\renewcommand\part{%
   \if@noskipsec \leavevmode \fi
   \par
   \addvspace{4ex}%
   \@afterindentfalse
   \secdef\@part\@spart}
\def\@part[#1]#2{%
    \ifnum \c@secnumdepth >\m@ne
      \refstepcounter{part}%
      \addcontentsline{toc}{part}{\thepart\hspace{1em}#1}%
    \else
      \addcontentsline{toc}{part}{#1}%
    \fi
    {\parindent \z@ \raggedright
     \interlinepenalty \@M
     \normalfont
     \ifnum \c@secnumdepth >\m@ne
       \Large\bfseries \partname\nobreakspace\thepart
       \par\nobreak
     \fi
     \huge \bfseries #2%
     \par}%
    \nobreak
    \vskip 3ex
    \@afterheading}
\def\@spart#1{%
    {\parindent \z@ \raggedright
     \interlinepenalty \@M
     \normalfont
     \huge \bfseries #1\par}%
     \nobreak
     \vskip 3ex
     \@afterheading}
\theoremstyle{definition}
\newtheorem{definition}{Definition}
\theoremstyle{definition}
\newtheorem{definition-proposition}{Definition-Proposition}
\theoremstyle{theorem}
\newtheorem{theorem}{Theorem}
\theoremstyle{theorem}
\newtheorem{lemma}{Lemma}
\theoremstyle{theorem}
\newtheorem{corollary}{Corollary}
\theoremstyle{theorem}
\newtheorem{proposition}{Proposition}
\theoremstyle{remark}
\numberwithin{equation}{section}
\begin{document}

\title{Higher algebra of \Ainf\ and \ombas -algebras in Morse theory I}
\author{Thibaut Mazuir}


\newcommand{\End}{\ensuremath{\mathrm{End}}}
\newcommand{\Hom}{\ensuremath{\mathrm{Hom}}}
\newcommand{\N}{\ensuremath{\mathbb{N}}}
\newcommand{\R}{\ensuremath{\mathbb{R}}}
\newcommand{\Z}{\ensuremath{\mathbb{Z}}}
\newcommand{\Sp}{\ensuremath{\mathbb{S}}}
\newcommand{\Lac}{\ensuremath{\mathrm{L}}}
\newcommand{\ide}{\ensuremath{\mathrm{id}}}
\newcommand{\calA}{\ensuremath{\mathcal{A}}}
\newcommand{\Ainf}{\ensuremath{A_\infty}}
\newcommand{\ombas}{\ensuremath{\Omega B As}}
\newcommand{\infmor}{\ensuremath{\Ainf - \mathrm{Morph}}}
\newcommand{\ombasmor}{\ensuremath{\ombas - \mathrm{Morph}}}
\newcommand{\inprod}{$\infty$-inner product}
\newcommand{\infcat}{$\infty$-category}
\newcommand{\degr}{\ensuremath{\mathrm{deg}}}
\newcommand{\CM}{\ensuremath{\mathrm{CM}}}

\newcommand{\rouge}[1]{\textcolor{red}{#1}}
\newcommand{\bleu}[1]{\textcolor{blue}{#1}}
\newcommand{\verde}[1]{\textcolor{green}{#1}}
\newcommand{\violet}[1]{\textcolor{violet}{#1}}
\newcommand{\jaune}[1]{\textcolor{yellow}{#1}}

\newcommand*{\inserim}[1]{%
  \raisebox{-.3\baselineskip}{%
    \includegraphics[
      width=0.08\textwidth
    ]{#1}%
  }%
}

\newcommand*{\inserimi}[1]{%
  \raisebox{-.3\baselineskip}{%
    \includegraphics[
      width=0.15\textwidth
    ]{#1}%
  }%
}



\newcommand{\arbreop}[1]{

}

\maketitle

\begin{abstract}
Elaborating on works by Abouzaid and Mescher, we prove that for a Morse function on a smooth compact manifold, its Morse cochain complex can be endowed with an $\Omega B As$-algebra structure by counting moduli spaces of perturbed Morse gradient trees. This rich structure descends to its already known $\Ainf$-algebra structure. We then introduce the notion of \ombas -morphism between two \ombas -algebras and prove that given two Morse functions, one can construct an $\Omega B As$-morphism between their associated $\Omega B As$-algebras by counting moduli spaces of two-colored perturbed Morse gradient trees. This morphism induces a standard \Ainf -morphism between the induced \Ainf -algebras. We work with integer coefficients, and provide to this extent a detailed account on the sign conventions for \Ainf\ (resp. $\Omega B As$)-algebras and \Ainf\ (resp. $\Omega B As$)-morphisms, using polytopes (resp. moduli spaces) which explicitly realize the dg-operadic objects encoding them. Our proofs also involve showing at the level of polytopes that an \ombas -morphism between \ombas -algebras naturally induces an \Ainf -morphism between \Ainf -algebras. This paper is adressed to people acquainted with either differential topology or algebraic operads, and written in a way to be hopefully understood by both communities. It comes in particular with a short survey on operads, \Ainf -algebras and \Ainf -morphisms, the associahedra and the multiplihedra. All the details on transversality, gluing maps, signs and orientations for the moduli spaces defining the algebraic structures on the Morse cochains are thorougly carried out. It moreover lays the basis for a second article in which we solve the problem of finding a satisfactory homotopic notion of higher morphisms between \Ainf -algebras and between \ombas -algebras, and show how this higher algebra of \Ainf\ and \ombas -algebras naturally arises in the context of Morse theory.
\end{abstract}

\vspace{30pt}

\begin{figure}[h]
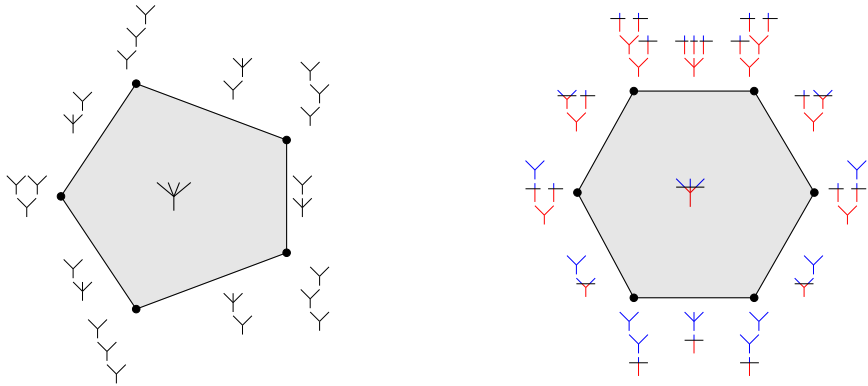

    \centering
    \begin{subfigure}{0.4\textwidth}
    \centering
        \associaedretrois
    \end{subfigure} ~
    \begin{subfigure}{0.4\textwidth}
    \centering
        \multiplaedretrois
    \end{subfigure}
    \caption*{\textit{The associahedron $K_4$ and the multiplihedron $J_3$ ...}}
\end{figure}

\newpage

\setcounter{tocdepth}{1}

\tableofcontents

\vspace{30pt}

\begin{figure}[h]
    \centering
    \begin{subfigure}{0.4\textwidth}
    \centering
        \Tquatrestratbis
    \end{subfigure} ~
    \begin{subfigure}{0.4\textwidth}
    \centering
        \CTtroisstratbis
    \end{subfigure}
    \vspace{10pt}
    \caption*{\textit{... and their $\Omega B As$-cell decompositions}}
\end{figure}

\newpage

\begin{leftbar}
\part*{Introduction}\label{p:intro}
\end{leftbar}

\paragraph{\textit{\textbf{Outline of the paper and main results}}} --- Our first part begins with concise and self-contained recollections on the theory of algebraic (non-symmetric) operads, that we subsequently specialize to the case of \Ainf -algebras, \Ainf -morphisms between them and their homotopy theory. We introduce in particular the convenient setting of operadic bimodules to define the operadic bimodule \infmor\ encoding \Ainf -morphisms between \Ainf -algebras. We then recall how the operad \Ainf\ (resp. the operadic bimodule \infmor ) can be realized using families of polytopes, known as the associahedra (resp. multiplihedra). The associahedra can themselves be realized as geometric moduli spaces : the compactified moduli spaces of metric stable ribbon trees $\overline{\mathcal{T}}_n$. These moduli spaces come with a refined cell decomposition encoding the operad $\Omega B As$. Likewise, the multiplihedra can be realized as the compactified moduli spaces of two-colored metric stable ribbon trees $\overline{\mathcal{CT}}_n$. Endowing these moduli spaces with a refined cell decomposition, we introduce a new operadic bimodule : the operadic bimodule \ombasmor , encoding \ombas -morphisms between \ombas -algebras.

\theoremstyle{definition}
\newtheorem*{alg:def:ombas-morph}{Definition~\ref{alg:def:ombas-morph}}
\begin{alg:def:ombas-morph}
The operadic bimodule \ombasmor\ is the quasi-free $(\Omega B As , \Omega B As)$-operadic bimodule generated by the set of two-colored stable ribbon trees
\[ \Omega B As - \mathrm{Morph} := \mathcal{F}^{\Omega B As, \Omega B As}( \arbreopunmorph , \arbrebicoloreL , \arbrebicoloreM , \arbrebicoloreN , \cdots ) \ ,  \]
where a two-colored stable ribbon tree $t_g$ with $e(t)$ internal edges and whose gauge crosses $j$ vertices has degree $|t_g| := j - e(t) -1$. The differential of a two-colored stable ribbon tree $t_g$ is given by the signed sum of all two-colored stable ribbon trees obtained from $t_g$ under the rule prescribed by the top dimensional strata in the boundary of $\overline{\mathcal{CT}}_n(t_g)$.
\end{alg:def:ombas-morph} 
  
\noindent The \ombas\ framework provides another template to study algebras which are homotopy-associative, together with morphisms between them which preserve the product up to homotopy. This is followed by a comprehensive study on the \Ainf\ and \ombas\ sign conventions. In the \Ainf\ case, we show how the two usual sign conventions for \Ainf -algebras and \Ainf -morphisms are naturally induced by the shifted bar construction viewpoint. Using the Loday realizations of the associahedra~\cite{masuda-diagonal-assoc} and the Forcey-Loday realizations of the multiplihedra~\cite{masuda-diagonal-multipl}, we give a complete proof of the following two folklore propositions :

\theoremstyle{theorem}
\newtheorem*{alg:prop:loday-assoc-multi-signs}{Propositions~\ref{alg:prop:loday-assoc-signs}~and~\ref{alg:prop:loday-multi-signs}}
\begin{alg:prop:loday-assoc-multi-signs}
The Loday realizations of the associahedra and the Forcey-Loday realizations of the multiplihedra determine the usual sign conventions for \Ainf -algebras and \Ainf -morphisms between them.
\end{alg:prop:loday-assoc-multi-signs}

\noindent On the \ombas\ side, we start by recalling the formulation of the operad \ombas\ by Markl and Shnider~\cite{markl-assoc}. We then proceed to study the moduli spaces of stable two-colored metric ribbon trees $\mathcal{CT}_n(t_g)$ and compute the signs arising in the top dimensional strata of their boundary in Propositions~\ref{alg:prop:int-collapse-signs}~to~\ref{alg:prop:below-break-signs}. This allows us to complete our definition of the operadic bimodule \ombasmor\ by making explicit the signs for the action-composition maps and the differential.
We finally give an alternative and more geometric construction of the morphism of operads $\Ainf \rightarrow \ombas$ defined in~\cite{markl-assoc}, using the realizations of the associahedra as geometric moduli spaces. We then build an explicit morphism of operadic bimodules $\infmor \rightarrow \ombasmor$ applying the same ideas to the moduli spaces realizing the multiplihedra.

\theoremstyle{theorem}
\newtheorem*{alg:prop:markl-shnider}{Propositions~\ref{alg:prop:markl-shnider-un}~and~\ref{alg:prop:markl-shnider-deux}}
\begin{alg:prop:markl-shnider}
There exist a geometric morphism of operads $\Ainf \rightarrow \ombas$ and a geometric morphism of operadic bimodules $\infmor \rightarrow \ombasmor$.
\end{alg:prop:markl-shnider}

\emph{Morse theory} corresponds to the study of manifolds endowed with a \emph{Morse function}, i.e. a function whose critical points are non-degenerate. Given a smooth compact manifold $M$, Fukaya constructed in~\cite{fukaya-morse-homotopy} an \Ainf -category whose objects are functions $f_i$ on $M$, whose spaces of morphisms between two functions $f_i$ and $f_j$ (such that $f_i-f_j$ is Morse) are the Morse cochain complexes $C^*(f_i-f_j)$, and whose higher multiplications are defined by counting moduli spaces of Morse ribbon trees. Adapting this construction to the case of a single Morse function $f$ on $M$, Abouzaid defines in~\cite{abouzaid-plumbings} an \Ainf -algebra structure on the Morse cochains $C^*(f)$ by counting moduli spaces of \emph{perturbed Morse gradient ribbon trees}. His work was subsequently continued by Mescher in~\cite{mescher-morse}. 

\noindent In the second part of this paper, we adapt the constructions of Abouzaid~\cite{abouzaid-plumbings}, using the terminology of Mescher~\cite{mescher-morse}, to perform two constructions on the Morse cochains $C^*(f)$. Firstly, we introduce the notion of smooth choices of perturbation data $\mathbb{X}_n$ on the moduli spaces $\mathcal{T}_n$ that we use to define the moduli spaces of perturbed Morse gradient trees $\mathcal{T}_t^{\mathbb{X}_t}(y ; x_1,\dots,x_n)$ modeled on a stable ribbon tree type $t$. 

\theoremstyle{theorem}
\newtheorem*{geo:th:existence-compact}{Theorems~\ref{geo:th:exist-admissible-perturbation-data-Tn}~and~\ref{geo:th:compactification-Tn}}
\begin{geo:th:existence-compact}
Under some generic assumptions on the choice of perturbation data $\{ \mathbb{X}_n \}_{n \geqslant 2}$, the moduli spaces $\mathcal{T}_t^{\mathbb{X}_t}(y ; x_1,\dots,x_n)$ are orientable manifolds. If they have dimension 0, they are compact. If they have dimension 1, they can be compactified to compact manifolds with boundary, whose boundary is modeled on the boundary of the moduli spaces $\mathcal{T}_n(t)$.
\end{geo:th:existence-compact}

\noindent We then show that under a generic choice of perturbation data $\{ \mathbb{X}_n \}_{n \geqslant 2}$ the Morse cochains $C^*(f)$ can be endowed with an \ombas -algebra structure, by counting 0-dimensional moduli spaces of Morse gradient ribbon trees.

\theoremstyle{theorem}
\newtheorem*{geo:th:struct-ombas-alg}{Theorem~\ref{geo:th:ombas-alg}}
\begin{geo:th:struct-ombas-alg} 
Defining for every $n$ and every stable ribbon tree type $t$ of arity $n$ the operation $m_t$ as 
\begin{align*}
m_t : C^*(f) \otimes \cdots \otimes C^*(f) &\longrightarrow C^*(f) \\
x_1 \otimes \cdots \otimes x_n &\longmapsto \sum_{|y|= \sum_{i=1}^n|x_i| - e(t)} \# \mathcal{T}_t^\mathbb{X}(y ; x_1,\cdots,x_n) \cdot y \ ,
\end{align*} 
these operations endow the Morse cochains  $C^*(f)$ with an $\Omega B As$-algebra structure.
\end{geo:th:struct-ombas-alg}

\noindent This \ombas -algebra structure is more canonical than the \Ainf -algebra structure of Abouzaid, as the \ombas -cell decomposition of the associahedra is the natural cell decomposition arising when realizing the moduli spaces of stable metric ribbon trees in Morse theory. This cell decomposition is also more appropriate for a rigorous proof of Theorem~\ref{geo:th:exist-admissible-perturbation-data-Tn}, than the \Ainf -cell decomposition used in~\cite{abouzaid-plumbings}. We recover the \Ainf -algebra structure of Abouzaid using the morphism $\Ainf \rightarrow \ombas$ of Proposition~\ref{alg:prop:markl-shnider-un}.

\noindent Given now two Morse functions $f$ and $g$, we can perform the same constructions in Morse theory using this time the moduli spaces $\mathcal{CT}_n$ as blueprints. The counterparts of Theorems~\ref{geo:th:exist-admissible-perturbation-data-Tn}~and~\ref{geo:th:compactification-Tn} still hold. Moreover, given two generic choices of perturbation data $\mathbb{X}^f$ and $\mathbb{X}^g$, we construct an \ombas -morphism between the \ombas -algebras $C^*(f)$ and $C^*(g)$ by counting 0-dimensional moduli spaces of two-colored Morse gradient trees. This construction provides a first geometric and explicit instance of the newly defined notion of \ombas -morphism.

\theoremstyle{theorem}
\newtheorem*{geo:th:struct-ombas-morph}{Theorem~\ref{geo:th:ombas-morph}}
\begin{geo:th:struct-ombas-morph}
Let $(\mathbb{Y}_n)_{n \geqslant 1}$ be a generic choice of perturbation data on the moduli spaces $\mathcal{CT}_n$.
Defining for every $n$ and every two-colored stable ribbon tree type $t_g$ of arity $n$ the operations $\mu_{t_g}$ as
\begin{align*}
\mu_{t_g}^{\mathbb{Y}} : C^*(f) \otimes \cdots \otimes C^*(f) &\longrightarrow C^*(g) \\
x_1 \otimes \cdots \otimes x_n &\longmapsto \sum_{|y|= \sum_{i=1}^n|x_i| + |t_g|} \# \mathcal{CT}_{t_g}^\mathbb{Y}(y ; x_1,\cdots,x_n) \cdot y \ .
\end{align*} 
these operations fit into an \ombas -morphism $\mu^{\mathbb{Y}} : (C^*(f),m_t^{\mathbb{X}^f}) \rightarrow (C^*(g),m_t^{\mathbb{X}^g})$.
\end{geo:th:struct-ombas-morph}

\noindent This \ombas -morphism yields in particular an \Ainf -morphism between two \Ainf -algebras, using the morphism of Proposition~\ref{alg:prop:markl-shnider-deux}. 
These constructions are followed by a section dedicated to a comprehensive proof of Theorems~\ref{geo:th:exist-admissible-perturbation-data-Tn} and~\ref{geo:th:existence-perturb-CTn}, which clarifies and completes the constructions of~\cite{abouzaid-plumbings}. Our last section on signs and orientations is dedicated to a thorough sign check for Theorems~\ref{geo:th:ombas-alg}~and~\ref{geo:th:ombas-morph}. We show that we have in fact defined a twisted \ombas -algebra structure on the Morse cochains, and a twisted \ombas -morphism between two Morse cochains complexes : when the manifold $M$ is odd-dimensional, the word "twisted" can be dropped.

\theoremstyle{definition}
\newtheorem*{geo:def:twisted-ombas}{Definition~\ref{geo:def:twisted-ombas}}
\begin{geo:def:twisted-ombas}
A \emph{twisted \Ainf -algebra} is a dg-\Z -module $A$ endowed with two different differentials $\partial_1$ and $\partial_2$, and a sequence of degree $2-n$ operations $m_n : A^{\otimes n} \rightarrow A$ such that
\[ [ \partial , m_n ] = - \sum_{\substack{i_1+i_2+i_3=n \\ 2 \leqslant i_2 \leqslant n-1}} (-1)^{i_1 + i_2i_3} m_{i_1+1+i_3} (\ide^{\otimes i_1} \otimes m_{i_2} \otimes \ide^{\otimes i_3} ) \ , \]
where $[ \partial , \cdot ]$ denotes the bracket for the maps $ (A^{\otimes n} , \partial_1) \rightarrow (A , \partial_2)$.
A \emph{twisted $\Omega B As$-algebra} and a \emph{twisted $\Omega B As$-morphism} are defined similarly.
\end{geo:def:twisted-ombas}

\noindent Our computations are performed using the convenient viewpoint of signed short exact sequences of vector bundles. This last section also gives us the opportunity to recall in detail the basic method to compute the relations satisfied by algebraic operations defined in the context of Morse theory or symplectic topology : counting the points on the boundary of an oriented 1-dimensional manifold.  We moreover pay a particular attention to the construction of explicit gluing maps for the 1-dimensional moduli spaces of perturbed Morse gradient trees.

Finally, the third and last part is composed of a series of developments on the algebraic and geometric constructions performed in the first two parts. We show in particular that : 

\theoremstyle{theorem}
\newtheorem*{fd:prop}{Proposition~\ref{fd:prop:quasi-iso-Morse}}
\begin{fd:prop}
The twisted $\Omega B As$-morphism $\mu^{\mathbb{Y}} : (C^*(f),m_t^{\mathbb{X}^f}) \longrightarrow (C^*(g),m_t^{\mathbb{X}^g})$ constructed in Theorem~\ref{geo:th:ombas-morph} is a quasi-isomorphism.
\end{fd:prop}

\noindent We also give a brief overview on the \Ainf -structures appearing in symplectic topology through Floer theory, which is sometimes presented as an infinite-dimensional analogue of Morse theory. In the last section we formulate two problems naturally arising from our constructions. Problem 1 is solved in our second article~\cite{mazuir-II} while Problem 2 is still a work in progress.

\paragraph{\textit{\textbf{Towards article II}}} --- This article completes the existing works on strongly homotopy associative structures arising from Morse theory and clarifies the analytical and algebraic technicalities that they involve. It moreover lays the ground for a second article~\cite{mazuir-II} dealing with two questions. First, understand and define a suitable homotopic notion of higher morphisms between \Ainf -algebras, which would give a satisfactory description of the higher algebra of \Ainf -algebras. Secondly, elaborating on the work of Abouzaid and Mescher on perturbed Morse gradient trees, realize these higher morphisms through moduli spaces in Morse theory.

\paragraph{\textit{\textbf{Acknowledgements}}} My first thanks go to my advisor Alexandru Oancea, for his continuous help and support through the settling of this series of papers. I also express my gratitude to Bruno Vallette for his constant reachability and his suggestions and ideas on the algebra underlying this work. I specially thank Jean-Michel Fischer and Guillaume Laplante-Anfossi who repeatedly took the time to offer explanations on higher algebra and $\infty$-categories. I finally adress my thanks to Florian Bertuol, Thomas Massoni, Amiel Peiffer-Smadja and Victor Roca Lucio for useful discussions. 

\newpage

\begin{leftbar}
\part{Algebra}\label{p:algebra}
\end{leftbar}

\section{Operadic algebra}\label{alg:s:op-alg}

Our first section is devoted to some basic recollections on operadic algebra, and the particular case of the operad \Ainf . The specialist already acquainted with these notions will only have to read sections~\ref{alg:ss:op-bimod} and~\ref{alg:ss:ainf-morph}, which introduce the \emph{operadic bimodule} viewpoint on \Ainf -morphisms through the $(\Ainf , \Ainf )$-operadic bimodule \infmor . All the signs of this section are worked out in section~\ref{alg:ss:signs-ainf-bar}, and will temporarily be written $\pm$ here.

We let in the rest of this section $\mathcal{C}$ be one the following two monoidal categories : the category of differential graded \Z -modules with cohomological convention $(\mathtt{dg-\Z -mod},\otimes)$ and the category of polytopes $(\mathtt{Poly},\times)$, introduced in detail in subsection~\ref{alg:sss:cat-poly}. We will write $\otimes$ for the tensor product on $\mathcal{C}$, and $I$ for its identity element.
Sections~\ref{alg:ss:op} and~\ref{alg:ss:p-alg} are derived from~\cite{loday-vallette-algebraic-operads}. Apart from the operadic bimodule viewpoint, most of the material presented in sections~\ref{alg:ss:op-ainf} and~\ref{alg:ss:ainf-morph} is inspired from~\cite{loday-vallette-algebraic-operads}~and~\cite{vallette-algebra}.

\subsection{Operads} \label{alg:ss:op}

\subsubsection{Definition} \label{alg:sss:def}

\begin{definition}
A \emph{(non-symmetric) $\mathcal{C}$-operad} $P$ consists in the data of a collection of objects $\{ P_n \}_{n \geqslant 1}$ of $\mathcal{C}$ together with a unit element $e \in P_1$ and with compositions
\[ P_k \otimes P_{i_1} \otimes \cdots \otimes P_{i_k} \underset{c_{i_1,\dots,i_k}}{\longrightarrow} P_{i_1 + \dots + i_k} \]
which are unital and associative. 
The objects $P_n$ are to be thought as spaces encoding arity $n$ operations while the compositions $c_{i_1,\dots,i_k}$ define how to compose these operations together.
\end{definition}

Operads can be defined in an equivalent fashion using partial compositions instead of total compositions. An operad is then the data of a collection of objects $\{ P_n\}_{n \geqslant 1}$ together with a unit element $e \in P_1$ and with partial composition maps
\[ \circ_i : P_k \otimes P_h \longrightarrow P_{h+k-1} \ , \ 1 \leqslant i \leqslant k  \]
which are unital and associative.
Finally a morphism of operads $P \rightarrow Q$ is a sequence of maps $P_n \rightarrow Q_n$ compatible with the compositions and preserving the identity.

\subsubsection{Schur functors} \label{alg:sss:schur}

There is a third equivalent definition of operads using the notion of Schur functors. Call any collection $P = \{ P_n \}$ of objects of $\mathcal{C}$ a \emph{$\N$-module}. To each $\N$-module one can associate its \emph{Schur functor}, which is the endofunctor $S_P : \mathcal{C} \rightarrow \mathcal{C}$ defined as
\[ C \longmapsto \bigoplus_{n=1}^\infty P_n \otimes C^{\otimes n} \ . \]

Given two \N -modules $P$ and $Q$, composing their Schur functors gives the following formula
\[ S_P \circ S_Q : C \longrightarrow \bigoplus_{n=1}^\infty \left( \bigoplus_{i_1+\dots+i_k=n} P_k \otimes Q_{i_1} \otimes \cdots \otimes Q_{i_k} \right) \otimes C^{\otimes n} \ . \]
In other words, there is a \N -module associated to the composition of the Schur functors of two \N -modules, and it is given by 
\[ P \circ Q = \{ \bigoplus_{i_1+\dots+i_k=n} P_k \otimes Q_{i_1} \otimes \cdots \otimes Q_{i_k} \}_{n \geqslant 1} \ . \]

The category $(\mathrm{End}(\mathcal{C}), \circ,Id_{\mathcal{C}})$, endowed with composition of endofunctors, is a monoidal category. In particular, there is a well-defined notion of monoid in $\mathrm{End}(\mathcal{C})$. A monoid structure on an endofunctor $F : \mathcal{C} \rightarrow \mathcal{C}$ is the data of natural transformations $\mu_F : F \circ F \rightarrow F$ and $e : Id_{\mathcal{C}} \rightarrow F$, which satisfy the usual commutative diagrams for monoids. 
This viewpoint yields the following equivalent definition of an operad. Albeit tedious, it will prove useful in the following section when considering operadic modules.

\begin{definition}
A \emph{$\mathcal{C}$-operad} is the data of a \N -module $P = \{ P_n \}$ of $\mathcal{C}$ together with a monoid structure on its Schur functor $S_P$.
\end{definition}

\subsection{$P$-algebras} \label{alg:ss:p-alg}

Let $A$ be a dg-\Z -module and $n \geqslant 1$. Define the graded \Z -module $\Hom (A^{\otimes n},A)^i$ of $i$-graded maps $A^{\otimes n} \rightarrow A$, and endow it with the differential $[ \partial , f] = \partial f - (-1)^{|f|} f \partial$. The \N -module $\End_A(n) := \Hom (A^{\otimes n},A)$ in dg-\Z -modules can then naturally be endowed with an operad structure, where composition maps are defined as one expects. Let $P$ be a $\mathtt{(dg-\Z -mod)}$-operad. A \emph{structure of $P$-algebra} on $A$ is defined to be the datum of a morphism of operads 
\[ P \longrightarrow \End_A \ , \]
in other words the datum of a way to interpret each operation of $P_n$ in $\Hom (A^{\otimes n},A)$, such that abstract composition in $P$ coincides with actual composition in $\End_A$.

A morphism of $P$-algebras between $A$ and $B$ is then simply a dg-map $f : A \rightarrow B$, which commutes with every operation of $P_n$ interpreted in $A$ and $B$. In other words, for every 
$m_n \in P_n$, 
\[ m_n^B \circ f^{\otimes n} = f \circ m_n^A \ . \]

\subsection{Operadic bimodules} \label{alg:ss:op-bimod}

\subsubsection{Definition with Schur functors} \label{alg:sss:def-with-schur}

Let now $(\mathcal{D},\otimes_{\mathcal{D}},I)$ be any monoidal category, and $(A,\mu_A)$ and $(B,\mu_B)$ be two monoids in $\mathcal{D}$. Reproducing the diagrams of usual algebra, one can define the notion of an $(A,B)$-bimodule in $\mathcal{D}$. It is simply the data of an object $R$ of $\mathcal{D}$, together with action maps $\lambda : A \otimes R \rightarrow R$ and $\mu : R \otimes B \rightarrow R$ which are compatible with the product on $A$ and $B$, act trivially under their identity elements and satisfy the obvious associativity conditions.

Take for instance $\mathcal{D}$ to be the category $\mathtt{dg-\Z -mod}$. A monoid in $\mathcal{D}$ is then a unital associative differential graded algebra, and the notion of bimodules in the previous paragraph then coincides with the usual notion of bimodules over dg-algebras.

\begin{definition}
Given $P$ and $Q$ two operads seen as their Schur functors $S_P$ and $S_Q$, let $R = \{ R_n \}$ be a \N -module of $\mathcal{C}$ seen as its Schur functor $S_R$. 
A \emph{$(P,Q)$-operadic bimodule structure} on $R$ is a $(S_P,S_Q)$-bimodule structure $\lambda : S_P \circ S_R \rightarrow S_R$ and $\mu : S_R \circ S_Q \rightarrow S_R$ on $S_R$ in $(\mathrm{End}(\mathcal{C}), \circ,Id_{\mathcal{C}})$. 
\end{definition}

\subsubsection{Operadic bimodules with operations} \label{alg:sss:op-bimod-op}

This definition is of course of no use for actual computations. Unraveling the definitions, we get an equivalent definition for $(P,Q)$-operadic bimodules. 

\begin{definition}
A \emph{$(P,Q)$-operadic bimodule structure} on $R$ is the data of action-composition maps
\begin{align*}
R_k \otimes Q_{i_1} \otimes \cdots \otimes Q_{i_k} &\underset{\mu_{i_1,\dots,i_k}}{\longrightarrow} R_{i_1 + \dots + i_k} \ ,  \\
P_h \otimes R_{j_1} \otimes \cdots \otimes R_{j_h} &\underset{\lambda_{j_1,\dots,j_h}}{\longrightarrow} R_{j_1 + \dots + j_h} \ ,
\end{align*}
which are compatible with one another, with identities, and with compositions in $P$ and $Q$.
\end{definition}

\noindent Note that the action of $Q$ on $R$ can be reduced to partial action-composition maps
\[ \circ_i : R_k \otimes Q_h \longrightarrow R_{h+k-1} \ \ 1 \leqslant i \leqslant k \ , \]
as $Q$ has an identity. This cannot be done for the action of $P$ on $R$, as $R$ does not necessarily have an identity.

\subsubsection{The $(\End_B , \End_A )$-operadic bimodule $\Hom (A,B)$} \label{alg:sss:homAB}

Let $A$ and $B$ be two dg-\Z -modules. We have seen that they each determine an operad, $ \End_A$ and $ \End_B$ respectively. Then the \N -module $\Hom (A ,B) := \{ \Hom (A^{\otimes n},B) \}_{n \geqslant 1}$ in dg-\Z -modules is a $(\End_B , \End_A)$-operadic bimodule where the action-composition maps are defined as one could expect.

\subsection{The operad \Ainf} \label{alg:ss:op-ainf}

\subsubsection{Suspension of a dg-\Z -module} \label{alg:sss:susp}

Let $A$ be a graded \Z -module. We define $sA$ to be the graded \Z -module $(sA)^i := A^{i - 1}$. In other words, $|sa|=|a|-1$. It is merely a notation that gives a convenient way to handle certain degrees. Note for instance that a degree $2-n$ map $A^{\otimes n} \rightarrow A$ is simply a degree $+1$ map $(sA)^{\otimes n} \rightarrow sA$.
This will be used thoroughly in the rest of this part.

\subsubsection{\Ainf -algebras} \label{alg:sss:ainf-alg}

Let $A$ be a dg-\Z -module with differential $m_1$. Recall that we are working in the cohomological framework hence $m_1$ has degree $+1$. A structure of \Ainf -algebra on $A$ is the data of a collection of degree $2-n$ maps 
\[m_n : A^{\otimes n} \longrightarrow A \ , \ n \geqslant 1 , \]
extending $m_1$ and which satisfy the following equations, called the \Ainf -equations
\[ \left[ m_1 , m_n \right] = \sum_{\substack{i_1+i_2+i_3=n \\ 2 \leqslant i_2 \leqslant n-1}} \pm m_{i_1+1+i_3} (\ide^{\otimes i_1} \otimes m_{i_2} \otimes \ide^{\otimes i_3} ) . \]
We refer to section~\ref{alg:ss:signs-ainf-bar} for the signs. Representing $m_n$ as \arbreop{0.15} , this equation reads as
\[ [ m_1 , \arbreop{0.2} ] = \sum_{\substack{h + k = n+1 \\ 2 \leqslant h \leqslant n-1 \\ 1 \leqslant i \leqslant k}} \pm \eqainf   \ . \]

We have in particular that
\begin{align*}
\left[ m_1 , m_2 \right] &= 0  \ , \\  
\left[ m_1 , m_3 \right] &= m_2 ( \ide \otimes m_2 - m_2\otimes \ide ) \ .
\end{align*}
Defining $H^*(A)$ to be the cohomology of $A$ relative to $m_1$, the last two equations show that $m_2$ descends to an associative product on $H^*(A)$. An \Ainf -algebra is simply a correct notion of a dg-algebra whose product is associative up to homotopy. Indeed to define such a notion, we have to keep track of all the higher homotopies coming with the fact that the product is associative up to homotopy : these higher homotopies are exactly the $m_n$. 

\subsubsection{The operad \Ainf} \label{alg:sss:ainf-op}

The \Ainf -algebra structure defined previously is actually governed by the following operad :
\begin{definition}
The \emph{operad \Ainf} is the quasi-free $\mathtt{dg-\Z -mod}$-operad generated in arity $n \geqslant 2$ by one operation $m_n$ of degree $2-n$ and whose differential is defined by
\[ \partial (m_n) = \sum_{\substack{i_1+i_2+i_3=n \\ 2 \leqslant i_2 \leqslant n-1}} \pm m_{i_1+1+i_3} (\ide^{\otimes i_1} \otimes m_{i_2} \otimes \ide^{\otimes i_3} ) \ . \]
\end{definition}

This is often written as $\Ainf = \mathcal{F}( \arbreopdeux , \arbreoptrois, \arbreopquatre , \cdots )$ where
\[ \partial ( \arbreop{0.2} ) = \sum_{\substack{h+k = n+1 \\ 2 \leqslant h \leqslant n-1 \\ 1 \leqslant i \leqslant k}} \pm \eqainf \ . \] 
Recall that quasi-free means that the operad is freely generated by the operations \arbreop{0.15} as a graded object, with the additional datum of a differential on its generating operations that is non-canonical. 
We then check that an \Ainf -algebra structure on a dg-\Z -module $A$ amounts simply to a morphism of operads $\Ainf \rightarrow \End_A$.

\subsubsection{The bar construction} \label{alg:sss:bar-constr}

\Ainf -algebras can also be defined using the \emph{bar construction}. Define the reduced tensor coalgebra of a graded \Z -module $V$ to be 
\[ \overline{T} V := V \oplus V^{\otimes 2} \oplus \cdots \]
endowed with the coassociative comultiplication
\[ \Delta_{\overline{T} V} (v_1 \dots v_n) := \sum_{i=1}^{n-1} v_1 \dots v_i \otimes v_{i+1} \dots v_n \ . \]

Then, we have a correspondence
\[
\begin{array}{c@{}c@{}c}
 \left\{\begin{array}{c}
         \text{collections of morphisms of degree $2-n$} \\
         \text{$m_n : A^{\otimes n} \rightarrow A \ , \ n \geqslant 1$} \\
  \end{array}\right\}
  & \longleftrightarrow 
  & \left\{\begin{array}{c}
         \text{collections of morphisms of degree $+1$} \\
         \text{$b_n : (sA)^{\otimes n} \rightarrow sA \ , \ n \geqslant 1$} \\
  \end{array}\right\} \\
  & & \updownarrow \\
  & & \left\{\begin{array}{c}
         \text{coderivations $D$ of degree $+1$ of $\overline{T} (sA)$}
  \end{array}\right\}
\end{array} \ .
\] 
Indeed, to each family of maps $b_n : (sA)^{\otimes n} \rightarrow sA$ of degree $+1$ one can associate a map $D : \overline{T}(sA) \rightarrow \overline{T}(sA)$ of degree $+1$ whose restriction to the $(sA)^{\otimes n}$ summand is given by
\[ \sum_{i_1+i_2+i_3 = n} \pm \ide^{\otimes i_1} \otimes b_{i_2} \otimes \ide^{\otimes i_3} \ . \]
Then the map $D$ is a coderivation of $\overline{T}(sA)$.

There is a second correspondence
\[
\begin{array}{c@{}c@{}c}
 \left\{\begin{array}{c}
         \text{collections of morphisms of degree $2-n$} \\
         \text{$m_n : A^{\otimes n} \rightarrow A \ , \ n \geqslant 1,$} \\
         \text{satisfying the \Ainf -equations} \\
  \end{array}\right\}
  & \longleftrightarrow 
  & \left\{\begin{array}{c}
         \text{coderivations $D$ of degree $+1$ of $\overline{T} (sA)$} \\
         \text{such that $D^2 = 0$} \\
  \end{array}\right\} \ .
\end{array}
\]    
Hence, the following proposition
\begin{proposition}
There is a one-to-one correspondence between \Ainf -algebra structures on $A$ and coderivations $D : \overline{T}(sA) \rightarrow \overline{T}(sA)$ of degree $+1$ which square to 0.
\end{proposition}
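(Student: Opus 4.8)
The plan is to deduce the proposition by composing the two correspondences already displayed above, the essential input being the universal property of the cofree tensor coalgebra. The statement is really the assertion that the bottom arrow of the first diagram and the content of the second diagram can be merged into a single equivalence, so the work is to make precise \emph{why} a coderivation is determined by its components and \emph{why} squaring to zero is detected on those components.

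First I would make precise the passage from a coderivation $D$ to its components. Since $\overline{T}(sA)$ is the cofree conilpotent coassociative coalgebra on $sA$, a coderivation $D$ of degree $+1$ is freely and entirely determined by its corestriction $b := \mathrm{pr} \circ D : \overline{T}(sA) \rightarrow sA$, where $\mathrm{pr}$ is the projection onto the $sA$ summand. Writing $b_n$ for the restriction of $b$ to $(sA)^{\otimes n}$, the coderivation is reconstructed by the explicit formula
\[ D\big|_{(sA)^{\otimes n}} = \sum_{i_1+i_2+i_3 = n} \pm\, \ide^{\otimes i_1} \otimes b_{i_2} \otimes \ide^{\otimes i_3} \ , \]
and conversely any family $\{ b_n \}$ of degree $+1$ maps assembles into a coderivation via this sum. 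This is exactly the content of the first and second displayed correspondences, and it furnishes a bijection between families $\{ b_n : (sA)^{\otimes n} \rightarrow sA \}$ and degree $+1$ coderivations of $\overline{T}(sA)$. The suspension dictionary of subsection~\ref{alg:sss:susp} then turns such a family into a family $\{ m_n : A^{\otimes n} \rightarrow A \}$ of degree $2-n$ maps, establishing the correspondence on the level of underlying data.

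The key remaining step is to identify the condition $D^2 = 0$ with the \Ainf -equations. Here I would use that the graded commutator of two coderivations is again a coderivation, so that $D^2 = \tfrac{1}{2}[D,D]$ is itself a coderivation (of degree $+2$). By the same universal property, $D^2$ vanishes if and only if its corestriction $\mathrm{pr} \circ D^2$ vanishes. Computing this corestriction from the formula above, the only terms surviving projection onto $sA$ are those in which one inner $b_{i_2}$ is fed into an outer $b_{i_1+1+i_3}$, giving
\[ \mathrm{pr} \circ D^2 \big|_{(sA)^{\otimes n}} = \sum_{i_1+i_2+i_3 = n} \pm\, b_{i_1+1+i_3}(\ide^{\otimes i_1} \otimes b_{i_2} \otimes \ide^{\otimes i_3}) \ . \]
Isolating the extreme terms $i_2 \in \{1,n\}$, which involve the differential $b_1$, reproduces the commutator $[b_1, b_n]$ on the left-hand side; transporting everything back through the suspension then yields precisely the \Ainf -equations of subsection~\ref{alg:sss:ainf-alg}. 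Hence $D^2 = 0$ is equivalent to $\{ m_n \}$ defining an \Ainf -structure, which proves the proposition.

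The hard part will be the sign bookkeeping, which is exactly why the paper postpones all signs to subsection~\ref{alg:ss:signs-ainf-bar} and writes $\pm$ throughout. The delicate point is to verify that the Koszul signs produced when commuting the suspension maps $s^{\pm 1}$ past the tensor factors in $b_n = s \circ m_n \circ (s^{-1})^{\otimes n}$ match, after composition, the signs appearing in $\mathrm{pr}\circ D^2$ and hence in the \Ainf -equations. At the level of the present statement these signs are inessential: the equivalence is purely structural, resting on (i) the freeness of coderivations over the cofree coalgebra and (ii) the fact that a coderivation vanishes as soon as its corestriction does. I would therefore treat the precise signs as a separate computation and here record only that the two conditions correspond term by term.
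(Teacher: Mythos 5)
Your proposal is correct and follows essentially the same route as the paper, which establishes the proposition by composing the two displayed correspondences (families $m_n \leftrightarrow$ families $b_n \leftrightarrow$ coderivations $D$, with the $\Ainf$-equations matching $D^2=0$). You merely make explicit the justification the paper leaves implicit — cofreeness of $\overline{T}(sA)$, the determination of a coderivation by its corestriction, and the fact that $D^2$ is itself a coderivation so vanishes iff its corestriction does — all of which is sound.
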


\subsection{\Ainf -morphisms} \label{alg:ss:ainf-morph} 

\subsubsection{dg-morphisms between \Ainf -algebras} \label{alg:sss:dg-morph}

Using the definition of section~\ref{alg:ss:p-alg}, a morphism between two \Ainf -algebras $A$ and $B$ is simply a dg-morphism $f : A \rightarrow B$ which is compatible with all the $m_n$.
This notion of morphism is however not satisfactory from an homotopy-theoretic point of view. Indeed, an \Ainf -algebra being an algebra whose product is associative up to homotopy, the correct homotopy notion of a morphism between two \Ainf -algebras would be that of a map which preserves the product $m_2$ up to homotopy, i.e. of a dg-morphism $f_1 : A \rightarrow B$ together with higher coherent homotopies, the first one satisfying
\[ [\partial , f_2 ] = f_1 m_2^A - m_2^B (f_1 \otimes f_1) \ . \]

\subsubsection{\Ainf -morphisms} \label{alg:sss:ainf-morph}

\begin{definition}
An \emph{\Ainf -morphism} between two \Ainf -algebras $A$ and $B$ is a dg-coalgebra morphism $F : (\overline{T}(sA),D_A) \rightarrow (\overline{T}(sB),D_B)$ between their bar constructions. 
\end{definition}

As previously, we have a one-to-one correspondence 
\[
\begin{array}{c@{}c@{}c}
 \left\{\begin{array}{c}
         \text{collections of morphisms of degree $1-n$} \\
         \text{$f_n : A^{\otimes n} \rightarrow B \ , \ n \geqslant 1,$} \\
  \end{array}\right\}
  & \longleftrightarrow 
  & \left\{\begin{array}{c}
         \text{morphisms of graded coalgebras} \\
         F : \overline{T} (sA) \rightarrow \overline{T} (sB)
  \end{array}\right\}
\end{array} \ .
\] 
The component of $F$ mapping $(sA)^{\otimes n}$ to $(sB)^{\otimes s}$ is given by
\[ \sum_{i_1+\cdots+i_s=n} \pm f_{i_1} \otimes \cdots \otimes f_{i_s} \ . \]
A coalgebra morphism preserves the differentials if and only if for all $n \geqslant 1$,
\begin{align*}
\sum_{i_1+i_2+i_3=n} \pm f_{i_1+1+i_3} (\ide^{\otimes i_1} \otimes m^A_{i_2} \otimes \ide^{\otimes i_3}) = \sum_{i_1 + \cdots + i_s = n} \pm m^B_s ( f_{i_1} \otimes \cdots \otimes f_{i_s}) \ . \tag{$\star$} \label{alg:eq:ainf-eq-morph}
\end{align*} 
These equations can be rewritten as 
\begin{align*}
\left[ m_1 , f_n \right] =  \sum_{\substack{i_1+i_2+i_3=n \\ i_2 \geqslant 2}} \pm f_{i_1+1+i_3} (\ide^{\otimes i_1} \otimes m_{i_2}^A \otimes \ide^{\otimes i_3}) + \sum_{\substack{i_1 + \cdots + i_s = n \\ s \geqslant 2 }} \pm m_s^B ( f_{i_1} \otimes \cdots \otimes f_{i_s}) \ .  \tag{$\star$}
\end{align*} 
This yields the following equivalent definition : 
\begin{definition}
An \emph{\Ainf -morphism} between two \Ainf -algebras $A$ and $B$ is a family of maps $f_n : A^{\otimes n} \rightarrow B$ of degree $1-n$ satisfying equations~\ref{alg:eq:ainf-eq-morph}.
\end{definition}

See section~\ref{alg:ss:signs-ainf-bar} for signs. We check that we recover in particular 
$ [\partial , f_2 ] = f_1 m_2^A - m_2^B (f_1 \otimes f_1) \ . $
As a result, an \Ainf -morphism of \Ainf -algebras induces a morphism of associative algebras on the level of cohomology.
An \emph{\Ainf -quasi-isomorphism} is then defined to be an \Ainf -morphism inducing an isomorphism in cohomology.

\subsubsection{Composing \Ainf -morphisms} \label{alg:sss:composition}

Given two coalgebra morphisms $F : \overline{T}V \rightarrow \overline{T}W$ and $G : \overline{T}W \rightarrow \overline{T}Z$, the family of morphisms associated to $G \circ F$ is given by 
\[ (G \circ F)_n := \sum_{i_1 + \cdots + i_s = n} \pm g_s (f_{i_1} \otimes \cdots \otimes f_{i_s}) \ . \]
Hence, the composition of two \Ainf -morphisms $f : A \rightarrow B$ and $g: B \rightarrow C$ is defined to be 
\[ (g \circ f)_n := \sum_{i_1 + \cdots + i_s = n} \pm g_s (f_{i_1} \otimes \cdots \otimes f_{i_s}) \ . \]
In particular one can define $\mathtt{\Ainf -alg}$, the category of \Ainf -algebras with \Ainf -morphisms between them, whose composition is defined by the previous formula.

\subsubsection{The (\Ainf ,\Ainf )-operadic bimodule encoding $\Ainf$-morphisms} \label{alg:sss:ainf-ainf-bimod}

In fact there is an (\Ainf ,\Ainf )-operadic bimodule encoding the notion of \Ainf -morphisms of \Ainf -algebras. 

\begin{definition}
The operadic bimodule \infmor\ is the quasi-free $(\Ainf ,\Ainf )$-operadic bimodule generated in arity $n \geqslant 1$ by one operation $f_n$ of degree $1-n$ and whose differential is defined by
\[ \partial (f_n) = \sum_{\substack{i_1+i_2+i_3=n \\ i_2 \geqslant 2}} \pm f_{i_1+1+i_3} (\ide^{\otimes i_1} \otimes m_{i_2} \otimes \ide^{\otimes i_3}) + \sum_{\substack{i_1 + \cdots + i_s = n \\ s \geqslant 2 }} \pm m_s ( f_{i_1} \otimes \cdots \otimes f_{i_s}) \ . \]
\end{definition}

Representing the generating operations of the operad \Ainf\ acting on the right in blue \arbreopbleu{0.15} and the ones of the operad \Ainf\ acting on the left in red \arbreoprouge{0.15}, we represent $f_n$ by \arbreopmorph{0.15}. This operadic bimodule can then be written as
\[ \infmor = \mathcal{F}^{\Ainf , \Ainf}(\arbreopunmorph , \arbreopdeuxmorph , \arbreoptroismorph , \arbreopquatremorph , \cdots ) \ ,  \]
with differential defined as
\[ \partial ( \arbreopmorph{0.2} ) = \sum_{\substack{h+k = n+1 \\ 1 \leqslant i \leqslant k \\ h \geqslant 2 }} \pm \eqainfmorphun + \sum_{\substack{i_1 + \cdots + i_s = n \\ s \geqslant 2 }} \pm \eqainfmorphdeux . \] 

Consider $A$ and $B$ two \Ainf -algebras, which we can see as two morphisms of operads $\Ainf \rightarrow \End_A$ and $\Ainf \rightarrow \End_B$. Recall from subsection~\ref{alg:sss:homAB} that $\Hom (A,B)$ is a $(\End_B , \End_A)$-operadic bimodule. The previous two morphisms of operads make $\Hom (A , B)$ into an (\Ainf ,\Ainf )-operadic bimodule. An \Ainf -morphism between $A$ and $B$ is then simply a morphism of (\Ainf ,\Ainf )-operadic bimodules 
\[ \infmor \longrightarrow \Hom (A , B) \ . \]
It is in that sense that $\infmor$ is the (\Ainf ,\Ainf )-operadic bimodule encoding the notion of \Ainf -morphisms of \Ainf -algebras.

\subsubsection{The framework of two-colored operads} \label{alg:sss:two-col-op}

In fact, our choice of notation \arbreopmorph{0.15} reveals that the natural framework to work with the operad \Ainf\ and the operadic bimodule \infmor\ is provided by the quasi-free two-colored operad 
\[ A_\infty^2 := \mathcal{F} (\arbreopdeuxcol{red} , \arbreoptroiscol{red} , \arbreopquatrecol{red}, \cdots, \arbreopdeuxcol{blue} , \arbreoptroiscol{blue} , \arbreopquatrecol{blue} , \cdots, \arbreopunmorph , \arbreopdeuxmorph , \arbreoptroismorph , \arbreopquatremorph , \cdots ) \ , \]
where the differential on the generating operations is given by the previous formulae.
A two-colored operad can be roughly defined as an operad whose operations have entries and output labeled either in red or in blue, and whose operations can only be composed along the same color. See~\cite{yau-colored} for a complete definition. 

\subsection{Homotopy theory of \Ainf -algebras} \label{alg:ss:homotopy}

\Ainf -algebras with \Ainf -morphisms between them provide a suitable framework to study homotopy theory of dg-associative algebras. This is because the two-colored operad $A_\infty^2$ is a resolution 
\[ A_\infty^2 \tilde{\longrightarrow} As^2 \ , \]
of the two-colored operad encoding associative algebras with morphisms of algebras, and a fibrant-cofibrant object in the model category of two-colored operads in dg-\Z -modules. See~\cite{markl-homotopy-diagram}. We illustrate these statements with two fundamental theorems. We refer moreover to~\cite{markl-transferring} for a more general version of Theorem~\ref{alg:th:htt}.

\begin{theorem}[Homotopy transfer theorem \cite{kadeishvili-theory}] \label{alg:th:htt}
Let $(A,\partial_A)$ and $(H,\partial_H)$ be two cochain complexes. Suppose that $H$ is a \emph{deformation retract} of $A$, that is that they fit into a diagram
\begin{center} 
\begin{tikzcd}
\arrow[loop left,"h"](A,\partial_A) \arrow[r, shift left, "p"] & \arrow[l,shift left,"i"] (H,\partial_H) \ ,
\end{tikzcd}
\end{center}
where $\mathrm{id}_A - ip = [ \partial,h]$. Then if $(A,\partial_A)$ is endowed with an associative algebra structure, $H$ can be made into an \Ainf -algebra such that $i$ and $p$ extend to \Ainf -morphisms. 
\end{theorem}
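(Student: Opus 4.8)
The plan is to carry out the entire argument on the level of the bar constructions, exploiting the dictionary already established in the excerpt: by the proposition on the bar construction, an \Ainf -structure on a dg-\Z -module is the same as a square-zero degree $+1$ coderivation of its reduced tensor coalgebra, and by the definition of section~\ref{alg:ss:ainf-morph} an \Ainf -morphism is the same as a dg-coalgebra morphism between bar constructions. Thus it suffices to transfer the bar codifferential $D_A$ from $\overline{T}(sA)$ to a codifferential on $\overline{T}(sH)$ and to produce coalgebra morphisms extending $i$ and $p$. The tool for this is the Homological Perturbation Lemma, and unwinding its formulas will recover the familiar explicit sums over planar binary trees.

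First I would lift the deformation retract to the tensor coalgebras. From $i,p,h$ one builds the cofree extensions $\overline{T}(si), \overline{T}(sp) : \overline{T}(sH) \rightleftarrows \overline{T}(sA)$, where $\overline{T}(si) = \bigoplus_n (si)^{\otimes n}$, and the \emph{tensor trick} homotopy $\mathbf{h}$ whose restriction to $(sA)^{\otimes n}$ is
\[ \sum_{k=0}^{n-1} (si\,sp)^{\otimes k} \otimes sh \otimes \ide^{\otimes (n-1-k)} \ . \]
These assemble into a deformation retract of $(\overline{T}(sA), \partial)$ onto $(\overline{T}(sH), \partial)$, the differentials $\partial$ being those induced by $\partial_A$ and $\partial_H$. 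The associative product $m_2^A$ then defines, via the bar construction, a coderivation perturbation $t = b_2^A$ of $\partial$ on $\overline{T}(sA)$, so that $\partial + t = D_A$ squares to zero; crucially, $t$ strictly lowers the word-length filtration, so all infinite sums below are locally finite.

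Next I would apply the perturbation lemma, which yields a perturbed codifferential $D_H = \partial + t_\infty$ on $\overline{T}(sH)$ together with perturbed comparison maps, given by the geometric series
\[ t_\infty = \overline{T}(sp)\, t\, (\ide - \mathbf{h}\,t)^{-1}\, \overline{T}(si) \ , \quad I = (\ide - \mathbf{h}\,t)^{-1}\, \overline{T}(si) \ , \quad P = \overline{T}(sp)\, (\ide - t\,\mathbf{h})^{-1} \ , \]
all well-defined by local finiteness, with signs fixed by the conventions of section~\ref{alg:ss:signs-ainf-bar}. The main obstacle lies precisely here: one must check that $t_\infty$ is again a \emph{coderivation} and that $I$ and $P$ are genuine morphisms of coalgebras, so that $D_H$ corresponds to an honest \Ainf -structure and $I, P$ to honest \Ainf -morphisms rather than to arbitrary transferred maps. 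This compatibility with the cofree coalgebra structures is exactly what the tensor trick homotopy is designed to ensure, and verifying it (together with the sign bookkeeping) is the technical heart of the proof. Reading off the corestriction of $t_\infty$ to $sH$ along $(sH)^{\otimes n}$ gives the operations $b_n^H$, hence the transferred $m_n^H$, while the components of $I$ and $P$ give the families $i_n, p_n$ with $i_1 = i$ and $p_1 = p$; unwinding the series reproduces the signed sums over planar binary trees with leaves decorated by $i$, vertices by $m_2^A$, internal edges by $h$, and the root by $p$ (respectively by $h$ for $i_n$).

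Finally, the quasi-isomorphism statement is immediate. An \Ainf -morphism induces an isomorphism on cohomology exactly when its linear component does, and here these linear components are $i_1 = i$ and $p_1 = p$. The relation $\ide_A - ip = [\partial, h]$ shows that $i$ and $p$ are mutually inverse isomorphisms in cohomology, so the extended morphisms $i_\bullet$ and $p_\bullet$ are \Ainf -quasi-isomorphisms in the sense defined after equations~\eqref{alg:eq:ainf-eq-morph}. I expect the routine parts (convergence, explicit signs, the elementary cohomology computation) to be straightforward, with the coalgebra-compatibility of the perturbation output being the one step requiring genuine care.
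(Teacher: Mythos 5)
The paper does not prove this theorem: it is stated with a citation to Kadeishvili and a pointer to Markl's more general transfer results, so there is no in-paper argument to compare against. Your route --- lift the deformation retract to the bar constructions via the tensor-trick homotopy, perturb by the coderivation $b_2^A$ encoding the product, and apply the homological perturbation lemma --- is the standard modern proof and is essentially correct: the perturbation formulas you write down are the right ones, local finiteness holds because $b_2^A$ strictly lowers word length while $\mathbf{h}$ preserves it, and the quasi-isomorphism statement does reduce to the linear components $i_1=i$, $p_1=p$ exactly as you say. This is a genuinely different (and more structural) argument than Kadeishvili's original degree-by-degree induction, and it has the advantage of producing $I$ and $P$ in closed form as sums over planar trees.

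Two points would need to be supplied to make this a complete proof rather than a reduction to a known lemma. First, the basic perturbation lemma only requires a deformation retract, but the statement that its output ($t_\infty$ a coderivation, $I$ and $P$ coalgebra morphisms) respects the cofree coalgebra structure is \emph{not} automatic from the tensor trick alone; the standard proofs use the side conditions $hi=0$, $ph=0$, $h^2=0$, which do not follow from the data $\ide_A - ip = [\partial,h]$, $pi=\ide_H$ as given in the theorem. You should either arrange them (replace $h$ by $h' = [\partial,h]h[\partial,h]$, say, which always works) or invoke a version of the lemma that does not need them. Second, having arranged this, the verification that $(\ide-\mathbf{h}t)^{-1}\overline{T}(si)$ is a morphism of coalgebras and that $t_\infty$ is a coderivation is an actual computation (one checks compatibility with the decomposition map term by term in the geometric series); you correctly identify it as the technical heart but leave it entirely to the reader, so as written the proposal is a correct strategy with its central step deferred rather than a proof.
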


\begin{theorem}[Fundamental theorem of \Ainf -quasi-isomorphisms \cite{lefevre-hasegawa}] For every \Ainf -quasi-isomor\-phism $f : A \rightarrow B$ there exists an \Ainf -quasi-isomorphism $B \rightarrow A$ which inverts $f$ on the level of cohomology. 
\end{theorem}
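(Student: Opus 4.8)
The plan is to reduce the statement to the problem of inverting an $A_\infty$-\emph{isomorphism}, that is an $A_\infty$-morphism whose linear component is an isomorphism of the underlying graded modules. Concretely, given an $A_\infty$-quasi-isomorphism $f : A \rightarrow B$, I would first replace both $A$ and $B$ by minimal models, transport $f$ to a morphism between these minimal models, observe that this transported morphism is automatically an isomorphism, invert it, and finally conjugate back. The three ingredients I need are the homotopy transfer theorem (Theorem~\ref{alg:th:htt}), the composition of \Ainf -morphisms (section~\ref{alg:sss:composition}), and the elementary invertibility of \Ainf -isomorphisms.

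First I would produce minimal models. Choosing a splitting of the cochain complex $(A,m_1)$ (available whenever the cohomology is projective, in particular over a field), the cohomology $(H^*(A),0)$ becomes a deformation retract of $A$ in the sense of Theorem~\ref{alg:th:htt}. The homotopy transfer theorem then equips $H^*(A)$ with an \Ainf -structure and yields \Ainf -quasi-isomorphisms $i_A : H^*(A) \rightarrow A$ and $p_A : A \rightarrow H^*(A)$ extending the retract data, with $i_{A,1}$ and $p_{A,1}$ inducing mutually inverse isomorphisms on cohomology. I would run the same construction for $B$, obtaining $i_B$ and $p_B$.

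Next I would transport and invert. Using composition of \Ainf -morphisms, I form
\[ \varphi := p_B \circ f \circ i_A : H^*(A) \longrightarrow H^*(B) \ . \]
Its linear part $\varphi_1$ induces an isomorphism on cohomology; but source and target now carry the zero differential, so $\varphi_1$ is already an isomorphism of graded \Z -modules, and $\varphi$ is an \Ainf -isomorphism. Such a morphism admits an \Ainf -inverse $\varphi^{-1}$, whose components $(\varphi^{-1})_n$ I would construct by induction on $n$: one sets $(\varphi^{-1})_1 := \varphi_1^{-1}$ and, at each stage, solves the arity-$n$ component of $\varphi \circ \varphi^{-1} = \ide$ for $(\varphi^{-1})_n$, the leading term of which is $\varphi_1 \circ (\varphi^{-1})_n$. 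Since $\varphi_1$ is invertible, this equation is solvable purely algebraically with no cohomological obstruction. Assembling, the composite
\[ g := i_A \circ \varphi^{-1} \circ p_B : B \longrightarrow A \]
is an \Ainf -morphism whose linear component induces $(f_1^*)^{-1}$ on cohomology, so $g$ is an \Ainf -quasi-isomorphism inverting $f$, as desired.

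The main obstacle I anticipate lies in the inductive inversion step: one must verify that solving order by order really does produce a morphism of dg-coalgebras $\overline{T}(sH^*(B)) \rightarrow \overline{T}(sH^*(A))$, i.e. that the freely determined $(\varphi^{-1})_n$ are compatible with the coderivation differentials, and in tracking the signs governing these equations (deferred throughout this section and fixed in section~\ref{alg:ss:signs-ainf-bar}). A secondary subtlety, specific to the integral setting of this paper, is that the splitting underlying the minimal model exists over $\Z$ only when the cohomology is free; as stated the cited result is taken over a field, and over a general ground ring one should either assume such a splitting or argue directly, replacing $H^*(A)$ by any \Ainf -algebra that is a genuine deformation retract of $A$.
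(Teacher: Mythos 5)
The paper does not actually prove this statement --- it is quoted with a citation to Lefèvre-Hasegawa --- and your argument is precisely the standard proof found in that reference (and in Loday--Vallette): pass to minimal models via the homotopy transfer theorem, observe that the transported morphism is an \Ainf -isomorphism because the differentials vanish, invert it, and conjugate back. The argument is correct. Two remarks on the points you flag. First, the compatibility of the order-by-order inverse with the differentials is automatic and needs no separate verification: a morphism of conilpotent tensor coalgebras whose linear component is invertible admits an inverse $\varphi^{-1}$ as a morphism of graded coalgebras (your induction), and then $D_A \varphi^{-1} = \varphi^{-1}\varphi D_A \varphi^{-1} = \varphi^{-1} D_B \varphi \varphi^{-1} = \varphi^{-1} D_B$ shows it is a dg-coalgebra morphism for free. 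Second, your caveat about the ground ring is genuine and worth stating: the paper works over $\Z$, where the deformation-retract hypothesis of Theorem~\ref{alg:th:htt} applied to $H^*(A) \hookrightarrow A$ requires the cohomology to be free, so the theorem as invoked here implicitly carries that assumption (or field coefficients, as in the cited source).
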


\section{Operads in polytopes}\label{alg:s:poly-op}

We recall in the first section the monoidal category $\mathtt{Poly}$ defined in~\cite{masuda-diagonal-assoc}, which yields a good framework to handle operadic calculus in a category whose objects are polytopes.
We then introduce in sections~\ref{alg:ss:assoc}~and~\ref{alg:ss:multipl} the two main combinatorial objects of this article : the \emph{associahedra} and the \emph{multiplihedra}, which are polytopes that respectively encode \Ainf -algebras and \Ainf -morphisms between them. 
Explicit realizations of the associahedra and the multiplihedra will be given in sections~\ref{alg:ss:loday-assoc}~and~\ref{alg:ss:forcey-loday-multipl}.

\subsection{Three monoidal categories and their operadic algebra} \label{alg:ss:three-monoid}

\subsubsection{Differential graded \Z -modules and CW-complexes} \label{alg:sss:dg-cw}

Consider $\mathtt{dg-\Z -mod}$ to be the category with objects differential graded \Z -modules with cohomological convention, and morphisms the morphisms of dg-\Z -modules. It is a monoidal category with the classical tensor product of dg-\Z -modules and unit the underlying field seen as a dg-\Z -module concentrated in degree 0.

Likewise, define $\mathtt{CW}$ to be the category whose objects are finite CW-complexes and whose morphisms are CW-maps between CW-complexes. This category is again a monoidal category with product the usual cartesian product and unit the point $*$.
The cellular chain functor $C^{cell}_{*} : \mathtt{CW} \rightarrow \mathtt{dg-\Z -mod}$ is then strong monoidal, i.e. it satisfies
\[ C^{cell}_{*} (P \times Q) = C^{cell}_{*}(P) \otimes C^{cell}_{*}(Q) \ . \]
To be consistent with the cohomological degree convention on \Ainf -algebras, we will actually work with the strong monoidal functor
\[ C^{cell}_{-*} : \mathtt{CW} \longrightarrow \mathtt{dg-\Z -mod} \ , \]
where $C^{cell}_{-*}(P)$ is simply the \Z -module $C^{cell}_{*}(P)$ taken with its opposite grading.

\subsubsection{The category of polytopes (\cite{masuda-diagonal-assoc})} \label{alg:sss:cat-poly}

Define a \emph{polytope} to be the convex hull of a finite number of points in a Euclidean space $\R^n$. 
A \emph{polytopal complex} is then a finite collection $\mathcal{P}$ of polytopes satisfying three conditions :
\begin{enumerate}[label=(\roman*)]
\item $\emptyset \in \mathcal{P}$ ,
\item if $P \in \mathcal{P}$ then all the faces of $P$ are also in $\mathcal{P}$ ,
\item if $P$ and $Q$ are two polytopes of $\mathcal{P}$ then the intersection $P \cap Q$ belongs to $\mathcal{P}$.
\end{enumerate}
The realisation of a polytopal complex is simply
\[ | \mathcal{P} | := \bigcup_{P \in \mathcal{P}} P \ . \]
Given $P$ a polytope, we say in particular that a polytopal complex $\mathcal{Q}$ is a polytopal subdivision of $P$ if $| \mathcal{Q} | = P$. Every polytope $P$ comes with a polytopal complex $\mathcal{L}(P)$ consisting of all its faces, which realizes a polytopal subdivision of $P$.

Following~\cite{masuda-diagonal-assoc}, we then define the category $\mathtt{Poly}$ as :
\begin{enumerate}[label={}]
\item \textbf{Objects.} Polytopes.
\item \textbf{Morphisms.} A continuous map $f : P \rightarrow Q$ which is a homeomorphism $P \rightarrow | \mathcal{D} |$ where $\mathcal{D}$ is a polytopal subcomplex of $\mathcal{L}(Q)$ and $f^{-1}(\mathcal{D})$ is a polytopal subdivision of $P$. Such a map will be called a \emph{polytopal map}.
\end{enumerate}
This is a monoidal category with product the usual cartesian product and unit the polytope reduced to a point $*$. It is in fact a monoidal subcategory of $\mathtt{CW}$. 

\subsubsection{From operadic algebra in $\mathtt{Poly}$ to operadic algebra in $\mathtt{dg-\Z -mod}$} \label{alg:sss:op-alg-poly-to-dg}

Let $\{ X_n \}$ be a $\mathtt{Poly}$-operad, that is a collection of polytopes $X_n$ together with polytopal maps 
\[ \circ_i : X_k \times X_h \longrightarrow X_{h+k-1} \ , \]
satisfying the compatibility conditions of partial compositions.
Then, the functor $C^{cell}_{-*}$ yields a new $\mathtt{dg-\Z -mod}$-operad $\{ P_n \}$ defined by $P_n := C^{cell}_{-*}(X_n)$ and whose partial compositions are
\[ \circ_i : C^{cell}_{-*}(X_k) \otimes  C^{cell}_{-*}(X_h) \tilde{\longrightarrow} C^{cell}_{-*}(X_k \times X_h) \underset{C^{cell}_{-*}(\circ_i)}{\longrightarrow} C^{cell}_{-*}(X_{h+k-1}) \ . \]

In the same way, let $\{ X_n \}$ and $\{ Y_n \}$ be two $\mathtt{Poly}$-operads, and $\{ Z_n \}$ be a $(\{ X_n \} , \{ Y_n \})$-operadic bimodule, that is a collection of polytopes $\{ Z_n \}$ together with polytopal action-composition maps
\begin{align*}
X_s \times Z_{i_1} \times \cdots \times Z_{i_s} &\overset{\mu}{\longrightarrow} Z_{i_1 + \dots + i_s} \ , \\
Z_k \times Y_h &\underset{\circ_i}{\longrightarrow} Z_{h+k-1} \ ,  
\end{align*}
which are compatible with the composition maps of  $\{ X_n \}$ and $\{ Y_n \}$.
Then, the functor $C^{cell}_{-*}$ yields a new operadic-bimodule in $\mathtt{dg-\Z -mod}$ as follows. Denote $P_n = C^{cell}_{-*}(X_n)$ and $Q_n = C^{cell}_{-*}(Y_n)$. These are both operads in $\mathtt{dg-\Z -mod}$. Defining $R_n := C^{cell}_{-*}(Z_n)$, this is a $(P,Q)$-operadic bimodule with action-composition maps defined by
\begin{align*}
C^{cell}_{-*}(X_{s}) \otimes C^{cell}_{-*}(Z_{i_1}) \otimes \cdots \otimes C^{cell}_{-*}(Z_{i_s}) &\tilde{\longrightarrow} C^{cell}_{-*}(X_s \times Z_{i_1} \times \cdots \times Z_{i_s}) \overset{C^{cell}_{-*}(\mu)}{\longrightarrow} C^{cell}_{-*}(Z_{i_1 + \dots + i_s}) \ , \\
C^{cell}_{-*}(Z_k) \otimes C^{cell}_{-*}(Y_h) &\tilde{\longrightarrow} C^{cell}_{-*}(Z_k \times Y_h) \underset{C^{cell}_{-*}(\circ_i)}{\longrightarrow} C^{cell}_{-*}(Z_{h+k-1}) \ .
\end{align*}

\subsection{The associahedra} \label{alg:ss:assoc}

The $\mathtt{dg-\Z -mod}$-operad $\Ainf$ actually stems from a $\mathtt{Poly}$-operad : 
\begin{theorem}[\cite{masuda-diagonal-assoc}]
There exists a collection of polytopes, called the \emph{associahedra} and denoted $\{ K_n \}$, endowed with a structure of operad in the category~$\mathtt{Poly}$ and whose image under the functor $C^{cell}_{-*}$ yields the operad \Ainf .
\end{theorem}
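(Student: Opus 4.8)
The plan is to first construct the associahedra as honest polytopes carrying an operad structure in $\mathtt{Poly}$, and then to transport this structure through the strong monoidal functor $C^{cell}_{-*}$ and recognize the result as \Ainf . For the construction I would fix, once and for all, a convex realization of $K_n$ as a polytope of dimension $n-2$ whose nonempty faces are in bijection with planar rooted trees with $n$ leaves all of whose internal vertices have at least two inputs, the face indexed by a tree $t$ having dimension $(n-2) - e(t)$, where $e(t)$ is the number of internal edges of $t$. The structural property on which everything rests is that this face is canonically identified with the product $\prod_{v} K_{|v|}$ taken over the internal vertices $v$ of $t$, where $|v|$ is the arity of $v$; in particular the top cell is the corolla and the codimension-one faces correspond exactly to trees with a single internal edge, i.e. to grafts of two corollas.

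With this at hand, I would define the partial compositions $\circ_i : K_k \times K_h \to K_{h+k-1}$ to be the inclusions onto the codimension-one face indexed by the two-vertex tree obtained by grafting the $h$-corolla onto the $i$-th leaf of the $k$-corolla. One then checks that each such inclusion is a polytopal map in the sense of subsection~\ref{alg:sss:cat-poly} (here it is a homeomorphism onto a genuine face, so the subdivision conditions are immediate), that the unit $e \in K_1 = *$ acts trivially, and that the sequential and parallel associativity relations hold. Via the face-product decomposition $\prod_v K_{|v|}$, these last verifications reduce to the associativity of grafting of planar trees and are thus purely combinatorial.

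Applying $C^{cell}_{-*}$ then yields, as in subsection~\ref{alg:sss:op-alg-poly-to-dg}, a $\mathtt{dg-\Z -mod}$-operad $\{ C^{cell}_{-*}(K_n) \}$, and it remains to identify it with \Ainf\ on two levels. As a graded operad, the cells of $K_n$ are indexed by the tree-monomials forming the canonical basis of the free operad $\mathcal{F}(m_2, m_3, \dots)$ in arity $n$: a tree with internal vertices of arities $a_1, \dots, a_r$ labels a cell of dimension $n-1-r$, whose image under $C^{cell}_{-*}$ lies in degree $\sum_i (2-a_i) = r-n+1$, matching the degree of the corresponding composite of generators and placing the top cell $m_n$ in degree $2-n$. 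To match the differentials, since both the cellular boundary and the operadic differential of \Ainf\ are derivations for the operad structure, it suffices to compare them on the top cell, where the cellular boundary is precisely the signed sum of codimension-one faces, namely the trees $m_{i_1+1+i_3}(\ide^{\otimes i_1} \otimes m_{i_2} \otimes \ide^{\otimes i_3})$ appearing in $\partial(m_n)$. The main obstacle is not the combinatorics of this boundary but the bookkeeping of orientations: one must orient the cells of $K_n$ coherently so that the cellular boundary signs agree with the \Ainf\ signs, which is exactly the content deferred to section~\ref{alg:ss:signs-ainf-bar} and which I would treat there rather than here.
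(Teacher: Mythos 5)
Your outline follows the same overall strategy as the paper (realize the $K_n$ as polytopes, define the $\circ_i$ as inclusions of codimension-one faces, push the structure through $C^{cell}_{-*}$, identify the resulting graded operad as free on the top cells, and defer the orientation bookkeeping), and your degree count and the derivation argument reducing the comparison of differentials to the top cell are both correct. The gap is exactly at the step you call ``the structural property on which everything rests.'' For an arbitrary convex realization of the associahedron, the face indexed by a tree $t$ is only \emph{combinatorially} isomorphic to $\prod_v K_{|v|}$; it is not canonically identified with the product of the polytopes you fixed. Concretely, in the Loday realization with unit weights the facet $H_{i_1,i_2,i_3}\cap K_{\mathbf{1}_n}$ is the affine image under $\theta$ of $K_{\overline{\omega}}\times K_{\widetilde{\omega}}$ for the \emph{weighted} realizations $\overline{\omega}=(1,\dots,1,i_2,1,\dots,1)$ and $\widetilde{\omega}=\mathbf{1}_{i_2}$, not of $K_{\mathbf{1}_{i_1+1+i_3}}\times K_{\mathbf{1}_{i_2}}$. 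This is precisely why the construction of~\cite{masuda-diagonal-assoc}, recalled in subsection~\ref{alg:sss:real-a-la-loday}, introduces weighted Loday realizations and the explicit coordinate-insertion maps $\theta$: these are affine, cellular, and strictly associative because they are literal insertions of coordinates, and the weights are what make the facet an honest polytopal image of a product.

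Without such an explicit choice, your plan requires selecting, for every face of every $K_n$, a polytopal homeomorphism from the corresponding product, and then verifying that all these choices are coherent so that the two iterated compositions agree \emph{as maps of topological spaces}, not merely at the level of which face they land in. ``Associativity of grafting of planar trees'' only gives the latter; the point-set coherence is the actual content of the theorem and does not come for free from fixing ``a convex realization.'' Either import the weighted realizations and the maps $\theta$ (as the paper does), or supply an argument producing coherent identifications. Once that is in place, the remainder of your argument — freeness of the graded operad on the cells indexed by corollas, the degree match $\sum_i(2-a_i)=r-n+1$, and the deferral of signs to the orientation analysis — goes through as in the paper.
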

\noindent We refer to section~\ref{alg:ss:loday-assoc} in the appendix for a detailed construction and a proof that $\Ainf (n) = C^{cell}_{-*}(K_n)$, and only list noteworthy properties of these polytopes in the following paragraphs.

As $\Ainf (n) = C^{cell}_{-*}(K_n)$, we know that $K_n$ has to have a unique cell $[K_n]$ of dimension $n-2$ whose image under $\partial_{cell}$ is the \Ainf -equation, that is such that
\[ \partial_{cell} [K_n] = \sum \pm \circ_i ([K_k] \otimes [K_h]) \ . \]
In fact, these polytopes are constructed such that the boundary of $K_n$ is exactly
\[ \partial K_n = \bigcup_{\substack{h+k = n+1 \\ 2 \leqslant h \leqslant n-1}} \bigcup_{1 \leqslant i \leqslant k} K_k \times_i K_h \ , \]
where $\times_i$ is in fact the standard $\times$ cartesian product, and such that partial compositions are then simply polytopal inclusions of $K_k \times K_h$ in the boundary of $K_{h+k-1}$.

The first three associahedra $K_2$, $K_3$ and $K_4$ are represented in figure~\ref{alg:fig:associahedra}, labeling their cells by the operations they define in $\Ainf$ when seen in $C^{cell}_{-*}(K_n)$. 

\begin{figure}[h]
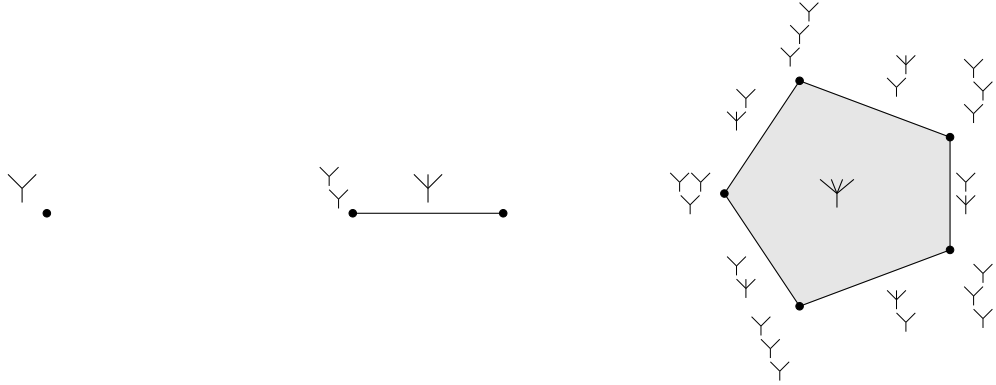
 
    \centering
    \begin{subfigure}{0.3\textwidth}
    \centering
       \associaedreun 
    \end{subfigure} ~
    \begin{subfigure}{0.3\textwidth}
    \centering
        \associaedredeux
    \end{subfigure} ~
    \begin{subfigure}{0.3\textwidth}
    \centering
        \associaedretrois
    \end{subfigure}
    \caption{The associahedra $K_2$, $K_3$ and $K_4$} \label{alg:fig:associahedra}
\end{figure}

\subsection{The multiplihedra} \label{alg:ss:multipl}

Just like the operad \Ainf, the $\mathtt{dg-\Z -mod}$-operadic bimodule \infmor\ is the image under the functor $C^{cell}_{-*}$ of a $\mathtt{Poly}$-operadic bimodule :
\begin{theorem}[\cite{masuda-diagonal-multipl}]
There exists a collection of polytopes, called the \emph{multiplihedra} and denoted $\{ J_n \}$, endowed with a structure of $(\{ K_n \} , \{ K_n \})$-operadic bimodule, i.e. with polytopal action-composition maps 
\begin{align*}
K_s \times J_{i_1} \times \cdots \times J_{i_s} &\overset{\mu}{\longrightarrow} J_{i_1 + \dots + i_s} \ , \\
J_k \times K_h &\underset{\circ_i}{\longrightarrow} J_{h+k-1} \ ,
\end{align*}
whose image under the functor $C^{cell}_{-*}$ yields the $(\Ainf , \Ainf )$-operadic bimodule \infmor . 
\end{theorem}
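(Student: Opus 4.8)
The plan is to mirror the construction already carried out for the associahedra, replacing planar trees by \emph{two-colored} planar rooted trees. First I would produce the multiplihedra $\{J_n\}$ as a family of polytopes, with $J_n$ of dimension $n-1$ and with $J_1$ a point, whose face poset is isomorphic to the poset of two-colored planar rooted trees with $n$ leaves ordered by edge-collapse: the top-dimensional cell $[J_n]$ corresponds to the single generator \arbreopmorph{0.15}, the vertices to the completely nested two-colored trees, and a face of codimension $c$ to a two-colored tree with $c$ internal edges. For the actual existence of such polytopes I would invoke the explicit realization of section~\ref{alg:ss:forcey-loday-multipl}; at this stage only their combinatorial type and the fact that each $J_n$ is a genuine polytope are needed.

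The heart of the argument is the identification of the facets of $J_n$, which fall into exactly two families matching the two sums in the differential of $f_n$ in \infmor. The facets of the first type are the products $K_s \times J_{i_1} \times \cdots \times J_{i_s}$ with $i_1 + \cdots + i_s = n$ and $s \geqslant 2$; they encode the left action of $\{K_n\}$, i.e. the terms $m_s(f_{i_1} \otimes \cdots \otimes f_{i_s})$. The facets of the second type are the products $J_k \times_i K_h$ with $h+k = n+1$, $h \geqslant 2$ and $1 \leqslant i \leqslant k$; they encode the right action, i.e. the terms $f_{i_1+1+i_3}(\ide^{\otimes i_1} \otimes m_{i_2} \otimes \ide^{\otimes i_3})$. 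I would establish the decomposition
\[ \partial J_n = \bigcup_{\substack{i_1 + \cdots + i_s = n \\ s \geqslant 2}} K_s \times J_{i_1} \times \cdots \times J_{i_s} \ \cup \bigcup_{\substack{h+k=n+1 \\ h \geqslant 2 \\ 1 \leqslant i \leqslant k}} J_k \times_i K_h \]
by direct inspection of the two-colored tree poset, exactly as $\partial K_n = \bigcup K_k \times_i K_h$ was obtained for the associahedra. The action-composition maps $\mu$ and $\circ_i$ are then \emph{defined} to be the corresponding polytopal inclusions onto these facets, and one checks the operadic bimodule axioms (unitality, associativity of $\mu$, and compatibility of $\mu$ and $\circ_i$ with each other and with the operad composition of $\{K_n\}$) as equalities of subcomplexes of the relevant $J_m$; these hold because they translate into equalities of two-colored trees at the level of the face poset.

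It then remains to apply $C^{cell}_{-*}$. By the strong monoidality recorded in section~\ref{alg:sss:op-alg-poly-to-dg}, $\{C^{cell}_{-*}(J_n)\}$ is automatically a $(\Ainf , \Ainf )$-operadic bimodule. Since $J_n$ has a unique top cell $[J_n]$, of dimension $n-1$ and hence of degree $1-n$ after reversal of the grading, this bimodule is freely generated over $(\Ainf , \Ainf )$ by the classes $f_n := [J_n]$, and the facet decomposition above shows that $\partial_{cell}[J_n]$ is the signed sum
\[ \sum \pm \, f_{i_1+1+i_3}(\ide^{\otimes i_1} \otimes m_{i_2} \otimes \ide^{\otimes i_3}) + \sum \pm \, m_s(f_{i_1} \otimes \cdots \otimes f_{i_s}) \ , \]
which is precisely the differential defining \infmor. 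This produces the desired isomorphism of $(\Ainf , \Ainf )$-operadic bimodules.

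The main obstacle is not the geometry but the handling of \textbf{signs}: one must orient every cell of every $J_n$ so that the cellular boundary reproduces the exact signs of the \infmor\ differential \emph{and} so that the maps $\mu$ and $\circ_i$ remain sign-compatible after applying $C^{cell}_{-*}$. Such a coherent choice of orientations does not follow formally from the face poset; it is supplied by the Forcey-Loday realization of section~\ref{alg:ss:forcey-loday-multipl} and is the content of Proposition~\ref{alg:prop:loday-multi-signs}. I would therefore defer the sign bookkeeping to that realization, where the orientations of the polytopes and of their cartesian-product facets are pinned down once and for all, and verify here only that those orientations are the ones induced by the bimodule inclusions.
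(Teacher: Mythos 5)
Your proposal is correct and follows essentially the same route as the paper: the theorem is established by the weighted Forcey-Loday realizations of section~\ref{alg:ss:forcey-loday-multipl} (citing~\cite{masuda-diagonal-multipl} for the $\mathtt{Poly}$-bimodule structure), the facets are identified with the products $J_k \times_i K_h$ and $K_s \times J_{i_1} \times \cdots \times J_{i_s}$ via explicit affine maps, the action-composition maps are the corresponding polytopal inclusions, and the signs are pinned down by Proposition~\ref{alg:prop:loday-multi-signs} exactly as you defer to. The only point worth keeping in mind is that the facet decomposition is not obtained by pure poset inspection but by the explicit hyperplane/parametrization computation (the maps $\theta$), which is also what guarantees the inclusions are genuine morphisms in $\mathtt{Poly}$; since you explicitly invoke that realization, this is consistent with the paper's argument.
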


We refer this time to section~\ref{alg:ss:forcey-loday-multipl} for details and conclude again by listing the main noteworthy properties of the $J_n$. Knowing that $\infmor (n) = C^{cell}_{-*}(J_n)$, we know that $J_n$ has to have a unique $n-1$-dimensional cell $[J_n]$ whose image under $\partial_{cell}$ is the \Ainf -equation for $\Ainf$-morphisms, that is such that
\[ \partial_{cell} [J_n] = \sum \pm \circ_i ([J_k] \otimes [K_h]) \ + \sum \pm \mu ( [K_s] \otimes [J_{i_1}] \otimes \dots \otimes [J_{i_s}]  )  \ . \]
In fact, the polytopes $J_n$ have the following properties 
\begin{enumerate}[label=(\roman*)]
\item the boundary of $J_n$ is exactly
\[ \partial J_n = \bigcup_{\substack{h+k=n+1 \\ h \geqslant 2}} \bigcup_{1 \leqslant i \leqslant k} J_k \times_i K_h \cup \bigcup_{\substack{i_1 + \dots + i_s = n \\ s \geqslant 2}} K_s \times J_{i_1} \times \cdots \times J_{i_s} \ , \]
where $\times_k$ is the standard cartesian product $\times$,
\item action-compositions are polytopal inclusions of faces in the boundary of $J_n$.
\end{enumerate}
The first three polytopes $J_1$, $J_2$ and $J_3$ are represented in figure~\ref{alg:fig:multiplihedra}, labeling their cells by the operations they define in $\infmor$.

\begin{figure}[h]
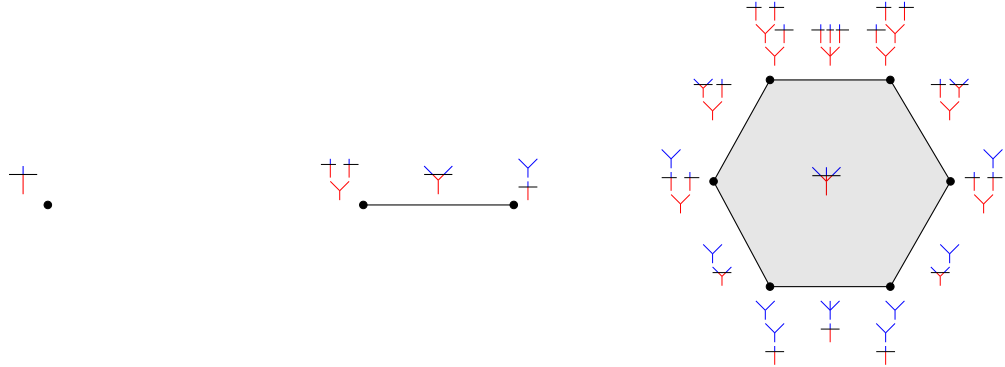
 
    \centering
    \begin{subfigure}{0.3\textwidth}
    \centering
       \multiplaedreun 
    \end{subfigure} ~
    \begin{subfigure}{0.3\textwidth}
    \centering
        \multiplaedredeux
    \end{subfigure} ~
    \begin{subfigure}{0.3\textwidth}
    \centering
        \multiplaedretrois
    \end{subfigure}
    \caption{The multiplihedra $J_1$, $J_2$ and $J_3$} \label{alg:fig:multiplihedra}
\end{figure}

\section{Moduli spaces of metric trees} \label{alg:s:mod-spac}

The associahedra and the multiplihedra are the polytopes governing the structures of \Ainf -algebras and \Ainf -morphisms between them. We show in this section that these polytopes can in fact be realized as geometric moduli spaces : the associahedra are the compactified moduli spaces of stable metric ribbon trees $\overline{\mathcal{T}}_n$, while the multiplihedra are the compactified moduli spaces of stable two-colored metric ribbon trees $\overline{\mathcal{CT}}_n$.

These moduli spaces will come with two cell decompositions : their \Ainf -cell decomposition, corresponding to the cell decomposition of the associahedra (resp. multiplihedra), and a refined decomposition, called the $\Omega B As$-cell decomposition. This second cell decomposition recovers the operad $\Omega B As$ in the case of $\overline{\mathcal{T}}_n$, and an $(\Omega B As , \Omega B As)$-operadic bimodule denoted $\Omega B As-\mathrm{Morph}$ in the case of $\overline{\mathcal{CT}}_n$. They are respectively related to the operad $\Ainf$ and the operadic bimodule \infmor\ by a morphism of operads $\Ainf \rightarrow \ombas$ and a morphism of operadic bimodules $\infmor \rightarrow \Omega B As-\mathrm{Morph}$ (Propositions~\ref{alg:prop:markl-shnider-un} and \ref{alg:prop:markl-shnider-deux}).

\subsection{The associahedra and metric ribbon trees} \label{alg:ss:assoc-metr-tree}

We refer to section~2 of~\cite{mau-woodward} and section~7 of~\cite{abouzaid-plumbings} for the moduli space viewpoint on the associahedra.

\subsubsection{Definitions} \label{alg:sss:def-metr-ribbon-tree}

We begin by giving the definitions of the trees we will need in the rest of the section. The best way to understand them is with the examples depicted in figure~\ref{alg:fig:metric-ribbon-tree}.
\begin{definition}
\begin{enumerate}[label=(\roman*)]
\item A \emph{(rooted) ribbon tree}, is the data of a tree together with a cyclic ordering on the edges at each vertex of the tree and a distinguished vertex adjacent to an external edge called the \emph{root}. This external edge is then called the \emph{outgoing edge}, while all the other external edges are called the \emph{incoming edges}. For a ribbon tree $t$, we will write $E(t)$ for the set of its internal edges, $\overline{E}(t)$ for the set of all its edges, and $e(t)$ for its number of internal edges.
\item A \emph{metric ribbon tree} is the data of a ribbon tree, together with a length $l_e \in ]0,+\infty[$ for each of its internal edges $e$. The external edges are thought as having length equal to $+ \infty$.
\item A ribbon tree is called \emph{stable} if all its inner vertices are at least trivalent. It is called \emph{binary} if all its inner vertices are trivalent. We denote $SRT_n$ the set of all stable ribbon trees, and $BRT_n$ the set of all binary ribbon trees. Note in particular that for a binary tree $t \in BRT_n$ we have that $e(t)=n-2$.
\end{enumerate}  
\end{definition}

\begin{figure}[h] 
    \centering
    \begin{subfigure}[b]{0.2\textwidth}
    	\centering
        \exampleribbontree
        \caption*{~\\A ribbon tree}
    \end{subfigure}
    \hspace{5pt}
    \begin{subfigure}[b]{0.2\textwidth}
    	\centering
        \examplemetricribbontree
        \caption*{~\\A metric ribbon tree}
    \end{subfigure}    
    \hspace{5pt}    
    \begin{subfigure}[b]{0.2\textwidth}
    	\captionsetup{justification=centering}
    	\centering
        \examplestablemetricribbontree
        \caption*{A stable metric \\ ribbon tree}
    \end{subfigure}
    \hspace{5pt}
    \begin{subfigure}[b]{0.2\textwidth}
    	\captionsetup{justification=centering}
    	\centering
        \examplebinarymetricribbontree
        \caption*{A binary metric \\ribbon tree}
    \end{subfigure} 
    \caption{} \label{alg:fig:metric-ribbon-tree}
\end{figure}

\subsubsection{Moduli spaces of stable metric ribbon trees} \label{alg:sss:mod-spac-ribbon-tree}

\begin{definition}
Define $\mathcal{T}_n$ to be \emph{moduli space of stable metric ribbon trees with $n$ incoming edges}. For each stable ribbon tree type $t$, we define moreover $\mathcal{T}_n(t) \subset \mathcal{T}_n$ to be the moduli space $$\mathcal{T}_n(t) := \{ \text{stable metric ribbon trees of type $t$} \} \ .$$
\end{definition}
We then have that 
\[ \mathcal{T}_n = \bigcup_{t \in SRT_n} \mathcal{T}_n(t) \ . \]
Writing $e(t)$ the number of internal edges for a ribbon tree of type $t$, each $\mathcal{T}_n(t)$ is naturally topologized as $]0,+\infty[^{e(t)}$, and they form a stratification of $\mathcal{T}_n$. This is illustrated in figures~\ref{alg:fig:compactification-stratum-T}~and~\ref{alg:fig:compact-mod-space-ombas}.

Interpreting a length in $]0,+\infty[^{e(t)}$ which goes towards 0 as the contraction of the corresponding edge of $t$, the strata $\mathcal{T}_n(t)$ can in fact be consistently glued together. With this observation, one can prove that the space $\mathcal{T}_n$ is in fact itself homeomorphic to $\R^{n-2}$. Allowing lengths of internal edges to go to $+ \infty$, this moduli space can be compactified into a $(n-2)$-dimensional CW-complex $\overline{\mathcal{T}}_n$, where $\mathcal{T}_n$ is seen as its unique $(n-2)$-dimensional stratum. The codimension 1 stratum of this CW-complex is given by
\[ \bigcup_{\substack{h+k = n+1 \\ 2 \leqslant h \leqslant n-1}} \bigcup_{1 \leqslant i \leqslant k} \mathcal{T}_{k} \times_i \mathcal{T}_{h} \ , \]
where $\times_i$ is the standard cartesian product $\times$, and the $i$ means that the outgoing edge of a tree in $\mathcal{T}_{h}$ connects to the $i$-th incoming edge of a tree in $\mathcal{T}_{k}$. It corresponds to metric trees with one internal edge of infinite length. More generally, the codimension $m$ stratum is given by metric trees with $m$ internal edges of infinite lengths. This cell decomposition of $\overline{\mathcal{T}}_n$ will be called its \emph{\Ainf -cell decomposition}.

\begin{theorem}
The moduli space $\overline{\mathcal{T}}_n$ endowed with its \Ainf -cell decomposition is isomorphic as a CW-complex to the associahedron $K_n$.
\end{theorem}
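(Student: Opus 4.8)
The plan is to prove this by induction on $n$, matching the two CW-complexes cell by cell through their common combinatorial skeleton: the poset of stable planar rooted trees with $n$ leaves. First I would make precise the indexing of the \Ainf -cells of $\overline{\mathcal{T}}_n$. Cutting a metric ribbon tree along its $m$ internal edges of infinite length produces $m+1$ finite-length pieces, and I would record the combinatorial type of this breaking as a stable ribbon tree $\tau$ with $n$ incoming edges and $m$ internal edges, whose vertices $v$ carry arities $|v|$. The corresponding open \Ainf -cell is then the product $\prod_{v} \mathcal{T}_{|v|}$, of dimension $\sum_{v} (|v|-2) = (n-2)-m$ (using that a tree broken at $m$ edges has $m+1$ pieces). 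This exhibits a bijection between the \Ainf -cells of $\overline{\mathcal{T}}_n$ and the stable planar rooted trees with $n$ leaves, and one checks that the closure relation is exactly edge-expansion of $\tau$ (a finite length tending to $+\infty$), so that the face poset of $\overline{\mathcal{T}}_n$ coincides with the face poset of $K_n$ recalled in Section~\ref{alg:ss:assoc}.

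Second, I would set up the induction. The base cases $n=2,3$ are checked by hand: $\overline{\mathcal{T}}_2$ is a point equal to $K_2$, and $\overline{\mathcal{T}}_3$ is a segment equal to $K_3$. For the inductive step, the codimension-$1$ strata give
\[ \partial \overline{\mathcal{T}}_n = \bigcup_{\substack{h+k=n+1 \\ 2 \leqslant h \leqslant n-1}} \bigcup_{1 \leqslant i \leqslant k} \overline{\mathcal{T}}_k \times_i \overline{\mathcal{T}}_h \ , \]
the gluing being prescribed by the operadic grafting maps, which are genuine cartesian products. The same recursive description holds for $\partial K_n$. By the induction hypothesis we have cellular homeomorphisms $\overline{\mathcal{T}}_k \cong K_k$ for all $k<n$, and since both boundary decompositions are assembled from these strictly lower pieces along the same product gluing maps and the same poset, these homeomorphisms patch to a cellular homeomorphism $\partial \overline{\mathcal{T}}_n \cong \partial K_n$. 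The compatibility to verify here is that the grafting inclusions commute with the identifications, which holds because grafting is the cartesian product on both sides and the poset isomorphism of the first step is grafting-equivariant.

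Finally, I would extend this boundary homeomorphism across the unique top cell. Here I would use that $\mathcal{T}_n$ is homeomorphic to $\R^{n-2}$, and that the length-at-infinity compactification turns $\overline{\mathcal{T}}_n$ into a compact manifold with corners whose underlying space is a closed $(n-2)$-ball with boundary sphere $\partial \overline{\mathcal{T}}_n$, as established in Section~\ref{alg:ss:assoc-metr-tree} and the cited work of Mau--Woodward and Abouzaid. Since $K_n$ is likewise a closed $(n-2)$-ball, a homeomorphism between their boundary spheres extends to a homeomorphism of the balls (for instance by coning from an interior point), and this extension is cellular since it carries the single top cell onto the single top cell. This produces the desired CW-isomorphism $\overline{\mathcal{T}}_n \cong K_n$.

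I expect the main obstacle to be this last step: showing rigorously that $\overline{\mathcal{T}}_n$ is genuinely a closed ball, equivalently that the compactification glues the strata $\mathcal{T}_n(t)$ into a manifold with corners rather than a space with pathological local structure. Controlling this requires the explicit gluing charts for \emph{degenerating} trees (a finite length tending to $0$) and for \emph{breaking} trees (a finite length tending to $+\infty$), which promote the poset match of the first step to an actual homeomorphism; I would import these charts from the manifold-with-corners structure recalled in Section~\ref{alg:ss:assoc-metr-tree}. A secondary point demanding care, should one want the isomorphism to respect the operad structure rather than being a bare CW-isomorphism, is the compatibility of characteristic maps and orientations; but for the statement as phrased the coning extension is enough.
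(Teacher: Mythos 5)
Your outline is sound, and it is worth saying up front that the paper itself does not prove this statement: it records it as an observation going back to Boardman--Vogt and illustrates it with the pictures of $\overline{\mathcal{T}}_3$ and $\overline{\mathcal{T}}_4$. So your induction is not a variant of the paper's argument but a genuine supply of one, and it is the standard one: match the face posets (cells of $\overline{\mathcal{T}}_n$ indexed by broken trees, i.e.\ by the planar trees indexing the faces of $K_n$, with the correct dimension count $(n-2)-m$), identify the boundaries inductively via the operadic product decomposition, and fill in the top cell by an Alexander-trick extension between two $(n-2)$-balls. Two points deserve to be made explicit. First, for the boundary facets $\overline{\mathcal{T}}_k\times_i\overline{\mathcal{T}}_h$ to patch along their codimension-$2$ overlaps, the induction hypothesis must be strengthened from ``there is a cellular homeomorphism $\overline{\mathcal{T}}_k\cong K_k$'' to ``there is one compatible with all face inclusions $\overline{\mathcal{T}}_{k'}\times\overline{\mathcal{T}}_{h'}\hookrightarrow\partial\overline{\mathcal{T}}_k$,'' i.e.\ the family of homeomorphisms is a map of operads in $\mathtt{CW}$ up to arity $n-1$; you gesture at this but it has to be carried as part of the statement being proved.

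Second, and more seriously, Section~\ref{alg:ss:assoc-metr-tree} does \emph{not} establish that $\overline{\mathcal{T}}_n$ is a closed ball --- that assertion is essentially equivalent to the theorem, so invoking it as ``established'' is circular as written. What the cited sources do give is the manifold-with-corners structure of the compactification (the gluing charts for collapsing and breaking edges) together with $\mathcal{T}_n\cong\R^{n-2}$. From these your conclusion can be recovered non-circularly: by the inductive step $\partial\overline{\mathcal{T}}_n\cong\partial K_n\cong S^{n-3}$ is a bicollared sphere, the interior is $\R^{n-2}$, and a compact manifold with boundary a sphere and Euclidean interior is a closed ball by a collar argument plus Brown's generalized Schoenflies theorem; only then does the coning extension apply. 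Alternatively one can avoid the ball recognition altogether by writing an explicit homeomorphism onto the Loday realization of $K_n$ in terms of the edge lengths, which is closer in spirit to what the paper does later in Section~\ref{alg:ss:loday-assoc}. With either repair the argument is complete.
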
 
\noindent This was first noticed in section 1.4. of Boardman-Vogt~\cite{boardman-vogt}. See two examples on figure~\ref{alg:fig:compact-mod-space-ombas}.

\subsubsection{The second cell decomposition of $\overline{\mathcal{T}}_n$} \label{alg:sss:second-cell-ribbon}

In fact the previous compactification can be obtained by first compactifying each cell $\mathcal{T}_n(t)$ individually and then gluing consistently all compactifications together. For $t \in RT_n$, the stratum $\mathcal{T}_n(t)$ is homeomorphic to $]0,+\infty[^{e(t)}$ and its compactification in $\overline{\mathcal{T}}_n$ is homeomorphic to $[0,+\infty]^{e(t)}$. A length equal to 0 simply corresponds to collapsing one edge of $t$ and a length equal to $+\infty$ is interpreted as breaking this edge. This is illustrated in the instance of a cell of $\mathcal{T}_4(t)$ in figure~\ref{alg:fig:compactification-stratum-T}.

\begin{figure}[h]
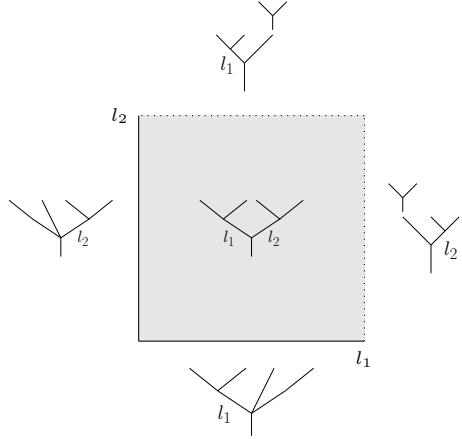
 
\centering
\examplestratum
\caption{Compactification of a stratum of $\mathcal{T}_4$} \label{alg:fig:compactification-stratum-T}
\end{figure}

\begin{definition} A \emph{broken ribbon tree} is a ribbon tree some of whose internal edges may be broken. Equivalently, it is the datum of a finite collection of (unbroken) ribbon trees together with a way of arranging this collection into a new tree (with broken edges). A broken ribbon tree is said to be \emph{stable} if every unbroken ribbon tree forming it is stable.
\end{definition}

The viewpoint introduced in the previous paragraph yields a new cell decomposition of $\overline{\mathcal{T}}_n$, an example of which is given in figure~\ref{alg:fig:compact-mod-space-ombas}.  Its cells are indexed by broken stable ribbon trees, a broken stable ribbon tree with $i$ finite internal edges labeling an $i$-dimensional cell.

\begin{figure}[h]
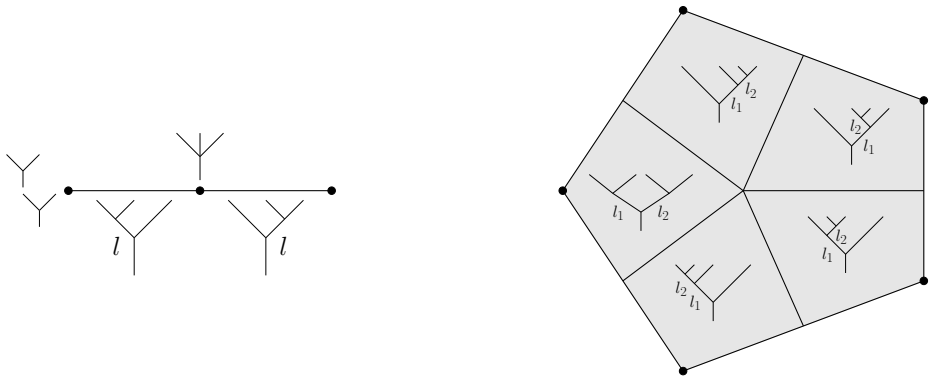
 
    \begin{subfigure}[b]{0.4\textwidth}
    		\centering
        \Ttroisstrat
    \end{subfigure}
    \hspace{10pt}
    \begin{subfigure}[b]{0.4\textwidth}
    \centering
        \Tquatrestrat
    \end{subfigure}
    \caption{The compactified moduli spaces $\overline{\mathcal{T}}_3$ and $\overline{\mathcal{T}}_4$ with their cell decomposition by broken stable ribbon tree type} \label{alg:fig:compact-mod-space-ombas}
\end{figure}

\subsubsection{The operad $\Omega B As$} \label{alg:sss:op-ombas}

Endowing the $\overline{\mathcal{T}}_n$ with this new cell decomposition, the maps
\[ \overline{\mathcal{T}}_k \times \overline{\mathcal{T}}_h \underset{\circ_i}{\longrightarrow} \overline{\mathcal{T}}_{h+k-1} \]
are then cellular maps, and hence form a new operad in $\mathtt{CW}$. Taking its image under the functor $C_{-*}^{cell}$ yields an operad in $\mathtt{dg-\Z -mod}$ : \emph{the operad \ombas}. We refer to section~\ref{alg:ss:signs-ombas} for a complete description of this operad and its sign conventions. 

\begin{definition}
The \emph{operad \ombas} is the quasi-free operad generated by the set of stable ribbon trees, where a stable ribbon tree $t$ has degree $|t| := -e(t)$. Its differential on a stable ribbon tree $t$ is given by the signed sum of all stable ribbon trees obtained from $t$ by breaking or collapsing exactly one of its internal edges.
\end{definition}
In other words, it is the quasi-free operad
\[ \ombas := \mathcal{F} (\premiertermecobarbarA , \premiertermecobarbarD , \premiertermecobarbarB , \premiertermecobarbarC , \cdots , SRT_n, \cdots ) \]
where for instance
\begin{align*}
| \examplediffcobarbarA | &= -2 \ , \\
\partial ( \examplediffcobarbarA ) &= \pm \examplediffcobarbarB \pm \examplediffcobarbarC \pm \examplediffcobarbarD \pm \examplediffcobarbarE \ . 
\end{align*}  

As the choice of notation \ombas\ suggests, this $\mathtt{dg-\Z -mod}$-operad is in fact the bar-cobar construction of the operad $As$, usually denoted $\Omega B As$. To put it shortly, the classical cobar-bar adjunction for standard algebras and coalgebras
\[ \Omega : \mathtt{conilpotent \ dg-coalgebras} \leftrightharpoons \mathtt{augmented \ dg-algebras} : B \ , \]
admits a counterpart in the realm of operads and cooperads 
\[ \Omega : \mathtt{coaugmented \ dg-cooperads} \leftrightharpoons \mathtt{augmented \ dg-operads} : B \ , \]
and the previously obtained operad is exactly equal to $\Omega B As$.
We refer the curious reader to section 6.5 in Loday-Vallette~\cite{loday-vallette-algebraic-operads}, for more details on that matter. 

\subsubsection{From the operad \Ainf\ to the operad $\Omega B As$} \label{alg:sss:op-ainf-to-ombas}

The $\mathtt{dg-\Z -mod}$-operads \Ainf\ and $\Omega B As$ are in fact related by the following proposition : 
\begin{proposition} \label{alg:prop:markl-shnider-un}
There exists a morphism of operads $\Ainf \rightarrow \Omega B As$ given on the generating operations of \Ainf\ by 
\[ m_n \longmapsto \sum_{t \in BRT_n} \pm m_t \ .  \]
\end{proposition}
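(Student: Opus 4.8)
We want a morphism of operads $\Ainf \to \ombas$. Both are quasi-free dg-operads, and a morphism out of a quasi-free operad is determined by its values on the generators, subject only to the requirement that it commute with the differentials. So the whole content is: define $m_n \mapsto \sum_{t \in BRT_n} \pm m_t$ on generators (this freely extends to an operad map of the underlying graded operads, since $\Ainf$ is free as a graded operad), then verify compatibility with differentials. Note the degree check is immediate: $m_n$ has degree $2-n$, and each binary tree $t \in BRT_n$ has $e(t) = n-2$ internal edges, so $|m_t| = -e(t) = -(n-2) = 2-n$. Good — the map is degree-preserving.

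**The plan.** First I would recall that a morphism of quasi-free operads $\Phi : (\mathcal{F}(M), \partial_{\Ainf}) \to (\ombas, \partial_{\ombas})$ is the same datum as a morphism of graded operads on the free part, hence is freely specified by a collection of elements $\Phi(m_n) \in \ombas(n)$ of the correct degree. So I set $\Phi(m_n) := \sum_{t \in BRT_n} \epsilon_t \, m_t$ for signs $\epsilon_t \in \{\pm 1\}$ to be pinned down, and extend multiplicatively over operadic composition. The only nontrivial thing to check is the square identity
\[
\Phi(\partial_{\Ainf} m_n) = \partial_{\ombas}(\Phi(m_n)) \ .
\]
Writing out both sides and comparing term by term is the heart of the argument.

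**Unwinding both sides.** On the left, $\partial_{\Ainf} m_n = \sum \pm\, m_{i_1+1+i_3}(\ide^{\otimes i_1} \otimes m_{i_2} \otimes \ide^{\otimes i_3})$, and applying $\Phi$ replaces each $m_k$ by its sum of binary trees and composes them operadically in $\ombas$; this produces a signed sum of \emph{broken} stable ribbon trees, namely those binary trees obtained by grafting a binary tree onto another along one internal edge marked as broken (length $+\infty$). On the right, $\partial_{\ombas}(m_t)$ for a binary $t$ is the signed sum over all internal edges $e$ of $t$ of (i) the tree with $e$ collapsed, and (ii) the tree with $e$ broken. Summing over all $t \in BRT_n$, the collapse terms should cancel in pairs: collapsing an edge of a binary tree yields a tree with one trivalent vertex replaced by a four-valent vertex, and each such non-binary tree arises from exactly two binary trees (the two ways of resolving that vertex into a pair of trivalent vertices), so with the right signs these contributions annihilate each other. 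The surviving break terms are precisely the broken binary trees appearing on the left-hand side, and the matching is a bijection. The core of the proof is therefore a two-part combinatorial check: the internal collapse terms cancel among themselves, and the break terms biject with the image of the $\Ainf$-differential.

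**The main obstacle.** The genuinely delicate point is not the combinatorial bijection but the \emph{signs}: I must choose the signs $\epsilon_t$ so that simultaneously the collapse terms cancel and the break terms match the (already fixed) $\Ainf$-signs. Since the paper has deferred all $\Ainf$-signs to section~\ref{alg:ss:signs-ainf-bar} and all $\ombas$-signs to section~\ref{alg:ss:signs-ombas}, the clean way to organize this is to use the geometric realization: both operads arise via $C^{cell}_{-*}$ from CW-operads ($\overline{\mathcal{T}}_n$ with its two cell decompositions, the $\ombas$-decomposition refining the $\Ainf$-one), and the morphism $\Phi$ is induced by the refinement of cell decompositions — the top cell $[K_n]$ of the associahedron, expressed in the finer $\ombas$-basis, is exactly $\sum_{t \in BRT_n} \pm [m_t]$ since the binary trees index the top-dimensional cells of the finer decomposition. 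Because both differentials are the single cellular boundary operator $\partial_{cell}$ on the \emph{same} space $\overline{\mathcal{T}}_n$, merely read in two different bases, the identity $\Phi \circ \partial_{\Ainf} = \partial_{\ombas} \circ \Phi$ becomes automatic: it is the statement that expressing $\partial_{cell}[K_n]$ in the finer basis commutes with the basis-change map. I would therefore prove compatibility by invoking that $\Phi$ is the change-of-basis induced by the two cell decompositions of $\overline{\mathcal{T}}_n$ and that $\partial_{cell}$ is intrinsic; the sign bookkeeping is then absorbed into the orientation conventions fixed in the two dedicated sign sections, rather than carried out by hand here.
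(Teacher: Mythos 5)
Your final argument is essentially the paper's: the morphism is the image under $C^{cell}_{-*}$ of the identity map $(\overline{\mathcal{T}}_n)_{\Ainf} \to (\overline{\mathcal{T}}_n)_{\ombas}$ refining the cell decomposition, so compatibility with the differentials is automatic because both are the same cellular boundary read in two bases, and the top $\Ainf$-cell decomposes into the top $\ombas$-cells indexed by binary trees. The only thing you defer that the paper actually carries out is the identification of the signs: the paper shows, via a determinant computation on the Loday realization for the right-leaning comb and a descent down the Tamari poset, that the restriction of the orientation of $K_n$ to the cells of the dual subdivision is exactly the canonical orientation $\omega_{can}$ on each binary tree, so that the formula reads $m_n \mapsto \sum_{t \in BRT_n} (t,\omega_{can})$ with no residual ambiguity.
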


This morphism stems from the image under the functor $C_{-*}^{cell}$ of the identity map 
$\ide : (\overline{\mathcal{T}}_n)_{\Ainf} \rightarrow (\overline{\mathcal{T}}_n)_{\Omega B As}$ refining the cell decomposition on $\overline{\mathcal{T}}_n$. The formula on $m_n$ then simply corresponds to associating to the $n-2$-dimensional cell of $\overline{\mathcal{T}}_n$ with the \Ainf -cell decomposition, the signed sum of all $n-2$-dimensional cells of $\overline{\mathcal{T}}_n$ with the $\Omega B As$-cell decomposition.

This geometric construction of the morphism $\Ainf \rightarrow \ombas$ is an adaptation of the algebraic construction by Markl and Shnider in~\cite{markl-assoc} and is detailed in subsection~\ref{alg:sss:morph-of-op}. Proposition~\ref{alg:prop:markl-shnider-un} dates in fact back to~\cite{getzler-jones-operads}, and is built in the theory of Koszul duality, as explained in sections 7 and 9 of~\cite{loday-vallette-algebraic-operads}. 
We moreover point out that the morphism $\Ainf \rightarrow \ombas$ will be crucial in the rest of this paper. It implies indeed that in order to construct a structure of \Ainf -algebra on a cochain complex, it is enough to endow it with a structure of $\Omega B As$-algebra.

\subsection{The multiplihedra and two-colored metric ribbon trees} \label{alg:ss:multipl-two-col-metr-tree}

We have seen in the previous section that the polytopes $K_n$ can be realized as the compactified moduli spaces of stable metric ribbon trees. So can the polytopes $J_n$ : they are the compactified moduli spaces of stable two-colored metric ribbon trees. 

\subsubsection{Two-colored metric ribbon trees} \label{alg:sss:def-gauged-tree}

\begin{definition}
A \emph{stable two-colored metric ribbon tree} or \emph{stable gauged metric ribbon tree} is defined to be a stable metric ribbon tree together with a length $\lambda \in \R$. 
This length is to be thought of as a gauge drawn over the metric tree, at distance $\lambda$ from its root, where the positive direction is pointing down. 
\end{definition}

The gauge divides the tree into two parts, each of which we think of as being colored in a different color. See an instance on figure~\ref{alg:fig:example-stable-two-col}.
This definition, despite being visual, will prove difficult to manipulate when trying to compactify moduli spaces of stable two-colored metric ribbon trees. We thus proceed to give an equivalent definition, which will provide a natural way of compactifying these moduli spaces. The equivalence between the two definitions is depicted on an example in figure~\ref{alg:fig:example-stable-two-col}.

\begin{definition}
\begin{enumerate}[label=(\roman*)]
\item \emph{A two-colored ribbon tree} is defined to be a ribbon tree together with a distinguished subset of vertices $E_{col}(T)$ called the \emph{colored vertices}. This set is such that, either there is exactly one colored vertex in every non-self crossing path from an incoming edge to the root and none in the path from the outgoing edge to the root, or there is no colored vertex in any non-self crossing path from an incoming edge to the root and exactly one in the path from the outgoing edge to the root. These colored vertices are to be thought as the intersection points of the gauge with the ribbon tree.
\item A two-colored ribbon tree is called \emph{stable} if all its non-colored vertices are at least trivalent. We denote $SCRT_n$ the set of all stable two-colored ribbon trees, and $CBRT_n$ the set of all two-colored binary ribbon trees whose gauge does not cross any vertex of the underlying binary ribbon tree.
\item A \emph{two-colored metric ribbon tree} is the data of a length for all internal edges $l_e \in ]0 , +\infty [$, such that the lengths of all non self-crossing paths from a colored vertex to the root are all equal.
\end{enumerate}
\end{definition}

\begin{figure}[h!]
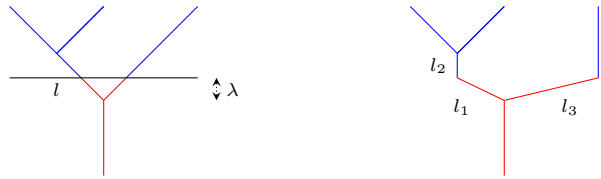
 
    \centering
    \begin{subfigure}[b]{0.2\textwidth}
        \centering 
        \exampletcribbontreeun
    \end{subfigure}
    \hspace{50pt}
    \begin{subfigure}[b]{0.2\textwidth}
        \exampletcribbontreedeux
    \end{subfigure}
    \caption{An example of a stable two-colored metric ribbon tree with the two definitions : here $l_1=l_3=-\lambda$ and $l=l_1 + l_2$} \label{alg:fig:example-stable-two-col}
\end{figure}

These two definitions of two-colored metric ribbon trees are easily seen to be equivalent, by viewing the colored vertices as the intersection points between the gauge and the edges. In the rest of the paper, the notations $t_c$ and $t_g$ will both stand for a two-colored stable ribbon tree, seen respectively from the colored vertices and from the gauged viewpoint. The symbol $t$ will then denote the underlying stable ribbon tree.

\subsubsection{Moduli spaces of stable two-colored metric ribbon trees} \label{alg:sss:mod-spac-two-col-tree}

The results presented in this subsection can be found in section 7 of Mau-Woodward~\cite{mau-woodward}, where they are formulated in the two-colored viewpoint.

\begin{definition}
For $n \geqslant 2$, we define $\mathcal{CT}_n$ to be the \emph{moduli space of stable two-colored metric ribbon trees}. It has a cell decomposition by stable two-colored ribbon tree type,
\[ \mathcal{CT}_n = \bigcup_{t_c \in SCRT_n} \mathcal{CT}_n(t_c) \ . \]
We also denote $\mathcal{CT}_1 := \{ \arbreopunmorph \}$ the space whose only element is the unique two-colored ribbon tree of arity~1.
\end{definition}

The space $\mathcal{CT}_n$ is homeomorphic to $\R^{n-1}$ : $\mathcal{T}_n$ is homeomorphic to $\R^{n-2}$ and, using the gauge description, the datum of a gauge adds a factor $\R$. Allowing again internal edges of metric trees to go to $+ \infty$ by using the second definition for two-colored metric ribbon trees, this moduli space $\mathcal{CT}_n$ can be compactified into a $(n-1)$-dimensional CW-complex $\overline{\mathcal{CT}}_n$. It has one $n-1$ dimensional stratum given by $\mathcal{CT}_n$. Its codimension 1 stratum is given by
\[ \bigcup_{i_1+\dots+i_s=n} \mathcal{T}_s \times \mathcal{CT}_{i_1} \times \dots \times \mathcal{CT}_{i_s} \cup \bigcup_{i_1+i_2+i_3=n} \mathcal{CT}_{i_1+1+i_3} \times \mathcal{T}_{i_2} \ . \]
This cell decomposition of $\overline{\mathcal{CT}}_n$ will be called its \emph{\Ainf -cell decomposition}.
Two sequences of stable two-colored metric ribbon trees converging in the compactification $\overline{\mathcal{CT}}_3$ are represented in figure~\ref{alg:fig:sequences-two-col-compact}.

\begin{figure}[h!]
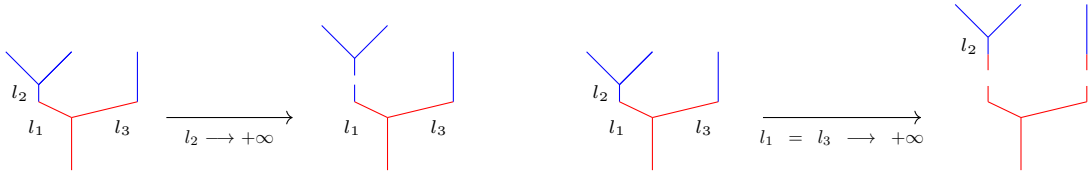
 
    \begin{subfigure}[b]{0.45\textwidth}
    	\centering
        \twocolbordun
    \end{subfigure}
    \hspace{10pt}
    \begin{subfigure}[b]{0.45\textwidth}
    	\centering
        \twocolborddeux
    \end{subfigure}
    \caption{Two sequences of stable two-colored metric ribbon trees converging in the compactification $\overline{\mathcal{CT}}_3$} \label{alg:fig:sequences-two-col-compact}
\end{figure}

\begin{theorem}[\cite{mau-woodward}]
The moduli space $\overline{\mathcal{CT}}_n$ endowed with its \Ainf -cell decomposition is isomorphic as a CW-complex to the multiplihedron $J_n$. 
\end{theorem}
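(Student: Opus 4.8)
The plan is to prove the statement by induction on $n$, running it in parallel with the already established isomorphism $\overline{\mathcal{T}}_m \cong K_m$ for the associahedra, which I take as given. The base cases are immediate: $\overline{\mathcal{CT}}_1$ is the single point $\{ \arbreopunmorph \}$, matching the $0$-dimensional polytope $J_1$, and $\overline{\mathcal{CT}}_2$ is a closed interval, matching $J_2$. For the inductive step I would assume $\overline{\mathcal{CT}}_m \cong J_m$ as CW-complexes for every $m < n$, together with $\overline{\mathcal{T}}_m \cong K_m$ for all $m$. The strategy is then the classical one for identifying a moduli-space compactification with a polytope: establish that $\overline{\mathcal{CT}}_n$ is a closed $(n-1)$-ball with a single open top cell, match its codimension $1$ boundary stratification with the facets of $J_n$ using the inductive hypotheses, and conclude by extending the boundary isomorphism radially.

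First I would verify that $\overline{\mathcal{CT}}_n$ carries the structure of a regular CW-ball of dimension $n-1$ with unique top cell $\mathcal{CT}_n \cong \R^{n-1}$. Following the description given for $\overline{\mathcal{T}}_n$, I would compactify each stratum $\mathcal{CT}_n(t_c)$ individually using the colored-vertex viewpoint: once the equal-length constraint on paths from colored vertices to the root is solved, each stratum is an open product of half-lines $]0,+\infty[$, and its closure is the corresponding closed cube obtained by adjoining the values $0$ and $+\infty$ to each internal-edge coordinate. A coordinate reaching $0$ collapses an edge, a coordinate reaching $+\infty$ breaks it, and these identifications glue the cubes consistently. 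The point to check is that the resulting gluing data is coherent and yields a manifold with corners homeomorphic to a ball, exactly as in the uncolored case but with the extra gauge coordinate accounted for.

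Next I would match the boundaries. The codimension $1$ stratum of $\overline{\mathcal{CT}}_n$ is, by the formula recalled above,
\[ \bigcup_{i_1+\dots+i_s=n} \mathcal{T}_s \times \mathcal{CT}_{i_1} \times \dots \times \mathcal{CT}_{i_s} \ \cup \ \bigcup_{i_1+i_2+i_3=n} \mathcal{CT}_{i_1+1+i_3} \times \mathcal{T}_{i_2} \ , \]
which, after applying $\overline{\mathcal{T}}_s \cong K_s$, $\overline{\mathcal{T}}_{i_2} \cong K_{i_2}$ and the inductive $\overline{\mathcal{CT}}_{i_j} \cong J_{i_j}$, is term-by-term identical to
\[ \partial J_n = \bigcup_{\substack{i_1+\dots+i_s=n \\ s \geqslant 2}} K_s \times J_{i_1} \times \cdots \times J_{i_s} \ \cup \ \bigcup_{\substack{h+k=n+1 \\ h \geqslant 2}} \bigcup_{1 \leqslant i \leqslant k} J_k \times_i K_h \ . \]
Hence $\partial \overline{\mathcal{CT}}_n$ and $\partial J_n$ are isomorphic as CW-complexes, both being $(n-2)$-spheres assembled from the same cells. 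Since each of $\overline{\mathcal{CT}}_n$ and $J_n$ is a ball with a single open top cell, this boundary isomorphism extends (by coning, i.e. radial extension from an interior point) to a CW-isomorphism $\overline{\mathcal{CT}}_n \cong J_n$, completing the induction.

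I expect the main obstacle to be the second paragraph: rigorously establishing that the compactification $\overline{\mathcal{CT}}_n$ is genuinely a manifold with corners, homeomorphic to a ball, rather than a pathological CW-complex, and that the cube-gluings are coherent at strata of higher codimension. The delicate point is specifically the interaction of the gauge parameter $\lambda$ with the breaking of edges: a codimension $2$ stratum can be approached from both families of facets above, and one must check that the two limiting procedures—breaking an edge lying above versus below the gauge, versus letting the gauge collapse onto a vertex—agree, including in the boundary cases where the gauge crosses a vertex. Controlling these identifications simultaneously is what underlies the coherence of the whole cell structure, and is where I would concentrate the analytic effort; the remaining combinatorial matching and the radial extension are then formal.
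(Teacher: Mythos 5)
The paper does not reprove this statement: it is quoted from Mau--Woodward, whose argument proceeds by exhibiting explicit charts (equivalently, by the explicit polytopal realizations $J_\omega$ recalled in the paper), not by the abstract boundary-matching-plus-coning induction you propose. Your overall scheme is the standard one and the combinatorial matching of facets in your third paragraph is correct, but as written the proof has a genuine gap concentrated exactly where you say the effort should go, and one of your supporting claims is false.

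First, the assertion that each stratum $\mathcal{CT}_n(t_c)$, after solving the equal-length constraints, is a product of half-lines whose closure is a cube is contradicted by the paper itself: the strata are polyhedral cones (e.g.\ $\mathcal{CT}_4(t_g) = \{ l_1>0,\ l_2>0,\ 0<-\lambda<l_1,l_2\}$), and the paper states explicitly that the compactification of $\mathcal{CT}_n(t_g)$ \emph{does not} coincide with a cube --- this is precisely what makes the two-colored case harder than $\overline{\mathcal{T}}_n$, where the cubical structure does hold. So the gluing-of-cubes picture cannot be the mechanism by which coherence is verified. Second, and more seriously, the coning step presupposes that $\overline{\mathcal{CT}}_n$ is a closed $(n-1)$-ball whose boundary sphere is the union of the codimension~$1$ cells, with the top cell attached regularly. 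A cell-by-cell isomorphism of boundary complexes together with ``one open top cell on each side'' does not yield a CW-isomorphism unless this regularity is known; for $J_n$ it is automatic because $J_n$ is a polytope, but for $\overline{\mathcal{CT}}_n$ it is essentially the content of the theorem. Establishing it requires either local manifold-with-corners charts near every stratum (Mau--Woodward's route) or a direct homeomorphism onto a convex polytope such as the Forcey--Loday realization $J_{\mathbf{1}_n}$; your proposal names this obstacle but does not supply either argument, so the induction does not close.
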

\noindent This theorem is illustrated in figure~\ref{alg:fig:compact-mod-space-CT-ombas}.

\subsubsection{The second cell decomposition of $\overline{\mathcal{CT}}_n$} \label{alg:sss:second-cell-two-col}

As for $\overline{\mathcal{T}}_n$, the compactified moduli space $\overline{\mathcal{CT}}_n$ can be endowed with a refined cell decomposition. This subsection sums up some of the main results of section~\ref{alg:ss:mod-space-CTm}, where we provide an extensive study of the strata of this refined cell decomposition.

Let $t_g$ be a gauged stable ribbon tree. Writing again $e(t)$ for the number of internal edges of the underlying stable ribbon tree, the stratum $\mathcal{CT}_n(t_g)$ is a polyhedral cone in $\R^{e(t)+1}$. For instance, 
\[ \mathcal{CT}_4(\arbrebicoloreA) = \{ (\lambda , l_1 , l_2 ) \text{ such that } l_1 > 0 \ ; \ l_2 > 0 \ ; \ 0 < - \lambda < l_1 , l_2 \} \ . \]
Denote $j$ the number of vertices $v$ of $t$ crossed by the gauge as depicted below 
\[ \localgaugeedgeCbis \ .  \]
There is for instance one vertex intersected by the gauge in~\examplestratumCTABbis .
The stratum $\mathcal{CT}_n(t_g)$ then has dimension $e(t)+1-j$, but is not naturally isomorphic to $]0,+\infty[^{e(t)+1-j}$, in the sense that its compactification will not coincide with a $(e(t)+1-j)$-dimensional cube. 

Switching now to the colored vertices viewpoint, the polyhedral cones $\mathcal{CT}_n(t_c)$ can be compactified, by allowing lengths of internal edges to go towards 0 or $+\infty$. The compactification $\overline{\mathcal{CT}}_n$ is simply obtained by gluing the previous compactifications. See an instance of the compactification of $\mathcal{CT}_3(\arbrebicoloreB) = \{ (\lambda , l) \text{ such that } l > 0 \ ; \  - \lambda > l \}$ in figure~\ref{alg:fig:compact-stratum-CT}.

\begin{figure}
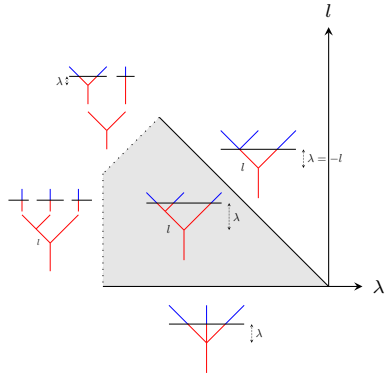
 
\centering
\examplestratumCT
\caption{Compactification of a stratum of $\mathcal{CT}_3$} \label{alg:fig:compact-stratum-CT}
\end{figure}

This yields a new cell decomposition of $\overline{\mathcal{CT}}_n$, where each cell is labeled by a broken two-colored stable ribbon tree. A two-colored stable ribbon tree $t_g$ with $e(t)$ internal edges and whose gauge crosses $j$ vertices labels a $e(t)+1-j$-dimensional cell. The dimension of a cell labeled by a broken two-colored tree can then simply be obtained by adding the dimensions associated to each of the pieces of the broken tree. The cell decompositions for $\overline{\mathcal{CT}}_2$ and $\overline{\mathcal{CT}}_3$ are represented in figure~\ref{alg:fig:compact-mod-space-CT-ombas}.

\begin{figure}[h!]
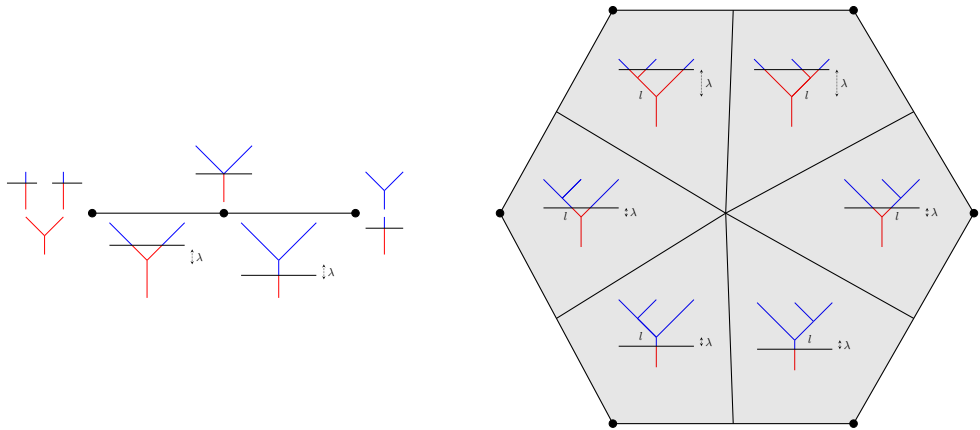
 
    \centering
    \begin{subfigure}[b]{0.3\textwidth}
    	\centering
        \CTdeuxstrat
    \end{subfigure}
    \hspace{10pt}
    \begin{subfigure}[b]{0.5\textwidth}
    	\centering
        \CTtroisstrat
    \end{subfigure}
    \caption{The compactified moduli spaces $\overline{\mathcal{CT}}_2$ and $\overline{\mathcal{CT}}_3$ with their cell decomposition by stable two-colored ribbon tree type} \label{alg:fig:compact-mod-space-CT-ombas}
\end{figure}

Endowing the moduli spaces $\overline{\mathcal{T}}_n$ with their $\Omega B As$-cell decomposition and the moduli spaces $\overline{\mathcal{CT}}_n$ with this new cell decomposition, the maps 
\begin{align*}
\overline{\mathcal{T}}_s \times \overline{\mathcal{CT}}_{i_1} \times \cdots \times \overline{\mathcal{CT}}_{i_s} &\longrightarrow \overline{\mathcal{CT}}_{i_1 + \dots + i_s} \ , \\
\overline{\mathcal{CT}}_k \times \overline{\mathcal{T}}_h &\underset{\circ_i}{\longrightarrow} \overline{\mathcal{CT}}_{h+k-1} \ ,
\end{align*}
are cellular : the \N -module $\{ \overline{\mathcal{CT}}_n \}$ is a $(\{ \overline{\mathcal{T}}_n \}, \{ \overline{\mathcal{T}}_n \} )$-operadic bimodule for this new cell decomposition. 

\subsubsection{The operadic bimodule $\Omega B As - \mathrm{Morph}$} \label{alg:sss:op-bimod-ombas}

The functor $C^{cell}_{-*}$ sends the previous operadic bimodule in $\mathtt{CW}$ to an $(\Omega B As , \Omega B As)$-operadic bimodule in $\mathtt{dg-\Z -mod}$, that we will denote $\Omega B As - \mathrm{Morph}$. We refer to section~\ref{alg:ss:op-bimod-ombasmorph} for a complete description of $\Omega B As - \mathrm{Morph}$ and explicit sign computations. 

\begin{definition} \label{alg:def:ombas-morph}
The operadic bimodule \ombasmor\ is the quasi-free $(\Omega B As , \Omega B As)$-operadic bimodule generated by the set of two-colored stable ribbon trees. A two-colored stable ribbon tree $t_g$ with $e(t)$ internal edges and whose gauge crosses $j$ vertices has degree $|t_g| := j - e(t) -1$. The differential of a two-colored stable ribbon tree $t_c$ is given by the signed sum of all two-colored stable ribbon trees obtained from $t_c$ under the rule prescribed by the top dimensional strata in the boundary of $\overline{\mathcal{CT}}_n(t_c)$.
\end{definition} 

Before giving tedious written details for the differential rule, we refer the reader to figure~\ref{alg:fig:compact-stratum-CT} and to the upcoming example.
Consider the following two-colored stable ribbon tree \arbrebicoloreA . Which codimension 1 phenomena can happen ?
\begin{enumerate}[label=(\roman*)]
\item The gauge can be moved to cross exactly one vertex of \arbrebicoloreC\ : these situations are given by \arbrebicoloreD , \arbrebicoloreE\ and \arbrebicoloreF .
\item An internal edge can break above the gauge : \arbrebicoloreI\ and \arbrebicoloreJ .
\item Both internal edges can break below the gauge : \arbrebicoloreK .
\end{enumerate}
Note that unlike for $\mathcal{CT}_3(\arbrebicoloreB)$, no internal edge can collapse in this example : that would be a codimension 2 phenomenon. These two examples list all four possible codimension 1 phenomena that can happen : the gauge moves to cross exactly one additional vertex of the underlying stable ribbon tree (gauge-vertex) ; an internal edge located above the gauge or intersecting it breaks or, when the gauge is below the root, the outgoing edge breaks between the gauge and the root (above-break) ; edges (internal or incoming) that are possibly intersecting the gauge, break below it, such that there is exactly one edge breaking in each non-self crossing path from an incoming edge to the root (below-break) ; an internal edge that does not intersect the gauge collapses (int-collapse).

In other words, we constructed the quasi-free $(\Omega B As, \Omega B As)$-operadic bimodule
\[ \Omega B As - \mathrm{Morph} := \mathcal{F}^{\Omega B As, \Omega B As}( \arbreopunmorph , \arbrebicoloreL , \arbrebicoloreM , \arbrebicoloreN , \cdots , SCRT_n,\cdots) \ ,  \]
where for instance
\begin{align*}
 | \arbrebicoloreA | &= -3 \ , \\
 \partial (\arbrebicoloreA) &= \pm \arbrebicoloreD \pm \arbrebicoloreE \pm \arbrebicoloreF \pm \arbrebicoloreI \pm \arbrebicoloreJ \pm \arbrebicoloreK \ .
\end{align*}
Note that the symbol \arbreopdeuxmorph\ used here is the same as the one used for the only arity 2 generating operation of \infmor . It will however be clear from the context what \arbreopdeuxmorph\ stands for in the rest of this paper.

Consider $A$ and $B$ two \ombas -algebras, which we can see as two morphisms of operads $\ombas \rightarrow \End_A$ and $\ombas \rightarrow \End_B$. We then define an \emph{\ombas -morphism} $A \rightarrow B$ to be a morphism of $(\ombas , \ombas )$-operadic bimodules $\ombasmor \rightarrow \Hom (A , B)$. It is equivalent to a collection of operations $\mu_{t_g} : A^{\otimes n} \rightarrow B$, $t_g \in SCRT_n$, satisfying the equations prescribed by the differential on \ombasmor . Note that in order to define the category $\mathtt{\ombas -alg}$ of \ombas -algebras with \ombas -morphisms between them, it remains to define the composition of two \ombas -morphisms. This question will be explored in an upcoming article.

\subsubsection{From \infmor\ to $\Omega B As-\mathrm{Morph}$} \label{alg:sss:infmor-to-ombasmor}

The morphism of operads $\Ainf \rightarrow \Omega B As$ makes the $(\Omega B As, \Omega B As)$-operadic bimodule $\Omega B As - \mathrm{Morph}$ into an $(\Ainf, \Ainf)$-operadic bimodule. 

\begin{proposition} \label{alg:prop:markl-shnider-deux}
There exists a morphism of $(\Ainf , \Ainf )$-operadic bimodules $\infmor \rightarrow \Omega B As - \mathrm{Morph}$ given on the generating operations of \infmor\ by
\[ f_n \longmapsto \sum_{t_g \in CBRT_n} \pm f_{t_g} \ .\]
\end{proposition}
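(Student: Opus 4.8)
The plan is to mirror, at the level of operadic bimodules, the geometric construction of the morphism of operads $\Ainf \rightarrow \ombas$ of Proposition~\ref{alg:prop:markl-shnider-un}. Recall that the compactified moduli space $\overline{\mathcal{CT}}_n$ carries two cell decompositions: its \Ainf -cell decomposition, for which it realizes the multiplihedron $J_n$ and satisfies $\infmor(n) = C^{cell}_{-*}((\overline{\mathcal{CT}}_n)_{\Ainf})$, and the thinner $\ombas$-cell decomposition of subsection~\ref{alg:sss:second-cell-two-col}, for which $\ombasmor(n) = C^{cell}_{-*}((\overline{\mathcal{CT}}_n)_{\ombas})$. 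Since the $\ombas$-cell decomposition refines the \Ainf -one, the identity
\[ \ide : (\overline{\mathcal{CT}}_n)_{\Ainf} \longrightarrow (\overline{\mathcal{CT}}_n)_{\ombas} \]
is a polytopal (hence cellular) map in the sense of subsection~\ref{alg:sss:cat-poly}, exactly as in the associahedral case. I would define the desired morphism of operadic bimodules as its image $C^{cell}_{-*}(\ide)$.

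The announced formula is then a dimension count. A two-colored stable ribbon tree $t_g$ with $e(t)$ internal edges whose gauge crosses $j$ vertices labels a cell of dimension $e(t)+1-j$; since $e(t) \leqslant n-2$ with equality exactly for binary trees, and $j \geqslant 0$, the top-dimensional cells (of dimension $n-1$) are precisely those with $e(t)=n-2$ and $j=0$, i.e. the binary two-colored ribbon trees whose gauge crosses no vertex — which is exactly the set $CBRT_n$. Their degree $|t_g| = j - e(t) - 1 = 1-n$ matches that of $f_n$. Consequently $C^{cell}_{-*}(\ide)$ sends the fundamental cell $[J_n]$, which represents $f_n$, to a signed sum $\sum_{t_g \in CBRT_n} \pm f_{t_g}$ of the top cells of $(\overline{\mathcal{CT}}_n)_{\ombas}$, while lower-dimensional \Ainf -cells are carried to the corresponding signed sums of $\ombas$-cells of the same dimension.

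To upgrade $C^{cell}_{-*}(\ide)$ to a morphism of $(\Ainf,\Ainf)$-operadic bimodules, I would invoke the compatibility of the refinement $\ide$ with the bimodule structure. By subsection~\ref{alg:sss:second-cell-two-col} the action-composition maps
\[ \overline{\mathcal{T}}_s \times \overline{\mathcal{CT}}_{i_1} \times \cdots \times \overline{\mathcal{CT}}_{i_s} \longrightarrow \overline{\mathcal{CT}}_{i_1+\dots+i_s} \ , \qquad \overline{\mathcal{CT}}_k \times \overline{\mathcal{T}}_h \underset{\circ_i}{\longrightarrow} \overline{\mathcal{CT}}_{h+k-1} \]
are cellular for both decompositions and manifestly commute with the identity refinements. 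Applying the strong monoidal functor $C^{cell}_{-*}$, which is functorial and therefore turns cellular maps into chain maps and preserves these structure maps, yields a morphism of operadic bimodules equivariant for the $(\Ainf,\Ainf)$-actions, where the action on $\ombasmor$ is the one obtained by restriction along $\Ainf \rightarrow \ombas$ (subsection~\ref{alg:sss:infmor-to-ombasmor}). In particular commutation with the differentials is automatic, $\ide$ being an honest cellular map.

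The hard part will be the sign bookkeeping. Functoriality of $C^{cell}_{-*}$ guarantees abstractly that such a chain-level bimodule morphism exists, but pinning down the explicit signs $\pm$ attached to each $f_{t_g}$ — and checking they are simultaneously compatible with the differential on $\ombasmor$ of Definition~\ref{alg:def:ombas-morph} and with the two-sided $\ombas$-action — requires the orientation conventions for the cells of $\overline{\mathcal{CT}}_n$ developed in section~\ref{alg:ss:op-bimod-ombasmorph}. Concretely, each top cell of $(\overline{\mathcal{CT}}_n)_{\ombas}$ inherits an orientation from the fundamental orientation of $J_n$, and the sign of $f_{t_g}$ records the discrepancy between this induced orientation and the chosen reference orientation of the cell; verifying that these signs assemble into a genuine morphism then amounts to comparing them with the incidence signs already computed in Propositions~\ref{alg:prop:int-collapse-signs}~to~\ref{alg:prop:below-break-signs}. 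This is the computational heart of the argument, and it runs entirely parallel to — though it is more involved than — the sign check carried out for $\Ainf \rightarrow \ombas$ in subsection~\ref{alg:sss:morph-of-op}.
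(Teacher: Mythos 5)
Your proposal follows essentially the same route as the paper: the morphism is $C^{cell}_{-*}$ applied to the identity refinement $\ide : (\overline{\mathcal{CT}}_n)_{\Ainf} \rightarrow (\overline{\mathcal{CT}}_n)_{\ombas}$, the top cells are correctly identified with $CBRT_n$ by the dimension count, and compatibility with the bimodule structure and differentials comes for free from cellularity. The only thing you defer — the sign determination — is organized in the paper by showing that the orientation of the Forcey--Loday polytope $J_n$ restricts on each top cell to the \emph{canonical} orientation $\omega_{can}$, verified by a direct determinant computation on the maximal gauged binary comb and then propagated down the Tamari-like poset on $CBRT_n$ via the covering relations, so that all signs come out equal to $+1$ for that choice of reference orientations.
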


As a result, to construct an \Ainf -morphism between two \Ainf -algebras whose \Ainf -algebra structure comes from an $\Omega B As$-algebra structure, it is enough to construct an $\Omega B As$-morphism between them. As in subsection~\ref{alg:sss:op-ainf-to-ombas}, this morphism stems again from the image under the functor $C_{-*}^{cell}$ of the identity morphism on $\overline{\mathcal{CT}}_n$ refining its cell decomposition. The formula for $f_n$ is obtained by sending the $n-1$-dimensional cell of $\overline{\mathcal{CT}}_n$ appearing in the \Ainf -cell decomposition, to the signed sum of all $n-1$-dimensional cells $\overline{\mathcal{CT}}_n$ appearing in the \ombas -cell decomposition.  We refer to subsection~\ref{alg:sss:morph-op-bimod-ainf-ombasmorph} for a complete proof and the details on signs.

\subsection{Résumé} \label{alg:ss:resume}

The moduli space of stable metric ribbon trees $\mathcal{T}_n$ can be compactified by allowing lengths of internal edges to go towards $+ \infty$. This compactification comes with two cell decompositions. The first one, by considering the moduli spaces $\mathcal{T}_n$ as $(n-2)$-dimensional strata, yields a CW-complex isomorphic to the associahedron $K_n$. Its realization under the functor $C^{cell}_{-*}$ then yields the operad \Ainf . The second one is obtained by considering the stratification of $\mathcal{T}_n$ by strata labeled by stable ribbon tree types. It is sent under the functor $C^{cell}_{-*}$ to the operad $\Omega B As$. These two operads in $\mathtt{dg-\Z -mod}$ are then related by a morphism of operads $\Ainf \rightarrow \Omega B As$.

The moduli space of stable two-colored metric ribbon trees $\mathcal{CT}_n$ can be compactified by allowing lengths to go towards $+ \infty$. There are again two cell decompositions for this compactification. Considering the moduli spaces $\mathcal{CT}_n$ as $(n-1)$-dimensional strata yields a first CW-complex isomorphic to the multiplihedron $J_n$. Its image under $C^{cell}_{-*}$ is the $( \Ainf , \Ainf)$-operadic bimodule \infmor . Likewise, considering the stratification of $\mathcal{CT}_n$ by strata labeled by two-colored stable ribbon tree types, we obtain a second cell decomposition. The functor $C^{cell}_{-*}$ sends it to the $(\Omega B As , \Omega B As)$-operadic bimodule $\Omega B As - \mathrm{Morph}$. The morphism of operads $\Ainf \rightarrow \Omega B As$ makes $\Omega B As - \mathrm{Morph}$ into a $( \Ainf , \Ainf)$-operadic bimodule. It is related to \infmor\ by a morphism of operadic bimodules $\infmor \rightarrow \Omega B As - \mathrm{Morph}$.

\section{Signs and polytopes for \Ainf -algebras and \Ainf -morphisms}\label{alg:s:sign-pol-ainf}

The goal of this section is twofold : work out all the signs written as $\pm$ in the \Ainf -equations in section~\ref{alg:s:op-alg} and provide explicit realizations for the associahedra and multiplihedra as polytopes. We begin by introducing the basic Koszul sign rules to work in a graded algebraic framework, and explain how to compute signs by comparing orientations on the boundary of a manifold with boundary. We then recall two equivalent sign conventions for \Ainf -algebras and \Ainf -morphisms and show how they naturally ensue from the bar construction viewpoint. We subsequently detail explicit polytopal realizations of the associahedra and the multiplihedra, introduced in~\cite{masuda-diagonal-assoc}~and~\cite{masuda-diagonal-multipl}, and conclude by showing that these polytopes determine indeed the \Ainf -sign conventions previously defined. 

\subsection{Basic conventions for signs and orientations} \label{alg:ss:basic-conv}

\subsubsection{Koszul sign rule} \label{alg:sss:Koszul-sign}

All formulae in this section will be written using the Koszul sign rule that we briefly recall. We will work exclusively with cohomological conventions.

Given $A$ and $B$ two dg \Z -modules, the differential on $A \otimes B$ is defined as 
\[ \partial_{A \otimes B} (a \otimes b) = \partial_A a \otimes b + (-1)^{|a|} a \otimes \partial_B b \ . \]
Given $A$ and $B$ two dg \Z -modules, we consider the graded \Z -module $\mathrm{Hom}(A,B)$ whose degree $r$ component is given by all maps $A \rightarrow B$ of degree $r$. We endow it with the differential 
\[ \partial_{\mathrm{Hom}(A,B)}(f) := \partial_B \circ f - (-1)^{|f|} f \circ \partial_A =: [\partial , f] \ . \]
Given $f : A \rightarrow A'$ and $g : B \rightarrow B'$ two graded maps between dg-\Z -modules, we set
\[ (f \otimes g) (a \otimes b) = (-1)^{|g| |a|} f(a) \otimes g(b) \ . \]
Finally, given $f : A \rightarrow A'$, $f' : A' \rightarrow A''$, $g : B \rightarrow B'$ and $g' : B' \rightarrow B''$, we define
\[ (f' \otimes g') \circ (f \otimes g) = (-1)^{|g'||f|}(f' \circ f) \otimes (g' \circ g) \ . \]
We check in particular that with this sign rule, the differential on a tensor product $A_1 \otimes \cdots \otimes A_n$ is given by
\[ \partial_{A_1 \otimes \cdots \otimes A_n} = \sum_{i=1}^n \ide_{A_1} \otimes \cdots \otimes \partial_{A_i} \otimes \cdots \otimes \ide_{A_n} \ . \]

\subsubsection{Orientation of the boundary of a manifold with boundary} \label{alg:sss:or-boundary}

Let $(M,\partial M)$ be an oriented $n$-manifold with boundary. We choose to orient its boundary $\partial M$ as follows : given $x \in \partial M$, a basis $e_1,\dots,e_{n-1}$ of $T_x(\partial M)$, and an outward pointing vector $\nu \in T_xM$, the basis $e_1,\dots,e_{n-1}$ is positively oriented if and only if the basis $\nu,e_1,\dots,e_{n-1}$ is a positively oriented basis of $T_xM$. Note that in the particular case when the manifold with boundary is a half-space inside the Euclidean space $\R^{n}$, defined by an inequality
\[ \sum_{i=1}^na_ix_i \leqslant C \ , \]
the vector $(a_1,\dots,a_n)$ is outward-pointing.

We recover under this convention the classical singular and cubical differentials. Take $X$ a topological space. Given a singular simplex $\sigma : \Delta^n \rightarrow X$, its differential is classically defined as 
\[ \partial_{sing} (\sigma) := \sum_{i=0}^n (-1)^i \sigma_i \ , \]
where $\sigma_i$ stands for the restriction $[0<\cdots < \hat{i} < \cdots < n] \hookrightarrow \Delta^n \rightarrow X$. Realizing $\Delta^n$ as a polytope in $\R^n$ and orienting it with the canonical orientation of $\R^n$, we check that its boundary reads exactly as
\[ \partial \Delta^n = \bigcup_{i=0}^n (-1)^i \Delta^{n-1}_i \ , \]
where $\Delta^{n-1}_i$ is the $(n-1)$-simplex corresponding to the face $[0<\cdots < \hat{i} < \cdots < n]$. The sign $(-1)^i$ means that the orientation of $\Delta^{n-1}_i$ induced by its canonical identification with $\Delta^{n-1}$ and its orientation as the boundary of $\Delta^n$, differ by a $(-1)^i$ sign.

Similarly, given a singular cube $\sigma : I^n \rightarrow X$, its differential is
\[ \partial_{cub} \sigma := \sum_{i=1}^n (-1)^i ( \sigma_{i,0} - \sigma_{i,1}) \ , \]
where $\sigma_{i,0}$ denotes the singular cube $I^{n-1} \rightarrow X$ obtained from $\sigma$ by setting its $i$-th entry to $0$, and $\sigma_{i,1}$ is defined similarly. We check again that considering $I^{n} \subset \R^n$ as a polytope of $\R^n$, its boundary reads as 
\[ \partial I^n = \bigcup_{i=1}^n (-1)^i ( I^{n-1}_{i,0} \cup - I^{n-1}_{i,1} ) \ , \]
where $I^{n-1}_{i,0}$ is the face of $I^n$ obtained by setting the $i$-th coordinate equal to 0, and $I^{n-1}_{i,1}$ is defined likewise.

\subsubsection{Coorientations} \label{alg:sss:coorientation}

Our convention for orienting the boundary of an oriented manifold with boundary $(M , \partial M)$ can in fact be rephrased as follows : the boundary $\partial M$ is cooriented by the outward pointing vector field $\nu$.

More generally consider an oriented manifold $N$ and a submanifold $S \subset N$. A coorientation of $S$ is defined to be an orientation of the normal bundle to $S$. Given any complement bundle $\nu_S$ to $TS$ in $TN |_S$, 
\[ TN |_S = \nu_S \oplus TS \ , \]
this orientation induces in turn an orientation on $\nu_S$, the normal bundle being canonically isomorphic to $\nu_S$.
The manifold $S$ is then orientable if and only if it is coorientable. This can be proven using the first Stiefel-Whitney class for instance. Given a coorientation for $S$, \emph{the induced orientation on S} is set to be the one whose concatenation with that of $\nu_S$, in the order $(\nu_S , TS)$, gives the orientation on $TN |_S$.

\subsection{Signs for \Ainf -algebras and \Ainf -morphisms using the bar construction} \label{alg:ss:signs-ainf-bar}

There exist various conventions on signs for \Ainf -algebras and \Ainf -morphisms between them, which can seem inexplicable when met out of context. The goal of this section is twofold : to give a comprehensive account of the two sign conventions coming from the bar construction, and to state our choice of signs for the rest of the paper. The eager reader can straightaway jump to subsection~\ref{alg:sss:signs-ainf-choice}, where our choice of signs is given.

\subsubsection{\Ainf -algebras} \label{alg:sss:signs-ainf-alg}

We will first be interested in the following two sign conventions for \Ainf -algebras :
\begin{align*} 
\left[ m_1 , m_n \right] &=  - \sum_{\substack{i_1+i_2+i_3=n \\ 2 \leqslant i_2 \leqslant n-1}} (-1)^{i_1i_2 + i_3} m_{i_1+1+i_3} (\ide^{\otimes i_1} \otimes m_{i_2} \otimes \ide^{\otimes i_3} ) \ , \tag{A}  \\
\left[ m_1 , m_n \right] &= - \sum_{\substack{i_1+i_2+i_3=n \\ 2 \leqslant i_2 \leqslant n-1}} (-1)^{i_1 + i_2i_3} m_{i_1+1+i_3} (\ide^{\otimes i_1} \otimes m_{i_2} \otimes \ide^{\otimes i_3} ) \ , \tag{B} 
\end{align*}
which can we rewritten as
\begin{align*} 
\sum_{i_1+i_2+i_3=n} (-1)^{i_1i_2 + i_3} m_{i_1+1+i_3} (\ide^{\otimes i_1} \otimes m_{i_2} \otimes \ide^{\otimes i_3} ) &= 0 \ , \tag{A} \\
\sum_{i_1+i_2+i_3=n} (-1)^{i_1 + i_2i_3} m_{i_1+1+i_3} (\ide^{\otimes i_1} \otimes m_{i_2} \otimes \ide^{\otimes i_3} ) &= 0 \ . \tag{B}
\end{align*}

First, note that these two sign conventions are equivalent in the following sense : given a sequence of operations $m_n : A^{\otimes n} \rightarrow A$ satisfying equations (A), we check that the operations $m_n':=(-1)^{\binom{n}{2}}m_n$ satisfy equations (B). This sign change does not come out of the blue, and appears in the following proof that these equations come indeed from the bar construction.

Introduce the suspension and desuspension maps 
\begin{align*}
s : A &\longrightarrow sA & w : sA &\rightarrow A \\
a &\longmapsto sa & sa &\longmapsto a  \ ,
\end{align*} 
which are respectively of degree $-1$ and $+1$. We check that with the Koszul sign rule, 
\[ w^{\otimes n} \circ s^{\otimes n}  = (-1)^{\binom{n}{2}} \ide_{A^{\otimes n}} \ . \]

Then, note that a degree $2-n$ map $m_n : A^{\otimes n} \rightarrow A$ yields a degree $+1$ map $b_n := s m_n w^{\otimes n} : (sA)^{\otimes n} \rightarrow sA$.
Consider now a collection of degree $2-n$ maps $m_n : A^{\otimes n } \rightarrow A$, and the associated degree $+1$ maps $b_n : (sA)^{\otimes n} \rightarrow sA$. Denoting $D$ the unique coderivation on $\overline{T}(sA)$ associated to the $b_n$, the equation $D^2 = 0$ is then equivalent to the equations
\[ \sum_{i_1+i_2+i_3=n} b_{i_1+1+i_3} (\ide^{\otimes i_1} \otimes b_{i_2} \otimes \ide^{\otimes i_3} ) = 0 \ . \]
There are now two ways to unravel the signs from these equations.

The first way consists in simply replacing the $b_i$ by their definition. It leads to the (A) sign conventions : 
\begin{align*}
&\sum_{i_1+i_2+i_3=n} b_{i_1+1+i_3} (\ide^{\otimes i_1} \otimes b_{i_2} \otimes \ide^{\otimes i_3} )  \\ 
= &\sum_{i_1+i_2+i_3=n} sm_{i_1+1+i_3}(w^{\otimes i_1} \otimes w \otimes w^{\otimes i_3}) (\ide^{\otimes i_1} \otimes sm_{i_2}w^{\otimes i_2} \otimes \ide^{\otimes i_3} ) \\
= &\sum_{i_1+i_2+i_3=n} (-1)^{i_3}sm_{i_1+1+i_3}(w^{\otimes i_1} \otimes m_{i_2}w^{\otimes i_2} \otimes w^{\otimes i_3} ) \\
= &\sum_{i_1+i_2+i_3=n} (-1)^{i_3+i_1i_2}sm_{i_1+1+i_3}(\ide^{\otimes i_1} \otimes m_{i_2} \otimes \ide^{\otimes i_3} )(w^{\otimes i_1} \otimes w^{\otimes i_2} \otimes w^{\otimes i_3}) \\
= &s \left( \sum_{i_1+i_2+i_3=n} (-1)^{i_1i_2+i_3}m_{i_1+1+i_3}(\ide^{\otimes i_1} \otimes m_{i_2} \otimes \ide^{\otimes i_3} ) \right) w^{\otimes n} \ .
\end{align*}
The second way consists in first composing and post-composing by $w$ and $s^{\otimes n}$ and then replacing the $b_i$ by their definition. It leads to the (B) sign conventions and makes the $(-1)^{\binom{n}{2}}$ sign change appear: 
\begin{align*}
&\sum_{i_1+i_2+i_3=n} w b_{i_1+1+i_3} (\ide^{\otimes i_1} \otimes b_{i_2} \otimes \ide^{\otimes i_3} ) s^{\otimes n}  \\ 
= &\sum_{i_1+i_2+i_3=n} w b_{i_1+1+i_3} (\ide^{\otimes i_1} \otimes b_{i_2} \otimes \ide^{\otimes i_3} ) (s^{\otimes i_1} \otimes s^{\otimes i_2} \otimes s^{\otimes i_3}) \\ 
= &\sum_{i_1+i_2+i_3=n} (-1)^{i_1}w b_{i_1+1+i_3} (s^{\otimes i_1} \otimes b_{i_2} s^{\otimes i_2} \otimes s^{\otimes i_3} ) \\ 
= &\sum_{i_1+i_2+i_3=n} (-1)^{i_1}w s m_{i_1+1+i_3} w^{\otimes i_1 + 1 + i_3} (s^{\otimes i_1} \otimes sm_{i_2} w^{\otimes i_2} s^{\otimes i_2} \otimes s^{\otimes i_3} ) \\ 
= &\sum_{i_1+i_2+i_3=n} (-1)^{i_1} m_{i_1+1+i_3} w^{\otimes i_1 + 1 + i_3} (s^{\otimes i_1} \otimes (-1)^{ \binom{i_2}{2}}sm_{i_2} \otimes s^{\otimes i_3} ) \\
= &\sum_{i_1+i_2+i_3=n} (-1)^{i_1+i_2i_3} m_{i_1+1+i_3} w^{\otimes i_1 + 1 + i_3} s^{\otimes i_1+1+i_3} (\ide^{\otimes i_1} \otimes (-1)^{ \binom{i_2}{2}}m_{i_2} \otimes \ide^{\otimes i_3} ) \\
= &\sum_{i_1+i_2+i_3=n} (-1)^{i_1+i_2i_3} (-1)^{ \binom{i_1+1+i_3}{2}}m_{i_1+1+i_3} (\ide^{\otimes i_1} \otimes (-1)^{ \binom{i_2}{2}}m_{i_2} \otimes \ide^{\otimes i_3} ) \ .
\end{align*}

\subsubsection{\Ainf -morphisms} \label{alg:sss:signs-ainf-morph}

We now dwell into the two sign conventions for \Ainf -morphisms that are coming with the bar construction viewpoint. They are as follows :
\begin{align*} 
\left[ m_1 , f_n \right] =  &\sum_{\substack{i_1+i_2+i_3=n \\ i_2 \geqslant 2}} (-1)^{i_1i_2 + i_3} f_{i_1+1+i_3} (\ide^{\otimes i_1} \otimes m_{i_2} \otimes \ide^{\otimes i_3}) \tag{A} \\ &- \sum_{\substack{i_1 + \cdots + i_s = n \\ s \geqslant 2 }} (-1)^{\epsilon_A} m_s ( f_{i_1} \otimes \cdots \otimes f_{i_s}) \ ,  \\
\left[ m_1 , f_n \right] = &\sum_{\substack{i_1+i_2+i_3=n \\ i_2 \geqslant 2}} (-1)^{i_1 + i_2i_3} f_{i_1+1+i_3} (\ide^{\otimes i_1} \otimes m_{i_2} \otimes \ide^{\otimes i_3})  \tag{B} \\ &- \sum_{\substack{i_1 + \cdots + i_s = n \\ s \geqslant 2 }} (-1)^{\epsilon_B} m_s ( f_{i_1} \otimes \cdots \otimes f_{i_s}) \ ,  
\end{align*}
which can we rewritten as
\begin{align*} 
\sum_{i_1+i_2+i_3=n} (-1)^{i_1i_2 + i_3} f_{i_1+1+i_3} (\ide^{\otimes i_1} \otimes m_{i_2} \otimes \ide^{\otimes i_3}) &= \sum_{i_1 + \cdots + i_s = n} (-1)^{\epsilon_A} m_s ( f_{i_1} \otimes \cdots \otimes f_{i_s}) \ , \tag{A} \\
\sum_{i_1+i_2+i_3=n} (-1)^{i_1 + i_2i_3} f_{i_1+1+i_3} (\ide^{\otimes i_1} \otimes m_{i_2} \otimes \ide^{\otimes i_3}) &= \sum_{i_1 + \cdots + i_s = n} (-1)^{\epsilon_B} m_s ( f_{i_1} \otimes \cdots \otimes f_{i_s}) \ , \tag{B}
\end{align*}
where 
\begin{align*}
\epsilon_A = \sum_{u = 1}^s i_u \left( \sum_{ u < t \leqslant s} (1-i_t) \right) \ , && \epsilon_B = \sum_{u=1}^s (s-u) (1-i_u) \ .
\end{align*}

These two sign conventions are again equivalent : given a sequence of operations $m_n$ and $f_n$ satisfying equations (A), we check that the operations $m_n':=(-1)^{\binom{n}{2}}m_n$ and $f_n':=(-1)^{\binom{n}{2}}f_n$ satisfy equations (B). The $(-1)^{\binom{n}{2}}$ twist will again appear in the following proof, from the formula $w^{\otimes n} \circ s^{\otimes n}  = (-1)^{\binom{n}{2}} \ide_{A^{\otimes n}}$.

Consider now two dg-modules $A$ and $B$, together with a collection of degree $2-n$ maps $m_n : A^{\otimes n } \rightarrow A$ and $m_n : B^{\otimes n } \rightarrow B$ (we use the same notation for sake of readability), and a collection of degree $1-n$ maps $f_n : A^{\otimes n } \rightarrow B$. We associate again to the $m_n$ the degree $+1$ maps $b_n$, and also associate to the $f_n$ the degree 0 maps $F_n := sf_nw^{\otimes n} : (sA)^{\otimes n} \rightarrow sB$. We denote $D_A$ and $D_B$ the unique coderivations acting respectively on $\overline{T}(sA)$ and $\overline{T}(sB)$, and $F : \overline{T}(sA) \rightarrow \overline{T}(sB)$ the unique coalgebra morphism associated to the $F_n$. The equation $FD_A=D_BF$ is then equivalent to the equations
\[ \sum_{i_1+i_2+i_3=n} F_{i_1+1+i_3} (\ide^{\otimes i_1} \otimes b_{i_2} \otimes \ide^{\otimes i_3}) = \sum_{i_1 + \cdots + i_s = n} b_s ( F_{i_1} \otimes \cdots \otimes F_{i_s}) \ . \]
There are again two ways to unravel the signs from these equations, which will lead to conventions (A) and (B). The proofs proceed exactly as in subsection~\ref{alg:sss:signs-ainf-alg}.

\subsubsection{Composition of \Ainf -morphisms} \label{alg:sss:signs-ainf-comp}

Let $f_n : A^{\otimes n} \rightarrow B$ and $g_n : B^{\otimes n} \rightarrow C$ be two \Ainf -morphisms under conventions (A). The arity $n$ component of their composition $g \circ f$ is defined as
\begin{align*} 
\sum_{i_1 + \cdots + i_s = n} (-1)^{\epsilon_A} g_s ( f_{i_1} \otimes \cdots \otimes f_{i_s}) \ , \tag{A} 
\end{align*}
where $\epsilon_A$ is as previously.

Let $f_n : A^{\otimes n} \rightarrow B$ and $g_n : B^{\otimes n} \rightarrow C$ be two \Ainf -morphisms under conventions (B). The arity $n$ component of their composition $g \circ f$ is this time defined as
\begin{align*} 
\sum_{i_1 + \cdots + i_s = n} (-1)^{\epsilon_B} g_s ( f_{i_1} \otimes \cdots \otimes f_{i_s}) \ , \tag{B} 
\end{align*}
where $\epsilon_B$ is as previously.

We check that in each case, this newly defined morphism satisfies the \Ainf -equations, respectively under the sign conventions (A) and (B). This can again be proven using the bar construction and applying the previous transformations.

\subsubsection{Choice of convention in this paper} \label{alg:sss:signs-ainf-choice}

We will work in the rest of this paper under the set of conventions (B). The operations $m_n$ of an \Ainf -algebra will satisfy equations 
\[ \left[ m_1 , m_n \right] = - \sum_{\substack{i_1+i_2+i_3=n \\ 2 \leqslant i_2 \leqslant n-1}} (-1)^{i_1 + i_2i_3} m_{i_1+1+i_3} (\ide^{\otimes i_1} \otimes m_{i_2} \otimes \ide^{\otimes i_3} ) \ , \]
an \Ainf -morphism between two \Ainf -algebras will satisfy equations
\[ \resizebox{\hsize}{!}{$\displaystyle{ \left[ m_1 , f_n \right] = \sum_{\substack{i_1+i_2+i_3=n \\ i_2 \geqslant 2}} (-1)^{i_1 + i_2i_3} f_{i_1+1+i_3} (\ide^{\otimes i_1} \otimes m_{i_2} \otimes \ide^{\otimes i_3})  - \sum_{\substack{i_1 + \cdots + i_s = n \\ s \geqslant 2 }} (-1)^{\epsilon_B} m_s ( f_{i_1} \otimes \cdots \otimes f_{i_s}) \ , }$} \]
and two \Ainf -morphisms will be composed as
\[ \sum_{i_1 + \cdots + i_s = n} (-1)^{\epsilon_B} g_s ( f_{i_1} \otimes \cdots \otimes f_{i_s}) \ , \]
where $\epsilon_B = \sum_{u=1}^s (s-u) (1-i_u)$.

This choice of conventions will be accounted for in the next two sections : the signs are the ones which arise naturally from the realizations of the associahedra and the multiplihedra à la Loday. We also point out that a choice of convention for the signs on \Ainf -algebras completely determines the conventions on \Ainf -morphisms and their composition.

\subsection{Loday associahedra and signs} \label{alg:ss:loday-assoc}

\Ainf -structures were introduced for the first time in two seminal papers by Stasheff on homotopy associative H-spaces~\cite{stasheff-homotopy}. In the first paper of the series, he defined cell complexes $K_n \subset I^{n-2}$ which govern $A_n$-structures on topological spaces, and hence realize the associahedra as cell complexes. The associahedra were later realized as polytopes by Haiman in~\cite{haiman-assoc}, Lee in~\cite{lee-assoc} or Loday in~\cite{loday-assoc}. They were recently endowed with an operad structure in the category $\mathtt{Poly}$ by Masuda, Thomas, Tonks and Vallette in~\cite{masuda-diagonal-assoc}, using the notion of weighted Loday realizations. 

Following~\cite{masuda-diagonal-assoc}, we explain the construction of these realizations. We then show that the sign convention (B) for \Ainf -algebras is determined by these realizations : this gives a more geometric explanation of these signs, which does not come from a $(-1)^{\binom{n}{2}}$ twist after reading the signs on the bar construction. This also provides an explicit proof with signs of the statement in~\cite{masuda-diagonal-assoc}, that these polytopes are sent to the operad \Ainf\ by the functor $C_{-*}^{cell}$ (Proposition~\ref{alg:prop:loday-assoc-signs}). These realizations moreover achieve the first step towards constructing the morphism of operads of Markl-Shnider $\Ainf \rightarrow \Omega B As$.

\subsubsection{Realizations of the associahedra à la Loday} \label{alg:sss:real-a-la-loday}

\begin{definition}[\cite{masuda-diagonal-assoc}]
Given $n \geqslant 1$, define a weight $\mathbf{\omega}$ to be a list of $n$ positive integers $(\omega_1,\dots,\omega_n)$. The \emph{Loday realization of weight $\mathbb{\omega}$} of $K_n$ is defined as the common intersection in $\R^{n-1}$ of the hyperplane of equation
\[ H_\omega : \sum_{i=1}^{n-1} x_i = \sum_{1 \leqslant k < l \leqslant n} \omega_k \omega_l \ \]
and of the half-spaces of equation 
\[ D_{i_1,i_2,i_3} : x_{i_1+1} + \cdots + x_{i_1+i_2-1} \geqslant \sum_{i_1+1 \leqslant k < l \leqslant i_1+i_2} \omega_k \omega_l \ , \]
for all $i_1+i_2+i_3 = n$ and $2 \leqslant i_2 \leqslant n-1$. This polytope is denoted $K_\mathbf{\omega}$.
\end{definition}

\begin{figure}[h]
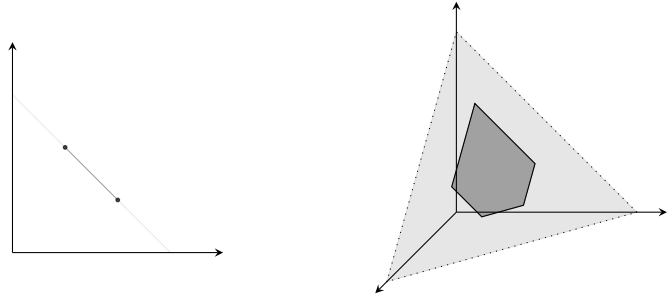
 
    \centering
    \begin{subfigure}{0.3\textwidth}
    \centering
       \associaedrelodaydeux
    \end{subfigure} ~
    \begin{subfigure}{0.3\textwidth}
    \centering
        \associaedrelodaytrois
    \end{subfigure} ~
    \caption{The Loday realizations $K_{(1,1)}$ and $K_{(1,1,1)}$ : the lighter grey depicts $H_\omega$, while the darker grey stands for $K_\omega$.} \label{alg:fig:loday-real}
\end{figure}

The Loday realizations $K_{(1,1)}$ and $K_{(1,1,1)}$ are represented in figure~\ref{alg:fig:loday-real}. The polytope $K_\mathbf{\omega}$ being defined as an intersection of half-spaces inside the $(n-2)$-dimensional space $H_\omega$, it has dimension $n-2$. In fact, denoting $\mathbf{1}_n$ the weight of length $n$ whose entries are all equal to 1, it is one of the main results of~\cite{masuda-diagonal-assoc} that the collection of polytopes $(K_{\mathbf{1}_n})_{n \geqslant 1}$ can be made into an operad in the category $\mathtt{Poly}$. 
The goal of this section is to show the following proposition : 

\begin{proposition} \label{alg:prop:loday-assoc-signs}
The Loday associahedra determine the sign conventions (B) for \Ainf -algebras.
\end{proposition}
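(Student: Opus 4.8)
The plan is to show that the sign which the functor $C_{-*}^{cell}$ reads off the cellular boundary of the top cell $[K_{\mathbf{1}_n}]$ is exactly the sign $-(-1)^{i_1+i_2i_3}$ appearing in convention (B). Recall that $\Ainf(n)=C^{cell}_{-*}(K_n)$ forces
\[ \partial_{cell}[K_n] = \sum \pm \circ_i([K_k]\otimes[K_h]) \ , \]
and that each codimension-1 face of $K_{\mathbf{1}_n}$ is the product $K_{\mathbf{1}_k}\times_i K_{\mathbf{1}_h}$ associated to a decomposition $i_1+i_2+i_3=n$ (with $h=i_2$, $k=i_1+1+i_3$, $i=i_1+1$), realizing the half-space $D_{i_1,i_2,i_3}$. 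So the whole proposition reduces to a sign computation: compare the boundary orientation that $H_\omega\cap K_{\mathbf{1}_n}$ induces on each facet $D_{i_1,i_2,i_3}$ with the product orientation coming from the Loday realizations $K_{\mathbf{1}_k}$ and $K_{\mathbf{1}_h}$.

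First I would fix orientation conventions carefully. Orient $\R^{n-1}$ canonically, orient the hyperplane $H_\omega$ (equivalently $K_{\mathbf{1}_n}$, which is top-dimensional in $H_\omega$) via the coorientation given by the normal $(1,\dots,1)$ to $H_\omega$, using the convention of subsection~\ref{alg:sss:coorientation}, and orient each facet using the outward-pointing normal rule of subsection~\ref{alg:sss:or-boundary}. The key point is that the defining inequality of $D_{i_1,i_2,i_3}$ involves only the consecutive coordinates $x_{i_1+1},\dots,x_{i_1+i_2-1}$, so its outward normal is supported on that coordinate block; this lets me split the tangent space to $K_{\mathbf{1}_n}$ along the facet into the factor tangent to $K_{\mathbf{1}_{i_2}}$ (the inner composed operation) and the factor tangent to $K_{\mathbf{1}_{i_1+1+i_3}}$ (the outer operation). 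I would then compute the permutation needed to pass from the naturally ordered coordinates $(x_1,\dots,x_{n-1})$ to the ordering $(\nu, \text{facet basis})$ where the facet basis is arranged as (outer factor)$\,\times\,$(inner factor). Counting the transpositions that move the inner block $\{i_1+1,\dots,i_1+i_2-1\}$ and its associated normal direction past the $i_1$ outer coordinates before it and the $i_3$ outer coordinates after it yields precisely a contribution of the form $(-1)^{i_1+i_2 i_3}$, and the single outward-normal comparison contributes the overall minus sign.

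The main obstacle will be bookkeeping the two normal directions correctly: both $H_\omega$ (via its coorientation) and the facet $D_{i_1,i_2,i_3}$ (via its outward normal) contribute constraints, and the cellular degree convention forced by $C_{-*}^{cell}$ (opposite grading) must be threaded through so that the boundary formula lands in cohomological degree. Concretely, the subtlety is that the half-space normal to $D_{i_1,i_2,i_3}$ is $-(e_{i_1+1}+\dots+e_{i_1+i_2-1})$ up to the $H_\omega$ direction, and I must express it modulo $\mathrm{span}(1,\dots,1)$ to get a genuine tangent-to-$H_\omega$ outward vector; getting this projection right, and checking it against the orientation of the product $K_{\mathbf{1}_k}\times_i K_{\mathbf{1}_h}$ transported by the operadic partial composition $\circ_i$, is where all the real content lies. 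I would verify the computation against the low-arity cases $n=3$ and $n=4$ using the explicit realizations $K_{(1,1)}$ and $K_{(1,1,1)}$ of figure~\ref{alg:fig:loday-real}, confirming that the recovered signs match the operations labeling the boundary cells of $K_3$ and $K_4$ in figure~\ref{alg:fig:associahedra}. Once the facet-by-facet sign is identified as $-(-1)^{i_1+i_2i_3}$, applying $C_{-*}^{cell}$ and comparing with $\partial(m_n)$ gives that $m_n:=[K_{\mathbf{1}_n}]$ satisfies exactly equations (B), which is the claim.
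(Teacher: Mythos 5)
Your proposal follows essentially the same route as the paper: orient the Loday realization $K_{\mathbf{1}_n}$ inside the hyperplane $H_\omega$, identify each facet $H_{i_1,i_2,i_3}$ with $K_{\mathbf{1}_{i_1+1+i_3}}\times K_{\mathbf{1}_{i_2}}$ via the block-insertion map $\theta$, and compare the outward-normal boundary orientation with the product orientation. The paper carries this out with an explicit positively oriented basis $e_j^\omega=(1,0,\dots,0,-1_{j+1},0,\dots,0)$ of the directing hyperplane rather than your coorientation by $(1,\dots,1)$ (a choice you must pin down consistently, since flipping all the $K_n$ flips every facet sign), and the comparison is a genuine determinant rather than a pure transposition count because $\theta$ sends orienting basis vectors to differences such as $e^\omega_{j+i_1}-e^\omega_{i_1}$ — but these are exactly the bookkeeping points you flag, and the facet-by-facet sign $-(-1)^{i_1+i_2i_3}$ comes out as you predict.
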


That is, after orienting each polytope $K_n := K_{\mathbf{1}_n}$ the boundary of $K_n$ reads as 
\[ \partial K_n = - \bigcup_{\substack{i_1+i_2+i_3=n \\ 2 \leqslant i_2 \leqslant n-1}} (-1)^{i_1 + i_2 i_3} K_{i_1 + 1 + i_3} \times K_{i_2} \ , \]
where $K_{i_1 + 1 + i_3} \times K_{i_2}$ is sent to $m_{i_1+1+i_3} (\ide^{\otimes i_1} \otimes m_{i_2} \otimes \ide^{\otimes i_3} )$ under the functor $C^{cell}_{-*}$. The signs mean that after comparing the product orientation on $K_{i_1 + 1 + i_3} \times K_{i_2}$ induced by the orientations of $K_{i_1 + 1 + i_3}$ and $K_{i_2}$, to the orientation of the boundary of $K_n$, they differ by the sign $-(-1)^{i_1+i_2i_3}$. 
We explain now how to obtain the set-theoretic decomposition of the boundary 
$$\partial K_n = \bigcup_{\substack{i_1+i_2+i_3=n \\ 2 \leqslant i_2 \leqslant n-1}} K_{i_1 + 1 + i_3} \times K_{i_2} \ ,$$ and inspect the signs in the next section.

The top dimensional strata in the boundary of some $K_\omega$ are obtained by allowing exactly one of the inequalities 
\[ x_{i_1+1} + \cdots + x_{i_1+i_2-1} \geqslant \sum_{i_1+1 \leqslant k < l \leqslant i_1 + i_2} \omega_k \omega_l \ , \]
to become an equality. We write $H_{i_1,i_2,i_3}$ for these hyperplanes. Defining two new weights
\begin{align*}
\overline{\omega} &:= (\omega_1,\dots,\omega_{i_1},\omega_{i_1+1} + \cdots + \omega_{i_1 + i_2},\omega_{i_1+i_2+1},\dots,\omega_n) \ , \\
 \widetilde{\omega} &:= (\omega_{i_1+1},\dots,\omega_{i_1+i_2}) \ ,
\end{align*}
the map 
\begin{align*}
\theta : \R^{i_1 + i_3} \times \R^{i_2-1} &\longrightarrow \R^{n-1} \\
(x_1,\dots,x_{i_1 + i_3}) \times (y_1,\dots,y_{i_2-1}) &\longmapsto (x_1,\dots,x_{i_1},y_1,\dots,y_{i_2-1},x_{i_1+1},\dots,x_{i_1+i_3})
\end{align*}
induces a bijection between $K_{\overline{\omega}} \times K_{\widetilde{\omega}}$ and the codimension 1 face of $K_\omega$ corresponding to the intersection with $H_{i_1,i_2,i_3}$. 

\subsubsection{Recovering signs from these realizations} \label{alg:sss:recover-signs-loday}

The directing hyperplane $\overline{H}_\omega$ of the affine hyperplane $H_\omega$ has basis 
\[ e_j^\omega = (1,0,\cdots,0,-1_{j+1},0,\cdots,0) \ , \]
where $-1$ is in the $j+1$-th spot, and we add a superscript $\omega$ for later use. We choose this basis as a positively oriented basis for $\overline{H}_\omega$ : this defines our orientation of $K_\omega$.
Choosing any $(a_1,\dots,a_{n-1}) \in H_\omega$, the basis $e_j^\omega$ parametrizes $H_\omega$ under the map
\[ (y_1 , \dots , y_{n-2}) \longmapsto (\sum_{j=1}^{n-2}y_j + a_1 , - y_1 + a_2 , \dots, -y_{n-2}+a_{n-1}) \ . \]
Hence in the coordinates of the basis $e_j^\omega$, the half-space $H_\omega \cap D_{i_1,i_2,i_3}$ reads as 
\begin{align*}
\text{when $i_1 = 0$ : }& - y_{i_2-1} - \cdots - y_{n-2} \leqslant C \ , \\
\text{when $i_1 \geqslant 1$ : }&  y_{i_1} + \cdots + y_{i_1+i_2-2} \leqslant C \ ,
\end{align*}
where $C$ denotes some constant that we are not interested in. Hence, in the basis $e_j^\omega$, an outward pointing vector for the boundary $H_\omega \cap H_{i_1,i_2,i_3}$ is 
\begin{align*}
\text{when $i_1 = 0$ : }& \nu := (0 , \dots , 0 , -1_{i_2 -1} , \dots , -1_{n-2}) \ , \\
\text{when $i_1 \geqslant 1$ : }& \nu := (0 , \dots , 0 , 1_{i_1} , \dots , 1_{i_1+i_2-2} , 0 ,\dots , 0) \ .
\end{align*}

We have chosen orienting bases for the directing hyperplanes $\overline{H}_\omega$, and computed all outward pointing vectors for the boundaries in these bases. It only remains to study the image of these bases under the maps $\theta$. We write $e_j^{\overline{\omega}}$ for the orienting basis of $K_{\overline{\omega}}$ and $e_j^{\widetilde{\omega}}$ for the one of $K_{\widetilde{\omega}}$. We distinguish two cases.

When $i_1=0$, the map $\theta$ reads as 
\[ \theta (x_1,\dots,x_{i_3},y_1,\dots,y_{i_2-1}) = (y_1,\dots,y_{i_2-1},x_{1},\dots,x_{i_3}) \ , \]
and we compute that :
\begin{align*}
\theta (e_j^{\overline{\omega}}) = - e^\omega_{i_2 - 1} + e^\omega_{j + i_2 - 1} && \theta (e_j^{\widetilde{\omega}}) = e^\omega_{j} \ .
\end{align*}
The determinant then has value
\[ \mathrm{det}_{e^\omega_j} \left( \nu , \theta (e_j^{\overline{\omega}}),\theta (e_j^{\widetilde{\omega}}) \right) = -i_3 (-1)^{i_2 i_3} \ . \]
Thus, we recover the $-(-1)^{i_1+i_2 i_3} K_{i_1 + 1 + i_3} \times K_{i_2}$ oriented component of the boundary.

When $i_1 \geqslant 1$, the map $\theta$ now reads as 
\[ \theta (x_1,\dots,x_{i_3},y_1,\dots,y_{i_2-1}) = (x_1,\dots,x_{i_1},y_1,\dots,y_{i_2-1},x_{i_1+1},\dots,x_{i_1+i_3}) \ , \]
and we compute that :
\begin{align*}
j \leqslant i_1 -1 \ , \ \theta (e_j^{\overline{\omega}}) = e^\omega_{j} && j \geqslant i_1 \ , \ \theta (e_j^{\overline{\omega}}) = e^\omega_{j+i_2 - 1} && \theta(e_j^{\widetilde{\omega}}) = e_{j+i_1}^{\omega}- e_{i_1}^\omega \ . 
\end{align*}
This time, 
\[ \mathrm{det}_{e^\omega_j} \left( \nu , \theta (e_j^{\overline{\omega}}),\theta (e_j^{\widetilde{\omega}}) \right) = - (i_2 - 1)(-1)^{i_1 + i_2 i_3} \ . \]
We find again the $-(-1)^{i_1+i_2 i_3} K_{i_1 + 1 + i_3} \times K_{i_2}$ oriented component of the boundary, which concludes the proof of~Proposition~\ref{alg:prop:loday-assoc-signs}.

\subsection{Forcey-Loday multiplihedra and signs} \label{alg:ss:forcey-loday-multipl}

Iwase and Mimura realized the multiplihedra as cell complexes in~\cite{iwase-mimura} following the hints of Stasheff in~\cite{stasheff-homotopy}. The multiplihedra were later realized as polytopes in~\cite{forcey-multipl}. This will be adapted in an upcoming paper by Masuda, Vallette and the author~\cite{masuda-diagonal-multipl}, which uses again the notion of weighted Loday realizations.

The goal of this section is to show that the sign convention (B) for \Ainf -morphisms is naturally determined by the weighted Loday realizations of~\cite{masuda-diagonal-multipl}. In this regard, we lay out the explicit construction of~\cite{masuda-diagonal-multipl}, and follow the same lines of proof as in the previous section. This also provides a proof with signs that these polytopes are sent to the operadic bimodule \infmor\ by the functor $C_{-*}^{cell}$ (Proposition~\ref{alg:prop:loday-multi-signs}).

\subsubsection{Forcey-Loday realizations of the multiplihedra} \label{alg:sss:forcey-loday-real}

\begin{definition}[\cite{masuda-diagonal-multipl}]
Given $n \geqslant 1$, choose a weight $\mathbf{\omega}=(\omega_1,\dots,\omega_n)$. The \emph{Forcey-Loday realization} of weight $\mathbb{\omega}$ of $J_n$ is defined as the intersection in $\R^{n-1}$ of the half-spaces of equation 
\[ D_{i_1,i_2,i_3} : x_{i_1+1} + \cdots + x_{i_1+i_2-1} \geqslant \sum_{i_1+1 \leqslant k < l \leqslant i_1+i_2} \omega_k \omega_l \ , \]
for all $i_1+i_2+i_3 = n$ and $i_2 \geqslant 2$, with the half-spaces of equation
\[ D^{i_1,\dots,i_s} : x_{i_1} + x_{i_1 + i_2} + \cdots + x_{i_1+\cdots+i_{s-1}} \leqslant 2 \sum_{1 \leqslant t < u \leqslant s} \Omega_{t} \Omega_{u} \  \]
for all $i_1+\cdots+i_s = n$, with each $i_t \geqslant 1$ and $s \geqslant 2$, and where $\Omega_t := \sum_{a=1}^{i_t}\omega_{i_1 + \cdots + i_{t-1} + a}$. This polytope is denoted $J_\mathbf{\omega}$. 
\end{definition}

\begin{figure}[h]
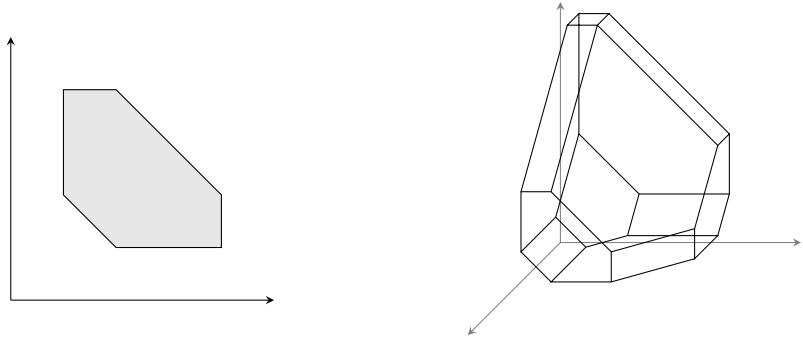
 
    \centering
    \begin{subfigure}{0.3\textwidth}
    \centering
       \multiplaedreforceylodaytrois
    \end{subfigure} ~
    \hspace{30 pt}
    \begin{subfigure}{0.3\textwidth}
    \centering
        \multiplaedreforceylodayquatre
    \end{subfigure} ~
    \caption{The Forcey-Loday realizations $J_{(1,1,1)}$ and $J_{(1,1,1,1)}$} \label{alg:fig:forcey-loday-real}
\end{figure}

The Forcey-Loday realizations $J_{(1,1,1)}$ and $J_{(1,1,1,1)}$ are depicted in figure~\ref{alg:fig:forcey-loday-real}. The polytope $J_\mathbf{\omega}$ being an intersection of half-spaces in $\R^{n-1}$, it has dimension $n-1$. Setting $J_n := J_{\mathbf{1}_n}$, it is proven in~\cite{masuda-diagonal-multipl} that the collection of polytopes $\{ J_n \}_{n \geqslant 1}$ can be made into a $(\{ K_n \} , \{ K_n \})$-operadic bimodule in the category $\mathtt{Poly}$.

\begin{proposition}  \label{alg:prop:loday-multi-signs}
The Forcey-Loday realizations determine the sign conventions (B) for \Ainf -morphisms. 
\end{proposition}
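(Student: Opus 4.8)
The plan is to mirror exactly the computation carried out for the associahedra in Proposition~\ref{alg:prop:loday-assoc-signs}, adapting it to the two families of facets of $J_n$. The Forcey-Loday realization $J_\omega$ lives in the full ambient space $\R^{n-1}$ (it is not confined to a hyperplane as $K_\omega$ was), so I would first fix the canonical orientation of $\R^{n-1}$, given by the standard basis $e_1,\dots,e_{n-1}$, as the chosen orientation of $J_n := J_{\mathbf{1}_n}$. The key structural input is that the top-dimensional strata in $\partial J_n$ come in two types, matching the two families of defining inequalities: the facets where some $D_{i_1,i_2,i_3}$ becomes an equality, which are parametrized by $J_{i_1+1+i_3}\times K_{i_2}$ via a map $\theta$ as in subsection~\ref{alg:sss:recover-signs-loday}, and the facets where some $D^{i_1,\dots,i_s}$ becomes an equality, which are parametrized by $K_s\times J_{i_1}\times\cdots\times J_{i_s}$. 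These are precisely the two summands in the differential on \infmor\ recorded in Definition~\ref{alg:def:ombas-morph}'s \Ainf\ analogue, namely the terms $f_{i_1+1+i_3}(\ide^{\otimes i_1}\otimes m_{i_2}\otimes\ide^{\otimes i_3})$ and $m_s(f_{i_1}\otimes\cdots\otimes f_{i_s})$.

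For each family I would, following the template of the associahedron proof, first read off an outward-pointing normal $\nu$ in the coordinates of $\R^{n-1}$. Here the computation is more direct than for the associahedra, since there is no change of basis to a hyperplane: the inequality $D_{i_1,i_2,i_3}$ has outward normal supported on the coordinates $x_{i_1+1},\dots,x_{i_1+i_2-1}$ with sign making it point out of the region $\{x_{i_1+1}+\cdots+x_{i_1+i_2-1}\geqslant C\}$, while $D^{i_1,\dots,i_s}$ has outward normal supported on the coordinates $x_{i_1},x_{i_1+i_2},\dots$ consistent with its $\leqslant$ orientation. Next I would compute the image under $\theta$ of the chosen orienting bases of the factor polytopes ($e_j^{\overline\omega}$, $e_j^{\widetilde\omega}$ in the first case; the analogous bases of $K_s$ and the $J_{i_t}$ in the second) and form the determinant $\mathrm{det}_{e_j}\!\left(\nu,\theta(\text{basis vectors})\right)$. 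The sign of this determinant, compared against the product orientation on the factors, yields the coefficient attached to each stratum; I expect to recover exactly $(-1)^{i_1+i_2i_3}$ on the first family and $(-1)^{\epsilon_B}$ on the second, with $\epsilon_B = \sum_{u=1}^s (s-u)(1-i_u)$ as defined in subsection~\ref{alg:sss:signs-ainf-morph}.

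The hard part will be the second family of facets, $K_s\times J_{i_1}\times\cdots\times J_{i_s}$, where the sign $\epsilon_B$ is a genuinely combinatorial sum over $s$ factors rather than a single quadratic monomial. The obstacle is bookkeeping: one must track how the parametrizing map interleaves the $s$ separate blocks of $J_{i_t}$-coordinates with the $K_s$-coordinates, and then verify that the determinant of the resulting block matrix produces precisely the alternating sum $\sum_u (s-u)(1-i_u)$. I would handle this by induction on $s$, or equivalently by factoring the determinant into a product of elementary transpositions counted by the displacement of each block, so that the exponent $(s-u)(1-i_u)$ emerges as the total number of sign-changing swaps contributed by the $u$-th block. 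Once both determinant signs are pinned down and shown to agree with conventions~(B), the equality of the set-theoretic boundary decomposition
\[ \partial J_n = \bigcup_{\substack{h+k=n+1\\ h\geqslant 2}}\bigcup_{1\leqslant i\leqslant k} J_k\times_i K_h \ \cup\ \bigcup_{\substack{i_1+\cdots+i_s=n\\ s\geqslant 2}} K_s\times J_{i_1}\times\cdots\times J_{i_s} \]
with the signed strata completes the proof, exactly as the two determinant computations closed the argument for the associahedra.
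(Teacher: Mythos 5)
Your overall strategy is exactly the paper's: orient $J_n$ by a fixed basis of $\R^{n-1}$, identify the two families of facets $H_{i_1,i_2,i_3}$ and $H^{i_1,\dots,i_s}$ with $J_{i_1+1+i_3}\times K_{i_2}$ and $K_s\times J_{i_1}\times\cdots\times J_{i_s}$ via explicit affine maps $\theta$, read off outward normals from the defining inequalities, and compare orientations by evaluating $\mathrm{det}\left(\nu,\theta(\text{bases})\right)$. The paper handles your ``hard part'' (the $K_s\times J_{i_1}\times\cdots\times J_{i_s}$ facets) by a single direct determinant evaluation, $\mathrm{det}_{f_j^\omega}(\nu,\theta(e_j^{\overline\omega}),\theta(f_j^{\widetilde\omega_1}),\dots,\theta(f_j^{\widetilde\omega_s}))=-(s-1)(-1)^{\epsilon_B}$, after computing $\theta(e_j^{\overline\omega})=-f^\omega_{i_1}+f^\omega_{i_1+\cdots+i_{j+1}}$ and $\theta(f_j^{\widetilde{\omega}_t})=f^\omega_{j+i_1+\cdots+i_{t-1}}$; your plan of organizing that determinant by block displacements is a legitimate alternative bookkeeping.

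There is, however, one concrete error in your setup: the choice of orientation of $J_n$. The paper orients $J_\omega$ by the basis $f_j^\omega=(0,\dots,0,-1_j,0,\dots,0)$, i.e.\ the \emph{negatives} of the standard basis vectors, not by $e_1,\dots,e_{n-1}$ as you propose. This is not a cosmetic difference. Replacing $f_j^\omega$ by $e_j$ multiplies the ambient determinant by $(-1)^{n-1}$ and negates each of the $\dim J_{i_1+1+i_3}=i_1+i_3$ image vectors $\theta(f_j^{\overline\omega})$, so the sign of the facet $J_{i_1+1+i_3}\times K_{i_2}$ picks up an extra factor $(-1)^{(n-1)+(i_1+i_3)}=(-1)^{i_2+1}$; similarly the facets $K_s\times J_{i_1}\times\cdots\times J_{i_s}$ pick up $(-1)^{s+1}$. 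Already for $n=2$ your convention gives $\partial J_2=-\,J_1\times K_2+K_2\times J_1\times J_1$, whereas convention (B) requires $[m_1,f_2]=f_1m_2-m_2(f_1\otimes f_1)$. The resulting family $f_n'=(-1)^{n-1}f_n$ does not satisfy the (B) equations (the correction $(-1)^{i_2+1}$ is not absorbable by a uniform rescaling), so with the standard basis you would not ``recover exactly $(-1)^{i_1+i_2i_3}$ and $(-1)^{\epsilon_B}$'' as you expect. Since the entire content of the proposition is to pin these signs down on the nose, you must either adopt the basis $f_j^\omega$ from the start or carry the compensating factors through and adjust; as written, the expected conclusion of your computation is false.
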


More precisely our goal is to prove that, after orienting the $K_n$ as before and choosing an orientation for the $J_n$, the boundary of $J_n$ reads as 
\[ \partial J_n = \bigcup_{\substack{i_1+i_2+i_3=n \\ i_2 \geqslant 2}} (-1)^{i_1 + i_2 i_3} J_{i_1 + 1 + i_3} \times K_{i_2} \cup - \bigcup_{\substack{i_1 + \cdots + i_s = n \\ s \geqslant 2}} (-1)^{\epsilon_B} K_s \times J_{i_1} \times \cdots \times J_{i_s}  \ , \]
where $\epsilon_B$ is as in subsection~\ref{alg:sss:signs-ainf-choice} ; $K_{i_1 + 1 + i_3} \times K_{i_2}$ is sent to $f_{i_1+1+i_3} (\ide^{\otimes i_1} \otimes m_{i_2} \otimes \ide^{\otimes i_3} )$ while $K_s \times J_{i_1} \times \cdots \times J_{i_s}$ is sent to $ m_s ( f_{i_1} \otimes \cdots \otimes f_{i_s})$ by the functor $C^{cell}_{-*}$. 

We conclude this section with a proof of the set-theoretic equality for the boundary
$$ \partial J_n = \bigcup_{\substack{i_1+i_2+i_3=n \\ i_2 \geqslant 2}} J_{i_1 + 1 + i_3} \times K_{i_2} \cup  \bigcup_{\substack{i_1 + \cdots + i_s = n \\ s \geqslant 2}} K_s \times J_{i_1} \times \cdots \times J_{i_s}  \ ,$$ and postpone the processing of signs to the next subsection.
The top dimensional strata in the boundary of a $J_\omega$ are obtained by allowing exactly one of the inequalities 
\begin{align*} x_{i_1+1} + \cdots + x_{i_1+i_2-1} \geqslant \sum_{i_1+1 \leqslant k < l \leqslant i_1 + i_2} \omega_k \omega_l \ , \\
x_{i_1} + x_{i_1 + i_2} + \cdots + x_{i_1+\cdots+i_{s-1}} \leqslant 2 \sum_{1 \leqslant t < u \leqslant s} \Omega_{t} \Omega_{u} \ ,
\end{align*}
to become an equality. We write $H_{i_1,i_2,i_3}$ and $H^{i_1,\dots,i_s}$ for these hyperplanes. 

Begin with the $H_{i_1,i_2,i_3}$ component. Defining two new weights
\begin{align*}
\overline{\omega} &:= (\omega_1,\dots,\omega_{i_1},\omega_{i_1+1} + \cdots + \omega_{i_1 + i_2},\omega_{i_1+i_2+1},\dots,\omega_n) \ , \\
 \widetilde{\omega} &:= (\omega_{i_1+1},\dots,\omega_{i_1+i_2}) \ ,
\end{align*}
the map 
\begin{align*}
\theta : \R^{i_1 + i_3} \times \R^{i_2-1} &\longrightarrow \R^{n-1} \\
(x_1,\dots,x_{i_1 + i_3}) \times (y_1,\dots,y_{i_2-1}) &\longmapsto (x_1,\dots,x_{i_1},y_1,\dots,y_{i_2-1},x_{i_1+1},\dots,x_{i_1+i_3})
\end{align*}
induces a bijection between $J_{\overline{\omega}} \times K_{\widetilde{\omega}}$ and the codimension 1 face of $J_\omega$ corresponding to the intersection with $H_{i_1,i_2,i_3}$. 

In the case of the $H^{i_1,\dots,i_s}$ component, we define the weights
\begin{align*}
\overline{\omega} &:= (\sqrt{2}\Omega_1,\dots,\sqrt{2}\Omega_s) \ , \\
\widetilde{\omega}_t &:= (\omega_{i_1+\cdots + i_{t-1} + 1},\dots,\omega_{i_1+\cdots + i_{t-1} + i_t}) \ , \ 1 \leqslant t \leqslant s \ .
\end{align*}
This time, the map
\[ \theta : \R^{s-1} \times \R^{i_1-1} \times \cdots \times \R^{i_s-1} \longrightarrow \R^{n-1} \]
sends an element $(x_1,\dots,x_{s-1}) \times (y_1^1,\dots,y_{i_1-1}^1) \times \cdots \times (y_1^s,\dots,y_{i_s-1}^s)$ to
\[ (y_1^1,\dots,y_{i_1-1}^1,x_1,y_1^2,\dots,y_{i_2-1}^2,x_2,y_1^3,\dots,x_{s-1},y_1^s,\dots,y_{i_s-1}^s) \ . \]
It induces a bijection between $K_{\overline{\omega}} \times J_{\widetilde{\omega}_1} \times \cdots \times J_{\widetilde{\omega}_s}$ and the codimension 1 face of $J_\omega$ corresponding to the intersection with $H^{i_1,\dots,i_s}$. 

\subsubsection{Processing the signs for these realizations} \label{alg:sss:process-signs}

We set the orientation on $\R^{n-1}$, and hence on $J_\omega$, to be such that the vectors
\[ f_j^\omega := (0,0,\cdots,0,-1_{j},0,\cdots,0) \ , \]
define a positively oriented basis of $\R^{n-1}$. In the coordinates of the basis $f_j^\omega$, the half-space $D_{i_1,i_2,i_3}$ reads as 
\[ z_{i_1+1} + \cdots + z_{i_1+i_2-1} \leqslant - \sum_{i_1+1 \leqslant k < l \leqslant i_1+i_2} \omega_k \omega_l \ , \]
and the half-space $D^{i_1,\dots,i_s}$ as
\[  - z_{i_1} - z_{i_1 + i_2} - \cdots - z_{i_1+\cdots+i_{s-1}} \leqslant 2 \sum_{1 \leqslant t < u \leqslant s} \Omega_{t} \Omega_{u} \]
In this basis, an outward pointing vector for the boundary $H_{i_1,i_2,i_3}$ is then
\[  \nu := ( 0 ,\dots , 0 , 1_{i_1 + 1} , \dots , 1_{i_1 + i_2-1} , 0 , \dots , 0 )   \ , \]
while an outward pointing vector for the boundary $H^{i_1,\cdots,i_s}$ is 
\[ \nu := ( 0 , \dots , 0 , -1_{i_1} , 0 , \dots , 0 , -1_{i_1 + i_2} , 0 , \dots \dots , 0 , -1_{i_1 + i_2 + \cdots + i_{s-1}} , 0 , \dots , 0 ) \ . \]
Now that we have chosen positively oriented bases for the $J_\omega$, and chosen outward pointing vectors for each component of their boundaries, we conclude again by computing the image of these bases under the maps $\theta$. 

In the case of a boundary component $H_{i_1,i_2,i_3}$, 
\begin{align*}
j \leqslant i_1 \ , \ \theta (f_j^{\overline{\omega}}) = f^\omega_{j} && j \geqslant i_1 + 1 \ , \ \theta (f_j^{\overline{\omega}}) = f^\omega_{j+i_2 - 1} && \theta (e_j^{\widetilde{\omega}}) = - f_{i_1 + 1}^\omega + f^\omega_{i_1 + j+1} \ .
\end{align*}
The determinant against the basis $f_j^\omega$ then has value
\[ \mathrm{det}_{f^\omega_j} \left( \nu , \theta (f_j^{\overline{\omega}}),\theta (e_j^{\widetilde{\omega}}) \right) = (i_2 - 1) (-1)^{i_1 + i_2 i_3} \ . \]
Thus, we recover the $(-1)^{i_1+i_2 i_3} J_{i_1 + 1 + i_3} \times K_{i_2}$ oriented component of the boundary.

Finally, in the case of a boundary component $H^{i_1,\dots,i_s}$, we compute that
\begin{align*}
\theta (e_j^{\overline{\omega}}) = - f^\omega_{i_1} + f^\omega_{i_1 + \cdots + i_{j+1}}  && \theta(f_j^{\widetilde{\omega}_t}) = f^\omega_{j+i_1 + \cdots + i_{t-1}} \ . 
\end{align*}
This time, 
\[ \mathrm{det}_{f^\omega_j} \left( \nu , \theta (e_j^{\overline{\omega}}),\theta (f_j^{\widetilde{\omega}_1}) , \dots , \theta (f_j^{\widetilde{\omega}_s}) \right) = - (s - 1)(-1)^{\epsilon_B} \ . \]
We find again the $-(-1)^{\epsilon_B} K_{s} \times J_{i_1} \times \cdots \times J_{i_s}$ oriented component of the boundary, which concludes the proof of Proposition~\ref{alg:prop:loday-multi-signs}.

\section{Signs and moduli spaces for $\Omega B As$-algebras and $\Omega B As$-morphisms} \label{alg:s:sign-pol-ombas}

This section completes section~\ref{alg:s:mod-spac} by explicitly describing the two families of moduli spaces of metric trees $\mathcal{T}_n(t)$ and $\mathcal{CT}_n(t_g)$, working out the induced signs for $\Omega B As$-algebras and $\Omega B As$-morphisms and eventually constructing the morphisms of Propositions~\ref{alg:prop:markl-shnider-un} and~\ref{alg:prop:markl-shnider-deux}.

More precisely, we begin by recalling the definition of the operad $\Omega B As$ from Markl-Shnider, using the formalism of orientations on broken stable ribbon trees. This establishes a direct link to the moduli spaces $\mathcal{T}_n(t)$. Using the fact that the dual decomposition on the associahedron coincides with its $\Omega B As$ decomposition, we give a new proof of the morphism of operads $\Ainf \rightarrow \Omega B As$, that relies uniquely on polytopes and not on sign computations. We then attend to the definition of the operadic bimodule $\Omega B As - \mathrm{Morph}$. This goes through a long and comprehensive study of the signs ensuing from orientations of the codimension 1 strata of the compactified moduli spaces $\overline{\mathcal{CT}}_n(t_{g})$. We finally define the morphism of operadic bimodules $\Ainf - \mathrm{Morph} \rightarrow \Omega B As - \mathrm{Morph}$, using again solely the realizations of the multiplihedra from~\cite{masuda-diagonal-multipl}. This is an opportunity to state a MacLane's coherence theorem encoded by the multiplihedra, while the classical MacLane's coherence theorem on monoidal categories is encoded by the associahedra (see subsection~\ref{alg:sss:maclane-coherence}). 

\subsection{The operad $\Omega B As$} \label{alg:ss:signs-ombas}

\subsubsection{Definition of the operad $\Omega B As$}\label{alg:sss:def-ombas}

The definition of the operad $\Omega B As$ that we now lay out is the one given by Markl and Shnider in~\cite{markl-assoc}. We only expose the material necessary to our construction, and refer to their paper for further details and proofs. In the rest of the section, the notation $t$ stands for a stable ribbon tree, and the notation $t_{br}$ denotes a broken stable ribbon tree. Observe that a stable ribbon tree is a broken stable ribbon tree with 0 broken edge. As a result, all constructions performed for broken stable ribbon trees in the upcoming subsections will hold in particular for stable ribbon trees.

\begin{definition}[\cite{markl-assoc}]
Given a broken stable ribbon tree $t_{br}$, an \emph{ordering} of $t_{br}$ is defined to be an ordering of its $i$ finite internal edges $e_{1} , \dots , e_{i}$. Two orderings are said to be equivalent if one passes from one ordering to the other by an even permutation. An \emph{orientation} of $t_{br}$ is then defined to be an equivalence class of orderings, and written $\omega := e_{1} \wedge \cdots \wedge e_{i}$. Each tree $t_{br}$ has exactly two orientations. Given an orientation $\omega$ of $t_{br}$ we will write $-\omega$ for the second orientation on $t_{br}$, called its \emph{opposite orientation}.
\end{definition}

\begin{definition}[\cite{markl-assoc}]
Consider the \Z -module freely generated by the pairs $(t_{br},\omega)$ where $t_{br}$ is a broken stable ribbon tree and $\omega$ an orientation of $t_{br}$. We define the arity $n$ space of operations $\Omega B As(n)_*$ to be the quotient of this \Z -module under the relation
\[ (t_{br} , - \omega) = - (t_{br} , \omega) \ . \]
A pair $(t_{br},\omega)$ where $t_{br}$ has $i$ finite internal edges, is defined to have degree $-i$. The partial compositions are then
\[ (t_{br},\omega) \circ_k (t_{br}',\omega ') = (t_{br} \circ_k t_{br}' , \omega \wedge \omega ') \ , \]
where the tree $t_{br} \circ_k t_{br}'$ is the broken ribbon tree obtained by grafting $t_{br}'$ to the $k$-th incoming edge of $t_{br}$, and the edge resulting from the grafting is broken. The differential $\partial_{\Omega B As}$ on $\Omega B As (n)_*$ is finally set to send an element $( t_{br}, e_{1} \wedge \cdots \wedge e_{i})$ to 
\[ \sum_{j=1}^i (-1)^{j} \left( (t_{br}/e_{j} , e_{1} \wedge \cdots \wedge \hat{e_{j}} \wedge \cdots \wedge e_{i}) - ((t_{br})_j , e_{1} \wedge \cdots \wedge \hat{e_{j}} \wedge \cdots \wedge e_{i}) \right) \ , \]
where $t_{br}/e_{j}$ is the tree obtained from $t$ by collapsing the edge $e_{j}$ and $(t_{br})_j$ is the tree obtained from $t_{br}$ by breaking the edge $e_{j}$. It can be checked that the collection of dg-\Z -modules $\Omega B As(n)_*$ defines indeed an operad in $\mathtt{dg-\Z -mod}$.
\end{definition}

Choosing a distinguished orientation for every stable ribbon tree $t \in SRT$, this definition of the operad $\Omega B As$ yields the definition as the quasi-free operad
\[ \mathcal{F} (\premiertermecobarbarA , \premiertermecobarbarD , \premiertermecobarbarB , \premiertermecobarbarC , \cdots , SRT_n, \cdots ) \ ,  \]
given in subsection~\ref{alg:sss:op-ombas}. Our definition with the pairs $(t,\omega)$, albeit more tedious at first sight, allows however for easier computations of signs.

\subsubsection{Canonical orientations for the binary ribbon trees (\cite{markl-assoc})} \label{alg:sss:canon-or-ribbon-tree}

For a fixed $n \geqslant 2$, the set of binary ribbon trees $BRT_n$ can be endowed with a partial order that Tamari introduced in his thesis~\cite{tamari-monoides}. 

\begin{definition}
The \emph{Tamari order} on $BRT_n$ is the partial order generated by the covering relations
\[ \coveringrelationA \ \ \ > \ \ \ \coveringrelationB \]
where $t_1$, $t_2$, $t_3$ and $t_4$ are binary ribbon trees. 
\end{definition}

The left-hand side in the above covering relation will be called a \emph{right-leaning configuration}, and the right-hand side a \emph{left-leaning configuration}. 
Hence given two trees $t$ and $t'$ in $BRT_n$, the inequality $t \geqslant t'$ holds if and only one can pass from $t$ to $t'$ by successive transformations of a right-leaning configuration into a left-leaning configuration.
For example in the case of $BRT_4$, we obtain the Hasse diagram in figure~\ref{alg:fig:tamari-poset}. 

\begin{figure}[h] 
\centering
\begin{subfigure}{0.4\textwidth}
	\centering
	\Tamariquatre
\end{subfigure} 
\begin{subfigure}{0.4\textwidth}
	\centering
	\Tamariorientationsquatre
\end{subfigure}
\caption{On the left, the Hasse diagram of the Tamari poset, where the maximal element is written at the top. On the right, all the canonical orientations for $BRT_4$ computed going down the Tamari poset.} \label{alg:fig:tamari-poset}
\end{figure} 

The Tamari poset has a unique maximal element and a unique minimal element, respectively given by the right-leaning and left-leaning combs, denoted $t_{max}$ and $t_{min}$. Given moreover a binary ribbon tree $t$, its immediate neighbours are by definition the trees obtained from $t$ by either transforming exactly one right-leaning configuration of $t$ into a left-leaning configuration, or transforming exactly one left-leaning configuration of $t$ into a right-leaning configuration.

The canonical orientation on the maximal binary tree is defined as
\[ \rightleaningcomb \ \ \omega_{can} := e_1 \wedge \cdots \wedge e_{n-2}  \ . \]
Using the Tamari order, we can now build inductively canonical orientations on all binary trees. We start at the maximal binary ribbon tree, and use the following rule on the covering relations
\[ \coveringrelationorientationsA  \ \ \omega = \cdots \wedge e \wedge \cdots  \ \ \ \longrightarrow \ \ \  \coveringrelationorientationsB \ \ - \omega = \cdots \wedge (-e) \wedge \cdots  \ , \]
to define the orientations of its immediate neighbours. We then repeat this rule while going down the Tamari poset until the minimal binary tree is reached. This process is consistent (see subsection~\ref{alg:sss:maclane-coherence}), i.e. it does not depend on the path taken in the Tamari poset from the maximal binary tree to the binary tree whose orientation is being defined. A full example for $BRT_4$ is illustrated in figure~\ref{alg:fig:tamari-poset}.

\begin{definition}[\cite{markl-assoc}]
The orientations obtained under this process are called the \emph{canonical orientations} and written $\omega_{can}$. 
\end{definition}

\subsubsection{The moduli spaces $\overline{\mathcal{T}}_n$ realize the operad $\Omega B As$} \label{alg:sss:mod-space-Tn-real}

We explained in subsection~\ref{alg:sss:second-cell-ribbon} that the compactified moduli space $\overline{\mathcal{T}}_n$ comes with a fine cell decomposition, which is labeled by all broken stable ribbon trees with $n$ incoming edges. Consider then a cell $\overline{\mathcal{T}}_n(t_{br}) \subset \overline{\mathcal{T}}_n$, where $t_{br}$ is a broken stable ribbon tree. An ordering of its finite internal edges $e_{1} , \dots , e_{i}$ induces an isomorphism
\[ \overline{\mathcal{T}}_n(t_{br}) \ \tilde{\longrightarrow} \ [ 0 , + \infty ]^{i} \ , \]
where the length $l_{e_{j}}$ is seen as the $j$-th coordinate in $[ 0 , + \infty ]^{i}$. This ordering induces in particular an orientation on $\mathcal{T}_n(t_{br})$, by taking the image of the canonical orientation of $] 0 , + \infty [^{i}$ under the isomorphism. We check that two orderings of $t_{br}$ define the same orientation on $\mathcal{T}_n(t_{br})$ if and only if they are equivalent : in other words, an orientation of $t_{br}$ amounts to an orientation of $\mathcal{T}_n(t_{br})$.

Consider now the \Z -module freely generated by the pairs $$(\overline{\mathcal{T}}_n(t_{br}) , \text{choice of orientation $\omega$ on the cell $\overline{\mathcal{T}}_n(t_{br})$}) \ ,$$ where $t_{br}$ is a broken stable ribbon tree. The complex $C_{-*}^{cell}(\overline{\mathcal{T}}_n)$ can simply be defined to be the quotient of this \Z -module under the relation
\[ - (\overline{\mathcal{T}}_n(t_{br}) , \omega) = (\overline{\mathcal{T}}_n(t_{br}) , - \omega) \ . \]
The differential of an element $(\overline{\mathcal{T}}_n(t_{br}) , \omega)$ is moreover given by the classical cubical differential on $[ 0 , + \infty ]^{i}$. Defining the cell chain complex in this way, it becomes tautological that :
\begin{proposition} 
The functor $C_{-*}^{cell}$ sends the operad $\overline{\mathcal{T}}_n$ to the operad $\Omega B As$.
\end{proposition}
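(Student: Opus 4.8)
The plan is to check separately that the three pieces of data — underlying graded $\Z$-module, operadic compositions, and differential — of $C_{-*}^{cell}(\overline{\mathcal{T}}_n)$ and of Markl--Shnider's $\Omega B As(n)_*$ coincide. For the underlying modules nothing remains to be verified: the construction of $C_{-*}^{cell}(\overline{\mathcal{T}}_n)$ laid out just above takes as generators the oriented cells $(\overline{\mathcal{T}}_n(t_{br}),\omega)$ modulo the relation $(\overline{\mathcal{T}}_n(t_{br}),-\omega) = -(\overline{\mathcal{T}}_n(t_{br}),\omega)$, and places an $i$-dimensional cell in degree $-i$. Since the cells of $\overline{\mathcal{T}}_n$ are indexed by broken stable ribbon trees $t_{br}$ (one with $i$ finite internal edges labeling an $i$-dimensional cell), and since an orientation of such a cell is exactly the datum of an orientation $\omega$ of $t_{br}$, this generating set, grading, and relation are identical to those defining $\Omega B As(n)_*$.

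First I would match the operadic compositions. The operad structure on $\{ \overline{\mathcal{T}}_n \}$ in $\mathtt{CW}$ is given by the cellular gluing maps $\circ_i : \overline{\mathcal{T}}_k \times \overline{\mathcal{T}}_h \to \overline{\mathcal{T}}_{h+k-1}$, which graft a representative of $\overline{\mathcal{T}}_h$ onto the $i$-th incoming edge of a representative of $\overline{\mathcal{T}}_k$, the newly created edge being of infinite length, i.e. broken. On the level of cells this sends $\overline{\mathcal{T}}_k(t_{br}) \times \overline{\mathcal{T}}_h(t_{br}')$ homeomorphically onto $\overline{\mathcal{T}}_{h+k-1}(t_{br} \circ_i t_{br}')$. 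Applying the strong monoidal functor $C_{-*}^{cell}$ and tracking orientations, the product orientation on the source cell is the concatenation of an ordering of the internal edges of $t_{br}$ with one of $t_{br}'$, namely $\omega \wedge \omega'$. This is precisely the partial composition $(t_{br},\omega) \circ_i (t_{br}',\omega') = (t_{br} \circ_i t_{br}', \omega \wedge \omega')$ of $\Omega B As$.

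Finally I would match the differentials, which is the only step calling for an actual sign comparison. Fixing an ordering $e_1,\dots,e_i$ of the finite internal edges of $t_{br}$ identifies $\overline{\mathcal{T}}_n(t_{br})$ with $[0,+\infty]^i$, the coordinate $l_{e_j}$ being the $j$-th factor; its two codimension $1$ faces in the $j$-th direction are $\{ l_{e_j} = 0 \}$, which collapses $e_j$ and produces $t_{br}/e_j$, and $\{ l_{e_j} = +\infty \}$, which breaks $e_j$ and produces $(t_{br})_j$. The cubical boundary formula recalled in subsection~\ref{alg:sss:or-boundary}, namely $\partial_{cub} = \sum_{j=1}^i (-1)^j (\sigma_{j,0} - \sigma_{j,1})$, then contributes in the $j$-th direction the term $(-1)^j \big( (t_{br}/e_j, \hat{\omega}_j) - ((t_{br})_j, \hat{\omega}_j) \big)$, where $\hat{\omega}_j := e_1 \wedge \cdots \wedge \hat{e_{j}} \wedge \cdots \wedge e_i$ is the orientation induced on the face. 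Summing over $j$ reproduces verbatim the definition of $\partial_{\Omega B As}$.

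The main and only genuine obstacle is this last verification: one must confirm that the two ends of the interval $[0,+\infty]$ are correctly identified with the collapse (at $0$) and break (at $+\infty$) operations, and that the orientation conventions of subsection~\ref{alg:sss:or-boundary} line up coordinate by coordinate with Markl--Shnider's $(-1)^j$ signs and with the induced orientation $\hat{\omega}_j$ on each face. Once these are checked, the coincidence of the two operads is immediate, as anticipated in the text.
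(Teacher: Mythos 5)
Your proposal is correct and follows essentially the same route as the paper, which sets up exactly the identification you describe — cells indexed by broken stable ribbon trees, orientations of cells corresponding to orientations (orderings up to even permutation) of trees, and the cubical differential on $[0,+\infty]^{i}$ matching Markl--Shnider's $(-1)^{j}$ signs for collapse at $0$ and break at $+\infty$ — and then declares the proposition tautological. Your extra verification of the compositions and of the sign bookkeeping is just a spelled-out version of what the paper leaves implicit.
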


What's more, it can be easily seen that given a binary ribbon tree $t$, the cells labeled by the immediate neighbours to the tree $t$ in the Tamari order are exactly the cells having a codimension 1 stratum in common with the cell $\overline{\mathcal{T}}_n(t)$.

\subsubsection{The morphism of operads $\Ainf \rightarrow \Omega B As$} \label{alg:sss:morph-of-op}

The moduli space $\overline{\mathcal{T}}_n$ endowed with its \Ainf -cell decomposition is isomorphic to the Loday realization $K_n$ of the associahedron. In fact, tedious computations show that under this isomorphism, the $\Omega B As$-decomposition is sent to the dual subdivision of $K_n$. See appendix C of~\cite{loday-vallette-algebraic-operads} and an illustration in figure~\ref{alg:fig:compact-mod-space-ombas} for instance. The goal of this section is to prove the following proposition : 
\begin{proposition}
The map $\ide : (\overline{\mathcal{T}}_n)_{\Ainf} \rightarrow (\overline{\mathcal{T}}_n)_{\Omega B As}$ is sent under the functor $C_{-*}^{cell}$ to the morphism of operads $\Ainf \rightarrow \Omega B As$ acting as
\[ m_n \longmapsto \sum_{t \in BRT_n} (t, \omega_{can}) \ . \]
\end{proposition}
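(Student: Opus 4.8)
The plan is to prove two separate assertions contained in the statement: that $C_{-*}^{cell}(\ide)$ is a \emph{morphism of operads} $\Ainf \to \ombas$, and that its value on the generator $m_n$ is the signed sum $\sum_{t \in BRT_n}(t,\omega_{can})$ with \emph{all signs equal to $+1$}. For the first assertion I would note that since the $\ombas$-cell decomposition of $\overline{\mathcal{T}}_n$ refines its $\Ainf$-cell decomposition, the identity map $\ide \colon (\overline{\mathcal{T}}_n)_{\Ainf} \to (\overline{\mathcal{T}}_n)_{\ombas}$ is cellular. It moreover commutes with the gluing maps $\circ_i$, which are cellular for both decompositions (polytopal face inclusions on the $\Ainf$ side, and cellular by subsection~\ref{alg:sss:op-ombas} on the $\ombas$ side). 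Hence $\ide$ is a morphism of operads in $\mathtt{CW}$, and applying the strong monoidal functor $C_{-*}^{cell}$ produces a morphism of operads $\Ainf \to \ombas$. It then suffices to compute its value on the top cell $[K_n]=m_n$.

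For the second assertion, recall that the chain map induced by a CW-subdivision sends a top-dimensional cell $\sigma$ to the signed sum of the top-dimensional cells of the subdivision covering $\sigma$, each sign comparing the ambient orientation of $\sigma$ with the chosen orientation of the subdividing cell. Here $\mathcal{T}_n$ is the top $\Ainf$-cell, and the top-dimensional $\ombas$-cells covering it are exactly the $\mathcal{T}_n(t)$ for $t \in BRT_n$, since each such cell has $e(t)=n-2=\dim \mathcal{T}_n$ internal edges. Orienting each $\mathcal{T}_n(t)$ by the restriction $\omega(t)$ of the ambient orientation of $\mathcal{T}_n$, the image of $[K_n]$ is $\sum_{t \in BRT_n}(t,\omega(t))$ with all signs $+1$. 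The proposition therefore reduces to the identity $\omega(t)=\omega_{can}(t)$ for every $t \in BRT_n$.

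To establish $\omega(t)=\omega_{can}(t)$ I would argue by connectedness, avoiding a determinant-by-determinant computation. Let $\eta(t)\in\{\pm 1\}$ be the sign comparing $\omega(t)$ with $\omega_{can}(t)$. Two binary trees $t,t'$ that are immediate Tamari neighbours share a codimension-$1$ face of $\overline{\mathcal{T}}_n$, namely the cell $\mathcal{T}_n(t_0)$ where $t_0$ is obtained by collapsing the edge $e$ of the covering relation; by the remark at the end of subsection~\ref{alg:sss:mod-space-Tn-real}, these are precisely the binary cells adjacent to $\mathcal{T}_n(t)$. The restricted orientations $\omega(t),\omega(t')$ automatically induce \emph{opposite} boundary orientations on $\mathcal{T}_n(t_0)$, being restrictions of a single global orientation of $\mathcal{T}_n$. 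The crux is the local lemma that the canonical orientations $\omega_{can}(t),\omega_{can}(t')$ \emph{also} induce opposite boundary orientations on $\mathcal{T}_n(t_0)$, which is exactly what the defining rule on the Tamari covering relation encodes once read through the length coordinate $l_e$ near the collapse face $\{l_e=0\}$. Granting this, $\eta(t)=\eta(t')$ for adjacent $t,t'$, so $\eta$ is constant on the connected Hasse diagram of the Tamari poset. I would then fix the overall sign by a single base-case computation at the maximal right-leaning comb $t_{max}$, comparing its canonical orientation $\omega_{can}=e_1\wedge\cdots\wedge e_{n-2}$ with the orientation of $K_n$ coming from the Loday realization of subsection~\ref{alg:sss:recover-signs-loday}, which gives $\eta(t_{max})=+1$ and hence $\eta\equiv +1$.

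The main obstacle is this local lemma: verifying that the covering-relation sign rule defining $\omega_{can}$ matches the geometric flip of boundary orientations across the collapsed-edge face $\mathcal{T}_n(t_0)$. Carrying it out requires understanding precisely how the length coordinate $l_e$ on $\mathcal{T}_n(t)$ and the corresponding coordinate $l_{e'}$ on $\mathcal{T}_n(t')$ relate to the common coordinates of $t_0$ as one crosses the face, and checking that the outward-normal-first convention of subsection~\ref{alg:sss:or-boundary} yields opposite orientations exactly when the canonical rule replaces $e$ by $-e$. Once this single local computation is isolated, the connectedness argument propagates the conclusion to all of $BRT_n$ with no global sign computation, which is the sense in which the proof "relies uniquely on polytopes."
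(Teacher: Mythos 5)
Your proposal is correct and follows essentially the same route as the paper: a base-case determinant computation at the right-leaning comb $t_{max}$ in the Loday realization, followed by propagation down the Tamari poset using the fact that the covering-relation sign rule defining $\omega_{can}$ matches the flip of induced boundary orientations across the shared collapsed-edge face. The ``local lemma'' you isolate as the main obstacle is exactly the gluing argument the paper carries out (identifying the union of two adjacent cells with a copy of $[0,+\infty]^{n-2}$ in which the axis of the collapsing edge is reversed), so your connectedness packaging of the induction adds clarity but no new content.
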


For this purpose, we will work with the Loday realizations of the associahedra. We will show that taking the restriction of the orientation of $K_n$ chosen in section~\ref{alg:ss:loday-assoc} to the top dimensional cells of its dual subdivision yields the canonical orientations on these cells in the $\overline{\mathcal{T}}_n$ viewpoint.

We begin by proving this statement for the cell labeled by the right-leaning comb $t_{max}$. Consider the orientation on the cell $\overline{\mathcal{T}}_n(t_{max})$ induced by the canonical ordering $e_1 , \dots , e_{n-2}$ under the isomorphism
\[ \overline{\mathcal{T}}_n(t_{max}) \ \tilde{\longrightarrow} \ [0,+\infty]^{n-2} \ . \]
The face of $\overline{\mathcal{T}}_n(t_{max})$ associated to the breaking of the $i$-th edge corresponds to the face $H_{i,n-i,0}$ when seen in the Loday polytope. An outward-pointing vector for the face $H_{i,n-i,0}$ is moreover
\[ \nu_i := (0 , \dots , 0 , 1_{i} , \dots , 1_{n-2}) \ , \]
where coordinates are taken in the basis $e_j^\omega$.
The orientation defined by the canonical basis of $[0,+\infty]^{n-2}$ being exactly the one defined by the ordered list of the outwarding-point vectors to the $+ \infty$ boundary, it is sent to the orientation of the basis $(\nu_1,\dots,\nu_{n-2})$ in the Loday polytope. We then check that 
\[ \mathrm{det}_{e_j^\omega} ( \nu_j) = 1 \ . \]
Hence the orientation of $K_n$ and the one induced by the canonical orientation are the same for the cell $\overline{\mathcal{T}}_n(t_{max})$.

\begin{figure}[h]
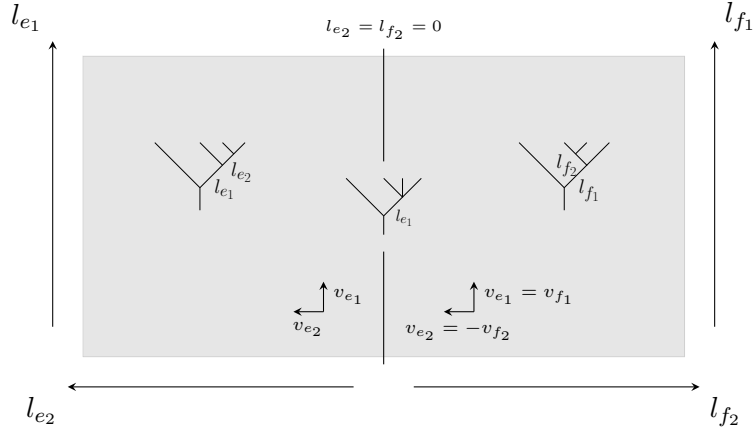
 
\centering
\coveringrelationsgeom
\caption{Gluing the cells $\overline{\mathcal{T}}_n(t_{max})$ and $\overline{\mathcal{T}}_n(t)$ along their common boundary : on this diagram, a vector of the form $v_e$ is the vector orienting the axis associated to the length $l_e$} \label{alg:fig:covering-rel}
\end{figure}

As explained in the previous subsection, the cells labeled by the immediate neighbours of the right-leaning comb $t_{max}$ in the Tamari order are exactly the cells having a codimension 1 stratum in common with this cell. Choose an immediate neighbour $t$, and write $e$ for the edge that has been collapsed to obtain the common codimension 1 stratum. We detail the process to obtain the induced orientation on $\overline{\mathcal{T}}_n(t)$ following figure~\ref{alg:fig:covering-rel}. Gluing the cells $\overline{\mathcal{T}}_n(t_{max})$ and $\overline{\mathcal{T}}_n(t)$ along their common boundary, we obtain a new copy of $[0,+\infty]^{n-2}$ which can be divided into two halves $t_{max}$ and $t$. We then orient the total space $[0,+\infty]^{n-2}$ as the $t_{max}$ half. Reading the induced orientation on the $t$ half, it is the one obtained from the $t_{max}$ half by reversing the axis associated to the edge $e$. By construction, this orientation is exactly the one obtained by restricting the global orientation on $K_n$ to an orientation on $\mathcal{T}_n(t)$.

Finally, going down the Tamari order, we can read the induced orientation on the top dimensional cells one immediate neighbour after another. And the rule to do this step-by-step process is exactly the one given in~\ref{alg:sss:canon-or-ribbon-tree} on the covering relations. Hence, by construction, the global orientation on $K_n$ restricts to the canonical orientations on binary trees, which concludes the proof of Proposition~\ref{alg:prop:markl-shnider-un}.

\subsection{The moduli spaces $\mathcal{CT}_n(t_{br,g})$} \label{alg:ss:mod-space-CTm}

We give a detailed definition of the moduli spaces of gauged stable metric ribbon trees $\mathcal{CT}_n(t_{g})$, introduced in part~\ref{alg:ss:multipl-two-col-metr-tree}. Building on these explicit realizations, we then thoroughly compute the signs appearing in the codimension 1 strata of the compactified moduli spaces $\overline{\mathcal{CT}}_n(t_{g})$. This yields in particular the signs which will appear in subsection~\ref{alg:sss:def-op-bimod-ombasmorph}, in the definition of the differential on the operadic bimodule $\Omega B As - \mathrm{Morph}$.

\subsubsection{Definition}\label{alg:sss:CTm-def}

In the rest of the section, we will write $t_{br,g}$ for a broken gauged stable ribbon tree, and $t_g$ for an unbroken gauged stable ribbon tree. 
\begin{definition}
We set \arbreopunmorph\ to be the unique stable gauged tree of arity 1, and will call it the \emph{trivial gauged tree}. We define the underlying broken stable ribbon tree $t_{br}$ of a $t_{br,g}$ to be the ribbon tree obtained by first deleting all the \arbreopunmorph\ in $t_{br,g}$, and then forgetting all the remaining gauges of $t_{br,g}$. We refer moreover to a gauge in $t_{br,g}$ which is associated to a non-trivial gauged tree, as a \emph{non-trivial gauge} of $t_{br,g}$. 
\end{definition}

\begin{figure}[h]
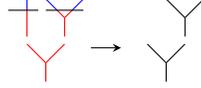
 
\exampleunderlyingbroken
\caption{An instance of association $t_{br,g} \mapsto t_{br}$}
\end{figure} \label{alg:fig:underlying-broken}

We now define the moduli spaces $\mathcal{CT}_n(t_{br,g})$ in three steps. Consider a gauged stable ribbon tree $t_g$ whose gauge does not intersect any of its vertices. Locally at any vertex directly adjacent to the gauge, the intersection between the gauge and the edges of $t$ corresponds to one of the following two cases
\begin{align*}
\localgaugeedgeA && \localgaugeedgeB \ .
\end{align*}
Write $r$ for the root, the unique vertex adjacent to the outgoing edge. For a vertex $v$, we denote $d(r,v)$ the distance separating it from the root : the sum of the lengths of the edges appearing in the unique non self-crossing path going from $r$ to $v$.  Associating lengths $l_e > 0$ to all edges of $t$, we then associate the following inequalities to the two above cases
\begin{align*}
- \lambda > d ( r,v) && - \lambda < d(r,v') \ .
\end{align*}
Note that this set of inequalities amounts to seeing the gauge as going towards $-\infty$ when going up, and towards $+\infty$ as going down.
The moduli space $\mathcal{CT}_n(t_{g})$ is then defined as 
\[ \mathcal{CT}_n(t_{g}) := \left\{ ( \lambda , \{ l_e \}_{e \in E(t)} ) \  , \ \lambda \in \R  , \ l_e > 0  , \ - \lambda > d ( r,v)  , \ - \lambda < d(r,v') \right\} \ , \]
where the set of inequalities on $\lambda$ is prescribed by the gauged tree $t_g$.

Consider now a gauged stable ribbon tree $t_g$ whose gauge may intersect some of its vertices. To the two previous local pictures, one has to add the case 
\[ \localgaugeedgeC  \]
to which we associate the equality
\[ - \lambda = d(r,v'') \ . \]
The moduli space $\mathcal{CT}_n(t_{g})$ is this time defined as 
\[ \mathcal{CT}_n(t_{g}) := \left\{ ( \lambda , \{ l_e \}_{e \in E(t)} ) \  , \ \lambda \in \R  , \ l_e > 0  , \ - \lambda > d ( r,v)  , \ - \lambda < d(r,v') , \ - \lambda = d(r,v'') \right\} \ , \]
where the set of equalities and inequalities on $\lambda$ is prescribed by the gauged tree $t_g$.

Finally, consider a gauged broken stable ribbon tree $t_{br,g}$, whose gauges may intersect some of its vertices. We order the non-trivial unbroken gauged ribbon trees appearing in $t_{br,g}$ from left to right, as 
\[
\rouge{\underbrace{\ordreunbrokengaugedtrees}_{\rouge{t_{br}}}}
\] 
where $t_{br}^{1,1} , \dots , t_{br}^{1,i_1} , \dots , t_{br}^{s,1} , \dots , t_{br}^{s,i_s}$ and $t_{br}$ are broken stable ribbon trees, and the non-trivial unbroken gauged ribbon trees are represented in the picture as gauged corollae $t_g^1 , \dots , t_g^s$ for the sake of readability.
We write moreover $r_1, \dots , r_s$ and $\lambda_1 , \dots , \lambda_s$ for their respective roots and gauges.
The moduli space $\mathcal{CT}_n(t_{br,g})$ is this time defined as 
\[ \mathcal{CT}_n(t_{br,g}) := 
		\left\{ \begin{array}{c}
       ( \lambda_1 , \dots , \lambda_s , \{ l_e \}_{e \in E(t_{br})} ) \  , \ \lambda_i \in \R  , \ l_e > 0  , \\
  - \lambda_i > d ( r_i,v)  , \ - \lambda_i < d(r_i,v') , \ - \lambda_i = d(r_i,v'')
  \end{array} \right\} \ ,
\]
where the set of equalities and inequalities on $\lambda_i$ is prescribed by the unbroken gauged tree $t_g^i$.

\subsubsection{Orienting the moduli spaces $\mathcal{CT}_n(t_{br,g})$} \label{alg:sss:or-CTm}

\begin{definition}
Define an \emph{orientation} on a broken gauged stable ribbon tree $t_{br,g}$, to be an orientation $e_{1} \wedge \dots \wedge e_{i}$ on $t_{br}$. 
\end{definition}

We now explain how to orient the moduli spaces $\mathcal{CT}_n(t_{br,g})$, following the previous three steps approach.
Begin with a gauged stable ribbon tree $t_g$ whose gauge does not intersect any of its vertices. An orientation $\omega$ on $t_g$ identifies $\mathcal{CT}_n(t_{g})$ with a polyhedral cone 
\[ \mathcal{CT}_n(t_{g}) \subset ]- \infty , + \infty [ \times ] 0 , + \infty [^{e(t)} \ , \]
defined by the inequalities $- \lambda > d ( r,v)$ and $- \lambda < d(r,v')$. This polyhedral cone has dimension $e(t)+1$, and we choose to orient it as an open subset of $]- \infty , + \infty [ \times ] 0 , + \infty [^{e(t)}$ endowed with its canonical orientation.

Consider now a gauged stable ribbon tree $t_g$ whose gauge may intersect some of its vertices. This time, an orientation $\omega$ on $t_g$ identifies $\mathcal{CT}_n(t_{g})$ with a polyhedral cone 
\[ \mathcal{CT}_n(t_{g}) \subset ]- \infty , + \infty [ \times ] 0 , + \infty [^{e(t)} \ , \]
defined by the inequalities $- \lambda > d ( r,v)$ and $- \lambda < d(r,v')$, to which we add the equalities $- \lambda = d(r,v'')$. If there are exactly $j$ gauge-vertex intersections in the gauged tree $t_g$, this polyhedral cone has dimension $e(t)+1 - j$. Order now the $j$ intersections from left to right 
\[ \ordreunbrokengaugedtreeintersectionsA \ , \]
and consider the tree $t_g'$ obtained by replacing these intersections by 
\[ \ordreunbrokengaugedtreeintersectionsB \ . \]
One can see $t_g$ as lying in the boundary of $t_g'$, by allowing the inequalities $- \lambda > d ( r,v_k)$ to become equalities $- \lambda = d ( r,v_k)$ for $k=1,\dots,j$. This determines in particular $j$ vectors $\nu_k$ corresponding to the outwarding-pointing vectors to the boundary of the half-space $- \lambda \geqslant d ( r,v_k)$. We finally choose to coorient (and hence orient) $\mathcal{CT}_n(t_{g} )$ inside $]- \infty , + \infty [ \times ] 0 , + \infty [^{e(t)}$ with the vectors $( \nu_1 , \dots , \nu_j)$. 

Lastly, consider a gauged broken stable ribbon tree $t_{br,g}$, whose gauges may intersect some of its vertices. Suppose there are exactly $s$ non-trivial unbroken gauged trees $t_g^1 , \dots , t_g^s$ appearing in $t_{br,g}$, which are ordered from left to right as previously. Suppose also that in each tree $t_g^i$, there are $j_i$ gauge-vertex intersections. An orientation $\omega$ on $t_{br,g}$ identifies $\mathcal{CT}_n(t_{br,g})$ with a polyhedral cone 
\[ \mathcal{CT}_n(t_{br,g}) \subset ]- \infty , + \infty [^s \times ] 0 , + \infty [^{e(t_{br})} \ , \]
defined by the set of equalities and inequalities on the $\lambda_i$, and where the factor $]- \infty , + \infty [^s$ corresponds to $(\lambda_1, \dots , \lambda_s)$. This polyhedral cone has dimension $e(t_{br}) + s - \sum_{i=1}^s j_i$. Now, as in the previous paragraph, order all gauge-vertex intersections from left to right in every tree $t_g^i$, and construct a new tree $t_{br,g}'$. Seeing $\mathcal{CT}_n(t_{br,g})$ as lying in the boundary of $\mathcal{CT}_n(t_{br,g}')$, this determines again a collection of outward-pointing vectors $\nu_{i,1} , \dots , \nu_{i,j_i}$ for $i= 1 , \dots , s$. We then coorient $\mathcal{CT}_n(t_{br,g} )$ inside $]- \infty , + \infty [^s \times ] 0 , + \infty [^{e(t_{br})}$ with the vectors $( \nu_{1,1} , \dots , \nu_{1,j_1} , \dots , \nu_{s,1} , \dots , \nu_{s,j_s})$.

\begin{definition}
We define $\mathcal{CT}_n(t_{br,g}, \omega )$ to be the moduli space $\mathcal{CT}_n(t_{br,g} )$ endowed with the previous orientation. 
\end{definition}

We moreover insist on the fact that for a given broken stable ribbon tree type $t_{br}$ all gauged trees $t_{br,g}$ whose underlying ribbon tree is $t_{br}$ form polyhedral cones $\subset ]- \infty , + \infty [^s \times ] 0 , + \infty [^{e(t_{br})}$, and the collection of these polyhedral cones is a partition of 
$ ]- \infty , + \infty [^s \times ] 0 , + \infty [^{e(t_{br})}$. This is illustrated in figure~\ref{alg:fig:decompo-polyedrale}.

\begin{figure}[h] 
\centering
\exampledecompositionpolyedrale
\caption{} \label{alg:fig:decompo-polyedrale}
\end{figure}

\subsubsection{Compactification} \label{alg:sss:compact-CTm}

Recall from section~\ref{alg:ss:multipl-two-col-metr-tree} that each broken gauged ribbon tree $t_{br,g}$ can be seen as a broken two-colored ribbon tree $t_{br,c}$. Using the two-colored metric trees viewpoint, the compactification of $\mathcal{CT}_n(t_{br,c})$ is defined by allowing lengths of internal edges to go towards $0$ or $+ \infty$, where combinatorics are induced by the equalities defined by the colored vertices. The compactification rule for gauged metric trees is then simply defined by transporting the compactification rule from the two-colored viewpoint to the gauged viewpoint. We do not give further details here, as we won't need them in our upcoming computations.

For a gauged stable ribbon tree $t_g$, the compactified moduli space $\overline{\mathcal{CT}}_n(t_{g})$ has codimension 1 strata given by the four components (int-collapse), (gauge-vertex), (above-break) and (below-break). Choose an orientation $\omega$ for $t_{g}$. As for the moduli spaces $\mathcal{T}_n(t,\omega)$, the question is now to determine which signs appear in the boundary of the compactification of the oriented moduli space $\mathcal{CT}_n(t_{g}, \omega )$. We will inspect this matter in the four upcoming sections, computing the signs for each boundary component. Note that this time the compactification is much more elaborate than the cubical compactification of the $\mathcal{T}_n(t, \omega )$, and as a result we will not be able to write nice and elegant formulae. We will rather give recipes to compute the signs in each case. 

\subsubsection{The (int-collapse) boundary component} \label{alg:sss:CTm-int-collapse-bound}

Consider a gauged stable ribbon tree $t_g$. The (int-collapse) boundary corresponds to the collapsing of an internal edge that does not intersect the gauge of the tree $t$. Choosing an ordering $\omega = e_1 \wedge \cdots \wedge e_i$, suppose that it is the $p$-th edge of $t$ which collapses. Write moreover $(t/e_p)_g$ for the resulting gauged tree, and $\omega_p := e_1 \wedge \cdots \wedge \widehat{e_p} \wedge \cdots \wedge e_i$ for the induced ordering on the edges of $t/e_p$.

We begin by considering the case of a gauged tree $t_g$ whose gauge does not intersect any of its vertices. Suppose first that the collapsing edge is located above the gauge. A neighbordhood of the boundary can then be parametrized as 
\begin{align*}
]-1,0] \times \mathcal{CT}_n((t/e_p)_{g}, \omega_p ) &\longrightarrow \overline{\mathcal{CT}}_n(t_{g}, \omega ) \\
(\delta , \lambda , l_1 , \dots , \widehat{l_p} , \dots , l_i ) &\longmapsto (\lambda , l_1 , \dots , l_p := - \delta , \dots , l_i ) \ .
\end{align*}
This map has sign $(-1)^{p+1}$, and the component $\mathcal{CT}_n((t/e_p)_{g}, \omega_p )$ consequently bears a $(-1)^{p+1}$ sign in the boundary of $\overline{\mathcal{CT}}_n(t_{g}, \omega )$.

Suppose next that the collapsing edge is located below the gauge. We define a parametrization of a neighborhood of the boundary 
\[ ]-1,0] \times \mathcal{CT}_n((t/e_p)_{g}, \omega_p ) \longrightarrow \overline{\mathcal{CT}}_n(t_{g}, \omega ) \]
as follows : $\lambda$ is sent to $\lambda + \delta$ ; if the edge $e_q$ is located directly below a gauge-edge intersection
\[ \intersectiongaugeedgeedgecollaps , \]
then we send $l_q$ to $l_q - \delta$ ; for all the other edges $e_q$ of $(t/e_p)$, we send $l_q$ to $l_q$ ; finally, we set $l_p := -\delta$. We check again that this map has sign $(-1)^{p+1}$. Hence, in general, for a gauged tree $t_g$ whose gauge does not intersect any of its vertices, the component $\mathcal{CT}_n((t/e_p)_{g}, \omega_p )$ bears a $(-1)^{p+1}$ sign in the boundary of $\overline{\mathcal{CT}}_n(t_{g}, \omega )$.

Move on to the case of a gauged stable ribbon tree $t_g$ whose gauge may intersect some of its vertices. Order the $j$ gauge-vertex intersections from left to right as depicted in subsection~\ref{alg:sss:or-CTm}. We are going to distinguish three cases, but will eventually end up with the same sign in each case. Suppose to begin with that the collapsing edge $e_p$ is located above the gauge, and is not adjacent to a gauge-vertex intersection. Then, denoting $(t/e_p)_{g}'$ the tree obtained via the same process as $t_{g}'$, we check that the first parametrization introduced in this section 
\[ \Phi : \ ]-1,0] \times \mathcal{CT}_n((t/e_p)_{g}', \omega_p ) \longrightarrow \overline{\mathcal{CT}}_n(t_{g}', \omega ) \ , \]
restricts to a parametrization of a neighborhood of the boundary
\[ \phi : \ ]-1,0] \times \mathcal{CT}_n((t/e_p)_{g}, \omega_p ) \longrightarrow \overline{\mathcal{CT}}_n(t_{g}, \omega ) \ . \]
We also check that $\Phi$ sends the outward-pointing vectors $\nu_k^{(t/e_p)}$ associated to the gauge-vertex intersections in $(t/e_p)_g$, to the outward-pointing vectors $\nu_k^{t}$ associated to the gauge-vertex intersections in $t_g$. Computing the sign of $\phi$ amounts to computing the sign of $\Phi$ and then exchanging the direction $\delta$ with the outward-pointing vectors $\nu_1^{t} , \dots , \nu_j^{t}$. The total sign is hence $(-1)^{p+1+j}$.

Suppose, as second case, that the collapsing edge $e_p$ is located above the gauge, and directly adjacent to a gauge-vertex intersection.
\[ \directlyadjacentgaugevertex \ . \]
We cannot use the trees $(t/e_p)_{g}'$ and $t_{g}'$ as in the last paragraph, as the gauge would then cut the edge $e_p$ in the gauged tree $t_{g}'$. A small change is required. We form the tree $t_{g}''$ as the tree $t_{g}'$, but instead of moving the gauge up at the vertex $v_k$, we move it down. The tree $(t/e_p)_{g}''$ is defined similarly. Applying the same argument as previously, we compute again a $(-1)^{p+1+j}$ sign for the boundary.

Finally, suppose that the collapsing edge $e_p$ is located below the gauge. It may this time be directly adjacent to a gauge-vertex intersection. Introducing again the trees $(t/e_p)_{g}'$ and $t_{g}'$, and using this time the second parametrization introduced in this section, we find a $(-1)^{p+1+j}$ sign for the boundary. Note that there is a small adjustment to make in the proof for the outward-pointing vectors. Indeed, the outward-pointing vector $\nu_k^{(t/e_p)}$ gets again sent to the outward-pointing vector $\nu_k^{t}$, except if the edge $e_p$ is located in the non-self crossing path going from the vertex $v_k$ intersected by the gauge to the root. For such an intersection, the vector $\nu_k^{(t/e_p)}$ is sent to $\nu_k^{t} - e_p$ by the map $\Phi$, where $e_p$ is the positive direction for the length $l_p$. Though the vector $\nu_k^{t} - e_p$ is  not equal to $\nu_k^{t}$, it is still outward-pointing to the half-space $- \lambda \geqslant d(r , v_k)$. As a result, $\Phi ( \nu_1^{(t/e_p)} ) , \dots , \Phi ( \nu_j^{(t/e_p)} )  $ defines indeed the same coorientation of $\mathcal{CT}_n(t_{g}, \omega )$ as $\nu_1^{t} , \dots , \nu_j^{t}$.

\begin{proposition} \label{alg:prop:int-collapse-signs}
For a gauged stable ribbon tree $t_g$ whose gauge intersects $j$ vertices, the boundary component $\mathcal{CT}_n((t/e_p)_{g}, \omega_p )$ corresponding to the collapsing of the $p$-th edge of $t$ bears a $(-1)^{p+1+j}$ sign in the boundary of $\overline{\mathcal{CT}}_n(t_{g}, \omega )$.
\end{proposition}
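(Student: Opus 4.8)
The plan is to reduce the computation to an explicit comparison of orientations via the parametrizations of a neighborhood of the (int-collapse) boundary set up above. First I would treat the base case of a gauged tree $t_g$ whose gauge meets no vertex, so that $j=0$ and $\mathcal{CT}_n(t_g,\omega)$ is an open subset of $]-\infty,+\infty[\,\times\,]0,+\infty[^{e(t)}$ with its canonical orientation. Writing $\omega = e_1 \wedge \cdots \wedge e_i$ and supposing the $p$-th edge collapses, I would exhibit the two parametrizations $]-1,0] \times \mathcal{CT}_n((t/e_p)_g,\omega_p) \to \overline{\mathcal{CT}}_n(t_g,\omega)$ — one for a collapsing edge located above the gauge, one for a collapsing edge below it — and check that in each case the map has sign $(-1)^{p+1}$. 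This sign comes from the reflection $l_p = -\delta$ together with commuting the collapsing direction $\delta$ past the $p$ preceding coordinates $\lambda, l_1, \dots, l_{p-1}$; the shifts of $\lambda$ (and, below the gauge, of the lengths directly beneath a gauge-edge intersection) are shears and contribute no extra sign.

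Next I would incorporate the gauge-vertex intersections, where the genuine content lies. Ordering the $j$ intersections of $t_g$ from left to right, I would pass to the auxiliary trees $t_g'$ and $(t/e_p)_g'$ obtained by moving each gauge slightly off its vertex, so that $\mathcal{CT}_n(t_g,\omega)$ sits in the boundary of $\mathcal{CT}_n(t_g',\omega)$ cooriented by the outward-pointing vectors $\nu_1^t,\dots,\nu_j^t$. Since, by our convention, the induced orientation is $(\nu_1^t,\dots,\nu_j^t)$ concatenated with the boundary orientation, computing the sign of the restricted parametrization $\phi$ amounts to computing the sign of the unrestricted $\Phi$ and then commuting the collapsing direction $\delta$ past the $j$ coorienting vectors. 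This introduces the extra factor $(-1)^j$ and yields the total $(-1)^{p+1+j}$.

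The main obstacle, and the step I would treat most carefully, is verifying that $\Phi$ genuinely carries the coorientation of $(t/e_p)_g$ to that of $t_g$, i.e. that $\Phi(\nu_1^{(t/e_p)}),\dots,\Phi(\nu_j^{(t/e_p)})$ and $\nu_1^t,\dots,\nu_j^t$ induce the same coorientation of $\mathcal{CT}_n(t_g,\omega)$. Here I would split into the three configurations already isolated: the collapsing edge above the gauge and away from an intersection, above and directly adjacent to an intersection, and below the gauge. In the adjacent case the naive choice $t_g'$ fails, since the displaced gauge would cut $e_p$; I would instead form $t_g''$ by moving the relevant gauge \emph{downward} at that vertex, and rerun the same argument. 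The delicate point, arising below the gauge, is that when $e_p$ lies on the non-self-crossing path from an intersected vertex $v_k$ to the root, $\Phi$ sends $\nu_k^{(t/e_p)}$ not to $\nu_k^t$ but to $\nu_k^t - e_p$, with $e_p$ the positive direction of $l_p$; I would check that $\nu_k^t - e_p$ is still outward-pointing for the half-space $-\lambda \geqslant d(r,v_k)$, hence defines the same coorientation, so the sign count is unaffected. Once all three configurations are seen to produce the same $(-1)^{p+1+j}$, the proposition follows by collecting the cases.
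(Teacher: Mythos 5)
Your proposal is correct and follows essentially the same route as the paper: the same two explicit parametrizations with $l_p:=-\delta$ giving $(-1)^{p+1}$ in the gauge-misses-all-vertices case, the same passage to the auxiliary trees $t_g'$ (and $t_g''$ in the adjacent case) with the coorientation by $\nu_1^t,\dots,\nu_j^t$ contributing the extra $(-1)^j$, and the same verification that $\nu_k^{(t/e_p)}$ maps to $\nu_k^t - e_p$, which still coorients the same half-space. Nothing is missing.
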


\subsubsection{The (gauge-vertex) boundary component} \label{alg:sss:CTm-gauge-vertex-bound}

Consider a gauged stable ribbon tree $t_g$ whose gauge may intersect some of its vertices. We order the gauge-vertex intersections from left to right as depicted in subsection~\ref{alg:sss:or-CTm}. The (gauge-vertex) boundary corresponds to the gauge crossing exactly one additional vertex of $t$. We suppose that this intersection takes place between the $k$-th and $k+1$-th intersections of $t_g$. We write moreover $t_g^{0}$ for the resulting gauged tree, and introduce again the tree $t_g'$ of subsection~\ref{alg:sss:or-CTm}.

\begin{proposition}
Suppose the crossing results from a move
\[ \transformationgaugevertexA \ . \]
Then the boundary component $\mathcal{CT}_n(t^{0}_{g}, \omega )$ has sign $(-1)^{j+k}$ in the boundary of $\overline{\mathcal{CT}}_n(t_{g}, \omega )$.
\end{proposition}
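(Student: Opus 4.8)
The plan is to follow the same strategy as in the (int-collapse) computation of Proposition~\ref{alg:prop:int-collapse-signs}, namely to exhibit an explicit parametrization of a neighborhood of the boundary component $\mathcal{CT}_n(t^{0}_{g}, \omega )$ inside $\overline{\mathcal{CT}}_n(t_{g}, \omega )$, compute the sign of this parametrization, and then account for the reordering of the outward-pointing coorientation vectors. The (gauge-vertex) boundary is the degeneration in which the gauge slides up to cross one additional vertex $v$, turning a configuration $\localgaugeedgeB$ into a configuration $\localgaugeedgeA$ at that vertex. Geometrically, this is the locus where one of the strict inequalities $-\lambda < d(r,v')$ associated to the vertex immediately below the gauge becomes an equality $-\lambda = d(r,v'')$, i.e. a new gauge-vertex intersection $\nu$ is created. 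The tree $t^0_g$ thus has $j+1$ gauge-vertex intersections, and it is the $(k+1)$-th one when reading from left to right (the hypothesis that the crossing happens between the $k$-th and $(k+1)$-th intersections places the new vertex in position $k+1$).

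First I would set up the local parametrization. Working inside $]-\infty,+\infty[^{\,1} \times ]0,+\infty[^{\,e(t)}$, introduce a single parameter $\delta \in \,]-1,0]$ measuring the signed gap $-\lambda - d(r,v)$ between the gauge and the crossed vertex, and write the map
\[ \phi : \ ]-1,0] \times \mathcal{CT}_n(t^{0}_{g}, \omega ) \longrightarrow \overline{\mathcal{CT}}_n(t_{g}, \omega ) \ , \]
where $\delta = 0$ recovers the boundary stratum $\mathcal{CT}_n(t^0_g,\omega)$. As in the auxiliary-tree argument of subsection~\ref{alg:sss:CTm-int-collapse-bound}, I would pass through the tree $t_g'$ in which all existing gauge-vertex intersections have been opened up, so that $\mathcal{CT}_n(t_g,\omega)$ sits in its boundary via the outward-pointing vectors $\nu_1^{t},\dots,\nu_j^{t}$, and $\mathcal{CT}_n(t^0_g,\omega)$ sits in the boundary of $t^0_g{}'$ via $j+1$ vectors. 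The key bookkeeping is: the new intersection contributes an outward-pointing direction $\nu$ sitting in position $k+1$ among the ordered coorientation vectors. Computing the sign of $\phi$ then amounts to commuting the interval direction $\delta$ past the coorientation vectors and inserting $\nu$ into its correct slot.

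The sign count proceeds as follows. The variable $-\lambda$ being the very first coordinate in our chosen ordering of $]-\infty,+\infty[ \times ]0,+\infty[^{\,e(t)}$, the parameter $\delta$ (built from $-\lambda$ and the lengths along the path from $r$ to $v$) must first be transported past the $j$ outward-pointing vectors $\nu_1^t,\dots,\nu_j^t$ already defining the coorientation of $\mathcal{CT}_n(t_g,\omega)$, producing a factor $(-1)^{j}$. Then the newly created coorientation direction $\nu$ must be moved from the front into its place as the $(k+1)$-th vector, passing the first $k$ vectors $\nu_1^t,\dots,\nu_k^t$ and hence contributing $(-1)^{k}$. Multiplying these gives the claimed $(-1)^{j+k}$. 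I expect the main obstacle to be precisely this combinatorial matching of the coorientation vectors across the two trees $t_g'$ and $t^0_g{}'$: one must verify that $\phi$ sends each $\nu_m^{(t^0)}$ to a vector inducing the same coorientation of $\mathcal{CT}_n(t_g,\omega)$ as $\nu_m^{t}$ (up to the usual correction by a $\pm e_p$ direction for intersections whose path to the root is affected by the slide), exactly as in the delicate final case of Proposition~\ref{alg:prop:int-collapse-signs}. Once this identification is confirmed, the determinant computation is routine and the sign $(-1)^{j+k}$ follows.
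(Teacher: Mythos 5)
Your combinatorial count is the right one --- moving the new coorientation vector $\nu$ from the end of the list $(\nu_1,\dots,\nu_j,\nu)$ into position $k+1$ costs $j-k$ transpositions, i.e. $(-1)^{j-k}=(-1)^{j+k}$ --- and this is essentially the computation the paper performs. But the paper needs no parametrization at all: since the auxiliary trees satisfy $(t^0_g)'=t_g'$, both $\mathcal{CT}_n(t_g,\omega)$ and $\mathcal{CT}_n(t^0_g,\omega)$ sit inside the single ambient cone $\mathcal{CT}_n(t_g',\omega)$, the first cooriented by $(\nu_1,\dots,\nu_j)$ and the second, intrinsically, by $(\nu_1,\dots,\nu_k,\nu,\nu_{k+1},\dots,\nu_j)$; the boundary coorientation of the latter inside the former is $(\nu_1,\dots,\nu_j,\nu)$, and the comparison is immediate. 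Your worry about matching each $\nu_m^{(t^0)}$ with $\nu_m^{t}$ up to $\pm e_p$ corrections, as in the delicate case of the (int-collapse) computation, is vacuous here: no edge collapses, the ambient cone does not change, and the vectors are literally the same.

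The genuine gap is that you have identified the wrong degeneration. You describe the boundary as the locus where a strict inequality $-\lambda<d(r,v')$ becomes an equality, with the gauge sliding up to a vertex lying above it; that is the move of the \emph{next} proposition. The move in the present statement starts from the configuration $-\lambda>d(r,v)$ (gauge strictly above the vertex) and degenerates to $-\lambda=d(r,v)$. This is not a cosmetic slip, because the entire difference between the signs $(-1)^{j+k}$ and $(-1)^{j+k+1}$ of the two propositions is whether the outward-pointing vector at the new equality equals $\nu$ (the outward normal to the half-space $-\lambda\geqslant d(r,v)$, which is the convention defining the intrinsic coorientation of $t^0_g$) or equals $-\nu$. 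For the configuration you describe the outward normal is $-\nu$, which would yield $(-1)^{j+k+1}$; and your parameter $\delta=-\lambda-d(r,v)\in\,]-1,0]$ is only consistent with that configuration, since in the present case $-\lambda-d(r,v)>0$ on the interior. As written, your argument would assign the same sign to both (gauge-vertex) moves, so the agreement with the stated $(-1)^{j+k}$ is not actually established: you need to fix the direction of the degeneration and record explicitly the sign relating $\partial_\delta$ to $\nu$.
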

Indeed the orientation induced on $\mathcal{CT}_n(t^{0}_{g}, \omega )$ in the boundary of $\overline{\mathcal{CT}}_n(t_{g}, \omega )$, is defined by the coorientation $(\nu_1 , \dots , \nu_k , \widehat{\nu} , \nu_{k+1} , \dots , \nu_j , \nu)$ inside $\mathcal{CT}_n(t_{g}', \omega )$. The orientation defined by $\omega$ on $\mathcal{CT}_n(t^{0}_{g}, \omega )$, is the one defined by the coorientation $(\nu_1 , \dots , \nu_k , \nu , \nu_{k+1} , \dots , \nu_j )$ inside $\mathcal{CT}_n(t_{g}', \omega )$. Hence, these two orientations differ by a $(-1)^{j+k}$ sign.

\begin{proposition}
Suppose the crossing results from a move
\[ \transformationgaugevertexB \ . \]
Then the boundary component $\mathcal{CT}_n(t^{0}_{g}, \omega )$ has sign $(-1)^{j+k+1}$ in the boundary of $\overline{\mathcal{CT}}_n(t_{g}, \omega )$.
\end{proposition}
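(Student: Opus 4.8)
The plan is to run exactly the same orientation-comparison argument used to prove the preceding proposition (the one for the move $\transformationgaugevertexA$), and track the single additional sign that the reversed move introduces. So first I would recall the setup from subsection~\ref{alg:sss:or-CTm}: for the gauged tree $t_g$ with $j$ gauge-vertex intersections ordered from left to right, the orientation on $\mathcal{CT}_n(t_{g}, \omega )$ is fixed by cooriention with the outward-pointing vectors $(\nu_1 , \dots , \nu_j)$ inside the ambient moduli space $\mathcal{CT}_n(t_{g}', \omega )$. The new crossing produces one further intersection, hence one further outward-pointing vector $\nu$, and the whole question reduces to where $\nu$ sits in the ordered list of coorienting vectors relative to the orientation induced as a boundary face.

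Next I would carefully identify the boundary orientation. Since the crossing takes place between the $k$-th and $(k+1)$-th intersections, the coorientation of $\mathcal{CT}_n(t^0_{g}, \omega )$ induced from the boundary of $\overline{\mathcal{CT}}_n(t_{g}, \omega )$ is $(\nu_1 , \dots , \nu_k , \widehat{\nu} , \nu_{k+1} , \dots , \nu_j , \nu)$ inside $\mathcal{CT}_n(t_{g}', \omega )$, exactly as in the previous proposition, with $\nu$ pushed to the last slot by the compactification convention. The intrinsic orientation defined by $\omega$ on $\mathcal{CT}_n(t^0_{g}, \omega )$ is $(\nu_1 , \dots , \nu_k , \nu , \nu_{k+1} , \dots , \nu_j )$. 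Comparing the two lists requires moving $\nu$ from position $j+1$ back to position $k+1$, i.e. past $\nu_{k+1}, \dots, \nu_j$, which is $j-k$ transpositions, giving a $(-1)^{j-k} = (-1)^{j+k}$ sign in the earlier proposition.

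The only new ingredient here is the direction of the move. In the move $\transformationgaugevertexB$ the gauge sweeps past the vertex in the opposite sense compared to $\transformationgaugevertexA$, so the outward-pointing vector $\nu$ associated to this crossing is the negative of the one in the previous case (the relevant half-space inequality $-\lambda \geqslant d(r,v)$ is replaced by $-\lambda \leqslant d(r,v)$, flipping the outward normal). Replacing $\nu$ by $-\nu$ in the coorientation contributes one extra sign, so the comparison yields $(-1)^{j+k}\cdot(-1) = (-1)^{j+k+1}$. I would therefore conclude that the boundary component $\mathcal{CT}_n(t^{0}_{g}, \omega )$ carries the sign $(-1)^{j+k+1}$ in the boundary of $\overline{\mathcal{CT}}_n(t_{g}, \omega )$, as claimed.

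I expect the main obstacle to be purely bookkeeping rather than conceptual: namely making fully rigorous the claim that the direction-reversed move genuinely negates the coorienting vector $\nu$ while leaving all the $\nu_i$ unchanged, and that the compactification convention still places the new crossing's vector in the last slot of the induced boundary coorientation. Once those two points are pinned down, the sign is forced.
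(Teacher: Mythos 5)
Your argument is correct and matches the paper's own proof: the induced boundary coorientation is $(\nu_1,\dots,\nu_k,\widehat{\nu},\nu_{k+1},\dots,\nu_j,-\nu)$ while the intrinsic one is $(\nu_1,\dots,\nu_k,\nu,\nu_{k+1},\dots,\nu_j)$, and the transposition count $(-1)^{j-k}$ together with the negation of $\nu$ (coming from the reversed half-space inequality) gives $(-1)^{j+k+1}$. This is exactly the comparison the paper performs.
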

Again the orientation induced on $\mathcal{CT}_n(t^{0}_{g}, \omega )$ in the boundary of $\overline{\mathcal{CT}}_n(t_{g}, \omega )$, is defined by the coorientation $(\nu_1 , \dots , \nu_k , \widehat{\nu} , \nu_{k+1} , \dots , \nu_j , - \nu)$ inside $\mathcal{CT}_n(t_{g}', \omega )$. The orientation defined by $\omega$ on $\mathcal{CT}_n(t^{0}_{g}, \omega )$, is the one defined by the coorientation $(\nu_1 , \dots , \nu_k , \nu , \nu_{k+1} , \dots , \nu_j )$ inside $\mathcal{CT}_n(t_{g}', \omega )$. Hence, these two orientations differ by a $(-1)^{j+k+1}$ sign. 

\subsubsection{The (above-break) boundary component} \label{alg:sss:CTm-above-break-bound}

The (above-break) boundary corresponds either to the breaking of an internal edge of $t$, that is located above the gauge or intersects the gauge, or, when the gauge is below the root, to the outgoing edge breaking between the gauge and the root. Choosing an ordering $\omega = e_1 \wedge \cdots \wedge e_i$, suppose that it is the $p$-th edge of $t$ which breaks and write moreover $(t_p)_g$ for the resulting broken gauged tree.

We begin by considering the case of a gauged tree $t_g$ whose gauge does not intersect any of its vertices. Suppose first that the breaking edge does not intersect the gauge. A neighborhood of the boundary can then be parametrized as 
\begin{align*}
]0 , + \infty] \times \mathcal{CT}_n((t_p)_{g}, \omega_p ) &\longrightarrow \overline{\mathcal{CT}}_n(t_{g}, \omega ) \\
(\delta , \lambda , l_1 , \dots , \widehat{l_p} , \dots , l_i ) &\longmapsto (\lambda , l_1 , \dots , l_p := \delta , \dots , l_i ) \ .
\end{align*}
This map has sign $(-1)^{p}$. In the case when the breaking edge does intersect the gauge, a neighbordhood of the boundary can be parametrized as 
\begin{align*}
]0 , + \infty] \times \mathcal{CT}_n((t_p)_{g}, \omega_p ) &\longrightarrow \overline{\mathcal{CT}}_n(t_{g}, \omega ) \\
(\delta , \lambda , l_1 , \dots , \widehat{l_p} , \dots , l_i ) &\longmapsto (\lambda , l_1 , \dots , l_p := \delta - \lambda , \dots , l_i ) \ ,
\end{align*} 
where we set this time $l_p := \delta - \lambda$ in order for the inequality $- \lambda < d ( r,v')$ to hold in this case. This parametrization again has sign $(-1)^{p}$.

The case of a gauged tree $t_g$ whose gauge may intersect some of its vertices is treated as in subsection~\ref{alg:sss:CTm-int-collapse-bound}. We check again that the parametrization maps $\Phi$ introduced in the previous paragraph, restrict to parametrizations of a neighborhood of the boundary
\[ ]0 , + \infty] \times \mathcal{CT}_n((t_p)_{g}, \omega_p ) \longrightarrow \overline{\mathcal{CT}}_n(t_{g}, \omega ) \ , \]
and that $\Phi$ sends moreover the coorientation of $\mathcal{CT}_n((t_p)_{g}, \omega_p )$ to the coorientation of $\mathcal{CT}_n(t_{g}, \omega )$. These coorientations introduce as previously an additional $(-1)^j$ sign.

Finally, suppose that the gauge of $t_g$ intersects its outgoing edge and compute the sign of the (above-break) boundary component corresponding to the gauge going towards $+ \infty$. A parametrization of a neighborhood of the boundary is simply given by 
\begin{align*}
]0 , + \infty] \times \mathcal{CT}_n((t_0)_{g}, \omega_p ) &\longrightarrow \overline{\mathcal{CT}}_n(t_{g}, \omega ) \\
(\delta , l_1 , \dots , l_i ) &\longmapsto (\lambda := \delta , l_1 , \dots , l_i ) \ .
\end{align*} 
This map has sign $1$.

\begin{proposition}
For a gauged stable ribbon tree $t_g$ whose gauge intersects $j$ vertices, the boundary component $\mathcal{CT}_n((t_p)_{g}, \omega_p )$ corresponding to the breaking of the $p$-th edge of $t$ bears a $(-1)^{p+j}$ sign in the boundary of $\overline{\mathcal{CT}}_n(t_{g}, \omega )$, where we set $e_0$ for the outgoing edge of $t$.
\end{proposition}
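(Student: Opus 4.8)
The plan is to assemble the sign computations already carried out in the three preceding paragraphs, which between them treat every configuration of the breaking edge relative to the gauge; the proposition is their common summary. First I would record the base case in which the gauge crosses no vertex, so that $j=0$. Here the two explicit parametrizations of a neighborhood of the boundary — one for a breaking edge $e_p$ disjoint from the gauge, sending $l_p := \delta$, and one for a breaking edge meeting the gauge, sending $l_p := \delta - \lambda$ so that the inequality $-\lambda < d(r,v')$ is preserved — each merely reorder the coordinates so as to bring the collar direction $\delta$ into the $p$-th slot, and both therefore carry the sign $(-1)^{p}$. This gives the claimed $(-1)^{p+j}$ in the case $j=0$.

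Next I would promote this to arbitrary $j$ by the same device used for the (int-collapse) component in Proposition~\ref{alg:prop:int-collapse-signs}. Passing to the auxiliary tree $t_g'$ (and its analogue $(t_p)_g'$) in which each of the $j$ gauge-vertex intersections has been opened up, the parametrization $\Phi$ of the previous step restricts to a parametrization $\phi$ of a neighborhood of the boundary of $\overline{\mathcal{CT}}_n(t_{g}, \omega)$. The coorientation of $\mathcal{CT}_n(t_g,\omega)$ inside this auxiliary cone is recorded by the outward-pointing vectors $\nu_1^{t},\dots,\nu_j^{t}$, and computing the sign of $\phi$ from that of $\Phi$ amounts to commuting the direction $\delta$ past these $j$ coorientation vectors, which produces the extra factor $(-1)^{j}$. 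This yields the total sign $(-1)^{p+j}$.

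Finally I would dispatch the outgoing-edge case, in which the gauge meets the outgoing edge and is pushed towards $+\infty$. A gauge lying below the root crosses no vertex, so here $j=0$; setting $e_0$ for the outgoing edge so that $p=0$, the explicit parametrization $\lambda := \delta$ has sign $1 = (-1)^{0+0}$, again matching $(-1)^{p+j}$. Combining the three cases closes the proposition.

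The step I expect to be the most delicate is the coorientation bookkeeping in the second paragraph. As in the (int-collapse) argument, one must verify that $\Phi$ sends each $\nu_k^{(t_p)}$ either to $\nu_k^{t}$, or, when $e_p$ lies on the non-self-crossing path from the intersected vertex $v_k$ to the root, to a vector of the form $\nu_k^{t} - e_p$ that is still outward-pointing to the half-space $-\lambda \geqslant d(r,v_k)$. Only once this is checked does $\Phi(\nu_1^{(t_p)}),\dots,\Phi(\nu_j^{(t_p)})$ define the same coorientation of $\mathcal{CT}_n(t_g,\omega)$ as $\nu_1^{t},\dots,\nu_j^{t}$, so that the clean $(-1)^{j}$ shift is legitimate and the three cases can be amalgamated without further correction.
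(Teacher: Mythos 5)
Your proposal follows essentially the same route as the paper: the two explicit collar parametrizations ($l_p := \delta$ and $l_p := \delta - \lambda$) each contributing $(-1)^p$ when $j=0$, the extra $(-1)^j$ from commuting the collar direction past the coorientation vectors $\nu_1^t,\dots,\nu_j^t$ exactly as in the (int-collapse) case, and the separate outgoing-edge computation giving sign $1$ with $p=j=0$. The coorientation bookkeeping you flag as delicate is indeed the point the paper also defers to its (int-collapse) discussion, so nothing is missing.
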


\subsubsection{The (below-break) boundary component} \label{alg:sss:CTm-below-break-bound}

The (below-break) boundary corresponds to the breaking of edges of $t$ that are located below the gauge or intersect it, such that there is exactly one edge breaking in each non-self crossing path from an incoming edge to the root.  Write $(t_{br})_g$ for the resulting broken gauged tree. Consider now an ordering $\omega = e_1 \wedge \cdots \wedge e_i$ of $t_g$. We order again from left to right the $s$ non-trivial unbroken gauged trees $t_g^1 , \dots , t_g^s$ of $(t_{br})_g$, and denote moreover $e_{j_1} , \dots , e_{j_s}$ the internal edges of $t$ whose breaking produce the trees $t_g^1 , \dots , t_g^s$. Beware that we do not necessarily have that $j_1 < \cdots < j_s$.  We assume in the next paragraphs that $j_1 = 1 , \dots , j_s = s$, and will explain how to deal with the general case at the end of this section. We set to this extent $\omega_{br} := e_{s+1} \wedge \cdots \wedge e_{i}$.

We introduce two more pieces of notation. We will denote $\mathcal{E}_\infty$ the set of incoming edges of $t$ which are crossed by the gauge and correspond to the trivial gauged trees in $(t_{br})_g$. In other words, the set of edges which are breaking in the (below-break) boundary component associated to $(t_{br})_g$ is $\mathcal{E}_\infty \cup \{ e_{j_1} , \dots , e_{j_s} \}$. For an edge $e$, internal or external, we will moreover write $w_e$ for the vertex adjacent to $e$ which is closest to the root $r$ of $t$, and set $w_u := w_{e_u}$ for $u = 1 , \dots , s$.

Start by considering the case of a gauged tree $t_g$ whose gauge does not intersect any of its vertices. Suppose first that among the breaking internal edges, none of them intersects the gauge. We define a parametrization of a neighbourhood of the boundary 
\begin{align*}
]0 , + \infty] \times \mathcal{CT}_n((t_{br})_{g}, \omega_{br} ) &\longrightarrow \overline{\mathcal{CT}}_n(t_{g}, \omega )
\end{align*}
by sending $( \delta , \lambda_1 , \dots , \lambda_s , l_{s+1} , \dots , l_i)$ to the element of $\mathcal{CT}_n(t_{g}, \omega)$ whose entries are defined as 
\begin{align*}
\lambda &:= - \delta + \sum_{u=1}^s \left( \lambda_u - d(r,w_u) \right) - \sum_{e \in \mathcal{E}_\infty} d(r,w_e)  \ , \\
l_v &:= \delta + \sum_{\substack{u = 1 , \dots , s \\ u \neq v}} \left( - \lambda_u + d(r , w_u) \right) + \sum_{e \in \mathcal{E}_\infty} d(r,w_e) \  &&\text{for $v = e_1 , \dots , e_s$} \ , \\
l_k &:= l_k \  &&\text{for $k = s+1 , \dots , i$} \ .
\end{align*}
We compute that this map has sign $-1$. 

Suppose now that among the breaking internal edges of $t_g$, some of them may intersect the gauge. We denote $\mathcal{N}_{\cap} \subset \{ 1 , \dots , s \}$ for the set of indices corresponding to the breaking internal edges which intersect the gauge, and $\mathcal{N}_{\emptyset} \subset \{ 1 , \dots , s \}$ for the set of indices corresponding to the breaking of internal edges which do not intersect the gauge. We define this time a parametrization of a neighbourhood of the boundary
\begin{align*}
]0 , + \infty] \times \mathcal{CT}_n((t_{br})_{g}, \omega_{br} ) &\longrightarrow \overline{\mathcal{CT}}_n(t_{g}, \omega )
\end{align*}
by sending $( \delta , \lambda_1 , \dots , \lambda_s , l_{s+1} , \dots , l_i)$ to the element of $\mathcal{CT}_n(t_{g}, \omega)$ whose entries are set to be 
\begin{align*}
\lambda &:= - \delta + \sum_{u \in \mathcal{N}_{\emptyset}} \left( \lambda_u - d(r,w_u) \right) - \sum_{u \in \mathcal{N}_{\cap}} d(r,w_u) - \sum_{e \in \mathcal{E}_\infty} d(r,w_e)  \ , \\
l_v &:= \delta + \sum_{\substack{u \in \mathcal{N}_{\emptyset} \\ u \neq v}} \left( - \lambda_u + d(r , w_u) \right) + \sum_{u \in \mathcal{N}_{\cap}} d(r,w_u) + \sum_{e \in \mathcal{E}_\infty} d(r,w_e) \  &&\text{for $v \in \mathcal{N}_{\emptyset}$} \ , \\
l_v &:= \delta + \lambda_v + \sum_{u \in \mathcal{N}_{\emptyset}} \left( - \lambda_u + d(r , w_u) \right) + \sum_{\substack{u \in \mathcal{N}_{\cap} \\ u \neq v}} d(r,w_u) + \sum_{e \in \mathcal{E}_\infty} d(r,w_e) \  &&\text{for $v \in \mathcal{N}_{\cap}$} \ , \\
l_k &:= l_k \  &&\text{for $k = s+1 , \dots , i$} \ .
\end{align*}
We compute that this map has again sign $-1$. 

Consider now the case of a gauged tree $t_g$ whose gauge intersects $j$ of its vertices. We check as in the previous sections that the parametrization maps introduced in the previous paragraphs, restrict to parametrizations of a neighborhood of the boundary
\[ ]0 , + \infty] \times \mathcal{CT}_n((t_{br})_{g}, \omega_{br} ) \longrightarrow \overline{\mathcal{CT}}_n(t_{g}, \omega ) \ , \]
and that these maps send moreover the coorientation of $\mathcal{CT}_n((t_{br})_{g}, \omega_{br} )$ to the coorientation of $\mathcal{CT}_n(t_{g}, \omega )$. These coorientations introduce an additional $(-1)^j$ sign.

We have thus computed the sign of the (below-break) boundary when $j_1 = 1 , \dots , j_s = s$. Now, consider the general case where we dot no necessarily have that $j_1 = 1 , \dots , j_s = s$. We denote $\varepsilon ( j_1 , \dots , j_s  ; \omega) $ the sign obtained after modifying $\omega$ by moving $e_{j_k}$ to the $k$-th spot in $\omega$, and write $\omega_0$ for the newly obtained orientation on $t_g$. Twisting the orientation on $\mathcal{CT}_n(t_{g}, \omega )$ by $(-1)^{\varepsilon ( j_1 , \dots , j_s ; \omega)}$ amounts to identifying it with $\mathcal{CT}_n(t_{g}, \omega_0 )$. We can apply the previous constructions and find the desired sign for the associated (below-break) component. 

\begin{proposition} \label{alg:prop:below-break-signs}
For a gauged stable ribbon tree $t_g$ whose gauge intersects $j$ vertices, the boundary component $\mathcal{CT}_n((t_{br})_{g}, \omega_{br} )$ corresponding to the breaking of the internal edges $e_{j_1} , \dots , e_{j_s}$ of $t$ bears a $(-1)^{\varepsilon ( j_1 , \dots , j_s ; \omega) + 1 + j}$ sign in the boundary of $\overline{\mathcal{CT}}_n(t_{g}, \omega )$.
\end{proposition}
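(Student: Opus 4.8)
The plan is to follow the same template already used for the three previous boundary components (int-collapse, gauge-vertex, above-break): construct an explicit parametrization of a collar neighborhood of the (below-break) stratum, compute the orientation sign of that parametrization, and then correct by the coorientation signs coming from the gauge-vertex intersections. Concretely, I would first reduce to the normalized situation where $j_1 = 1, \dots, j_s = s$, i.e. where the edges $e_{j_1}, \dots, e_{j_s}$ producing the non-trivial gauged pieces $t_g^1, \dots, t_g^s$ occupy the first $s$ slots of the ordering $\omega = e_1 \wedge \cdots \wedge e_i$, and set $\omega_{br} := e_{s+1} \wedge \cdots \wedge e_i$. The general case will be recovered at the end by a permutation argument.

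First I would treat the simplest case: a gauge meeting no vertex, with none of the breaking edges crossing the gauge. Here the single collar direction $\delta \in \,]0,+\infty]$ must simultaneously push all $s$ edges to infinite length while keeping the gauge consistent across the break, so I would write down the explicit map sending $(\delta, \lambda_1, \dots, \lambda_s, l_{s+1}, \dots, l_i)$ to the tuple with $\lambda := -\delta + \sum_u(\lambda_u - d(r,w_u)) - \sum_{e \in \mathcal{E}_\infty} d(r,w_e)$ and $l_v := \delta + \sum_{u \neq v}(-\lambda_u + d(r,w_u)) + \sum_{e \in \mathcal{E}_\infty} d(r,w_e)$, leaving the remaining lengths fixed. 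The key computation is the sign of the Jacobian of this affine substitution in the ordered coordinates $(\delta, \lambda_1, \dots, \lambda_s, l_{s+1}, \dots, l_i)$ against $(\lambda, l_1, \dots, l_i)$; I expect it to evaluate to $-1$, which is the source of the $+1$ in the exponent.

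Next I would incorporate the complications one at a time, checking that the sign is unaffected except for a controlled factor. When some breaking edges cross the gauge, I would split the index set into $\mathcal{N}_\cap$ and $\mathcal{N}_\emptyset$ and modify the formula for those $l_v$ with $v \in \mathcal{N}_\cap$ (setting $l_v := \delta + \lambda_v + \cdots$) so that the inequality $-\lambda < d(r,v')$ is preserved through the degeneration, and verify that the Jacobian is still $-1$. When the gauge additionally crosses $j$ vertices, I would argue exactly as in subsections~\ref{alg:sss:CTm-int-collapse-bound} and~\ref{alg:sss:CTm-above-break-bound} that the parametrization of the auxiliary tree $t_g'$ restricts to a parametrization of the boundary and carries the coorientation vectors of $(t_{br})_g$ to those of $t_g$, producing an extra $(-1)^j$ from commuting $\delta$ past the $j$ outward-pointing vectors. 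Finally, for a general ordering where the breaking edges do not already occupy the first $s$ slots in order, I would relabel $\omega$ into $\omega_0$ by moving each $e_{j_k}$ into the $k$-th slot at the cost of the permutation sign $(-1)^{\varepsilon(j_1,\dots,j_s;\omega)}$, and then apply the normalized computation. Multiplying the three contributions $-1$, $(-1)^j$, and $(-1)^{\varepsilon}$ gives the claimed sign $(-1)^{\varepsilon(j_1,\dots,j_s;\omega)+1+j}$.

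The main obstacle will be pinning down the collar parametrizations so that they genuinely land in $\mathcal{CT}_n(t_g,\omega)$, i.e. so that every defining (in)equality — in particular the gauge constraints $-\lambda = d(r,v'')$ at intersected vertices and $-\lambda < d(r,v')$ for edges crossing the gauge — remains valid throughout the degeneration $\delta \to 0^+$; this is precisely what forces the case distinction on $\mathcal{N}_\cap$ versus $\mathcal{N}_\emptyset$ and the $\mathcal{E}_\infty$ correction terms, and it is where a geometric error would silently corrupt the sign. A secondary subtlety is verifying that the image of each $\nu_k^{(t_{br})}$, though possibly differing from $\nu_k^t$ by a length direction, still defines the same coorientation of the half-space $-\lambda \geqslant d(r,v_k)$, so that the $(-1)^j$ factor is exactly as in the earlier components.
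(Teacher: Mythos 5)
Your proposal follows essentially the same route as the paper's own proof: the same normalization to $j_1 = 1, \dots, j_s = s$, the same explicit collar parametrizations (including the $\mathcal{E}_\infty$ correction terms and the $\mathcal{N}_\cap$ versus $\mathcal{N}_\emptyset$ case split), the same $-1$ Jacobian computation, the same $(-1)^j$ coorientation correction, and the same final permutation argument producing $(-1)^{\varepsilon(j_1,\dots,j_s;\omega)}$. The subtleties you flag at the end — that the parametrization must respect all defining (in)equalities of $\mathcal{CT}_n(t_g,\omega)$ and that the images of the $\nu_k$ need only define the same coorientation of the half-spaces — are exactly the points the paper addresses, so nothing is missing.
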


\subsection{The operadic bimodule $\Omega B As - \mathrm{Morph}$} \label{alg:ss:op-bimod-ombasmorph}

\subsubsection{Definition of the operadic bimodule $\Omega B As - \mathrm{Morph}$} \label{alg:sss:def-op-bimod-ombasmorph}

We choose to define the operadic bimodule $\Omega B As - \mathrm{Morph}$ with the formalism of orientations on gauged trees, so that it be compatible with the definition of Markl-Shnider for the operad $\Omega B As$. As before, $t_{br,g}$ will stand for a broken gauged stable ribbon tree, while $t_g$ will denote an unbroken gauged stable ribbon tree. We also respectively write $t_{br}$ and $t$ for the underlying stable ribbon trees.

\begin{definition}[Spaces of operations and action-composition maps]
Consider the \Z -module freely generated by the pairs $(t_{br,g},\omega)$. We define the arity $n$ space of operations $\Omega B As - \mathrm{Morph}(n)_*$ to be the quotient of this \Z -module under the relation
\[ (t_{br,g} , - \omega) = - (t_{br,g} , \omega) \ . \]
An element $(t_{br,g},\omega)$ where $t_{br,g}$ has $e(t_{br})$ finite internal edges and $g$ non-trivial gauges which intersect $j$ vertices of $t_{br}$ is defined to have degree $j-(e(t_{br})+g)$.
The operad $\Omega B As$ then acts on $\Omega B As -\mathrm{Morph}$ as follows
\begin{align*}
(t_{br,g},\omega) \circ_i (t_{br}',\omega ') &= (t_{br,g} \circ_i t_{br}' , \omega \wedge \omega ') \ , \\
\mu ((t_{br},\omega),(t_{br,g}^1,\omega_1),\dots,(t_{br,g}^s,\omega_s)) &= (-1)^{\dagger} ( \mu (t_{br} , t_{br,g}^1 \dots , t_{br,g}^s) , \omega \wedge \omega_1 \wedge \cdots \wedge \omega_s ) \ ,
\end{align*}
where the tree $t_{br,g} \circ_i t_{br}'$ is the gauged broken ribbon tree obtained by grafting $t_{br}'$ to the $i$-th incoming edge of $t_{br,g}$ and $\mu (t_{br} , t_{br,g}^1 \dots , t_{br,g}^s)$ is the gauged broken ribbon tree defined by grafting each $t_{br,g}^j$ to the $j$-th incoming edge of $t_{br}$. Writing $g_i$ for the number of non-trivial gauges and $j_i$ for the number of gauge-vertex intersections of $t_{br,g}^i$, $i= 1 , \dots , s$, and setting $t_{br}^0 := t_{br}$ and $g_0 = j_0 = 0$, 
\[ \dagger := \sum_{i=1}^s g_i  \sum_{l=0}^{i-1} e(t_{br}^l)  + \sum_{i=1}^s j_i \sum_{l=0}^{i-1} (e (t_{br}^l) + g_l - j_l ) \ , \]
or equivalently
\[ \dagger = \sum_{i=1}^s g_i  \left( |t_{br}| + \sum_{l=1}^{i-1} |t_{br}^l | \right)  + \sum_{i=1}^s j_i \left( |t_{br}| + \sum_{l=1}^{i-1} |t_{br,g}^l| \right) \ . \]
\end{definition}

Choosing a distinguished orientation for every gauged stable ribbon tree $t_g \in SCRT$, this definition of the operadic bimodule $\Omega B As -\mathrm{Morph}$ amounts to defining it as the free operadic bimodule in graded \Z -modules
\[ \mathcal{F}^{\Omega B As, \Omega B As}( \arbreopunmorph , \arbrebicoloreL , \arbrebicoloreM , \arbrebicoloreN , \cdots , SCRT_n,\cdots) \ .  \]
It remains to define a differential on the generating operations $(t_g , \omega )$ to recover the definition given in subsection~\ref{alg:sss:op-bimod-ombas}. 

\begin{definition}[Differential]
The differential of a gauged stable ribbon tree $(t_{g},\omega)$ is defined as the signed sum of all codimension~1 contributions
\[ \resizebox{\hsize}{!}{$\displaystyle{ \partial (t_{g},\omega) = \sum \pm (int-collapse) + \sum \pm (gauge-vertex) + \sum \pm (above-break) + \sum \pm (below-break) \ ,}$} \] 
where the signs are as computed in Propositions~\ref{alg:prop:int-collapse-signs}~to~\ref{alg:prop:below-break-signs}. 
\end{definition}

For instance, choosing the ordering $e_1 \wedge e_2$ on 
\[ \orderingarbrebicoloresignesA \ , \] the signs in the computation of subsection~\ref{alg:sss:op-bimod-ombas} are
\begin{align*}
 \partial \left( \arbrebicoloresignesA , e_1 \wedge e_2 \right) = &\left( \arbrebicoloresignesF , e_1 \wedge e_2 \right) - \left( \arbrebicoloresignesD , e_1 \wedge e_2 \right) - \left( \arbrebicoloresignesE , e_1 \wedge e_2 \right) \\
 &+ \left( \arbrebicoloresignesI , e_1 \right) - \left( \arbrebicoloresignesJ , e_2 \right) - \left( \arbrebicoloresignesK , \emptyset \right) \ .
\end{align*}

\subsubsection{The moduli spaces $\overline{\mathcal{CT}}_n$ realize the operadic bimodule $\Omega B As - \mathrm{Morph}$} \label{alg:sss:mod-space-CTn-realize}

We only have to check that the signs for the action-composition maps of $\Omega B As - \mathrm{Morph}$ are indeed the ones determined by the moduli spaces $\overline{\mathcal{CT}}_n$, to conclude that the moduli spaces $\overline{\mathcal{CT}}_n$ endowed with their fine cell decomposition realize the operadic bimodule $\Omega B As - \mathrm{Morph}$ under the functor $C_{-*}^{cell}$.

The computation for $\circ_i$ is straighforward. Consider now the map
\begin{align*}
\mu : \mathcal{T}(t_{br},\omega) \times \mathcal{CT}(t_{br,g}^1,\omega_1) \times \cdots \times \mathcal{CT} (t_{br,g}^s,\omega_s) &\longrightarrow \mathcal{CT} ( \mu (t_{br} , t_{br,g}^1 \dots , t_{br,g}^s) , \omega \wedge \omega_1 \wedge \cdots \wedge \omega_s ) \\
\left( L_\omega , (\Lambda_1 , L_{\omega_1}) , \dots , (\Lambda_s , L_{\omega_s}) \right) &\longmapsto \left( \Lambda_1 , \dots , \Lambda_s , L_{\omega} , L_{\omega_1} , \dots , L_{\omega_s} \right) \ ,
\end{align*}
where $L_{\omega_i}$ stands for the list of lengths of $t_{br}^i$ according to the ordering $\omega_i$, and $\Lambda _i := (\lambda_{i,1} , \dots , \lambda_{i,g_i})$ stands for the list of non-trivial gauges of $t_{br,g}^i$. We compute that, in the absence of gauge vertex intersections, this map has sign 
\[ (-1)^{\sum_{i=1}^s g_i  \sum_{l=0}^{i-1} e(t_{br}^l)} \ . \]
Assuming that there are some gauge-vertex intersections, the combinatorics of coorientations introduce an additional sign
\[ (-1)^{\sum_{i=1}^s j_i \sum_{l=0}^{i-1} (e (t_{br}^l) + g_l - j_l )} \ . \]
In total, we recover the sign $(-1)^{\dagger}$, which concludes the proof.

\subsubsection{Canonical orientations for the gauged binary ribbon trees} \label{alg:sss:can-or-gauged}

For a fixed $n \geqslant 2$, the set of gauged binary ribbon trees $CBRT_n$ can be endowed with a partial order, inspired by the Tamari order on $BRT_n$. It is introduced in~\cite{masuda-diagonal-multipl}. 

\begin{definition}[\cite{masuda-diagonal-multipl}]
The \emph{Tamari order on $CBRT_n$} is the partial order generated by the covering relations
\begin{align*}
\coveringrelationAgaugedun \ \ \ > \ \ \ \coveringrelationBgaugedun \tag{A}
\end{align*} 
where $t_1$, $t_2$ and $t_3$ are binary ribbon trees,
\begin{align*}
\coveringrelationAgaugeddeux \ \ \ > \ \ \ \coveringrelationBgaugeddeux \tag{B.1}
\end{align*} 
where $t_g^1$, $t_g^2$, $t_g^3$ are gauged binary ribbon trees and $t$ is a binary ribbon tree, and
\begin{align*}
\coveringrelationAgaugedtrois \ \ \ > \ \ \ \coveringrelationBgaugedtrois \tag{B.2}
\end{align*} 
where $t_1$, $t_2$, $t_3$ are binary ribbon trees and $t_g$ is a gauged binary ribbon tree.
\end{definition}

For example in the case of $CBRT_4$, we obtain the Hasse diagram in figure~\ref{alg:fig:tamari-poset-gauged}. This Tamari-like poset has a unique maximal element and a unique minimal element, respectively given by the right-leaning comb whose gauge intersects the outgoing edge, and the left-leaning comb whose gauge intersects all incoming edges.

\begin{figure}[h]
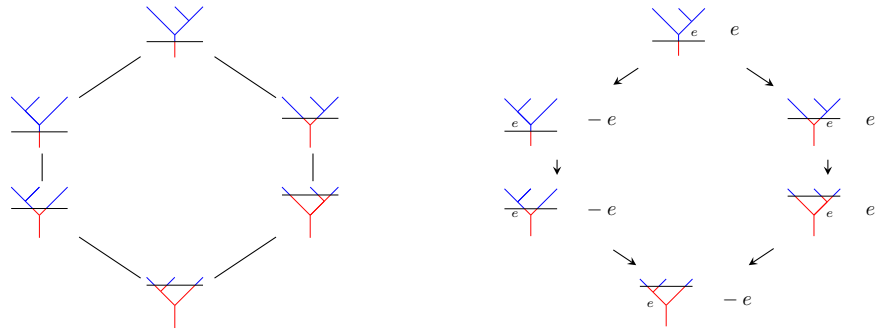
 
\centering
\begin{subfigure}{0.4\textwidth}
	\centering
	\TamariquatreCT
\end{subfigure} 
\begin{subfigure}{0.4\textwidth}
	\centering
	\TamariorientationsquatreCT
\end{subfigure}
\caption{On the left, the Hasse diagram of the poset $CBRT_3$, where the maximal element is written at the top. On the right, all the canonical orientations for $CBRT_3$ computed going down the poset.} \label{alg:fig:tamari-poset-gauged}
\end{figure} 

The canonical orientation on the maximal gauged binary tree is defined as
\[ \rightleaningcombgauged \ \ \omega_{can} := e_1 \wedge \cdots \wedge e_{n-2}  \ . \]
Using this Tamari-like order, we can now build inductively canonical orientations on all gauged binary trees. We start at the maximal gauged binary tree, and transport the orientation $\omega_{can}$ to its immediate neighbours as follows : the immediate neighbours of $t_g^{max}$ obtained under the covering relation (A) are endowed with the orientation $\omega_{can}$, while the ones obtained under the covering relations (B) are endowed with the orientation $-\omega_{can}$. We then repeat this operation while going down the poset until the minimal gauged binary tree is reached. This process is consistent (see next section), i.e. it does not depend on the path taken in the poset from $t_g^{max}$ to the gauged binary tree whose orientation is being defined. A full example for $CBRT_3$ is illustrated in figure~\ref{alg:fig:tamari-poset-gauged}.

\begin{definition}
The such obtained orientations will again be called the \emph{canonical orientations} and written $\omega_{can}$. They coincide in fact with the canonical orientations on the underlying binary trees. 
\end{definition}

\subsubsection{MacLane's coherence} \label{alg:sss:maclane-coherence}

We stated in subsections~\ref{alg:sss:canon-or-ribbon-tree}~and~\ref{alg:sss:can-or-gauged} that our process of transforming orientations is consistent, i.e. it does not depend on the path taken in the Tamari poset from the maximal tree to the tree whose orientation is being defined. In fact, our rules to transform orientations under the covering relations enable us to transport the orientation $\omega$ of any (gauged) tree $t_{(g)}$ to any (gauged) tree $t'_{(g)}$, along a path in the Tamari poset. The following result then holds~: for a given oriented (gauged) tree $(t_{(g)},\omega)$, any two paths in the Tamari poset from $t_{(g)}$ to $t_{(g)}'$ yield the same orientation on $t_{(g)}'$.

As pointed out by Markl and Shnider in~\cite{markl-assoc}, an adaptation of the proof of MacLane's coherence theorem shows that it is enough to prove that the diagram described by $K_4$ commutes to conclude that this statement holds for $BRT_n$. And this is the case as shown in figure~\ref{alg:fig:tamari-poset}. In the case of $CBRT_n$, an adaptation of these arguments shows this time that it is enough to prove that the diagrams described by $K_4$ and $J_3$ commute in order to conclude. This is again the case. 

A conceptual explanation for these two "coherence theorems" can be given as follows. In the case of $BRT_n$, a path between two trees $t$ and $t'$ in the Tamari poset corresponds to a path in the 1-skeleton of $K_n$. The faces of the 2-skeleton of $K_n$ consist moreover of the products
\begin{align*}
&K_2 \times \cdots \times K_2 \times K_3 \times K_2 \times \cdots \times K_2 \times K_3 \times K_2 \times \cdots \times K_2 \ , \\
&K_2 \times \cdots \times K_2 \times K_4 \times K_2 \times \cdots \times K_2 \ .
\end{align*}
The first type of face corresponds to a square diagram that tautologically commutes, while the second type of face corresponds to the $K_4$ diagram. Given now two paths from $t$ to $t'$, they delineate a family of faces in the 2-skeleton of $K_n$. Translating this into algebra, as all faces translate into commuting diagrams, the two paths produce the same orientation.

\subsubsection{The morphism of operadic bimodules $\infmor \rightarrow \Omega B As - \mathrm{Morph}$} \label{alg:sss:morph-op-bimod-ainf-ombasmorph}

The moduli space $\overline{\mathcal{CT}}_n$ endowed with its \Ainf -cell decomposition is isomorphic to the Forcey-Loday realization $J_n$ of the multiplihedron. Forcey shows in~\cite{forcey-multipl} that under this isomorphism, the $\Omega B As$-decomposition is sent to the dual subdivision of $J_n$. This is illustrated on figure~\ref{alg:fig:compact-mod-space-CT-ombas} for instance. The goal of this section is again to show that :
\begin{proposition}
The map $\ide : (\overline{\mathcal{CT}}_n)_{\Ainf} \rightarrow (\overline{\mathcal{CT}}_n)_{\Omega B As}$ is sent under the functor $C_{-*}^{cell}$ to the morphism of operadic bimodules $\infmor \rightarrow \Omega B As - \mathrm{Morph}$ acting as
\[ f_n \longmapsto \sum_{t_g \in CBRT_n} (t_g, \omega_{can}) \ . \]
\end{proposition}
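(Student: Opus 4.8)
The plan is to follow the same strategy as the proof of Proposition~\ref{alg:prop:markl-shnider-un} in subsection~\ref{alg:sss:morph-of-op}, now replacing the Loday associahedron $K_n$ by the Forcey-Loday multiplihedron $J_n$, and the Tamari poset on $BRT_n$ by the Tamari-like poset on $CBRT_n$ of subsection~\ref{alg:sss:can-or-gauged}. Since $\overline{\mathcal{CT}}_n$ with its \Ainf -cell decomposition is isomorphic to $J_n$ and, by Forcey's result recalled above, its $\Omega B As$-cell decomposition coincides with the dual subdivision of $J_n$, the identity map $\ide : (\overline{\mathcal{CT}}_n)_{\Ainf} \rightarrow (\overline{\mathcal{CT}}_n)_{\Omega B As}$ refines the cell decomposition. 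Under $C_{-*}^{cell}$ its top-dimensional component sends the unique $(n-1)$-cell $[J_n]$, which realizes the generator $f_n$, to the signed sum of the top-dimensional cells of the dual subdivision, indexed exactly by $CBRT_n$. Everything therefore reduces to showing that the global orientation of $J_n$ fixed in section~\ref{alg:ss:forcey-loday-multipl} restricts, on each cell $\overline{\mathcal{CT}}_n(t_g)$ with $t_g \in CBRT_n$, to the canonical orientation $\omega_{can}$ in the gauged-tree viewpoint.

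First I would treat the base case of the maximal element $t_g^{max}$ of the poset on $CBRT_n$, namely the right-leaning comb whose gauge meets the outgoing edge. Its cell $\overline{\mathcal{CT}}_n(t_g^{max})$ is identified with $[0,+\infty]^{n-1}$, with coordinates the gauge parameter $\lambda$ together with the edge lengths $l_{e_1},\dots,l_{e_{n-2}}$, and the canonical orientation is $\lambda \wedge e_1 \wedge \cdots \wedge e_{n-2}$. Its codimension 1 faces correspond, inside $J_n$, to the bounding hyperplanes $H_{i_1,i_2,i_3}$ and $H^{i_1,\dots,i_s}$, whose outward-pointing vectors in the basis $f_j^\omega$ were computed in subsection~\ref{alg:sss:process-signs}. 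As in the associahedron case I would verify, by a direct determinant computation against $f_j^\omega$, that the ordered family of these outward vectors represents the canonical orientation, i.e. that the global orientation of $J_n$ and the orientation $\omega_{can}$ agree on this cell.

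Next I would propagate the identification down the poset, neighbour by neighbour. Two cells of the dual subdivision share a codimension 1 stratum precisely when the corresponding gauged binary trees are related by one of the three covering relations (A), (B.1), (B.2). Gluing $\overline{\mathcal{CT}}_n(t_g^{max})$ to an immediate neighbour along the common boundary and orienting the union as the $t_g^{max}$ half, I would read off the induced orientation on the other half, exactly as in subsection~\ref{alg:sss:morph-of-op}. The content of this step is to check that the resulting rule matches the prescription of subsection~\ref{alg:sss:can-or-gauged}: a covering relation of type (A), which glues two cells along an (int-collapse) stratum, transports $\omega_{can}$ to $\omega_{can}$, whereas a covering relation of type (B), which glues along a (gauge-vertex) stratum, transports $\omega_{can}$ to $-\omega_{can}$. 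Here I would use the explicit signs of the codimension 1 strata established in Propositions~\ref{alg:prop:int-collapse-signs}~to~\ref{alg:prop:below-break-signs}, tracking carefully how the edge axes and the gauge-vertex coorientation vectors $\nu_k$ transform under the gluing.

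The main obstacle is precisely this last step: distinguishing the orientation behaviour of relation (A) from that of relations (B), since the genuinely new phenomenon not present in the associahedron argument is the gauge crossing a vertex. Correctly establishing that an (int-collapse) gluing preserves $\omega_{can}$ while a (gauge-vertex) gluing reverses it requires keeping track simultaneously of the reversal of the collapsed-edge axis and of the reordering of the coorientation vectors associated to the gauge-vertex intersections. Finally, path-independence of this propagation follows from the MacLane-type coherence argument of subsection~\ref{alg:sss:maclane-coherence}, which reduces consistency to the commutativity of the diagrams described by $K_4$ and $J_3$; this closes the induction and yields the formula $f_n \longmapsto \sum_{t_g \in CBRT_n} (t_g,\omega_{can})$.
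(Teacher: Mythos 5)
Your overall strategy coincides with the paper's: reduce everything to showing that the global orientation of the Forcey--Loday polytope $J_n$ restricts to $\omega_{can}$ on each top cell of the dual subdivision, verify this on the maximal gauged comb $t_g^{max}$ by a determinant computation against the basis $f_j^\omega$, and propagate down the poset of subsection~\ref{alg:sss:can-or-gauged} cell by cell, with path-independence supplied by the coherence argument of subsection~\ref{alg:sss:maclane-coherence}. The base case and the propagation scheme are exactly those of the paper.

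The pivotal step, however, is stated backwards. The covering relation (A) is the move in which the gauge slides past a trivalent vertex, so two cells related by (A) meet along a \emph{(gauge-vertex)} stratum, not an (int-collapse) stratum; conversely, the covering relations (B.1) and (B.2) are Tamari moves on the underlying binary tree (below, resp.\ above, the gauge), so the corresponding cells meet along an \emph{(int-collapse)} stratum. With the correct identification the sign behaviour is forced: for (A) the two adjacent cells are polyhedral cones inside the same oriented ambient space $]-\infty,+\infty[\,\times\,]0,+\infty[^{n-2}$ and no coordinate axis is reversed, so the induced orientation is again $\omega_{can}$; for (B) the axis of the collapsed edge is reversed exactly as in the associahedron argument of subsection~\ref{alg:sss:morph-of-op}, which produces the $-1$ twist. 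If you carry out the computation with your stated identification --- (A) as an edge collapse, (B) as a gauge-vertex crossing --- you will obtain the opposite sign rule, contradicting the definition of $\omega_{can}$ and breaking the induction. Note also that the paper does not invoke Propositions~\ref{alg:prop:int-collapse-signs} to~\ref{alg:prop:below-break-signs} here: the comparison of two adjacent top-dimensional cells is done by the elementary gluing-of-cones argument, which is lighter and less error-prone than extracting the answer from the boundary-orientation formulas of a single cell.
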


We prove that taking the restriction of the orientation of $J_n$ chosen in section~\ref{alg:ss:forcey-loday-multipl} to the top dimensional cells of its dual subdivision, yields the canonical orientations on these cells in the $\overline{\mathcal{CT}}_n$ viewpoint. We follow in this regard the exact same line of proof as in subsection~\ref{alg:sss:morph-of-op}.

This statement is at first shown for the maximal gauged binary tree $t^{max}_g$, the right-leaning comb whose gauge crosses the outgoing edge. The orientation on the cell $\overline{\mathcal{CT}}_n(t_g^{max})$ induced by the canonical orientation $e_1 \wedge \cdots \wedge e_{n-2}$ defines an isomorphism
\[ \overline{\mathcal{CT}}_n(t_g^{max}) \ \tilde{\longrightarrow} \ [0,+\infty] \times [0,+\infty]^{n-2} \ , \]
where the factor $[0,+\infty]$ corresponds to the gauge $\lambda$, and the factor $[0,+\infty]^{n-2}$ to the lengths of the inner edges. 
The face of $\overline{\mathcal{CT}}_n(t_g^{max})$ associated to the gauge going to $+\infty$ corresponds to the face $H_{0,n,0}$ when seen in the Forcey-Loday polytope, while the face associated to the  breaking of the $i$-th edge corresponds to the face $H_{i,n-i,0}$. An outward-pointing vector for the face $H_{i,n-i,0}$ is moreover
\[ \nu_i := (0 , \dots , 0 , 1_{i+1} , \dots , 1_{n-1}) \ , \]
where coordinates are taken in the basis $f_j^\omega$.
The orientation defined by the canonical basis of $[0,+\infty] \times [0,+\infty]^{n-2}$ is exactly the one defined by the ordered list of the outward-pointing vectors to the $+\infty$ boundary. This orientation is thus sent to the orientation defined by the basis $(\nu_0,\dots,\nu_{n-2})$ in the Forcey-Loday polytope. It remains to check that 
\[ \mathrm{det}_{f_j^\omega} ( \nu_j) = 1 \ . \]
As a result, the orientation induced by $J_n$ and the one defined by the canonical orientation coincide for the cell $\overline{\mathcal{CT}}_n(t_g^{max})$.

The rest of the proof is a mere adaptation of the proof of subsection~\ref{alg:sss:morph-of-op}. The cells labeled by the gauged binary trees which are immediate neighbours of the maximal gauged binary tree, are exactly the ones having a codimension 1 stratum in common with $\overline{\mathcal{CT}}_n(t_g^{max})$. Choosing one such tree $t_g$, and gluing the cells $\overline{\mathcal{CT}}_n(t_g)$ and $\overline{\mathcal{CT}}_n(t_g^{max})$ along their common boundary, one can read the induced orientation on $\overline{\mathcal{CT}}_n(t_g)$. In the case when the immediate neighbour $t_g$ is obtained under the covering relation (A), the cells $\mathcal{CT}_n(t_g)$ and $\mathcal{CT}_n(t_g^{max})$ are in fact both oriented as subspaces of $]-\infty,+\infty[ \times ]0,+\infty[^{n-2}$.  In the case when the immediate neighbour $t_g$ is obtained under the covering relations (B), we send the reader back to subsection~\ref{alg:sss:morph-of-op} for explanations on why a $-1$ twist of the orientation has to be introduced. In each case, the induced orientation is exactly the canonical orientation on $\overline{\mathcal{CT}}_n(t_g)$. This argument can now be repeated going down the poset, and the induced orientation will always coincide with the canonical orientation on the cell, which concludes the proof of Proposition~\ref{alg:prop:markl-shnider-deux}.

\newpage

\begin{leftbar}
\part{Geometry} \label{p:geometry}
\end{leftbar}

\setcounter{section}{0}

\section{\Ainf\ and $\Omega B As$-algebra structures on the Morse cochains} \label{geo:s:ainf-ombas-alg-Morse}

Let $M$ be an oriented closed Riemannian manifold endowed with a Morse function $f$ together with a Morse-Smale metric. Following~\cite{hutchings-floer}, the Morse cochains $C^*(f)$ form a deformation retract of the singular cochains on $M$. The cup product naturally endows the singular cochains $C^*_{sing}(M)$ with a dg-algebra structure. The homotopy transfer theorem then ensures that it can be transferred to an \Ainf -algebra structure on the Morse cochains $C^*(f)$. The following question then naturally arises. The differential on the Morse cochains is defined by a count of moduli spaces of gradient trajectories connecting critical points of $f$. Is it possible to define higher multiplications $m_n$ on $C^*(f)$ by a count of moduli spaces such that they fit in a structure of \Ainf -algebra ?

We have seen in the previous part that the polytopes encoding the operad \Ainf\ are the associahedra and that they can be realized as the compactified moduli spaces of stable metric ribbon trees. A natural candidate would thus be an interpretation of metric ribbon trees in Morse theory. A naive approach would be to define trees each edge of which corresponds to a Morse gradient trajectory as in figure~\ref{geo:fig:arbre-gradient-exemple}. These moduli spaces are however not well defined, as two trajectories coming from two distinct critical points cannot intersect. A second problem is that moduli spaces of trajectories issued from the same critical point do not intersect transversely. In his article~\cite{abouzaid-plumbings}, Abouzaid bypasses this problem by perturbing the equation around each vertex, so that a transverse intersection can be achieved. See also~\cite{mescher-morse}. This is illustrated in figure~\ref{geo:fig:arbre-gradient-exemple}. 

\begin{figure}[h] 
\centering
\arbredegradientexemple 
\caption{} \label{geo:fig:arbre-gradient-exemple}
\end{figure}

Trees obtained in this way will be called \emph{perturbed Morse gradient trees}. Let $t$ be a stable ribbon tree type and $y,x_1,\dots,x_n$ a collection of critical points of the Morse function $f$. We prove in this section that for a generic choice of perturbation data $\mathbb{X}_t$ on the moduli space $\mathcal{T}_n(t)$, the moduli space of perturbed Morse gradient trees modeled on $t$ and connecting $x_1 , \dots ,x_n$ to $y$, denoted $\mathcal{T}_t(y ; x_1,\dots,x_n)$, is an orientable manifold (Proposition~\ref{geo:prop:trois-items-ombas-alg}). Under some additional generic assumptions on the choices of perturbation data $\mathbb{X}_t$, these moduli spaces are compact in the 0-dimensional case, and can be compactified to compact manifolds with boundary in the 1-dimensional case (Theorems~\ref{geo:th:exist-admissible-perturbation-data-Tn}~and~\ref{geo:th:compactification-Tn}). We are finally able to define operations on the Morse cochains $C^*(f)$ by counting the 0-dimensional moduli spaces of Morse gradient trees : these operations define an \ombas -algebra structure on $C^*(f)$ (Theorem~\ref{geo:th:ombas-alg}). Our constructions are carried out using the formalism introduced in~\cite{abouzaid-plumbings} and some terminology of~\cite{mescher-morse}. Technical details are moreover postponed to sections~\ref{geo:s:transversality}~and~\ref{geo:s:signs-or}.

Note that in Floer theory, \Ainf -structures arise from the fact that moduli spaces of closed pointed disks naturally yield the \Ainf -cell decompositions of the associahedra. This is not the case in our situation, where it is the $\Omega B As$-cell decompositions that naturally arise.

\subsection{Conventions} \label{geo:ss:conventions}

We refer to section~\ref{geo:ss:basic-mod-space-Morse} for additional details on the moduli spaces introduced in this section. We will study Morse theory of the Morse function $f : M \rightarrow \R$ using its negative gradient vector field $-\nabla f$. Denote $d$ the dimension of the manifold $M$ and $\phi^s$ the flow of $-\nabla f$. For a critical point $x$ define its unstable and stable manifolds
\begin{align*}
W^U(x) &:= \{ z \in M , \ \lim_{s \rightarrow - \infty} \phi^s(z) = x  \} \\
W^S(x) &:= \{ z \in M , \ \lim_{s \rightarrow + \infty} \phi^s(z) = x  \} \ .
\end{align*}
Their dimensions are such that $\mathrm{dim} (W^U(x)) + \mathrm{dim} (W^S(x)) = d$. 
We then define the \emph{degree of a critical point $x$ }to be $|x| := \mathrm{dim} (W^S(x))$. This degree is often referred to as the \emph{coindex of $x$} in the litterature. 

We will moreover work with Morse cochains. For two critical point $x \neq y$, define 
\[ \mathcal{T} (y;x) := W^S(y) \cap W^U(x) / \R  \]
to be the moduli space of negative gradient trajectories connecting $x$ to $y$. Denote moreover $\mathcal{T}(x;x) = \emptyset$. Under the Morse-Smale assumption on $f$ and the Riemannian metric on $M$, for $x \neq y$ the moduli space $\mathcal{T} (y;x)$ has dimension $\mathrm{dim} \left( \mathcal{T} (y;x) \right) = |y| - |x| - 1$.
The Morse differential $\partial_{Morse} : C^*(f) \rightarrow C^*(f)$ is then defined to count descending negative gradient trajectories
\[  \partial_{Morse} (x) :=\sum_{|y| = |x| + 1} \# \mathcal{T} (y;x) \cdot y  \ . \]

\subsection{Perturbed Morse gradient trees} \label{geo:ss:pert-Morse-tree}

\begin{definition}[\cite{abouzaid-plumbings}]
Let $T:=(t,\{ l_e \}_{e \in E(t)})$ be a metric tree, where $\{ l_e \}_{e \in E(t)}$ are the lengths of its internal edges. A \emph{choice of perturbation data} on $T$ consists of the following data :
\begin{enumerate}[label=(\roman*)]
\item a vector field
\[ [ 0 , l_e ] \times M \underset{\mathbb{X}_e}{\longrightarrow} T M \ , \]
that vanishes on $[ 1 , l_e -1 ]$, for every internal edge $e$ of $t$ ; 
\item a vector field 
\[ [ 0 , +\infty [ \times M \underset{\mathbb{X}_{e_0}}{\longrightarrow} T M \ , \]
that vanishes away from $[0,1]$, for the outgoing edge $e_0$ of $t$ ; 
\item a vector field 
\[ ] - \infty , 0 ] \times M \underset{\mathbb{X}_{e_i}}{\longrightarrow} T M \ , \]
that vanishes away from $[-1,0]$, for every incoming edge $e_i \ (1 \leqslant i \leqslant n)$ of $t$.
\end{enumerate}
\end{definition}

Note that when $l_e \leqslant 2$, the vanishing condition on $[ 1 , l_e -1 ]$ is empty, that is we do not require any specific vanishing property for $\mathbb{X}_e$. For brevity's sake we will write $D_e$ for all segments $[ 0 , l_e ]$ as well as  for all semi-infinite segments $] - \infty , 0 ]$ and $[ 0 , +\infty [$ in the rest of the paper.

\begin{definition}[\cite{abouzaid-plumbings}]
A \emph{perturbed Morse gradient tree} $T^{Morse}$ associated to $(T,\mathbb{X})$ is the data for each edge $e$ of $t$ of a smooth map $\gamma_e : D_e \rightarrow M$ 
such that $\gamma_e$ is a trajectory of the perturbed negative gradient $-\nabla f + \mathbb{X}_e$, i.e.
\[ \dot{\gamma}_e (s) = -\nabla f (\gamma_e (s)) + \mathbb{X}_e(s,\gamma_e (s)) \ , \]
and such that the endpoints of these trajectories coincide as prescribed by the edges of the tree $T$.
\end{definition}

\begin{figure}[h!]
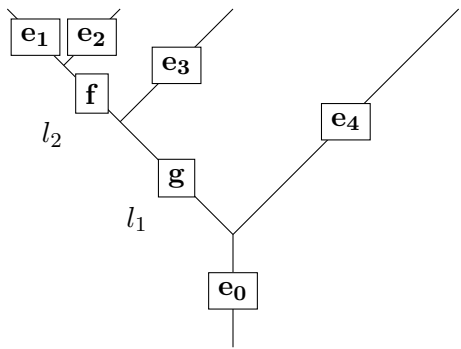
 
\centering
\perturbedgradienttreecorrespondence
\caption{Choosing perturbation data $\mathbb{X}$ for this metric tree, we have that $\phi_{1,\mathbb{X}}=\phi_{g,\mathbb{X}}^{l_1} \circ \phi_{f,\mathbb{X}}^{l_2} \circ \phi_{e_1,\mathbb{X}}^1$, $\phi_{2,\mathbb{X}}=\phi_{g,\mathbb{X}}^{l_1} \circ \phi_{f,\mathbb{X}}^{l_2} \circ \phi_{e_2,\mathbb{X}}^1$, $\phi_{3,\mathbb{X}}=\phi_{g,\mathbb{X}}^{l_1} \circ \phi_{e_3,\mathbb{X}}^1$ and $\phi_{4,\mathbb{X}}= \phi_{e_4,\mathbb{X}}^1$ } \label{geo:fig:flow-map}
\end{figure}

A perturbed Morse gradient tree $T^{Morse}$ associated to $(T,\mathbb{X})$ is determined by the data of the time -1 points on its incoming edges plus the time 1 point on its outgoing edge. Indeed, for each edge $e$ of $t$, we write $\phi_{e,\mathbb{X}}$ for the flow of $-\nabla f + \mathbb{X}_e$. We moreover define for every incoming edge $e_i$ $(1 \leqslant i \leqslant n)$ of $T$, the diffeomorphism $\phi_{i,\mathbb{X}}$ to be the composition of all flows obtained by following the time -1 point of the metric tree on $e_i$ along the only non-self crossing path connecting it to the root. We also set $\phi_{0,\mathbb{X}}$ for the flow of $\phi_{e_0,\mathbb{X}}$ at time -1, where $e_0$ is the outgoing edge of $t$. This is depicted on figure~\ref{geo:fig:flow-map}. Setting 
\[ \Phi_{T,\mathbb{X}} : M \times \cdots \times M \underset{\phi_{0,\mathbb{X}} \times \cdots \times  \phi_{n,\mathbb{X}}}{\longrightarrow} M \times \cdots \times M \ , \]
and $\Delta$ for the thin diagonal of $M \times \cdots \times M$, it is then clear that :

\begin{proposition}[\cite{abouzaid-plumbings}]
There is a one-to-one correspondence
\[
\begin{array}{c@{}c@{}c}
 \left\{\begin{array}{c}
         \text{perturbed Morse gradient trees} \\
         \text{associated to $(T,\mathbb{X})$} \\ 
  \end{array}\right\}
  & \longleftrightarrow 
  & \begin{array}{c}
         (\Phi_{T,\mathbb{X}})^{-1}(\Delta) \\
  \end{array} \ .
\end{array}
\]  
\end{proposition}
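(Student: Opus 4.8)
The plan is to exhibit the correspondence explicitly in both directions and check that the two constructions are mutually inverse, the only real content being that the matching conditions at \emph{all} vertices of $t$ are encapsulated by the single diagonal condition at the root. First I would record two standing facts: since $M$ is compact, the perturbed vector fields $-\nabla f + \mathbb{X}_e$ are complete, so a trajectory on a (possibly semi-infinite) segment $D_e$ is determined by its value at any single time; and each flow map $\phi_{e,\mathbb{X}}^s$, hence each composite $\phi_{i,\mathbb{X}}$ and $\phi_{0,\mathbb{X}}$, is a diffeomorphism of $M$. I would then define the \emph{leaf-evaluation map} sending a perturbed Morse gradient tree $T^{Morse} = \{ \gamma_e \}$ to the tuple $(z_0, z_1, \dots, z_n) \in M \times \cdots \times M$, where $z_0 := \gamma_{e_0}(1)$ is the time $1$ point on the outgoing edge and $z_i := \gamma_{e_i}(-1)$ is the time $-1$ point on the $i$-th incoming edge.

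For the forward inclusion into $(\Phi_{T,\mathbb{X}})^{-1}(\Delta)$, I would interpret $\phi_{i,\mathbb{X}}(z_i)$ as the value obtained by transporting the time $-1$ point of $e_i$ along the unique non-self-crossing path to the root, using successively the flows prescribed by the edge lengths. For a genuine gradient tree all trajectories meeting at a common vertex agree there, so each such transport produces the \emph{same} value, namely the root value $\gamma_{e_0}(0)$; likewise $\phi_{0,\mathbb{X}}(z_0) = \gamma_{e_0}(0)$. Hence $\Phi_{T,\mathbb{X}}(z_0,\dots,z_n)$ is the constant tuple $(\gamma_{e_0}(0), \dots, \gamma_{e_0}(0)) \in \Delta$.

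Conversely, given $(z_0,\dots,z_n) \in (\Phi_{T,\mathbb{X}})^{-1}(\Delta)$, I would reconstruct a gradient tree: each incoming trajectory $\gamma_{e_i}$ is the unique flow line of $-\nabla f + \mathbb{X}_{e_i}$ with $\gamma_{e_i}(-1) = z_i$, the outgoing trajectory is the flow line with $\gamma_{e_0}(1) = z_0$, and each internal-edge trajectory is obtained by flowing the value at the vertex of its upper endpoint, this value being read off along the relevant subpath. The hard part, and the step I would spend the most care on, is verifying that these reconstructed trajectories satisfy the matching condition at \emph{every} internal vertex and not merely at the root. This is where the diffeomorphism property is essential: if two subpaths reach a common internal vertex $v$ carrying values $p_v^{(1)}$ and $p_v^{(2)}$, then applying the invertible composite flow along the common edge-path from $v$ to the root sends both to the corresponding root values, which coincide by the diagonal hypothesis; injectivity then forces $p_v^{(1)} = p_v^{(2)}$. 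Propagating this argument from the root down the tree shows every vertex value is well defined, so the reconstruction yields a bona fide perturbed Morse gradient tree associated to $(T,\mathbb{X})$.

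Finally I would check that the two maps are mutually inverse, which is immediate: reconstructing a gradient tree from a tuple in $(\Phi_{T,\mathbb{X}})^{-1}(\Delta)$ and then re-evaluating its leaves returns the same tuple by flow uniqueness, while evaluating a gradient tree and reconstructing from the resulting leaf data returns the same trajectories by the same uniqueness. This establishes the claimed one-to-one correspondence.
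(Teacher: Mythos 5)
Your proof is correct and follows exactly the route the paper takes: the paper defines the transport maps $\phi_{i,\mathbb{X}}$ and $\phi_{0,\mathbb{X}}$ and then asserts the correspondence as clear, and your argument simply supplies the missing verification, the only nontrivial point being the well-definedness of the vertex values in the reconstruction, which you handle correctly via injectivity of the composite flows. Nothing to object to.
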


The vector fields on the external edges are equal to $-\nabla f$ away from a length 1 segment, hence the trajectories associated to these edges all converge to critical points of the function $f$.
For critical points $y$ and $x_1,\dots ,x_n$, the map $\Phi_{T,\mathbb{X}}$ can be restricted to
\[ W^S(y) \times W^U(x_1) \times \cdots \times W^U(x_n) \ , \]
such that the inverse image of the diagonal yields all perturbed Morse gradient trees associated to $(T,\mathbb{X})$ connecting $x_1,\dots ,x_n$ to $y$.

\subsection{Moduli spaces of perturbed Morse gradient trees} \label{geo:ss:mod-space-pert-Morse-tree}

Recall that $E(t)$ stands for the set of internal edges of $t$, and $\overline{E}(t)$ for the set of all its edges. We previously saw that a choice of perturbation data on a metric ribbon tree $T:=(t,\{ l_e \}_{e \in E(t)})$ is the data of maps $\mathbb{X}_{T,f} : D_f \times M  \longrightarrow TM$, for every edge $f \in \overline{E}(t)$ of $t$. Define the cone $C_f \subset \mathcal{T}_n(t) \times \R \simeq \R^{e(t)+1}  $ to be
\begin{enumerate}[label=(\roman*)]
\item $\{ (( l_e )_{e \in E(t)},s) \text{ such that } 0 \leqslant s \leqslant l_f \}$ if $f$ is an internal edge ;
\item $\{ (( l_e )_{e \in E(t)},s) \text{ such that } s \leqslant 0 \}$ if $f$ is an incoming edge ;
\item $\{ (( l_e )_{e \in E(t)},s) \text{ such that } s \geqslant 0 \}$ if $f$ is the outgoing edge.
\end{enumerate}
Then a choice of perturbation data for every metric ribbon tree in $\mathcal{T}_n(t)$ yields a map
\[ \mathbb{X}_{t,f} : C_f \times M \longrightarrow TM \ , \]
for every edge $f$ of $t$. This choice of perturbation data is said to be \emph{smooth} if all these maps are smooth.

\begin{definition}
Let $\mathbb{X}_t$ be a smooth choice of perturbation data on $\mathcal{T}_n(t)$. For critical points $y$ and $x_1,\dots ,x_n$, we define the moduli space
\[ \mathcal{T}_t^{\mathbb{X}_t}(y ; x_1,\dots,x_n) := \left\{\begin{array}{c}
         \text{perturbed Morse gradient trees associated to $(T,\mathbb{X}_T)$} \\
         \text{and connecting $x_1,\dots,x_n$ to $y$, for  $T \in \mathcal{T}_n(t)$ } \\
  \end{array}\right\} . \]
\end{definition}

Introduce now the map
\[ \phi_{\mathbb{X}_t} : \mathcal{T}_n(t) \times W^S(y) \times W^U(x_1) \times \cdots \times W^U(x_n) \longrightarrow M^{\times n+1} \ , \]
whose restriction to every $T \in \mathcal{T}_n(t)$ is as defined previously :

\begin{proposition} \label{geo:prop:trois-items-ombas-alg}
\begin{enumerate}[label=(\roman*)]
\item The moduli space $\mathcal{T}_t^{\mathbb{X}_t}(y ; x_1,\dots,x_n)$ can be rewritten as 
\[ \mathcal{T}_t^{\mathbb{X}_t}(y ; x_1,\dots,x_n) = \phi_{\mathbb{X}_t}^{-1}(\Delta) \ , \]
where $\Delta$ is the thin diagonal of $M^{\times n+1}$. \label{item:moduli-spaces:A}
\item Given a choice of perturbation data $\mathbb{X}_t$ making $\phi_{\mathbb{X}_t}$ transverse to the diagonal $\Delta$, the moduli space $\mathcal{T}_t^{\mathbb{X}_t}(y ; x_1,\dots,x_n)$ is an orientable manifold of dimension 
\[ \dim \left( \mathcal{T}_t(y ; x_1,\dots,x_n) \right) = e(t) + |y| - \sum_{i=1}^n|x_i| \ . \] \label{item:moduli-spaces:B}
\item Choices of perturbation data $\mathbb{X}_t$ such that $\phi_{\mathbb{X}_t}$ is transverse to $\Delta$ exist. \label{item:moduli-spaces:C}
\end{enumerate}
\end{proposition}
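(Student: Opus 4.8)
The plan is to prove the three items in the order stated: \ref{item:moduli-spaces:A} and \ref{item:moduli-spaces:B} are formal consequences of the setup, whereas \ref{item:moduli-spaces:C} carries all the analytic weight.

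\emph{Item \ref{item:moduli-spaces:A}.} I would simply unwind the definitions. By definition $\mathcal{T}_t^{\mathbb{X}_t}(y;x_1,\dots,x_n)$ is the union over all metric trees $T \in \mathcal{T}_n(t)$ of the perturbed Morse gradient trees associated to $(T,\mathbb{X}_T)$ which connect $x_1,\dots,x_n$ to $y$. For each fixed $T$, the one-to-one correspondence of the preceding proposition identifies these trees with the intersection $(\Phi_{T,\mathbb{X}})^{-1}(\Delta)$ taken inside $W^S(y)\times W^U(x_1)\times\cdots\times W^U(x_n)$, which is precisely the fiber of $\phi_{\mathbb{X}_t}$ over $T$. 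Taking the union over $T$ yields the equality $\mathcal{T}_t^{\mathbb{X}_t}(y;x_1,\dots,x_n)=\phi_{\mathbb{X}_t}^{-1}(\Delta)$.

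\emph{Item \ref{item:moduli-spaces:B}.} Write $D := \mathcal{T}_n(t)\times W^S(y)\times W^U(x_1)\times\cdots\times W^U(x_n)$ for the domain of $\phi_{\mathbb{X}_t}$. When $\phi_{\mathbb{X}_t}$ is transverse to $\Delta$, the preimage $\phi_{\mathbb{X}_t}^{-1}(\Delta)$ is a submanifold of $D$ of codimension $\mathrm{codim}_{M^{\times n+1}}(\Delta)=nd$. Using $\dim\mathcal{T}_n(t)=e(t)$, $\dim W^S(y)=|y|$ and $\dim W^U(x_i)=d-|x_i|$, the dimension formula reads
\[ \dim\phi_{\mathbb{X}_t}^{-1}(\Delta) = \dim D - nd = e(t)+|y|+\sum_{i=1}^n(d-|x_i|) - nd = e(t)+|y|-\sum_{i=1}^n|x_i| \ . \]
For orientability I would argue via the normal bundle. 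The manifolds $W^S(y)$ and $W^U(x_i)$ are diffeomorphic to open disks, hence orientable, and $\mathcal{T}_n(t)\cong{]0,+\infty[}^{e(t)}$ is orientable, so $D$ is orientable. Since $M$ is oriented, the diagonal $\Delta\cong M$ is oriented and its normal bundle in $M^{\times n+1}$ is isomorphic to $TM^{\oplus n}$, which is orientable. Transversality identifies the normal bundle of $\phi_{\mathbb{X}_t}^{-1}(\Delta)$ in $D$ with the pullback of this normal bundle, so the preimage is coorientable inside the orientable $D$, hence orientable. The explicit compatible choice of orientations, in the language of signed short exact sequences of vector bundles, is postponed to section~\ref{geo:s:signs-or}.

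\emph{Item \ref{item:moduli-spaces:C}.} This is the heart of the statement and relies on parametric transversality. The plan is to enlarge the perturbation data into a Banach manifold $\mathcal{P}$ of choices $\mathbb{X}_t$ (using Floer's $C^\varepsilon$-completions to circumvent the fact that the space of smooth data is not Banach), and to assemble the universal moduli space $\{(\mathbb{X}_t,u)\ :\ \phi_{\mathbb{X}_t}(u)\in\Delta\}$. The key analytic step is to show that the universal evaluation map is transverse to $\Delta$, i.e. that varying the perturbation vector fields already surjects onto the directions normal to $\Delta$; this uses that each trajectory spends a positive, controlled amount of time in the collar regions where the $\mathbb{X}_e$ are allowed to be nonzero, so that a localized variation of a single $\mathbb{X}_e$ can move the relevant endpoint in any prescribed direction. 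Granting this, the projection from the universal moduli space to $\mathcal{P}$ is Fredholm, and Sard--Smale produces a residual, hence nonempty, set of perturbation data $\mathbb{X}_t$ for which $\phi_{\mathbb{X}_t}$ is transverse to $\Delta$; a density argument then returns a smooth such choice. The hard part is precisely establishing the surjectivity of the universal linearized map, together with the functional-analytic bookkeeping (choice of Banach norms, regularity of the evaluation map) needed to apply Sard--Smale, all of which is carried out in detail in section~\ref{geo:s:transversality}.
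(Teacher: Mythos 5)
Your proposal is correct and follows essentially the same route as the paper: item (i) by unwinding definitions, item (ii) by the codimension count for a transverse preimage of the diagonal together with orientability of the domain and coorientability of the preimage, and item (iii) by parametric transversality and Sard--Smale, with the surjectivity of the universal linearized map coming from the freedom to perturb on the collar regions of each edge (Abouzaid's Lemma~7.3). The only minor deviation is functional-analytic bookkeeping: you invoke Floer's $C^\varepsilon$-completions, whereas the paper works with $C^l$ Banach spaces of perturbation data and then upgrades to smooth data by the standard argument \`a la Taubes; both are routine and interchangeable here.
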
 

Item~\ref{item:moduli-spaces:A} is straightforward and item~\ref{item:moduli-spaces:B} stems from the fact that if $\phi_{\mathbb{X}_t}$ transverse to $\Delta$, the moduli spaces $\mathcal{T}_t^{\mathbb{X}_t}(y ; x_1,\dots,x_n)$ are manifolds of codimension
\[ \mathrm{codim} \left( \mathcal{T}_t(y ; x_1,\dots,x_n) \right) = \mathrm{codim}_{M^{\times n +1}} (\Delta) = nd  \ , \]
where $d := \dim(M)$. 
Note that we have chosen to grade the Morse cochains using the coindex in order for this convenient dimension formula to hold. We refer to sections~\ref{geo:s:transversality} for details on item~\ref{item:moduli-spaces:C}.

\subsection{Compactifications} \label{geo:ss:compact-ribbon-Morse}

We now would like to compactify the moduli spaces $\mathcal{T}_t^{\mathbb{X}_t}(y ; x_1,\dots,x_n)$ that have dimension 1 to 1-dimensional manifolds with boundary. They are defined as the inverse image in $\mathcal{T}_n(t) \times  W^S(y) \times W^U(x_1) \times \cdots \times W^U(x_n)$ of the diagonal $\Delta$ under $\phi_{\mathbb{X}_t}$ . The boundary components in the compactification should hence come from those of $\mathcal{T}_n(t)$, of the $W^U(x_i)$, and of $W^S(y)$ : that is they will respectively come from internal edges of the perturbed Morse gradient tree collapsing, or breaking at a critical point (boundary of $\mathcal{T}_n(t)$), its semi-infinite incoming edges breaking at a critical point (boundary of $W^U(x_i)$) and its semi-infinite outgoing edge breaking at a critical point (boundary of $W^S(y)$). Some of these phenomena are represented on figure~\ref{geo:fig:examples-perturbed-ribbon-breaking}.

\begin{figure}[h]
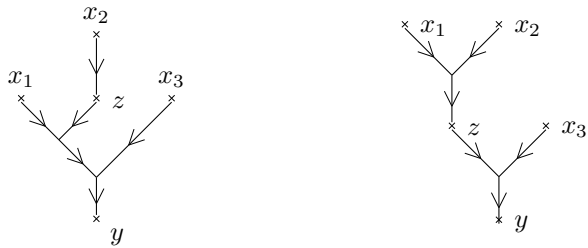
 
    \centering
    \begin{subfigure}{0.3\textwidth}
    \centering
       \examplebreakingMorseribbontreeun
       \end{subfigure} ~
    \begin{subfigure}{0.3\textwidth}
    \centering
        \examplebreakingMorseribbontreedeux
    \end{subfigure}
    \caption{Two examples of perturbed Morse gradient trees breaking at a critical point} \label{geo:fig:examples-perturbed-ribbon-breaking}
\end{figure}

Choose smooth perturbation data $\mathbb{X}_t$ for all $t \in SRT_i, \ 2 \leqslant i \leqslant n$. We denote $\mathbb{X}_n := (\mathbb{X}_t)_{t \in SRT_n}$ and call it a \emph{choice of perturbation data on the moduli space $\mathcal{T}_n$}. We construct the boundary of the compactification of the moduli space $\mathcal{T}^{\mathbb{X}_t}_t(y ; x_1,\dots,x_n)$ by using the perturbation data $(\mathbb{X}_t)^{t \in SRT_i}_{1 \leqslant i \leqslant n }$. It is given by the spaces
\begin{enumerate}[label=(\roman*)]
\item corresponding to an internal edge collapsing (int-collapse) : $$\mathcal{T}^{\mathbb{X}_{t'}}_{t'}(y ; x_1,\dots,x_n)$$ where $t' \in SRT_n$ are all the trees obtained by collapsing exactly one internal edge of $t$ ;
\item corresponding to an internal edge breaking (int-break) : $$\mathcal{T}^{\mathbb{X}_{t_1}}_{t_1}(y ; x_1,\dots,x_{i_1},z,x_{i_1+i_2+1},\dots,x_n) \times \mathcal{T}^{\mathbb{X}_{t_2}}_{t_2}(z ; x_{i_1+1},\dots,x_{i_1+i_2}) ,$$
where $t_2$ is seen to lie above the $i_1 + 1$-incoming edge of $t_1$ ;
\item corresponding to an external edge breaking (Morse) : $$\mathcal{T}(y;z) \times \mathcal{T}^{\mathbb{X}_t}_t(z ; x_1,\dots,x_n) \ \text{ and } \  \mathcal{T}^{\mathbb{X}_t}_t(y ; x_1,\dots,z,\dots , x_n) \times \mathcal{T}(z;x_i)  \ .$$
\end{enumerate} 
While the (Morse) boundary simply comes from the fact that external edges are Morse trajectories away from a length 1 segment, the analysis for the (int-collapse) and (int-break) boundaries requires to refine our definitions of perturbation data. It namely appears here why we had to choose more perturbation data than $\mathbb{X}_t$, as they will appear in the boundary of the compactified moduli space.

We begin by tackling the conditions coming with the (int-collapse) boundary. Let $t$ be a stable ribbon tree type and consider a choice of perturbation data on $\mathcal{T}_n(t)$ : it is a choice of perturbation data $\mathbb{X}_T$ for every $T \in \mathcal{T}_n(t) \simeq ]0,+\infty[^{e(t)}$. Denote $coll(t) \subset SRT_n$ the set of all trees obtained by collapsing internal edges of $t$. A choice of perturbation data $(\mathbb{X}_{t'})_{t' \in coll(t)}$ then corresponds to a choice of perturbation data $\mathbb{X}_T$ for every $T \in [0,+\infty[^{e(t)}$. Following section~\ref{geo:ss:mod-space-pert-Morse-tree}, such a choice of perturbation data is equivalent to a map 
\[ \tilde{\mathbb{X}}_{t,f} : \tilde{C}_f \times M \longrightarrow TM \ , \]
for every edge $f$ of $t$, where $\tilde{C}_f \subset [0,+\infty[^{e(t)} \times \R$ is defined in a similar fashion to $C_f$.

\begin{definition}
A choice of perturbation data $(\mathbb{X}_{t'})_{t' \in coll(t)}$ is said to be \emph{smooth} if all maps $\tilde{\mathbb{X}}_{t,f}$ are smooth. A choice of perturbation data $\mathbb{X}_n $ is said to be \emph{smooth} if for every $t \in SRT_n$, the choice of perturbation data $(\mathbb{X}_{t'})_{t' \in coll(t)}$ is smooth.
\end{definition}

We now tackle the conditions coming with the (int-break) boundary. We work again with a fixed stable ribbon tree type $t$. Consider a choice of perturbation data $\mathbb{X}_t=(\mathbb{X}_{t,e})_{e \in \overline{E}(t)}$ on $\mathcal{T}_n(t)$. We have to specify what happens on the $\mathbb{X}_{t,e}$ when the length of an internal edge $f$ of $t$, denoted $l_f$, goes towards $+\infty$. Write $t_1$ and $t_2$ for the trees obtained by breaking $t$ at the edge $f$.
\begin{enumerate}[label=(\roman*)]
\item For $e \in \overline{E}(t)$ and $\neq f$, assuming for instance that $e \in t_1$, we require that $$\lim_{l_f \rightarrow + \infty} \mathbb{X}_{t,e} = \mathbb{X}_{t_1,e} \ .$$ \label{geo:item:cond-un-ombas-alg}
\item For $f=e$, $\mathbb{X}_{t,f}$ yields two parts when $l_f \rightarrow +\infty$ : the part corresponding to the infinite edge in $t_1$ and the part corresponding to the infinite edge in $t_2$. We then require that they coincide respectively with $\mathbb{X}_{t_1,f}$ and $\mathbb{X}_{t_2,f}$. \label{geo:item:cond-deux-ombas-alg}
\end{enumerate}
Two examples illustrating these two cases are detailed in the following paragraphs.

Begin with an example of the first case, where $e \neq f$. This is represented on figure~\ref{geo:fig:example-perturbation-break-un}. We only represent the perturbation $\mathbb{X}_{t,f_3}$ on this figure for clarity's sake. 
The perturbation datum $\mathbb{X}_{t,f_3}^{\infty}$ could a priori depend on $l_{f_1}$ : the requirement $\mathbb{X}_{t,f_3}^{\infty}=\mathbb{X}_{t_1,f_3}$ says in particular that it is independent of $l_{f_1}$.
\begin{figure}[h] 
\centering
\exampleperturbationbreakun
\caption{} \label{geo:fig:example-perturbation-break-un}
\end{figure}

Similarly, we illustrate the second case, where $e=f$, on figure~\ref{geo:fig:example-perturbation-break-deux}. 
A priori, $\mathbb{X}_{t,f_2}^{+}$ and $\mathbb{X}_{t,f_2}^{-}$ can depend on both $l_{f_1}$ and $l_{f_3}$ : the requirement $\mathbb{X}_{t,f_2}^{+}=\mathbb{X}_{t_2,f_2}$ says exactly that $\mathbb{X}_{t,f_2}^{+}$ is independent of $l_{f_3}$, and similarly for $\mathbb{X}_{t,f_2}^{-}=\mathbb{X}_{t_1,f_2}$ with respect to $l_{f_1}$.
\begin{figure}[h] 
\centering
\exampleperturbationbreakdeux
\caption{} \label{geo:fig:example-perturbation-break-deux}
\end{figure}

\begin{definition}
A choice of perturbation data $(\mathbb{X}_i)_{2 \leqslant i \leqslant n}$ is said to be \emph{gluing-compatible} if it satisfies conditions~\ref{geo:item:cond-un-ombas-alg}~and~\ref{geo:item:cond-deux-ombas-alg} for lengths of edges going toward $+\infty$. A choice of perturbation data $(\mathbb{X}_n)_{n \geqslant 2}$ being both smooth and gluing-compatible, and such that all maps $\phi_{\mathbb{X}_t}$ are transverse to $\Delta$, is said to be \emph{admissible}.
\end{definition}

\begin{theorem} \label{geo:th:exist-admissible-perturbation-data-Tn}
Admissible choices of perturbation data on the moduli spaces $\mathcal{T}_n$ exist.
\end{theorem}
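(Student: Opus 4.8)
The plan is to argue by induction on the arity $n$, building the perturbation data $\mathbb{X}_n$ from the already-constructed admissible data in lower arities and then perturbing generically to achieve transversality. For the base case $n=2$, the moduli space $\mathcal{T}_2$ is a single point (the corolla has no internal edge), so one only has to choose vector fields on the three external edges; transversality of $\phi_{\mathbb{X}}$ to $\Delta$ for the finitely many triples of critical points is exactly the content of Proposition~\ref{geo:prop:trois-items-ombas-alg}~\ref{item:moduli-spaces:C}, and there are no gluing or smoothness constraints to respect yet. Assume then that admissible data $(\mathbb{X}_i)_{2 \leqslant i < n}$ have been fixed. The gluing-compatibility conditions~\ref{geo:item:cond-un-ombas-alg} and~\ref{geo:item:cond-deux-ombas-alg} prescribe $\mathbb{X}_n$ on a neighborhood of every (int-break) face of the compactification $\overline{\mathcal{T}}_n$ in terms of these lower-arity data, while the smoothness condition constrains how the data on the various strata $\mathcal{T}_n(t)$ must glue across the (int-collapse) faces where internal lengths tend to $0$.

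First I would construct \emph{some} smooth, gluing-compatible $\mathbb{X}_n$, ignoring transversality. Since $\overline{\mathcal{T}}_n$ is a manifold with corners whose cells are indexed by broken stable ribbon trees, the prescriptions coming from the lower-arity data live on a collar neighborhood of its boundary and are mutually compatible along intersections of faces — this compatibility being precisely the associativity of grafting, which is why the $\Omega B As$-cell decomposition is the natural setting here. A standard collar-and-partition-of-unity extension then produces a smooth family of perturbation data over all of $\overline{\mathcal{T}}_n$ restricting to the prescribed data near the boundary, with smoothness across the collapse faces built into the extension. This furnishes a nonempty space $\mathcal{P}$ of smooth gluing-compatible choices agreeing with a fixed such $\mathbb{X}_n$ near $\partial \overline{\mathcal{T}}_n$.

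Next I would run a Sard--Smale argument over $\mathcal{P}$. For fixed critical points $y; x_1,\dots,x_n$ and a fixed stratum $t \in SRT_n$, consider the universal moduli space of pairs consisting of a perturbation datum in $\mathcal{P}$ together with a perturbed Morse gradient tree, and the associated universal evaluation map into $M^{\times n+1}$. The key lemma to establish is that this universal map is transverse to the thin diagonal $\Delta$: adding to $\mathbb{X}_n$ a vector field supported in the interior of a chosen edge-segment (away from $\partial\overline{\mathcal{T}}_n$, so as to stay in $\mathcal{P}$) moves the corresponding flow endpoint freely in every direction of $M$, so that the derivative in the perturbation variable already surjects onto the $nd$ normal directions $TM^{\times n+1}/T\Delta$. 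Granting this, the projection from the universal space to $\mathcal{P}$ is Fredholm, and by Sard--Smale its regular values form a residual subset of $\mathcal{P}$; these regular values are precisely the $\mathbb{X}_t$ making $\phi_{\mathbb{X}_t}$ transverse to $\Delta$. Because $M$ is compact there are only finitely many critical points, hence finitely many tuples $(y;x_1,\dots,x_n)$ and finitely many tree types $t \in SRT_n$, so intersecting the corresponding residual sets yields a residual, in particular nonempty, set of admissible data $\mathbb{X}_n$, completing the induction.

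The hard part will be the universal transversality lemma, for two reasons. First, the perturbations are \emph{localized} (they vanish away from unit-length segments and must be left untouched near the boundary to preserve the inductively obtained boundary transversality), so one must verify that interior perturbations alone suffice to span all normal directions to $\Delta$ — this uses the precise dependence of the composed flow maps $\phi_{i,\mathbb{X}}$ on the vector fields and the freedom to place the evaluation point in a region where perturbation is unconstrained. Second, one must check that the generic choices on the different strata, forced by smoothness to assemble into a single family over $\overline{\mathcal{T}}_n$, are simultaneously transverse and compatible with the gluing maps governing the (int-break) boundary of section~\ref{geo:ss:compact-ribbon-Morse}; keeping track of this coherence, rather than any isolated transversality statement, is the genuine technical difficulty, and is where the detailed work of section~\ref{geo:s:transversality} is needed.
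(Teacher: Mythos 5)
Your overall strategy --- induction plus a parametric Sard--Smale argument over a space of gluing-compatible perturbation data --- is the same as the paper's (section~\ref{geo:s:transversality}), but two of your choices create genuine gaps. The first and most serious is that you freeze the perturbation data on a whole collar neighborhood of $\partial\overline{\mathcal{T}}_n$ and only allow perturbations supported away from $\partial\overline{\mathcal{T}}_n$, ``so as to stay in $\mathcal{P}$''. Transversality of $\phi_{\mathbb{X}_t}$ is required at every point of the open stratum $\mathcal{T}_n(t)$, including metric trees lying over your collar; for a solution sitting over the collar the perturbation variable does not act at all, so the universal evaluation map is not a submersion there and Sard--Smale tells you nothing about those solutions. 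You would have to supply a separate gluing theorem showing that transversality of the broken limit configurations propagates to nearby unbroken trees --- precisely the ``coherence'' you flag as the hard part but do not carry out. The paper avoids this entirely: its parametrization space $\mathfrak{X}_t^l$ prescribes only the \emph{limits} of the data, namely equality with the already-fixed data on the collapse strata and convergence to the glued lower data as $l_e \to +\infty$, so perturbations supported near any finite metric tree remain admissible and the universal map is a submersion over all of $\mathcal{T}_n(t)$. Relatedly, the paper inducts on the number of internal edges $e(t)$ rather than on the arity $n$; this is what turns the collapse faces (which connect strata of the \emph{same} arity) into genuine boundary prescriptions inherited from earlier stages of the induction, instead of interior compatibility constraints that your partition-of-unity construction must solve all at once over $\overline{\mathcal{T}}_n$.

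The second gap is functional-analytic: you run Sard--Smale directly over a space $\mathcal{P}$ of \emph{smooth} choices, but that space is Fr\'echet, not Banach, and the Sard--Smale theorem does not apply to it. The paper works with $C^l$-perturbation data for $l$ large enough relative to the expected dimension, where the parametrization space is an affine Banach space --- the prescribed $l_e \to +\infty$ limits are what make the $C^l$-norm finite on the noncompact $\underline{\mathcal{T}_n}(t)$ --- and then upgrades the residual statement to the smooth category by the Taubes exhaustion argument, writing the set of transverse smooth data as a countable intersection of subsets that are open and dense because transversality on a compact set is an open condition. Your proposal needs one of these devices (or the Floer $C^{\varepsilon}$-spaces) to be complete.
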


\begin{theorem} \label{geo:th:compactification-Tn}
Let $(\mathbb{X}_n)_{n \geqslant 2}$ be an admissible choice of perturbation data. The 0-dimensional moduli spaces $\mathcal{T}^{\mathbb{X}_t}_t(y ; x_1,\dots,x_n)$ are compact. The 1-dimensional moduli spaces $\mathcal{T}^{\mathbb{X}_t}_t(y ; x_1,\dots,x_n)$ can be compactified to 1-dimensional manifolds with boundary $\overline{\mathcal{T}}^{\mathbb{X}_t}_t(y ; x_1,\dots,x_n)$, whose boundary is described at the beginning of this section.
\end{theorem}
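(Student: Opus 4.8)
The plan is to prove the two theorems by a simultaneous induction on the arity $n$, the existence statement (Theorem~\ref{geo:th:exist-admissible-perturbation-data-Tn}) and the compactness/gluing statement (Theorem~\ref{geo:th:compactification-Tn}) feeding into one another at each stage. For $n=2$ the only stable tree $t$ has no internal edge, so $\mathcal{T}_2(t)$ is a point and a choice of perturbation data amounts to vector fields on the external edges; transversality of $\phi_{\mathbb{X}_t}$ to $\Delta$ is achieved by a standard Morse--Smale style perturbation and there is nothing to compactify. Assume now that admissibility and the compactification statement hold in all arities $<n$; within stage $n$ I would first establish transversality and then the compactification.

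For Theorem~\ref{geo:th:exist-admissible-perturbation-data-Tn} in arity $n$, note that the gluing-compatibility conditions~\ref{geo:item:cond-un-ombas-alg}~and~\ref{geo:item:cond-deux-ombas-alg}, together with smoothness across the $coll(t)$ strata, completely prescribe $\mathbb{X}_t$ on the boundary faces of $[0,+\infty]^{e(t)}$ in terms of the already-fixed lower-arity data. The task is thus to extend these boundary data to the interior while making every $\phi_{\mathbb{X}_t}$ transverse to $\Delta$. I would introduce a Banach space $\mathcal{P}$ of admissible perturbation data agreeing with the prescribed boundary values (with Floer-type $C^\varepsilon$ norms), form the universal moduli space
\[ \mathcal{T}^{univ}_t(y;x_1,\dots,x_n) := \{ (T^{Morse},\mathbb{X}) \ : \ \mathbb{X} \in \mathcal{P} \ , \ T^{Morse} \in \mathcal{T}^{\mathbb{X}_t}_t(y;x_1,\dots,x_n) \} \ , \]
and show it is a smooth Banach manifold. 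The key point is that $\phi_{\mathbb{X}_t}$ can be made submersive by varying $\mathbb{X}$ in the non-vanishing regions near the vertices, where perturbing the vector field close to a time-$\pm 1$ point moves the corresponding endpoint surjectively in $TM$. The projection $\mathcal{T}^{univ}_t \to \mathcal{P}$ is Fredholm, and the Sard--Smale theorem produces a residual set of $\mathbb{X}$ making $\phi_{\mathbb{X}_t}$ transverse to $\Delta$; intersecting over the finitely many tuples $(y,x_1,\dots,x_n)$ preserves residuality, and combining with the inductive transversality of the boundary strata (which involve only lower-arity spaces and Morse--Smale trajectory spaces) yields an admissible $\mathbb{X}_n$.

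Turning to Theorem~\ref{geo:th:compactification-Tn}, compactness rests on a Gromov--Floer type argument. Since $M$ is compact and each $\mathbb{X}_e$ is supported in a length-$1$ window of its edge parameter, trajectories of $-\nabla f + \mathbb{X}_e$ have uniformly bounded speed, while $f$ is strictly decreasing along the semi-infinite ends, so the total energy is controlled by the finitely many values $f(x_i)-f(y)$. Given a sequence in $\mathcal{T}^{\mathbb{X}_t}_t(y;x_1,\dots,x_n)$ I would extract a limit by Arzel\`a--Ascoli on each edge after passing to the limiting metric tree in $\overline{\mathcal{T}}_n(t)$: an internal length tending to $0$ gives an (int-collapse) configuration, one tending to $+\infty$ gives an (int-break) configuration, and breaking of a semi-infinite edge at an intermediate critical point gives a (Morse) configuration, exactly the strata listed at the start of subsection~\ref{geo:ss:compact-ribbon-Morse}. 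In the $0$-dimensional case the dimension formula of Proposition~\ref{geo:prop:trois-items-ombas-alg}, item~\ref{item:moduli-spaces:B}, shows every such stratum has negative dimension, hence is empty, so no degeneration occurs and the moduli space is already compact, i.e. finite.

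The hard part is the manifold-with-boundary structure in the $1$-dimensional case, which demands an explicit gluing theorem. For each $0$-dimensional boundary configuration of the three types I would construct a gluing map from a half-open interval $[0,\varepsilon)$ into $\mathcal{T}^{\mathbb{X}_t}_t(y;x_1,\dots,x_n)$: the parameter is the length of the un-collapsed edge for (int-collapse), a reparametrization of the large length for (int-break), and the usual Morse gluing length for (Morse). The gluing-compatibility of the perturbation data is precisely what guarantees that the glued object again solves the perturbed equation and that the map is smooth, since $\mathbb{X}_t$ restricts correctly to the pieces in the limit. I would then show, via an implicit-function/Newton argument using the transversality above and uniform estimates, that each gluing map is a diffeomorphism onto a half-neighborhood of its boundary point and that these half-neighborhoods exhaust a full neighborhood of the boundary (a surjectivity statement dual to compactness). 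This is the analytically delicate step: it must be carried out uniformly across the three degeneration types, and is where the explicit gluing maps of section~\ref{geo:s:signs-or} are needed. Completing it endows $\overline{\mathcal{T}}^{\mathbb{X}_t}_t(y;x_1,\dots,x_n)$ with the asserted compact manifold-with-boundary structure.
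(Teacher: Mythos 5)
Your proposal is correct in outline, but it takes a substantially different (and heavier) route than the paper, which in fact does not prove this theorem internally at all: it defers both the compactness and the manifold-with-boundary structure to chapter 6 of Mescher's monograph, and only supplies the ingredient it actually needs later, namely explicit gluing maps, in subsection~\ref{geo:sss:gluing-and-orientations}. Your compactness argument (uniformly bounded speed since each $\mathbb{X}_e$ is supported in a length-$1$ window, energy controlled by $f(x_i)-f(y)$, Arzel\`a--Ascoli edge by edge, degenerations matching the (int-collapse), (int-break) and (Morse) strata, emptiness of negative-dimensional strata in the $0$-dimensional case) is essentially what Mescher carries out, so that half is a faithful reconstruction. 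Where you diverge is the gluing step: you invoke a Banach-space implicit-function/Newton iteration with uniform quadratic estimates, which is the Floer-theoretic template, whereas the paper exploits the fact that every moduli space here is cut out as the preimage of a diagonal under a \emph{finite-dimensional} flow-evaluation map. Its Lemma~\ref{geo:lemma:lemma-gluing} is just the finite-dimensional inverse function theorem applied to a family $f:[0,1]\times M\to N$ transverse to a submanifold at the boundary parameter, fed with the evaluation map $ev$ in a Euclidean Morse chart $U_z$ around the breaking critical point; gluing-compatibility of the perturbation data is exactly what makes this family smooth up to the boundary. This buys a much more elementary argument, with the orientation bookkeeping built in for free, at the cost of relying on the Euclidean normal form near critical points. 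Two smaller discrepancies: the paper's induction for Theorem~\ref{geo:th:exist-admissible-perturbation-data-Tn} is on the number of internal edges $e(t)$ rather than on arity and does not use the compactification theorem as input, so the simultaneous induction you set up is unnecessary; and in the $1$-dimensional case you should say explicitly that multiply-broken or mixed degenerations are excluded because transversality forces the corresponding strata to have negative dimension --- your sketch only makes this point for the $0$-dimensional spaces.
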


We refer to section~\ref{geo:s:transversality} for a proof of Theorem~\ref{geo:th:exist-admissible-perturbation-data-Tn}. Theorem~\ref{geo:th:compactification-Tn} is proven in chapter 6 of~\cite{mescher-morse}. Using the results of~\cite{wehrheim-morse}, we could in fact try to prove that all moduli spaces $\mathcal{T}^{\mathbb{X}_t}_t(y ; x_1,\dots,x_n)$ can be compactified to compact manifolds with corners. The analysis involved therein goes however beyond the scope of this paper.

Consider now a stable ribbon tree $t$ together with an internal edge $f \in E(t)$ and write $t_1$ and $t_2$ for the trees obtained by breaking $t$ at the edge $f$, where $t_2$ is seen to lie abpve $t_1$. Given critical points $y,z,x_1,\dots,x_n$ suppose moreover that the moduli spaces $\mathcal{T}_{t_1}(y ; x_1,\dots,x_{i_1},z,x_{i_1+i_2+1},\dots,x_n)$ and $\mathcal{T}_{t_2}(z ; x_{i_1+1},\dots,x_{i_1+i_2})$ are 0-dimensional. Let $T_1^{Morse}$ and $T_2^{Morse}$ be two perturbed Morse gradient trees which belong respectively to the former and the latter moduli spaces. Theorem~\ref{geo:th:compactification-Tn} implies in particular that there exists $R > 0$ and an embedding
\[ \# _{ T_1^{Morse} , T_2^{Morse} } : [ R , + \infty ] \longrightarrow \overline{\mathcal{T}}_{t}(y ; x_1,\dots ,x_n) \]
parametrizing a neighborhood of the boundary $\{ T_1^{Morse} \} \times \{ T_2^{Morse} \} \subset \partial \overline{\mathcal{T}}_{t}^{Morse}$, i.e. sending $+ \infty$ to $(T_1^{Morse} , T_2^{Morse}) \in \partial \overline{\mathcal{T}}_{t}^{Morse}$. Such a map is called a \emph{gluing map} for $T_1^{Morse}$ and $T_2^{Morse}$. Explicit gluing maps are constructed in subsection~\ref{geo:sss:gluing-and-orientations}.

\subsection{$\Omega B As$ -algebra structure on the Morse cochains} \label{geo:ss:ombas-str-Morse}

We now have all the necessary material to define an $\Omega B As$-algebra structure on the Morse cochains $C^*(f)$. 

\begin{theorem} \label{geo:th:ombas-alg}
Let $\mathbb{X} := (\mathbb{X}_n)_{n \geqslant 2}$ be an admissible choice of perturbation data. Defining for every $n$ and $t \in SRT_n$ the operations $m_t$ as 
\begin{align*}
m_t : C^*(f) \otimes \cdots \otimes C^*(f) &\longrightarrow C^*(f) \\
x_1 \otimes \cdots \otimes x_n &\longmapsto \sum_{|y|= \sum_{i=1}^n|x_i| - e(t)} \# \mathcal{T}_t^\mathbb{X}(y ; x_1,\cdots,x_n) \cdot y \ ,
\end{align*} 
they endow the Morse cochains  $C^*(f)$ with an $\Omega B As$-algebra structure.
\end{theorem}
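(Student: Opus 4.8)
The plan is to prove that the operations $m_t$ satisfy the $\Omega B As$-algebra equations by the standard strategy of counting boundary points of compactified $1$-dimensional moduli spaces. More precisely, to verify that $\{ m_t \}_{t \in SRT_n}$ defines a morphism of operads $\Omega B As \to \End_{C^*(f)}$, it suffices to check that for every stable ribbon tree type $t$ the relation prescribed by the differential $\partial_{\Omega B As}(t)$ holds among the $m_t$. Recall from Definition in subsection~\ref{alg:sss:def-ombas} that $\partial_{\Omega B As}(t)$ is the signed sum over the internal edges $e_j$ of $t$ of the collapsed tree $t/e_j$ and the broken tree $(t)_j$. Thus the equation to establish reads, schematically,
\[ [ \partial_{Morse} , m_t ] = \sum_{j} \pm \left( m_{t/e_j} - m_{(t)_j} \right) \ , \]
where $m_{(t)_j}$ denotes the appropriate composite of lower operations along the broken tree, and where the Morse differential term $[ \partial_{Morse} , m_t ]$ accounts exactly for the (Morse) boundary components of subsection~\ref{geo:ss:compact-ribbon-Morse}.

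First I would fix critical points $y, x_1, \dots, x_n$ such that the moduli space $\mathcal{T}^{\mathbb{X}_t}_t(y ; x_1,\dots,x_n)$ is $1$-dimensional, i.e. $e(t) + |y| - \sum_i |x_i| = 1$. By Theorem~\ref{geo:th:compactification-Tn} this moduli space compactifies to a compact $1$-manifold with boundary $\overline{\mathcal{T}}^{\mathbb{X}_t}_t(y ; x_1,\dots,x_n)$, and the fundamental fact that the signed count of the boundary of a compact oriented $1$-manifold vanishes gives the master identity. The core of the argument is then to identify the boundary strata, listed at the beginning of subsection~\ref{geo:ss:compact-ribbon-Morse}, with the terms of the desired algebraic relation: the (int-collapse) strata contribute the $m_{t/e_j}$ terms; the (int-break) strata contribute the broken-tree composites $m_{(t)_j}$ (which, being $0$-dimensional products $\mathcal{T}_{t_1} \times \mathcal{T}_{t_2}$, count to the operadic composition in $\End_{C^*(f)}$); and the (Morse) strata $\mathcal{T}(y;z) \times \mathcal{T}^{\mathbb{X}_t}_t(z; \dots)$ and $\mathcal{T}^{\mathbb{X}_t}_t(\dots,z,\dots) \times \mathcal{T}(z;x_i)$ assemble precisely into the commutator $[ \partial_{Morse} , m_t ]$. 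This matching is exactly the statement that the boundary of $\overline{\mathcal{T}}_n(t)$ models the boundary of the geometric moduli space, which is the content of Theorem~\ref{geo:th:compactification-Tn}.

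The delicate part, which I would defer to the later sections on signs and orientations (sections~\ref{geo:s:transversality} and~\ref{geo:s:signs-or}), is to show that the signs produced by the boundary orientation convention of subsection~\ref{alg:sss:or-boundary} agree with the signs appearing in $\partial_{\Omega B As}$. Here one must choose coherent orientations on all the $\mathcal{T}^{\mathbb{X}_t}_t(y;x_1,\dots,x_n)$, use the explicit gluing maps $\#_{T_1^{Morse},T_2^{Morse}}$ of subsection~\ref{geo:sss:gluing-and-orientations} to compare the product orientation on a boundary stratum with the orientation induced from the ambient $1$-manifold, and verify that the resulting sign is the $(-1)^j$ (collapse) versus $-(-1)^j$ (break) discrepancy dictated by the Markl--Shnider differential. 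I expect this sign bookkeeping to be the main obstacle: it is where the orientability claim of Proposition~\ref{geo:prop:trois-items-ombas-alg}\ref{item:moduli-spaces:B} must be upgraded to a consistent global choice, and where one discovers (as announced in the introduction, cf.\ Definition~\ref{geo:def:twisted-ombas}) that the relation actually holds only up to the twisting by $\partial_1, \partial_2$ when $\dim M$ is even. Granting the admissible perturbation data of Theorem~\ref{geo:th:exist-admissible-perturbation-data-Tn} and the compactification of Theorem~\ref{geo:th:compactification-Tn}, the remaining verification is thus purely a transversality-and-signs computation, carried out stratum by stratum using the short exact sequences of vector bundles framework described in the introduction.
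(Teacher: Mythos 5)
Your proposal is correct and follows essentially the same route as the paper: fix $y,x_1,\dots,x_n$ so that $\mathcal{T}^{\mathbb{X}_t}_t(y;x_1,\dots,x_n)$ is $1$-dimensional, invoke Theorem~\ref{geo:th:compactification-Tn} to compactify, match the (int-collapse), (int-break) and (Morse) boundary strata with the terms of the $\Omega B As$-equation, and reduce the remaining work to the orientation/sign bookkeeping carried out via gluing maps and short exact sequences of vector bundles. You also correctly anticipate the key subtlety — that the natural orientations must be twisted by an explicit sign (the paper's $\sigma(t;y;x_1,\dots,x_n)$) and that the outcome is only a \emph{twisted} $\Omega B As$-algebra unless $\dim M$ is odd — which is exactly how the paper resolves it in section~\ref{geo:ss:twisted-ainf-alg-Morse}.
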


The proof of this theorem is detailed in section~\ref{geo:ss:twisted-ainf-alg-Morse}. Putting it shortly, counting the boundary points of the 1-dimensional orientable compactified moduli spaces $\overline{\mathcal{T}}_t^\mathbb{X}(y ; x_1,\cdots,x_n)$ whose boundary is described in the previous section yields the $\Omega B As$-equations 
\[ [ \partial_{Morse} , m_t ] = \sum_{t' \in coll(t)} \pm m_{t'} + \sum_{t_1 \#_i t_2 = t} \pm m_{t_1} \circ_i m_{t_2} \ . \]
In fact, the collection of operations $\{ m_t \}$ does not exactly define an $\Omega B As$-algebra structure : one of the two differentials $\partial_{Morse}$ appearing in the bracket $[ \partial_{Morse} , \cdot ]$ has to be twisted by a specific sign for the $\Omega B As$-equations to hold. We will speak about a \emph{twisted $\Omega B As$-algebra structure}. In the case when $M$ is odd-dimensional, this twisted $\Omega B As$-algebra is exactly an $\Omega B As$-algebra. 

If we want to recover an \Ainf -algebra structure on the Morse cochains, it suffices to apply the morphism of operads $\Ainf \rightarrow \ombas$ described in section~\ref{alg:sss:op-ainf-to-ombas}. In his paper~\cite{abouzaid-plumbings}, Abouzaid constructs a geometric \Ainf -morphism $C^*_{sing}(M) \rightarrow C^*(f)$, where the Morse cochains are endowed with the \Ainf -algebra structure constructed in this subsection. This \Ainf -morphism is in fact a quasi-isomorphism. This implies in particular that the Morse cochains $C^*(f)$ endowed with the \Ainf -algebra structure constructed in this subsection are quasi-isomorphic as an \Ainf -algebra to the Morse cochains endowed with the \Ainf -algebra structure induced by the homotopy transfer theorem. His construction of the \Ainf -morphism $C^*_{sing}(M) \rightarrow C^*(f)$ could be adapted to our present framework, and lifted to an \ombas -morphism. We will however not give more details on that matter.   

\section{\Ainf\ and $\Omega B As$-morphisms between the Morse cochains} \label{geo:s:ainf-ombas-morph-Morse}

Let $M$ be an oriented closed Riemannian manifold endowed with a Morse function $f$ together with a Morse-Smale metric. We have proven in the previous section that, upon choosing admissible perturbation data on the moduli spaces of stable metric ribbon trees $\mathcal{T}_n(t)$, we can define moduli spaces of perturbed Morse gradient trees, whose count will define the operations $m_t, \ t \in SRT$, of an $\Omega B As$-algebra structure on the Morse cochains $C^*(f)$. 

Consider now another Morse function $g$ on $M$. Apply again the homotopy transfer theorem to $C^*(f)$ and $C^*(g)$, which are deformation retracts of the singular cochains on $M$. Endowing them with their induced \Ainf -algebra structures, the theorem yields a diagram
\[ (C^*(f),m_n^{ind}) \tilde{\longrightarrow} (C^*_{sing}(M),\cup) \tilde{\longrightarrow} (C^*(g),m_n^{ind}) \ , \]
where each arrow is an \Ainf -quasi-isomorphism, hence an \Ainf -quasi-isomorphism $(C^*(f),m_n^{ind}) \rightarrow (C^*(g),m_n^{ind})$.
Let $\mathbb{X}^g$ be an admissible perturbation data for $g$. This motivates the following question : endowing $C^*(f)$ and $C^*(g)$ with their $\Omega B As$-algebra structures, can we construct an $\Omega B As$-morphism
\[ (C^*(f),m_t^{\mathbb{X}^f}) \longrightarrow (C^*(g),m_t^{\mathbb{X}^g}) \ ? \]

\begin{figure}[h]
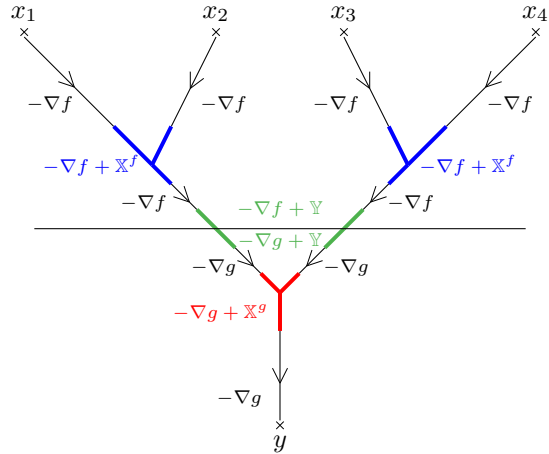
 
\centering
\arbredegradientbicoloreexemple
\caption{An example of a perturbed two-colored Morse gradient tree, where the $x_i$ are critical points of $f$ and $y$ is a critical point of $g$} \label{geo:fig:arbre-gradient-bicolore}
\end{figure}

While stable metric ribbon trees control $\Omega B As$-algebra structures, we have seen that two-colored stable metric ribbon trees control $\Omega B As$-morphisms. The answer to the previous question is then of course positive, and the morphism will be constructed using moduli spaces of \emph{two-colored perturbed Morse gradient trees}. As in section~\ref{geo:s:ainf-ombas-alg-Morse}, two-colored Morse gradient trees will be defined by perturbing Morse gradient equations around each vertex of the tree, where the Morse gradient is $-\nabla f$ above the gauge, and $-\nabla g$ below the gauge. This is illustrated in figure~\ref{geo:fig:arbre-gradient-bicolore}. The figure is incorrect, because we won't choose the perturbation to be equal to $\mathbb{X}^f$ above the gauge and to $\mathbb{X}^g$ below, but gives the correct intuition on the construction we unfold in this section.

The structure of this section follows the same lines as the previous section, and the only difficulty will consist in adapting properly our arguments to the combinatorics of two-colored ribbon trees. Under a generic choice of perturbation data on the moduli spaces $\mathcal{CT}_n$, the moduli spaces of two-colored perturbed Morse gradient trees connecting $x_1 , \dots , x_n \in \mathrm{Crit}(g)$ to $y \in \mathrm{Crit}(g)$, that we denote $\mathcal{CT}_{t_g}(y ; x_1,\dots,x_n)$, are orientable manifolds. They are moreover compact when 0-dimensional and can be compactified to compact manifolds with boundary when 1-dimensional (Theorems~\ref{geo:th:existence-perturb-CTn}~and~\ref{geo:th:compact-CTn}). Counting 0-dimensional moduli spaces of two-colored Morse gradient trees then defines an \ombas -morphism from $C^*(f)$ to $C^*(g)$ (Theorem~\ref{geo:th:ombas-morph}). 

\subsection{Notation} \label{geo:ss:notation}

A two-colored ribbon tree will be written $t_g$ using the gauge viewpoint, and $t_c$ using the colored vertices viewpoint. The tree $t_g$ then comes with an underlying stable ribbon tree $t$, while the tree $t_c$ is already a ribbon tree (though not necessarily stable because of its colored vertices).

A two-colored stable metric ribbon tree $T$ will be written $(t_g,(l_e)_{e\in E(t)},\lambda)$ using the gauge viewpoint. The lengths associated to the underlying metric ribbon tree with colored vertices will then be written $L_{f_c}((l_e)_{e\in E(t)},\lambda)$ where $f_c \in E(t_c)$. For instance, on figure~\ref{alg:fig:example-stable-two-col},
\begin{align*}
L_1 = - \lambda && L_2 = l + \lambda && L_3 = - \lambda \ .
\end{align*}
For the sake of readability, we do not write the dependence on $((l_e)_{e\in E(t)},\lambda)$ in the sequel.

\subsection{Perturbed two-colored Morse gradient trees} \label{geo:ss:pert-two-col-tree}

\begin{definition}
Let $T_g=(t_g,(l_e)_{e\in E(t)},\lambda)$ be a two-colored metric ribbon tree. A \emph{choice of perturbation data} $\mathbb{Y}$ on $T_g$ is defined to be a choice of perturbation data on the associated metric ribbon tree $(t_c,L_{f_c})$ in the sense of section~\ref{geo:ss:pert-Morse-tree}. 
\end{definition}

\begin{definition}
A \emph{two-colored perturbed Morse gradient tree} $T_g^{Morse}$ associated to a pair two-colored metric ribbon tree and perturbation data $(T_g,\mathbb{Y})$ is the data 
\begin{enumerate}[label=(\roman*)]
\item for each edge $f_c$ of $t_c$ which is above the gauge, of a smooth map
\begin{align*}
D_{f_c}  \underset{\gamma_{f_c}}{\longrightarrow} M \ , 
\end{align*}
such that $\gamma_{f_c}$ is a trajectory of the perturbed negative gradient $-\nabla f + \mathbb{Y}_{f_c}$,
\item for each edge $f_c$ of $t_c$ which is below the gauge, of a smooth map
\begin{align*}
D_{f_c} \underset{\gamma_{f_c}}{\longrightarrow} M \ , 
\end{align*}
such that $\gamma_{f_c}$ is a trajectory of the perturbed negative gradient $-\nabla g + \mathbb{Y}_{f_c}$,
\end{enumerate}
and such that the endpoints of these trajectories coincide as prescribed by the edges of the tree $t_c$.
\end{definition} 

Note that the above definitions still work for \arbreopunmorph . A choice of perturbation data for \arbreopunmorph\ is the data of vector fields
\begin{align*}
[0 , +\infty [ \times M &\underset{\mathbb{Y}_{+}}{\longrightarrow} T M \ , \\
] - \infty , 0] \times M &\underset{\mathbb{Y}_{-}}{\longrightarrow} T M \ ,
\end{align*}
which vanish away from a length 1 segment, and a two-colored perturbed Morse gradient tree associated to $(\arbreopunmorph , \mathbb{Y})$ is then simply the data of two smooth maps
\begin{align*}
]-\infty,0] &\underset{\gamma_-}{\longrightarrow} M \ , \\
[ 0 , +\infty [ &\underset{\gamma_+}{\longrightarrow} M \ ,
\end{align*}
such that $\gamma_{-}$ is a trajectory of $-\nabla f + \mathbb{Y}_-$ and $\gamma_{+}$ is a trajectory of $-\nabla g + \mathbb{Y}_+$.

There is also an equivalent formulation for two-colored perturbed Morse gradient trees, by following the flows of $-\nabla f + \mathbb{Y}$ and $-\nabla g + \mathbb{Y}$ along the the metric ribbon tree $(t_c,L_{f_c})$. That is, a two-colored perturbed Morse gradient tree is determined by the data of the time -1 points on its incoming edges plus the time 1 point on its outgoing edge. Introduce again the map
\[ \Phi_{T_g,\mathbb{Y}} : M \times \cdots \times M \underset{\phi_{0,\mathbb{Y}} \times \cdots \times  \phi_{n,\mathbb{Y}}}{\longrightarrow} M \times \cdots \times M \ , \]
defined as before, and set $\Delta$ for the diagonal of $M^{ \times n+1}$

\begin{proposition}
There is a one-to-one correspondence
\[
\begin{array}{c@{}c@{}c}
 \left\{\begin{array}{c}
         \text{two-colored perturbed Morse gradient trees} \\
         \text{associated to $(T_g,\mathbb{Y})$} \\ 
  \end{array}\right\}
  & \longleftrightarrow 
  & \begin{array}{c}
         (\Phi_{T_g,\mathbb{Y}})^{-1}(\Delta) \\
  \end{array} \ .
\end{array}
\]  
\end{proposition}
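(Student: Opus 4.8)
The plan is to reduce the statement to the colored-vertices viewpoint, where it becomes a direct transcription of the single-colored correspondence established in subsection~\ref{geo:ss:pert-Morse-tree}. First I would pass from the gauged metric tree $T_g$ to its underlying metric ribbon tree with colored vertices $(t_c,L_{f_c})$. The advantage of this viewpoint is that each edge $f_c \in \overline{E}(t_c)$ lies entirely on one side of the gauge, hence carries a single well-defined perturbed gradient: $-\nabla f + \mathbb{Y}_{f_c}$ if $f_c$ is above the gauge, and $-\nabla g + \mathbb{Y}_{f_c}$ if it is below. In this description a two-colored perturbed Morse gradient tree is literally a perturbed Morse gradient tree on $(t_c,L_{f_c})$ for the edge-dependent gradient data just specified, and the colored vertices are ordinary vertices at which the adjacent trajectories must match.

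Second, the core observation is that for each edge the defining relation $\dot{\gamma}_{f_c} = -\nabla(\cdot) + \mathbb{Y}_{f_c}$ is a first-order ODE, so by existence and uniqueness of solutions $\gamma_{f_c}$ is determined by its value at any single time, in particular by its value at the endpoint closest to the root. Flowing this value forward toward the root along the unique non-self-crossing path composes the individual edge flows, and by construction this composition is exactly $\phi_{i,\mathbb{Y}}$ for an incoming edge $e_i$ and $\phi_{0,\mathbb{Y}}$ for the outgoing edge $e_0$. The matching conditions at the internal vertices are thereby automatically encoded in these compositions, so that a tuple of boundary points $(p_0,p_1,\dots,p_n) \in M^{\times n+1}$ — the time $1$ point on $e_0$ and the time $-1$ points on the $e_i$ — assembles into a genuine two-colored perturbed Morse gradient tree if and only if the $n+1$ flowed points $\phi_{i,\mathbb{Y}}(p_i)$ all coincide at the root, i.e. if and only if $(p_0,\dots,p_n) \in \Phi_{T_g,\mathbb{Y}}^{-1}(\Delta)$.

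Finally I would verify that the two maps are mutually inverse: sending a tree to its boundary tuple, and sending a tuple of $\Phi_{T_g,\mathbb{Y}}^{-1}(\Delta)$ to the tree obtained by flowing each $p_i$ and reconstructing every edge trajectory from its root-side value via backward flow along the interior edges. Both directions follow from ODE uniqueness. The main point requiring care — and the only place where the two-colored structure genuinely intervenes — is the well-definedness of the flow maps $\phi_{i,\mathbb{Y}}$ across a colored vertex, where the ambient gradient switches from $-\nabla g$ to $-\nabla f$: one must check that composing the flow of $-\nabla g + \mathbb{Y}$ below the gauge with the flow of $-\nabla f + \mathbb{Y}$ above it yields a well-defined diffeomorphism of $M$, which holds because each individual $\phi_{e,\mathbb{Y}}$ is a diffeomorphism and the domains $D_{f_c}$ abut precisely at the colored vertices. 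Granting this, the argument is formally identical to the single-colored case and the correspondence follows.
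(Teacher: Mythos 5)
Your proposal is correct and follows essentially the same route as the paper, which reduces the statement to the colored-vertices viewpoint and then invokes the single-colored correspondence of subsection~\ref{geo:ss:pert-Morse-tree}: each edge trajectory is determined by one endpoint via ODE uniqueness, the composed flows $\phi_{i,\mathbb{Y}}$ are diffeomorphisms (so the matching conditions at all internal and colored vertices are equivalent to agreement at the root), and agreement at the root is exactly membership in $\Phi_{T_g,\mathbb{Y}}^{-1}(\Delta)$. The paper leaves these steps implicit ("defined as before... it is then clear that"), so your write-up simply makes the same argument explicit.
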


The vector fields on the incoming edges are equal to $-\nabla f$ away from a length 1 segment, hence the trajectories associated to these edges all converge to critical points of the function $f$, while the vector field on the outgoing edge is equal to $-\nabla g$ away from a length 1 segment, hence the trajectory associated to these edge converges to a critical point of the function $g$.
For critical points $y$ of the function $g$ and $x_1,\dots ,x_n$ of the function $f$, the map $\Phi_{T,\mathbb{Y}}$ can be restricted to
\[ W^S_g(y) \times W^U_f(x_1) \times \cdots \times W^U_f(x_n) \ , \]
such that the inverse image of the diagonal yields all two-colored perturbed Morse gradient trees associated to $(T,\mathbb{Y})$ connecting $x_1,\dots ,x_n$ to $y$.

\subsection{Moduli spaces of two-colored perturbed Morse gradient trees} \label{geo:ss:mod-space-pert-two-col-tree}

Choose a two-colored stable ribbon tree $t_g \in SCRT_n$ whose underlying stable ribbon tree is $t$ and whose associated ribbon tree with colored vertices is $t_c$. We write $(*)_{t_g}$ for the set of inequalities and equalities on $\{ l_e \}_{e \in E(t)}$ and $\lambda$, which define the polyedral cone $\mathcal{CT}_n(t_g) \subset \R^{e(t)+1}$. See part~\ref{p:algebra} section~\ref{alg:ss:mod-space-CTm} for more details. Define for all $f_c \in \overline{E}(t_c)$, the cone $C_{f_c} \subset \mathcal{CT}_n(t_g) \times \R \subset \R^{e(t)+1} \times \R  $ to be
\begin{enumerate}[label=(\roman*)]
\item $\{ (( l_e )_{e \in E(T)},\lambda,s) \text{ such that } (*)_{t_g} \ , \ 0 \leqslant s \leqslant L_{f_c}(( l_e )_{e \in E(T)},\lambda) \}$ if $f_c$ is an internal edge ;
\item $\{ (( l_e )_{e \in E(T)},\lambda,s) \text{ such that } (*)_{t_g} \ , \ s \leqslant 0 \}$ if $f_c$ is an incoming edge ;
\item $\{ (( l_e )_{e \in E(T)},\lambda,s) \text{ such that } (*)_{t_g} \ , \ s \geqslant 0 \}$ if $f_c$ is the outgoing edge.
\end{enumerate}
Then a choice of perturbation data for every two-colored metric ribbon tree in $\mathcal{CT}_n(t_g)$, yields maps $\mathbb{Y}_{t_g,f_c} : C_{f_c} \times M \longrightarrow TM$ for every edge $f_c$ of $t_c$. These perturbation data are said to be \emph{smooth} if all these maps are smooth.

\begin{definition}
Let $\mathbb{Y}_{t_g}$ be a smooth choice of perturbation data on the stratum $\mathcal{CT}_n(t_g)$. Given $y \in \mathrm{Crit}(g)$ and $x_1,\dots ,x_n \in \mathrm{Crit}(f)$, we define the moduli spaces
\[ \mathcal{CT}_{t_g}^{\mathbb{Y}_{t_g}}(y ; x_1,\dots,x_n) := \left\{\begin{array}{c}
         \text{two-colored perturbed Morse gradient trees associated to $(T_g,\mathbb{Y}_{T_g})$} \\
         \text{and connecting $x_1,\dots,x_n$ to $y$ for  $T_g \in \mathcal{CT}_n(t_g)$ } \\
  \end{array}\right\} . \]
\end{definition}  

Using the smooth map 
\[ \phi_{\mathbb{Y}_{t_g}} : \mathcal{CT}_n(t_g) \times W^S(y) \times W^U(x_1) \times \cdots \times W^U(x_n) \longrightarrow M^{\times n+1} \ , \]
this moduli space can be rewritten as 
\[ \mathcal{CT}_{t_g}^{\mathbb{Y}_{t_g}}(y ; x_1,\dots,x_n) = \phi_{\mathbb{Y}_{t_g}}^{-1}(\Delta) \ . \]

\begin{proposition}
\begin{enumerate}[label=(\roman*)]
\item Given a choice of perturbation data $\mathbb{Y}_{t_g}$ making $\phi_{\mathbb{Y}_{t_g}}$ transverse to the diagonal $\Delta \subset M^{\times n+1}$, the moduli spaces $\mathcal{CT}_{t_g}^{\mathbb{Y}_{t_g}}(y ; x_1,\dots,x_n)$ are orientable manifolds of dimension 
\[ \dim \left( \mathcal{CT}_{t_g}(y ; x_1,\dots,x_n) \right) = + |y| - \sum_{i=1}^n|x_i| -|t_g| \ . \]
\item Choices of perturbation data $\mathbb{Y}_{t_g}$ such that $\phi_{\mathbb{Y}_{t_g}}$ is transverse to the diagonal $\Delta$ exist. 
\end{enumerate}
\end{proposition}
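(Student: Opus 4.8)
The plan is to follow verbatim the strategy of Proposition~\ref{geo:prop:trois-items-ombas-alg}, adapting each step to the combinatorics of two-colored trees. For the dimension formula in item (i), I would use that transversality of $\phi_{\mathbb{Y}_{t_g}}$ to $\Delta$ makes $\phi_{\mathbb{Y}_{t_g}}^{-1}(\Delta)$ a submanifold of the domain $\mathcal{CT}_n(t_g) \times W^S(y) \times W^U(x_1) \times \cdots \times W^U(x_n)$ of codimension $\mathrm{codim}_{M^{\times n+1}}(\Delta) = nd$. Recalling that the polyhedral cone $\mathcal{CT}_n(t_g)$ has dimension $e(t)+1-j = -|t_g|$ when the gauge crosses $j$ vertices, while $\dim W^S(y) = |y|$ and $\dim W^U(x_i) = d - |x_i|$, the domain has dimension $-|t_g| + |y| + nd - \sum_{i=1}^n |x_i|$; subtracting $nd$ yields the claimed formula $|y| - \sum_{i=1}^n |x_i| - |t_g|$.

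For orientability, I would argue that each factor of the domain is orientable — the cone $\mathcal{CT}_n(t_g)$ is an open subset of a Euclidean space (oriented as in subsection~\ref{alg:sss:or-CTm}), and the (un)stable manifolds are embedded open disks — so the product is orientable. Since $M$ is oriented, the normal bundle of the diagonal $\Delta \cong M$ in $M^{\times n+1}$, which is canonically isomorphic to $(TM)^{\oplus n}$, carries an induced orientation. A transverse preimage of a cooriented submanifold under a map out of an orientable manifold inherits a coorientation, hence an orientation; this orients $\mathcal{CT}_{t_g}^{\mathbb{Y}_{t_g}}(y ; x_1,\dots,x_n)$, the precise identification of the resulting orientations being deferred to section~\ref{geo:s:signs-or}.

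Item (ii) is the substantial one, and I would establish it by a parametric transversality (Sard--Smale) argument, exactly as for item~\ref{item:moduli-spaces:C} of Proposition~\ref{geo:prop:trois-items-ombas-alg}, treated in section~\ref{geo:s:transversality}. First I would fix a separable Banach manifold $\mathcal{P}$ of admissible smooth perturbation data $\mathbb{Y}_{t_g}$ (equipped with a Floer-type $C^\varepsilon$-norm to retain separability while staying inside smooth vector fields), and form the universal moduli space
\[ \mathcal{M} := \{ (\mathbb{Y}_{t_g}, p) : p \in \mathcal{CT}_n(t_g) \times W^S(y) \times \textstyle\prod_{i} W^U(x_i), \ \phi_{\mathbb{Y}_{t_g}}(p) \in \Delta \} \ . \]
The heart of the proof is to show that the associated universal evaluation map is transverse to $\Delta$, which reduces to surjectivity of its linearization; this in turn follows from the fact that the perturbation vector fields $\mathbb{Y}_{t_g,f_c}$ supported on the edges of $t_c$ can be varied independently so as to move the time-$\pm 1$ endpoints of the edge trajectories in all directions of $TM$. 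Granting this, $\mathcal{M}$ is a Banach manifold, and applying the Sard--Smale theorem to the projection $\mathcal{M} \to \mathcal{P}$ produces a residual — hence, as $\mathcal{P}$ is a Baire space, nonempty — set of perturbation data $\mathbb{Y}_{t_g}$ for which $\phi_{\mathbb{Y}_{t_g}}$ is transverse to $\Delta$.

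The hard part will be the surjectivity of the linearized universal map in the presence of the two-colored structure. I expect the main obstacle to be the bookkeeping it entails: the gauge imposes the equality constraints $(*)_{t_g}$ on the lengths, the gradient field switches from $-\nabla f$ above the gauge to $-\nabla g$ below it, and one must verify that the perturbation data genuinely surject onto the relevant quotient at every edge without clashing with these constraints, in particular at edges meeting the gauge or a colored vertex. Since the perturbations on distinct edges act independently and each edge carries a full $TM$ worth of freedom, the argument of section~\ref{geo:s:transversality} should adapt with only notational changes, but the colored combinatorics must be checked edge by edge rather than quoted directly.
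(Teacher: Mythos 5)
Your proposal is correct and follows essentially the same route as the paper: the dimension count and orientability argument via transverse preimages of the cooriented diagonal are exactly those used for Proposition~\ref{geo:prop:trois-items-ombas-alg}, and the existence of transverse $\mathbb{Y}_{t_g}$ is obtained in section~\ref{geo:s:transversality} by the same parametric transversality (Sard--Smale) scheme, with surjectivity of the linearization coming from the fact that edge perturbations integrate to essentially arbitrary diffeomorphisms, the two-colored combinatorics requiring only notational changes. The only minor divergence is technical: you propose a $C^\varepsilon$-type Banach space of smooth perturbations, whereas the paper works with $C^l$-perturbation data and upgrades to smoothness by the Taubes intersection argument; both are standard and interchangeable here.
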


\noindent The proof of this proposition is again postponed to section~\ref{geo:s:transversality}.

\subsection{Compactifications} \label{geo:ss:compact-two-col-Morse}

We finally proceed to compactify the moduli spaces $\mathcal{CT}_{t_g}^{\mathbb{Y}_{t_g}}(y ; x_1,\dots,x_n)$ that have dimension 1 to 1-dimensional manifolds with boundary. Their boundary components are going to be given by those coming from the compactification of $\mathcal{CT}_n(t_g)$, and the compactifications of the $W^U(x_i)$ and of $W^S(y)$.

Choose admissible perturbation data $\mathbb{X}^f$ and $\mathbb{X}^g$ for the functions $f$ and $g$. Choose moreover smooth perturbation data $\mathbb{Y}_{t_g}$ for all $t_g \in SCRT_i, \ 1 \leqslant i \leqslant n$. We will again denote $\mathbb{Y}_n := (\mathbb{Y}_{t_g})_{t_g \in SCRT_n}$, and call it a choice of perturbation data on $\mathcal{CT}_n$. Fixing a two-colored stable ribbon tree $t_g \in SCRT_n$ we would like to compactify the 1-dimensional moduli space $\mathcal{CT}_{t_g}^{\mathbb{Y}_{t_g}}(y ; x_1,\dots,x_n)$ using the perturbation data $\mathbb{X}^f$, $\mathbb{X}^g$ and $(\mathbb{Y}_i)_{1 \leqslant i \leqslant n}$. Its boundary will be given by the following phenomena
\begin{enumerate}[label=(\roman*)]
\item an external edge breaks at a critical point (Morse) :
$$\mathcal{T}(y;z) \times \mathcal{CT}_{t_g}^{\mathbb{Y}_{t_g}}(z ; x_1,\dots,x_n) \ \text{ and } \  \mathcal{CT}_{t_g}^{\mathbb{Y}_{t_g}}(y ; x_1 , \dots , z , \dots , x_n) \times \mathcal{T}(z;x_i)  \ ;$$
\item an internal edge of the tree $t$ collapses (int-collapse) :
$$\mathcal{CT}^{\mathbb{Y}_{t_g'}}_{t_g'}(y ; x_1,\dots,x_n)$$ where $t_g' \in SCRT_n$ are all the two-colored trees obtained by collapsing exactly one internal edge, which does not cross the gauge ;
\item the gauge moves to cross exactly one additional vertex of the underlying stable ribbon tree (gauge-vertex) :
$$\mathcal{CT}^{\mathbb{Y}_{t_g'}}_{t_g'}(y ; x_1,\dots,x_n)$$ where $t_g' \in SCRT_n$ are all the two-colored trees obtained by moving the gauge to cross exactly one additional vertex of $t$ ;
\item an internal edge located above the gauge or intersecting it breaks or, when the gauge is below the root, the outgoing edge breaks between the gauge and the root (above-break) : $$ \mathcal{CT}^{\mathbb{Y}_{t^1_g}}_{t^1_g}(y ; x_1,\dots,x_{i_1},z,x_{i_1+i_2+1},\dots,x_n) \times \mathcal{T}^{\mathbb{X}^f_{t^2}}_{t^2}(z ; x_{i_1+1},\dots,x_{i_1+i_2}) \ ;$$
\item edges (internal or incoming) that are possibly intersecting the gauge, break below it, such that there is exactly one edge breaking in each non-self crossing path from an incoming edge to the root (below-break) : $$ \mathcal{T}^{\mathbb{X}^g_{t^0}}_{t^1}(y ; y_{1},\dots,y_{s}) \times \mathcal{CT}^{\mathbb{Y}_{t^1_g}}_{t^1_g}(y_1 ; x_1,\dots) \times \cdots \times \mathcal{CT}^{\mathbb{Y}_{t^s_g}}_{t^s_g}(y_s ; \dots,x_n) \ .$$
\end{enumerate}

The (Morse) boundaries are again a simple consequence of the fact that external edges are Morse trajectories away from a length 1 segment. Perturbation data that behave well with respect to the (int-collapse) and (gauge-vertex) boundaries are defined using simple adjustments of the discussion in section~\ref{geo:ss:compact-ribbon-Morse}. Hence, it only remains to specify the required behaviours under the breaking of edges.

\begin{figure}[h] 
    \centering
    \begin{subfigure}{\textwidth}
    \centering
       \exampleperturbationCTbreakun
       \caption*{(above-break) case (i)}
    \end{subfigure} ~
    \begin{subfigure}{\textwidth}
    \centering
        \exampleperturbationCTbreakdeux
       \caption*{(above-break) case (ii)}
    \end{subfigure} ~
    \begin{subfigure}{\textwidth}
    \centering
        \exampleperturbationCTbreaktrois
       \caption*{(above-break) case (iii)}
    \end{subfigure}
    \caption{} \label{geo:fig:above-break-Morse}
\end{figure}

We begin with the (above-break) boundary. Writing $t_c$ for the two-colored ribbon tree associated to $t_g$, it corresponds to the breaking of an internal edge $f_c$ of $t_c$ situated above the set of colored vertices. Denote $t^1_c$ and $t^2$ the trees obtained by breaking $t_c$ at the edge $f_c$, where $t^2$ is seen to lie above $t^1_c$. We have to specify, for each edge $e_c \in \overline{E}(t_c)$, what happens to the perturbation $\mathbb{Y}_{t_c,e}$ at the limit.
\begin{enumerate}[label=(\roman*)]
\item For $e_c \in \overline{E}(t^2)$ and $\neq f_c$, we require that $$\lim \mathbb{Y}_{t_c,e_c} = \mathbb{X}^f_{t^2,e_c} \ .$$
\item For $e_c \in \overline{E}(t^1_c)$ and $\neq f_c$, we require that $$\lim \mathbb{Y}_{t_c,e_c} = \mathbb{Y}_{t^1_c,e_c} \ .$$
\item For $f_c=e_c$, $\mathbb{Y}_{t_c,f_c}$ yields two parts at the limit : the part corresponding to the outgoing edge of $t^1$ and the part corresponding to the incoming edge of $t^1_c$. We then require that they coincide respectively with the perturbation $\mathbb{X}^f_{t^2,e_c}$ and $\mathbb{Y}_{t^1_c,e_c}$.
\end{enumerate}
Leaving the notations aside, an example of each case is illustrated in figure~\ref{geo:fig:above-break-Morse}.

\begin{figure}[h] 
    \centering
    \begin{subfigure}{\textwidth}
    \centering
       \exampleperturbationCTbreakquatre
       \caption*{(below-break) case (i)}
    \end{subfigure} ~
    \begin{subfigure}{\textwidth}
    \centering
        \exampleperturbationCTbreakcinq
       \caption*{(below-break) case (ii)}
    \end{subfigure} ~
    \begin{subfigure}{\textwidth}
    \centering
        \exampleperturbationCTbreaksix
       \caption*{(below-break) case (iii)}
    \end{subfigure}
    \caption{} \label{geo:fig:below-break-Morse}
\end{figure}

We conclude with the (below-break) boundary. Denote $t^1_g,\dots,t^s_g$ and $t^0$ the trees obtained by the chosen breaking of $t_g$ below the gauge, where $t^1_g,\dots,t^s_g$ are seen to lie above $t^0$.
\begin{enumerate}[label=(\roman*)]
\item For $e_c \in \overline{E}(t^i_c)$ and not among the breaking edges, we require that $$\lim \mathbb{Y}_{t_c,e_c} = \mathbb{Y}_{t^i_c,e_c} \ .$$
\item For $e_c \in \overline{E}(t^1)$ and not among the breaking edges, we require that $$\lim \mathbb{Y}_{t_c,e_c} = \mathbb{X}^g_{t^0,e_c} \ .$$
\item For $f_c$ among the breaking edges, $\mathbb{Y}_{t_c,f_c}$ yields two parts at the limit : the part corresponding to the outgoing edge of a $t^j_c$ and the part corresponding to the incoming edge of $t^0$. We then require that they coincide respectively with the perturbation $\mathbb{Y}_{t^j_c}$ and $\mathbb{X}^g_{t^0}$.
\end{enumerate}
This is again illustrated on figure~\ref{geo:fig:below-break-Morse}.

\begin{definition}
A choice of perturbation data $\mathbb{Y}$ on the moduli spaces $\mathcal{CT}_n$ is said to be \emph{smooth} if it is compatible with the (int-collapse) and (gauge-vertex) boundaries. A smooth choice of perturbation data is said to be \emph{gluing-compatible w.r.t. $\mathbb{X}^f$ and $\mathbb{X}^g$} if it satisfies the (above-break) and (below-break) conditions described in this section. Smooth and consistent choices of perturbation data $(\mathbb{Y}_n)_{n \geqslant 1}$ such that all maps $\phi_{\mathbb{Y}_{t_g}}$ are transverse to the diagonal $\Delta$ are called \emph{admissible w.r.t. $\mathbb{X}^f$ and $\mathbb{X}^g$} or simply \emph{admissible}.
\end{definition}

\begin{theorem} \label{geo:th:existence-perturb-CTn}
Given admissible choices of perturbation data $\mathbb{X}^f$ and $\mathbb{X}^g$ on the moduli spaces $\mathcal{T}_n$, choices of perturbation data on the moduli spaces $\mathcal{CT}_n$ that are admissible w.r.t. $\mathbb{X}^f$ and $\mathbb{X}^g$ exist.
\end{theorem}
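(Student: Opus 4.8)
The plan is to construct the admissible family $(\mathbb{Y}_n)_{n \geqslant 1}$ by induction on the arity $n$, mirroring the proof of Theorem~\ref{geo:th:exist-admissible-perturbation-data-Tn} given in section~\ref{geo:s:transversality}, to which the analytic core of the present argument can be reduced. For the base case $n=1$ the moduli space $\mathcal{CT}_1 = \{ \arbreopunmorph \}$ is a point, and a choice of perturbation data is simply a pair of vector fields $\mathbb{Y}_+, \mathbb{Y}_-$ supported on a length $1$ segment of the outgoing and incoming edges. The associated map $\phi_{\mathbb{Y}} : \mathcal{CT}_1 \times W^S_g(y) \times W^U_f(x_1) \to M \times M$ is a continuation-type map, and transversality to the diagonal $\Delta$ for all pairs $(y,x_1)$ is achieved by a standard Sard--Smale argument over the Banach space of such vector fields; there are no boundary constraints to respect at this stage, so this reduces to the classical transversality of continuation maps in Morse theory.

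For the inductive step I would assume that admissible $\mathbb{Y}_m$ have been constructed for all $m < n$, and treat all strata $t_g \in SCRT_n$ simultaneously. First I would record that the five boundary types listed in section~\ref{geo:ss:compact-two-col-Morse} prescribe the perturbation data on the codimension $\geqslant 1$ faces of each $\overline{\mathcal{CT}}_n(t_g)$: the (Morse) faces force the external edges to be genuine $-\nabla f$ or $-\nabla g$ trajectories; the (int-collapse) and (gauge-vertex) faces impose the smoothness conditions relating the strata $t_g, t_g'$ of the \emph{same} arity $n$, handled as in section~\ref{geo:ss:compact-ribbon-Morse} by viewing $\mathbb{Y}_{t_g}$ as smooth maps on the cones $C_{f_c}$ over the full within-arity parameter space; and the (above-break) and (below-break) faces express the limiting data in terms of the already-fixed admissible $\mathbb{X}^f, \mathbb{X}^g$ (on $\mathcal{T}$-factors of arity $\leqslant n$) together with the inductively constructed $\mathbb{Y}_{t^i_g}$ on the $\mathcal{CT}$-factors, which are of arity strictly less than $n$ since every such breaking is nontrivial. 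The first genuine task is then to check that these prescribed data are mutually \emph{consistent} at the higher-codimension corners where several degenerations overlap, which follows from the operadic-bimodule compatibility of the compactification $\overline{\mathcal{CT}}_n$ over $\{ \overline{\mathcal{T}}_n \}$ described in section~\ref{alg:ss:mod-space-CTm} (the two-colored analogue of the gluing-compatibility used for the $\mathcal{T}_n$).

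Having verified consistency, I would extend the prescribed boundary data to a smooth choice of perturbation data on the interior of each polyhedral cone $\mathcal{CT}_n(t_g)$, gluing smoothly across the (int-collapse) and (gauge-vertex) walls so that the resulting $\mathbb{Y}_n$ is smooth and gluing-compatible w.r.t. $\mathbb{X}^f, \mathbb{X}^g$ in the sense of the admissibility definition. It then remains to arrange transversality of every $\phi_{\mathbb{Y}_{t_g}}$ to $\Delta$. The key point is that on the boundary strata transversality holds \emph{automatically}: it is inherited from the inductive admissibility of the $\mathbb{Y}_{t^i_g}$, the admissibility of $\mathbb{X}^f, \mathbb{X}^g$, and the Morse--Smale condition, since the boundary faces are fibered products of maps already transverse to diagonals. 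Working within the affine Banach space of extensions that fix this boundary data, I would apply the parametric transversality (Sard--Smale) theorem to the universal map defined on the interior: at any interior point one has full freedom to perturb the vector fields, so the universal map is submersive there and hence transverse to $\Delta$, yielding a comeager set of interior perturbations for which $\phi_{\mathbb{Y}_{t_g}}$ is transverse. Intersecting these comeager sets over the countably many tuples $(t_g, y, x_1, \dots, x_n)$ and using that transversality is an open condition produces a single admissible $\mathbb{Y}_n$, completing the induction.

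The main obstacle is twofold. The combinatorial half is the consistency check at the corners of the non-cubical compactifications $\overline{\mathcal{CT}}_n(t_g)$: unlike the cubical strata of $\overline{\mathcal{T}}_n$, here the interaction between the position of the gauge (above-break versus below-break) and the collapsing/breaking of internal edges produces a rich corner structure, so the bulk of the work is to confirm that the limiting prescriptions from distinct faces agree on their common corners. The analytic half is the \emph{relative} Sard--Smale argument with boundary data pinned down: one must ensure that pinning the boundary still leaves enough perturbation freedom in the interior to achieve universal transversality, which I expect to be the delicate step and where the precise choice of Banach space of vector fields (with suitable support and decay conditions along the semi-infinite edges) matters most.
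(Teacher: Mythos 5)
Your overall strategy is the paper's: a relative parametric (Sard--Smale) transversality argument over a Banach space of perturbation data whose boundary values are pinned by previously constructed data, with interior submersivity of the universal map coming from the freedom to perturb the vector fields near any interior point. The genuine difference is the induction variable. The paper inducts on the number of internal edges of the tree (equivalently, the dimension of the stratum), across all arities at once; with that ordering \emph{every} codimension-one face of a stratum --- including the (int-collapse) and (gauge-vertex) faces, not just the breakings --- is labelled by strata already treated at an earlier step, so the parametrization space $\mathfrak{X}^l_{t_g}$ has its boundary data completely pinned and the statement ``transversality on the boundary is already achieved'' is literally true at each step. You instead induct on the arity $n$ and treat all strata of arity $n$ simultaneously. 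This works, but it forces you to notice that the (int-collapse) and (gauge-vertex) faces live in the \emph{same} arity, so your claim that boundary transversality ``holds automatically \ldots\ since the boundary faces are fibered products of maps already transverse to diagonals'' is false for those faces at the moment you invoke it: their transversality is only produced by the very same simultaneous Sard--Smale step (one residual set per stratum over the common parametrization space, intersected over the finitely many strata and critical-point tuples). Either make that simultaneity explicit and drop the ``automatic'' claim for the within-arity faces, or replace the arity induction by the paper's induction on stratum dimension, which makes the claim true as stated and buys a cleaner ``fully pinned boundary'' formulation of each Sard--Smale step.

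Two further points need shoring up. First, Sard--Smale requires Banach manifolds, while the space of \emph{smooth} perturbation data is only Fr\'echet; the paper runs the argument in the $C^l$ categories and then upgrades to $C^\infty$ by the Taubes exhaustion trick (writing the set of transverse smooth data as a countable intersection of sets that are open and dense because transversality on a compact set is an open condition). Your proposal jumps directly to a residual set of smooth data and omits this step entirely. Second, the corner-consistency of the prescriptions coming from the five boundary types is correctly identified as the combinatorial core, but it is asserted rather than checked; under the dimension induction it reduces to verifying that the limits prescribed on two intersecting codimension-one faces agree on their common corner, which is exactly the content of the gluing-compatibility conditions (i)--(iii) of the (above-break) and (below-break) requirements together with the smoothness conditions across the (int-collapse) and (gauge-vertex) walls, and should be stated as such rather than attributed to the operadic-bimodule structure in the abstract.
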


\begin{theorem} \label{geo:th:compact-CTn}
Let $(\mathbb{Y}_n)_{n \geqslant 1}$ be an admissible choice of perturbation data on the moduli spaces $\mathcal{CT}_n$. The 0-dimensional moduli spaces $\mathcal{CT}^{\mathbb{Y}_{t_g}}_{t_g}(y ; x_1,\dots,x_n)$ are compact. The 1-dimensional moduli spaces $\mathcal{CT}^{\mathbb{Y}_{t_g}}_{t_g}(y ; x_1,\dots,x_n)$ can be compactified to 1-dimensional manifolds with boundary, whose boundary is described at the beginning of this section..
\end{theorem}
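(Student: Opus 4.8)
The plan is to follow the strategy used for the single-function statement Theorem~\ref{geo:th:compactification-Tn} (carried out in chapter~6 of~\cite{mescher-morse}), transporting each step to the two-colored setting. The key observation is that a two-colored perturbed Morse gradient tree associated to $(T_g,\mathbb{Y})$ is, by definition, nothing but a perturbed Morse gradient tree on the underlying colored metric ribbon tree $(t_c,L_{f_c})$, where the reference vector field is $-\nabla f$ on the edges lying above the gauge and $-\nabla g$ on the edges lying below it. Since all the analytical input — energy bounds, $C^0$-estimates on the compact manifold $M$, exponential convergence towards critical points along the semi-infinite external edges, and the pregluing/Newton-iteration estimates — is local to each edge and only requires the flow to be a negative gradient flow of \emph{some} Morse function perturbed by a compactly supported vector field, these estimates apply verbatim whether the relevant Morse function is $f$ or $g$. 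The two sources of boundary are then: degenerations of the domain $T_g$, governed by the compactified moduli space $\overline{\mathcal{CT}}_n(t_g)$ whose codimension~$1$ strata were determined in subsection~\ref{alg:sss:compact-CTm} to be exactly the (int-collapse), (gauge-vertex), (above-break) and (below-break) components; and breaking of the external (Morse) trajectories, governed by the compactifications of $W^S(y)$ and of the $W^U(x_i)$.

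For compactness of the $0$-dimensional moduli spaces, I would take a sequence $(T_g^{Morse,k})_k$ and use the correspondence $\mathcal{CT}^{\mathbb{Y}_{t_g}}_{t_g}(y;x_1,\dots,x_n)=\phi_{\mathbb{Y}_{t_g}}^{-1}(\Delta)$. The underlying domains $T_g^k\in\mathcal{CT}_n(t_g)$ admit a subsequence converging in $\overline{\mathcal{CT}}_n(t_g)$, and Arzelà--Ascoli together with elliptic bootstrapping for the perturbed gradient ODE yields a $C^\infty_{loc}$-limit on each edge, producing a possibly broken or degenerate configuration. Each possible degeneration lands in a product of moduli spaces indexed either by a stratum of $\overline{\mathcal{CT}}_n(t_g)$ or by a Morse breaking; by the dimension formula of Proposition~\ref{geo:prop:trois-items-ombas-alg} (and its two-colored analogue) applied to each factor, and by admissibility, such a boundary stratum has expected dimension $-1$ and is therefore empty. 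Hence no degeneration occurs, the limit lies in the original moduli space, and the $0$-dimensional space is compact, thus finite. The same extraction applied to the $1$-dimensional case now allows degenerations landing in strata of expected dimension $0$, which are the nonempty finite sets appearing in the five boundary types (Morse), (int-collapse), (gauge-vertex), (above-break) and (below-break) listed at the start of the section; this establishes the set-theoretic description of the boundary.

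Finally, to upgrade this set-theoretic boundary to a genuine $1$-manifold with boundary, I would construct explicit gluing maps associated to each boundary configuration, parametrizing a half-open neighborhood $[R,+\infty]\hookrightarrow\overline{\mathcal{CT}}^{\mathbb{Y}_{t_g}}_{t_g}(y;x_1,\dots,x_n)$ of the boundary point, exactly in the spirit of subsection~\ref{geo:sss:gluing-and-orientations}. The gluing parameter is the length of the breaking edge tending to $+\infty$ (for the Morse, above-break and below-break types), or the modulus of the collapsing edge, respectively the position of the gauge (for the int-collapse and gauge-vertex types); transversality guarantees that the linearized operator is surjective with one-dimensional kernel, so the implicit function theorem produces the embedding. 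Because the moduli space is one-dimensional, any two simultaneous degenerations would be a codimension~$2$ phenomenon and hence excluded by the dimension count, so the compactification is a manifold with boundary and not merely with corners, and each boundary point arises from exactly one gluing map. The main obstacle is the gluing analysis for the (below-break) boundary, where $s$ two-colored trees $t^1_g,\dots,t^s_g$ must be glued simultaneously onto a single tree $t^0$ while the gauge sits precisely at the breaking locus: the gauge constraint — equality of the lengths of all non-self-crossing paths from the colored vertices to the root — couples the otherwise independent gluing parameters, and one must verify that the pregluing respects this constraint and that the Newton correction preserves it, so that the glued configuration genuinely lies in $\overline{\mathcal{CT}}_n(t_g)$. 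A careful bookkeeping of the gauge condition, along the lines of the combinatorial analysis of subsection~\ref{alg:sss:or-CTm}, is what makes this case delicate.
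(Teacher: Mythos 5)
Your overall strategy is sound and matches the paper's in outline: the paper itself disposes of the convergence and compactness part exactly as you do, by observing that the edgewise analysis is insensitive to whether the reference gradient is $-\nabla f$ or $-\nabla g$ and deferring the details to chapter~6 of~\cite{mescher-morse}, with the boundary strata dictated by the codimension~1 strata of $\overline{\mathcal{CT}}_n(t_g)$ together with Morse breaking of the external edges, and with dimension counting ruling out degenerations of the $0$-dimensional spaces. Where you genuinely diverge is in the gluing step. You invoke pregluing, Newton iteration and surjectivity of a linearized Fredholm operator, i.e.\ the functional-analytic package from Floer theory; the paper instead stays entirely finite-dimensional. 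Its gluing maps (subsections~\ref{geo:sss:gluing-and-orientations} and~\ref{geo:sss:gluing-two-colored}) are obtained from Lemma~\ref{geo:lemma:lemma-gluing}, a transversality statement for a one-parameter family of maps between finite-dimensional manifolds, applied to an explicit evaluation map $ev(\delta,x+y)=(e^{-2\delta}x+y,\,x+e^{-2\delta}y)$ in a Euclidean chart around the intermediate critical point. This is possible because a perturbed gradient tree is already a point of a finite-dimensional space $\mathcal{CT}_n(t_g)\times W^S(y)\times\prod W^U(x_i)$, so no infinite-dimensional setup is needed; the finite-dimensional route is lighter and, crucially for the paper, makes the comparison of orientations under gluing a determinant computation rather than a statement about determinant lines of Fredholm operators. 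Your identification of the (below-break) case as the delicate one is correct, and your worry about the gauge constraint coupling the gluing parameters is precisely how the paper resolves it: there is a single gluing parameter $\delta$, and the lengths $l^r_\delta$ of the newly created edges are prescribed by the explicit formulas of subsection~\ref{alg:sss:CTm-below-break-bound} (rather than~\ref{alg:sss:or-CTm}) so that the glued configuration satisfies the gauge equalities by construction; no correction step is needed. Both approaches prove the theorem; yours imports heavier machinery than the situation requires, while the paper's exploits the finite-dimensionality of Morse moduli spaces.
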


Theorem~\ref{geo:th:existence-perturb-CTn} is proven in section~\ref{geo:s:transversality}. Theorem~\ref{geo:th:compact-CTn} is a consequence of the results in chapter 6 of~\cite{mescher-morse}. We moreover point out that Theorem~\ref{geo:th:compact-CTn} implies in particular the existence of gluing maps 
\begin{align*}
\#^{above-break}_{ T_g^{1,Morse} , T^{2,Morse} } : [ R , + \infty ] \longrightarrow \overline{\mathcal{CT}}_{t_g}(y ; x_1,\dots ,x_n) \\
\#^{below-break}_{ T^{0,Morse} ,T_g^{1,Morse}, \dots , T_g^{s,Morse} } : [ R , + \infty ] \longrightarrow \overline{\mathcal{CT}}_{t_g}(y ; x_1,\dots ,x_n)
\end{align*}
where notations are as in section~\ref{geo:ss:compact-ribbon-Morse}. Such gluing maps are constructed in subsection~\ref{geo:sss:gluing-two-colored}.

\subsection{The $\Omega B As$-morphism between Morse cochains} \label{geo:ss:ombas-morph-Morse}

Let $\mathbb{X}^f$ and $\mathbb{X}^g$ be admissible choices of perturbation data for the Morse functions $f$ and $g$. Denote $(C^*(f),m_t^{\mathbb{X}^f})$ and $(C^*(g),m_t^{\mathbb{X}^g})$ the $\Omega B As$-algebras constructed in section~\ref{geo:ss:ombas-str-Morse}. 

\begin{theorem} \label{geo:th:ombas-morph}
Let $(\mathbb{Y}_n)_{n \geqslant 1}$ be a choice of perturbation on the moduli spaces $\mathcal{CT}_n$ that is admissible w.r.t. $\mathbb{X}^f$ and $\mathbb{X}^g$.
Defining for every $n$ and $t_g \in SCRT_n$ the operations $\mu_{t_g}$ as
\begin{align*}
\mu_{t_g}^{\mathbb{Y}} : C^*(f) \otimes \cdots \otimes C^*(f) &\longrightarrow C^*(g) \\
x_1 \otimes \cdots \otimes x_n &\longmapsto \sum_{|y|= \sum_{i=1}^n|x_i| + |t_g|} \# \mathcal{CT}_{t_g}^\mathbb{Y}(y ; x_1,\cdots,x_n) \cdot y \ .
\end{align*} 
they fit into an \ombas -morphism $\mu^{\mathbb{Y}} : (C^*(f),m_t^{\mathbb{X}^f}) \rightarrow (C^*(g),m_t^{\mathbb{X}^g})$.
\end{theorem}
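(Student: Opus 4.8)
The plan is to mimic the proof strategy of Theorem~\ref{geo:th:ombas-alg} for the $\Omega B As$-algebra structure, but now applied to the two-colored moduli spaces $\mathcal{CT}_{t_g}(y;x_1,\dots,x_n)$. The governing principle throughout is the standard one: the operations $\mu_{t_g}$ count $0$-dimensional moduli spaces, and the $\Omega B As$-morphism equations prescribed by the differential of Definition~\ref{alg:def:ombas-morph} are obtained by counting the boundary points of the $1$-dimensional compactified moduli spaces $\overline{\mathcal{CT}}^{\mathbb{Y}}_{t_g}(y;x_1,\dots,x_n)$. Since the boundary of such an oriented compact $1$-manifold consists of an even number of signed points summing to zero, each relation will drop out of a boundary-count identity, provided the combinatorial correspondence between boundary strata and algebraic terms, and the signs, are correctly matched.

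First I would fix a two-colored stable ribbon tree $t_g \in SCRT_n$ and critical points $y,x_1,\dots,x_n$ such that $\dim \mathcal{CT}_{t_g}(y;x_1,\dots,x_n)=1$. By Theorem~\ref{geo:th:compact-CTn}, this moduli space compactifies to a $1$-manifold with boundary whose boundary is the union of the five families of strata listed in section~\ref{geo:ss:compact-two-col-Morse}: (Morse), (int-collapse), (gauge-vertex), (above-break), and (below-break). I would then identify each boundary family with a term in the defining equation of the differential of $\Omega B As - \mathrm{Morph}$. The (Morse) strata produce the bracket $[\partial_{Morse},\mu_{t_g}]$ (splitting into the incoming $f$-trajectory breakings and the outgoing $g$-trajectory breaking); the (int-collapse) and (gauge-vertex) strata produce the terms coming from the $C_{-*}^{cell}$ image of the differential on $\overline{\mathcal{CT}}_n(t_g)$ that stay within the bimodule; the (above-break) strata produce the right action terms $\mu_{t^1_g}\circ_i m_{t^2}^{\mathbb{X}^f}$; and the (below-break) strata produce the left action terms $m_{t^0}^{\mathbb{X}^g}\circ(\mu_{t^1_g}\otimes\cdots\otimes\mu_{t^s_g})$. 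Collecting these gives, for each $t_g$, the relation
\[ [\partial_{Morse},\mu_{t_g}] = \sum \pm \mu_{t_g'} + \sum \pm \mu_{t^1_g}\circ_i m_{t^2}^{\mathbb{X}^f} + \sum \pm m_{t^0}^{\mathbb{X}^g}\circ(\mu_{t^1_g}\otimes\cdots\otimes\mu_{t^s_g}) \ , \]
which is exactly the $\Omega B As$-morphism equation dictated by $\partial(t_g,\omega)$ in subsection~\ref{alg:sss:def-op-bimod-ombasmorph}.

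The two genuine difficulties are the gluing argument and the sign bookkeeping, and I would treat the sign verification as the main obstacle. The set-theoretic boundary identification is essentially combinatorial and follows from Theorem~\ref{geo:th:compact-CTn}, using the explicit gluing maps $\#^{above-break}$ and $\#^{below-break}$ of section~\ref{geo:ss:compact-two-col-Morse} to show that each configuration predicted by the compactification is realized exactly once near the boundary. The signs, however, require comparing the orientation that $\overline{\mathcal{CT}}^{\mathbb{Y}}_{t_g}$ induces on each boundary stratum with the product orientation coming from the fiber-product descriptions of the strata, and matching this against the signs computed at the polytope level in Propositions~\ref{alg:prop:int-collapse-signs} to~\ref{alg:prop:below-break-signs}. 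This computation, carried out using the signed short exact sequences of vector bundles and the explicit gluing maps, is deferred to the dedicated sign-and-orientation section; as in Theorem~\ref{geo:th:ombas-alg}, one finds that the bracket must be taken with one copy of $\partial_{Morse}$ twisted by a dimension-dependent sign, so that strictly speaking one obtains a \emph{twisted} $\Omega B As$-morphism in the sense of Definition~\ref{geo:def:twisted-ombas}, reducing to an honest $\Omega B As$-morphism when $M$ is odd-dimensional. Finally, applying the morphism of operadic bimodules $\infmor \rightarrow \Omega B As - \mathrm{Morph}$ of Proposition~\ref{alg:prop:markl-shnider-deux} yields the induced \Ainf -morphism between the induced \Ainf -algebras, completing the statement.
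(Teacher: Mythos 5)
Your proposal is correct and follows essentially the same route as the paper: the $\Omega B As$-morphism equations are extracted by counting boundary points of the $1$-dimensional compactified moduli spaces $\overline{\mathcal{CT}}^{\mathbb{Y}}_{t_g}(y;x_1,\dots,x_n)$, matching the five boundary families to the terms of the differential of $\Omega B As-\mathrm{Morph}$, with the gluing maps and the orientation/sign analysis (via signed short exact sequences, building on the computations already done for Theorem~\ref{geo:th:ombas-alg}) carried out in the dedicated section. You also correctly anticipate the paper's own caveat that one obtains a \emph{twisted} $\Omega B As$-morphism in general, untwisted when $M$ is odd-dimensional or over $\Z/2$.
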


Again, the collection of operations $\{ \mu_{t_g} \}$ does not exactly define an $\Omega B As$-morphism but rather a \emph{twisted $\Omega B As$-morphism}. In the case when $M$ is odd-dimensional, this twisted $\Omega B As$-morphism is exactly an $\Omega B As$-morphism between two $\Omega B As$-algebras.
All sign computations are detailed in section~\ref{geo:s:signs-or}. If we want to go back to the more classical algebraic framework of \Ainf -algebras, an \Ainf -morphism between the induced \Ainf -algebra structures on the Morse cochains is simply obtained under the morphism of operadic bimodules $\infmor \rightarrow \Omega B As - \mathrm{Morph}$.

\section{Transversality} \label{geo:s:transversality}

The goal of this section is to prove Theorems~\ref{geo:th:exist-admissible-perturbation-data-Tn} and~\ref{geo:th:existence-perturb-CTn}. In this regard, we recall at first the parametric transversality lemma and then build an admissible choice of perturbation data $(\mathbb{X}_n)_{n \geqslant 2}$ on the moduli spaces $\mathcal{T}_n$, proceeding by induction on the number of internal edges $e(t)$ of a stable ribbon tree $t$. It moreover appears in our construction that all arguments adapt nicely to the framework of two-colored trees and admissible choices of perturbation data $(\mathbb{Y}_n)_{n \geqslant 1}$ on the moduli spaces $\mathcal{CT}_n$. 

\subsection{Parametric transversality lemma} \label{geo:sss:functional-analysis}

We begin by recalling Smale's generalization of the classical Sard theorem. See~\cite{smale-sard} or~\cite{mcduff-salamon} for a detailed proof :

\begin{theorem}[Sard-Smale theorem]
Let $X$ and $Y$ be separable Banach manifolds. Suppose that $f : X \rightarrow Y$ is a Fredholm map of class $C^l$ with $l \geqslant \mathrm{max}(1 , \mathrm{ind}(f) + 1 )$. Then the set $Y_{reg}(f)$ of regular values of $f$ is residual in $Y$ in the sense of Baire.
\end{theorem}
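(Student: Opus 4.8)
The plan is to reduce the infinite-dimensional statement to the classical finite-dimensional Sard theorem via a Lyapunov--Schmidt reduction, and then to promote the resulting local statement to a global one using separability. First I would observe that it suffices to prove a local version: since $X$ is a separable, hence second countable and Lindelöf, Banach manifold, every open cover admits a countable subcover. A value $y \in Y$ is critical for $f$ precisely when it is critical for the restriction of $f$ to one of the members of such a cover, so the set of critical values of $f$ is a countable union of the critical-value sets of these restrictions. As a countable union of meager sets is meager, it is enough to show that each $x_0 \in X$ has a neighbourhood $U$ on which the critical values of $f|_U$ form a meager subset of $Y$; the complement $Y_{reg}(f)$ is then residual.

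For the local statement I would work in charts so that $f$ is a $C^l$ map from an open set $U$ of a Banach space $E$ to a Banach space $F$, with $x_0 = 0$, $f(0)=0$ and $L := Df(0)$ Fredholm of index $n = \mathrm{ind}(f)$. Using that $\ker L$ and a complement $F_0$ of $\mathrm{im}\, L$ are finite-dimensional, split $E = \ker L \oplus E_1$ and $F = \mathrm{im}\, L \oplus F_0$. The composite $\mathrm{pr}_{\mathrm{im}\,L} \circ f$ is a submersion at $0$, so by the implicit function theorem I can choose coordinates in which $f$ takes the normal form $(v,w) \mapsto (w, g(v,w))$, where $v \in \ker L$, $w$ runs through $\mathrm{im}\,L$, and $g$ is a $C^l$ map into $F_0$. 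In these coordinates a point is regular for $f$ exactly when $\partial_v g$ is onto, so a value $(w,c)$ is critical if and only if $c$ is a critical value of the finite-dimensional map $g_w := g(\cdot, w)\colon \ker L \to F_0$. Since $\dim \ker L - \dim F_0 = n$, the hypothesis $l \geq \max(1, n+1)$ is exactly the regularity needed to apply the classical Sard theorem, which yields that the set of critical values of $g_w$ is Lebesgue-null in $F_0$ for every fixed $w$.

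It remains to pass from ``every $w$-slice is null'' to ``the total critical-value set $\Sigma = \{(w,c) : c \text{ critical value of } g_w\}$ is meager''. The main obstacle is exactly this point-set step, since $\mathrm{im}\,L$ is infinite-dimensional and Fubini-type arguments are unavailable in that direction. I would handle it as follows: the set of critical points of $f$ is closed, because surjectivity of a Fredholm operator is an open condition, and I would exhaust the finite-dimensional factor $\ker L$ by compact balls $\overline{B}_j$ to write the critical locus as a countable union $\bigcup_j \mathrm{Crit}_j$ with $\mathrm{Crit}_j$ contained in $\overline{B}_j \times \mathrm{im}\,L$. Because the $\ker L$-coordinate is now confined to a compact set and $f$ is the identity in the $\mathrm{im}\,L$-direction, the normal form shows that $f$ restricted to $\mathrm{Crit}_j$ is a closed map, after shrinking $U$ so that limit points stay inside the chart, so each $f(\mathrm{Crit}_j)$ is closed. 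On the other hand each $f(\mathrm{Crit}_j)$ has all its $w$-slices contained in the null sets above, hence has empty interior by a slicing argument, and a closed set with empty interior is nowhere dense. Therefore $\Sigma = \bigcup_j f(\mathrm{Crit}_j)$ is meager, which establishes the local statement. Finally I would invoke that a second-countable Banach manifold is completely metrizable, hence a Baire space, so that the residual set $Y_{reg}(f)$ is in particular dense.
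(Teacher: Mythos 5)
Your proof is correct and is precisely the classical Lyapunov--Schmidt reduction of Smale's original argument (local normal form $(v,w)\mapsto(w,g(v,w))$, finite-dimensional Sard on the slices $g_w$, closedness plus empty interior to get nowhere dense, and separability to globalize), which is exactly the proof in the references \cite{smale-sard} and \cite{mcduff-salamon} that the paper cites for this statement. The only point to tidy is the closedness of $f(\mathrm{Crit}_j)$: shrinking $U$ is not quite enough, and one should also exhaust the $\mathrm{im}\,L$-direction by closed sets so that limits of the $w$-coordinates stay in the domain, but this is handled by the same countable-union device you already use.
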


\noindent This theorem implies in particular the following corollary in transversality theory, that will constitute the cornerstone of our proof of Theorem~\ref{geo:th:exist-admissible-perturbation-data-Tn} : 

\begin{corollary}[Parametric transversality lemma]
Let $\mathfrak{X}$ be a Banach space, $M$ and $N$ two finite-dimensional manifolds and $S \subset N$ a submanifold of $N$. Suppose that $f : \mathfrak{X} \times M \rightarrow N$ is a map of class $C^l$ with $l \geqslant \mathrm{max}(1 , \mathrm{dim}(M) + \mathrm{dim}(S) - \mathrm{dim}(N) + 1 )$ and that it is transverse to $S$. Then the set
\[ \mathfrak{X}_{\pitchfork S} := \{ \mathbb{X} \in \mathfrak{X} \text{ such that } f_{\mathbb{X}} \pitchfork S \} \]
is residual in $\mathfrak{X}$ in the sense of Baire.
\end{corollary}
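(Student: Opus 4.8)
The plan is to deduce the Parametric Transversality Lemma directly from the Sard--Smale theorem by constructing an appropriate Banach manifold and a section whose regular values correspond exactly to the parameters $\mathbb{X}$ for which $f_\mathbb{X} \pitchfork S$. First I would consider the preimage $\mathcal{Z} := f^{-1}(S) \subset \mathfrak{X} \times M$. Since $f$ is assumed transverse to $S$, the implicit function theorem for Banach manifolds shows that $\mathcal{Z}$ is a (Banach) submanifold of $\mathfrak{X} \times M$, with tangent space at a point $(\mathbb{X},m)$ given by the preimage of $T_{f(\mathbb{X},m)}S$ under $d f_{(\mathbb{X},m)}$. The transversality of the individual map $f_\mathbb{X}$ to $S$ can then be characterized fibrewise in terms of this submanifold.

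The key step is to introduce the projection $\pi : \mathcal{Z} \to \mathfrak{X}$, $(\mathbb{X},m) \mapsto \mathbb{X}$, and to prove the following two facts. First, $\pi$ is a Fredholm map, with index equal to $\mathrm{dim}(M) + \mathrm{dim}(S) - \mathrm{dim}(N)$; this is a standard linear-algebra computation on the tangent spaces, reducing the Fredholm property and index of $d\pi$ to those of the finite-dimensional linear maps obtained by fixing the $\mathfrak{X}$-direction. Second, and crucially, a parameter $\mathbb{X} \in \mathfrak{X}$ is a regular value of $\pi$ if and only if $f_\mathbb{X} \pitchfork S$. This equivalence is the heart of the argument: one shows that for $m$ with $f_\mathbb{X}(m) \in S$, the surjectivity of $d\pi_{(\mathbb{X},m)}$ onto $T_\mathbb{X}\mathfrak{X}$ is equivalent to the statement that $df_\mathbb{X}$ at $m$ already hits a complement of $T S$ inside $TN$, using that the full differential $df$ is surjective onto a complement of $TS$ by the hypothesis $f \pitchfork S$.

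Having established these, I would apply the Sard--Smale theorem to $\pi$. The regularity hypothesis $l \geqslant \mathrm{max}(1, \mathrm{dim}(M) + \mathrm{dim}(S) - \mathrm{dim}(N) + 1)$ is exactly what is needed to ensure $l \geqslant \mathrm{max}(1, \mathrm{ind}(\pi)+1)$, so the theorem applies and yields that the set of regular values of $\pi$ is residual in $\mathfrak{X}$ in the sense of Baire. By the equivalence established in the previous step, this set of regular values is precisely $\mathfrak{X}_{\pitchfork S} = \{ \mathbb{X} \in \mathfrak{X} : f_\mathbb{X} \pitchfork S \}$, which completes the proof.

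The main obstacle I anticipate is not the abstract functional-analytic machinery, which is standard once the setup is in place, but rather the verification that $\pi$ is genuinely Fredholm of the claimed index in the precise Banach setting at hand. In particular one must be careful that $\mathcal{Z}$ is a smooth Banach \emph{submanifold} with a well-behaved tangent bundle (so that the derivative $d\pi$ is defined and its kernel and cokernel can be computed), and that the finite-dimensionality of $M$, $S$ and $N$ is what collapses the Fredholm index computation to an honest finite integer. The equivalence ``$\mathbb{X}$ regular for $\pi$ $\Longleftrightarrow$ $f_\mathbb{X} \pitchfork S$'' requires a careful diagram chase at the level of tangent spaces, splitting $T_{(\mathbb{X},m)}(\mathfrak{X}\times M)$ into its $\mathfrak{X}$ and $M$ factors and tracking how $df$ interacts with the chosen complement of $TS$ in $TN$; this bookkeeping is where sign-free but delicate linear algebra must be done correctly.
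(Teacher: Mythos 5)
Your proposal is correct and follows exactly the paper's argument: form $f^{-1}(S)$ as a Banach submanifold via transversality of $f$, restrict the projection to $\mathfrak{X}$, identify its regular values with $\mathfrak{X}_{\pitchfork S}$ and its Fredholm index with $\mathrm{dim}(M)+\mathrm{dim}(S)-\mathrm{dim}(N)$, and conclude by Sard--Smale. The only difference is that the paper delegates the Fredholm index computation and the regular-value characterization to references (Abraham--Robbin and McDuff--Salamon) where you sketch how to verify them directly.
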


\begin{proof}
The map $f$ being transverse to $S$, the inverse image $f^{-1}(S)$ is a Banach submanifold of $\mathfrak{X} \times M$. Consider the standard projection $p_\mathfrak{X} : \mathfrak{X} \times M \rightarrow \mathfrak{X}$ and denote $\pi := p_\mathfrak{X}|_{f^{-1}(S)}$. Following Lemma 19.2 in~\cite{abraham-transversal}, this map is Fredholm and has index $\mathrm{dim}(M) + \mathrm{dim}(S) - \mathrm{dim}(N) $. Moreover, drawing from an argument in section 3.2. of~\cite{mcduff-salamon}, there is an equality $\mathfrak{X}_{reg}(\pi) = \mathfrak{X}_{\pitchfork S}$. One can then conclude by applying the Sard-Smale theorem to the map $\pi$.
\end{proof}

\subsection{Proof of theorem~\ref{geo:th:exist-admissible-perturbation-data-Tn}} \label{geo:ss:existence-of-admissible}

\subsubsection{The case $e(t)=0$} \label{geo:sss:etocase}

If $e(t) = 0$, the tree $t$ is a corolla. Fix an integer $l$ such that 
\[ l \geqslant \mathrm{max} \left( 1 , e(t) + |y| - \sum_{i=1}^n|x_i| + 1 \right) \ . \]
We define $C^l$-choices of perturbation data in a similar fashion to smooth choices of perturbation data.
A $C^l$-choice of perturbation data $\mathbb{X}_t$ on $\mathcal{T}_n(t)$ then simply corresponds to a $C^l$-choice of perturbation datum on each external edge of $t$. Define the parametrization space
\[ \mathfrak{X}_t^l  := \{ \text{$C^l$-perturbation data $\mathbb{X}_t$ on the moduli space $\mathcal{T}_n(t)$} \} \ . \]
This parametrization space is a Banach space. The linear combination of choices of perturbation data is simply defined as the linear combination of each perturbation datum $\mathbb{X}_{t,e}$ with $e$ an external edge of $t$. The vector space $\mathfrak{X}_t^l$ is moreover Banach as each perturbation datum $\mathbb{X}_{t,e}$ vanishes away from a length 1 segment in $D_e$.

Given critical points $y$ and $x_1, \dots , x_n$, introduce the $C^l$-map
\[ \phi_t : \mathfrak{X}_t^l \times \mathcal{T}_n(t) \times W^S(y) \times W^U(x_1) \times \cdots \times W^U(x_n) \longrightarrow M^{\times n+1} \ , \]
such that for every $\mathbb{X}_t \in \mathfrak{X}_t^l$, $\phi_t ( \mathbb{X}_t , \cdot ) = \phi_{\mathbb{X}_t}$. Note that we should in fact write $\phi_t^{y,x_1,\dots,x_n}$ as the domain of $\phi_t$ depends on $y,x_1,\dots,x_n$. The map $\phi_t$ is then a submersion. This is proven in Lemma~7.3. of~\cite{abouzaid-plumbings} and Abouzaid explains it informally in the following terms : "[this lemma] is the infinitesimal version of the fact that perturbing the gradient flow equation on a bounded subset of an edge integrates to an essentially arbitrary diffeomorphism".

In particular the map $\phi_t$ is transverse to the diagonal $\Delta \subset M^{\times n+1}$. Applying the parametric transversality theorem of subsection~\ref{geo:sss:functional-analysis},  there exists a residual set $\mathfrak{Y}_t^{l ; y , x_1 , \dots x_n} \subset \mathfrak{X}_t^l$  such that for every choice of perturbation data $\mathbb{X}_t \in \mathfrak{Y}_t^{l ; y , x_1 , \dots x_n}$ the map $\phi_{\mathbb{X}_t}$ is transverse to the diagonal $\Delta \subset M^{\times n+1}$. Considering the intersection
\[ \mathfrak{Y}_t^l := \bigcap_{y,x_1,\dots,x_n} \mathfrak{Y}_t^{y , x_1 , \dots x_n} \subset \mathfrak{X}_t \]
which is again residual, any $\mathbb{X}_t \in \mathfrak{Y}_t^l$ yields a $C^l$-choice of perturbation data on $\mathcal{T}_n(t)$ such that all the maps $\phi_{\mathbb{X}_t}$ are transverse to the diagonal $\Delta \subset M^{\times n+1}$. It remains to prove this statement in the smooth case. 

\subsubsection{Achieving smoothness à la Taubes}

Using an argument drawn from section 3.2. of~\cite{mcduff-salamon} and attributed to Taubes, we now prove that the set 
\[ \mathfrak{Y}_t := 
\left\{ \begin{array}{l}
 \text{smooth choices of perturbation data $\mathbb{X}_t$ on $\mathcal{T}_n(t)$ such that} \\\text{all the maps $\phi_{\mathbb{X}_t}$ are transverse to the diagonal $\Delta \subset M^{\times n+1}$ } \end{array} \right\} \]
is residual in the Fréchet space
\[ \mathfrak{X}_t  := \{ \text{smooth choices of perturbation data $\mathbb{X}_t$ on $\mathcal{T}_n(t)$} \} \ . \]

Choose an exhaustion by compact sets $L_0 \subset L_1 \subset L_2 \subset \cdots$ of the space $\mathcal{T}_n(t) \times W^S(y) \times W^U(x_1) \times \cdots \times W^U(x_n)$. Define 
\[ \mathfrak{Y}_{t,L_m} := 
\left\{ \begin{array}{l}
 \text{smooth choices of perturbation data $\mathbb{X}_t$ on $\mathcal{T}_n(t)$ such that} \\\text{all maps $\phi_{\mathbb{X}_t}$ are transverse on $L_m$ to the diagonal of $M^{\times n+1}$ } \end{array} \right\} \]
and note that
\[ \mathfrak{Y}_t = \bigcap_{m=0}^{+\infty} \mathfrak{Y}_{t,L_m} \ . \]
We will prove that each $\mathfrak{Y}_{t,L_m} \subset \mathfrak{Y}_{t}$ is open and dense in $\mathfrak{X}_{t}$ to conclude that $\mathfrak{Y}_{t}$ is indeed residual.

Fix $m \geqslant 0$. To prove that the set $\mathfrak{Y}_{t,L_m}$ is open in $\mathfrak{X}_{t}$ it suffices to prove that for every $l$, the set $\mathfrak{Y}_{t,L_m}^l$ is open in $\mathfrak{X}_{t}^l$, where $\mathfrak{Y}_{t,L_m}^l$ is defined by replacing "smooth" by "$C^l$" in the definition of $\mathfrak{Y}_{t,L_m}$. This last result is a simple consequence of the fact that "being transverse on a compact subset" is an open property : if the map $\phi_{\mathbb{X}_t^0}$ is transverse on $L_m$ to the diagonal $\Delta \subset M^{\times n+1}$ then for $\mathbb{X}_t \in \mathfrak{X}_{t}^l$ sufficiently close to $\mathbb{X}_t^0$ the map $\phi_{\mathbb{X}_t}$ is again transverse on $L_m$ to the diagonal on $L_m$. 

Let now $\mathbb{X}_t \in \mathfrak{X}_{t}$. As $\mathbb{X}_t \in \mathfrak{X}_{t}^l$ and the set $\mathfrak{Y}_t^l$ is dense in $\mathfrak{X}_t^l$, there exists a sequence $\mathbb{X}_t^l \in \mathfrak{Y}_{t}^l$ such that for all $l$
\[ ||  \mathbb{X}_t - \mathbb{X}_t^l ||_{\mathcal{C}^l} \leqslant 2^{-l} \ . \]
Note that $\mathbb{X}_t^l \in \mathfrak{Y}_{t,L_m}^l$.
Now since the set $\mathfrak{Y}_{t,L_m}^l$ is open in $\mathfrak{X}_{t}^l$ for the $\mathcal{C}^l$-topology, there exists $\varepsilon_l > 0$ such that for all $\mathbb{X}_t^{\prime l} \in \mathfrak{X}_{t}^l$ if
\[ ||  \mathbb{X}_t^l - \mathbb{X}_t^{\prime  l} ||_{\mathcal{C}^l} \leqslant \mathrm{min}( 2^{-l} , \varepsilon_l ) \ , \]
then $\mathbb{X}_t^{\prime  l} \in \mathfrak{Y}_{t,L_m}^l$. Choosing $\mathbb{X}_t^{\prime  l}$ to be smooth, this yields a sequence of smooth choices of perturbation data lying in $\mathfrak{Y}_{t,L_m}$ and converging to $\mathbb{X}_t$, which concludes the proof. 

\subsubsection{Induction step and conclusion} 

Let $k \geqslant 0$ and suppose that we have constructed an admissible choice of perturbation data $(\mathbb{X}_t^0)_{e(t) \leqslant k}$. This notation should not be confused with the notation $(\mathbb{X}_i)_{i \leqslant k}$ : the former corresponds to a choice of perturbation data on the strata $\mathcal{T}(t)$ of dimension $\leqslant k$ while the latter corresponds to a choice of perturbation data on the moduli spaces $\mathcal{T}_i$ with $i \leqslant k$. Let $t$ be a stable ribbon tree with $e(t) = k + 1$. We want to construct a choice of perturbation data $\mathbb{X}_t$ on $\mathcal{T}_n(t)$ which is smooth, gluing-compatible and such that each map $\phi_{\mathbb{X}_t}$ is transverse to the diagonal $\Delta \subset M^{\times n +1}$.

Under a choice of identification $\overline{\mathcal{T}}_n(t) \simeq [ 0 , +\infty ]^{e(t)}$, define $\underline{\mathcal{T}_n}(t) \subset \overline{\mathcal{T}}_n(t)$ as the inverse image of $[ 0 , +\infty [^{e(t)}$. Introduce the parametrization space 
\[  \mathfrak{X}_t^l := 
  \left\{\begin{array}{l}
         \text{$C^l$-perturbation data $\mathbb{X}_t$ on $\underline{\mathcal{T}_n}(t)$ such that} \\
         \text{$\mathbb{X}_t|_{\mathcal{T}(t')} = \mathbb{X}_{t'}^0$ for all $t' \in coll(t)$ and such that} \\
         \text{$\underset{{l_e \rightarrow + \infty}}{\mathrm{lim}} \mathbb{X}_t = \mathbb{X}^0_{t_1} \#_e \mathbb{X}^0_{t_2}$ for all $e\in E(t)$} \\
  \end{array}\right\} \ , \]
where $t_1 \#_e t_2 = t$, and $\mathrm{lim}_{l_e \rightarrow + \infty} \mathbb{X}_t = \mathbb{X}^0_{t_1} \#_e \mathbb{X}^0_{t_2}$ denotes the gluing-compatibility condition described in section~\ref{geo:ss:compact-ribbon-Morse}.
Following~\cite{mescher-morse} this parametrization space is an affine space which is Banach. One can indeed show that the $l_e \rightarrow + \infty$ conditions imply that each $\mathbb{X}_t \in \mathfrak{X}_t^l$ is bounded in the $C^l$-norm, and that the $C^l$-norm is thus well defined on $\mathfrak{X}_t^l$ although $\underline{\mathcal{T}_n}(t)$ is not compact.

Consider the $C^l$-map
\[ \phi_t : \mathfrak{X}_t^l \times \mathcal{T}_n(t) \times W^S(y) \times W^U(x_1) \times \cdots \times W^U(x_n) \longrightarrow M^{\times n+1} \ . \]
Using the same argument as in subsection~\ref{geo:sss:etocase}, the map $\phi_t$ is again transverse to the diagonal $\Delta \subset M^{\times n+1}$. 
Applying the parametric transversality theorem and proceeding as in the case $e(t)=0$, there exists a residual set $\mathfrak{Y}_t^l \subset \mathfrak{X}_t^l$ such that for every choice of perturbation data $\mathbb{X}_t \in \mathfrak{Y}_t^l$ the map $\phi_{\mathbb{X}_t}$ is transverse to the diagonal $\Delta \subset M^{\times n+1}$. Using the previous argument à la Taubes, we can moreover prove the same statement in the smooth context. By definition of the parametrization spaces $\mathfrak{X}_t$ this construction yields indeed an admissible choice of perturbation data $(\mathbb{X}_t)_{e(t) \leqslant k+1}$, which concludes the proof of Theorem~\ref{geo:th:exist-admissible-perturbation-data-Tn} by induction.

\section{Signs, orientations and gluing} \label{geo:s:signs-or}

We now complete and conclude the proofs of Theorems~\ref{geo:th:ombas-alg}~and~\ref{geo:th:ombas-morph}, by expliciting all orientations conventions on the moduli spaces of Morse gradient trees and computing the signs involved therein. We use to this extent the ad hoc formalism of signed short exact sequences of vector bundles. Particular attention will be paid to the behaviour of orientations under gluing in our proof.

\subsection{More on signs and orientations} \label{geo:ss:more-signs}

\subsubsection{Additional tools for orientations} \label{geo:sss:additional-tools} 

Consider a short exact sequence of vector spaces
\[ 0 \longrightarrow V_2 \longrightarrow W \longrightarrow V_1 \longrightarrow 0 \ . \]
It induces a direct sum decomposition $W = V_1 \oplus V_2 $. Suppose that the vector spaces $W$, $V_1$ and $V_2$ are oriented. We denote $(-1)^{\varepsilon}$ the sign obtained by comparing the orientation on $W$ to the one induced by the direct sum $V_1 \oplus V_2$. We will then say that the short exact sequence has sign $(-1)^{\varepsilon}$. In particular, when $(-1)^{\varepsilon}= 1$, we will say that the short exact sequence is \emph{positive}.

Now, consider two short exact sequences 
\[ 0 \longrightarrow V_2 \longrightarrow W \longrightarrow V_1 \longrightarrow 0 \ \text{ and } \ 0 \longrightarrow V_2' \longrightarrow W' \longrightarrow V_1' \longrightarrow 0 \ , \]
of respective signs $(-1)^{\varepsilon}$ and $(-1)^{\varepsilon'}$. Then the short exact sequence obtained by summing them
\[ 0 \longrightarrow V_2 \oplus V_2' \longrightarrow W \oplus W' \longrightarrow V_1 \oplus V_1' \longrightarrow 0 \ , \]
has sign $(-1)^{\varepsilon + \varepsilon' + \mathrm{dim}(V_1') \mathrm{dim}(V_2) }$. Indeed, the direct sum decomposition writes as
\[ W \oplus W' = (-1)^{\varepsilon} (V_1 \oplus V_2) \oplus(-1)^{\varepsilon'} (V_1' \oplus V_2') \simeq (-1)^{\varepsilon + \varepsilon' + \mathrm{dim}(V_1') \mathrm{dim}(V_2) } V_1 \oplus V_1' \oplus V_2 \oplus V_2' \ . \]

\subsubsection{Orientation and transversality} \label{geo:sss:orientation-transversality}

Given two manifolds $M,N$, a codimension $k$ submanifold $S \subset N$ and a smooth map
\[ \phi : M \longrightarrow N \]
which is tranverse to $S$, the inverse image $\phi^{-1}(S)$ is a codimension $k$ submanifold of $M$. Moreover, choosing a complementary $\nu_S$ to $TS$, the transversality assumption yields the following short exact sequence of vector bundles
\[ 0 \longrightarrow T \phi^{-1} (S) \longrightarrow T M |_{ \phi^{-1}(S)} \underset{d \phi}{\longrightarrow} \nu_S \longrightarrow 0 \ . \]
Suppose now that $M$, $N$ and $S$ are oriented. The orientations on $N$ and $S$ induce an orientation on $\nu_S$. The submanifold $\phi^{-1}(S)$ is then oriented by requiring that the previous short exact sequence be positive. We will refer to this choice of orientation as the \emph{natural orientation on $\phi^{-1}(S)$}. 

In the particular case of two submanifolds $S$ and $R$ of $M$ which intersect transversely, we will use the inclusion map $S \hookrightarrow M$, which is transverse to $R \subset M$, to define the intersection $S \cap R$. The orientation will then be defined using the positive short exact sequence 
\[ 0 \longrightarrow T ( S \cap R) \longrightarrow TS |_{S \cap R} \longrightarrow \nu_R \longrightarrow 0 \ , \]
or equivalently with the direct sum decomposition
\[ TS = \nu_R \oplus T ( S \cap R ) \ . \]
The intersection $R \cap S$ (in contrast to $S \cap R$) is oriented by interchanging $S$ and $R$ in the above discussion. The two orientations on the intersection differ then by a $(-1)^{\mathrm{codim}(S)\mathrm{codim}(R)}$ sign.

\subsection{Basic moduli spaces in Morse theory and their orientations} \label{geo:ss:basic-mod-space-Morse}

\subsubsection{Orienting the unstable and stable manifolds} \label{geo:sss:orient-stable-unstable}

Recall that for a critical point $x$ of a Morse function $f$, its unstable and stable manifolds are respectively defined as
\begin{align*}
W^U(x) &:= \{ z \in M , \ \lim_{s \rightarrow - \infty} \phi^s(z) = x  \} \\
W^S(x) &:= \{ z \in M , \ \lim_{s \rightarrow + \infty} \phi^s(z) = x  \} \ ,
\end{align*}
where we denote $\phi^s$ the flow of $-\nabla f$, and its degree is defined as $|x| := \mathrm{dim} (W^S(x))$.

The unstable and stable manifolds are respectively diffeomorphic to a $(d-|x|)$-dimensional ball and a $|x|$-dimensional ball. They are hence orientable. They intersect moreover transversely in a unique point, which is $x$. Assume now that the manifold $M$ is orientable and oriented. We choose for the rest of this section an arbitrary orientation on $W^U(x)$, and endow $W^S(x)$ with the unique orientation such that the concatenation of orientations $or_{W^U(x)} \wedge or_{W^S(x)}$ at $x$ coincides with the orientation $or_M$.

\subsubsection{Orienting the moduli spaces $\mathcal{T}(y;x)$} \label{geo:sss:orient-mod-space-Tyx}

For two critical points $x \neq y$, the moduli spaces of negative gradient trajectories $\mathcal{T}(y;x)$ can be defined in two ways. The first point of view hinges on the fact that \R\ acts on $W^S(y) \cap W^U(x)$, by defining $s \cdot p = \phi^s(p)$ for $s \in \R$ and $p \in W^S(y) \cap W^U(x)$. The moduli space $\mathcal{T}(y;x)$ is then defined by considering the quotient associated to this action, i.e. by defining $\mathcal{T}(y;x) := W^S(y) \cap W^U(x) / \R$. The second point of view is to consider the transverse intersection with the level set of a regular value $a$,
\[ \mathcal{T}(y;x) := W^S(y) \cap W^U(x) \cap f^{-1}(a) \ . \]

Using this description, and coorienting the level set $f^{-1}(a)$ with $- \nabla f$, the spaces $\mathcal{T}(y;x)$ can easily be oriented with the formalism of section~\ref{geo:sss:orientation-transversality} on transverse intersections : 
\[ TW^S(y) \simeq TW^S(x) \oplus T \left( W^S(y) \cap W^U(x) \right) \simeq TW^S(x) \oplus - \nabla f \oplus T \mathcal{T}(y;x) \ . \]
Note that the space $W^S(y) \cap W^U(x)$ consists in a union of negative gradient trajectories $\gamma : \R \rightarrow M$. We will therefore use the notation $\dot{\gamma}$ for $- \nabla f$, which will become handy in the next section.

We point out that the moduli spaces $\mathcal{T}(y;x)$ are constructed in a different way than the moduli spaces $\mathcal{T}_t(y ; x_1 , \dots , x_n)$ : they cannot naturally be viewed as an arity 1 case of the moduli spaces of gradient trees. This observation will be of importance in our upcoming discussion on signs for the $\Omega B As$-algebra structure on the Morse cochains.

Finally, the moduli spaces $\mathcal{T}(y;x)$ are manifolds of dimension
\[ \mathrm{dim}(\mathcal{T}(y;x)) = |y| -|x| - 1 \ , \]
which can be compactified to manifolds with corners $\overline{\mathcal{T}}(y ;x )$, by allowing convergence towards broken negative gradient trajectories. See for instance~\cite{wehrheim-morse}.
In the case where they are 1-dimensional, their boundary is given by the signed union
\[ \partial \overline{\mathcal{T}}(y;x) = \bigcup_{z \in \mathrm{Crit}(f)} - \mathcal{T}(y;z) \times \mathcal{T}(z;x) \ . \]
We moreover recall from section~\ref{geo:ss:conventions} that we work under the convention $\mathcal{T}(x;x) = \emptyset$.

\subsubsection{Compactifications of the unstable and stable manifolds} \label{geo:sss:compact-stable-unstable}

Using the moduli spaces $\mathcal{T}(y;x)$, we can now compactify the manifolds $W^S(y)$ and $W^U(x)$ to compact manifolds with corners $\overline{W^S}(y)$ and $\overline{W^U}(x)$. See~\cite{hutchings-floer} for instance. With our choices of orientations detailed in the previous section, the top dimensional strata in their boundary are given by 
\begin{align*}
\partial \overline{W^S}(y) &= \bigcup_{z \in \mathrm{Crit}(f)} (-1)^{|z|+1} W^S(z) \times \mathcal{T}(y;z)   \ , \\
\partial \overline{W^U}(x) &= \bigcup_{z \in \mathrm{Crit}(f)} (-1)^{(d-|z|)(|x|+1)} W^U(z) \times \mathcal{T}(z;x) \ ,
\end{align*}
where $d$ is the dimension of the ambient manifold $M$.

The pictures in the neighborhood of the critical point $z$ are represented in figure~\ref{geo:fig:compact-unstable-stable}.
For instance, in the case of $\partial \overline{W^S}(y)$, an element of $W^S(y)$ is seen as lying on a negative semi-infinite trajectory converging to $y$, and an outward-pointing vector to the boundary is given by $- \dot{\gamma}$. We hence have that
\[ - \dot{\gamma} \oplus T W^S(z) \oplus T \mathcal{T}(y;z)  = (-1)^{|z|}    T W^S(z) \oplus - \dot{\gamma} \oplus T \mathcal{T}(y;z) = (-1)^{|z|+1}  T W^S(y) \ . \]

\begin{figure}[h] 
    \centering
    \begin{subfigure}{0.4\textwidth}
    \centering
       \compactificationWSy 
    \end{subfigure} ~
    \begin{subfigure}{0.4\textwidth}
    \centering
        \compactificationWUx \ .
    \end{subfigure} 
    \caption{} \label{geo:fig:compact-unstable-stable}
\end{figure}

\subsubsection{Euclidean neighborhood of a critical point} \label{geo:sss:morse-theory}

Following~\cite{wehrheim-morse}, we will assume in the rest of this part that the pair (Morse function,metric) on the manifold $M$ is Euclidean. Denote $B^k_\delta := \{ x \in \R^k, \ |x| < \delta \}$. Such a pair is said to \emph{Euclidean} if it is Morse-Smale and is such that for each critical point $z \in \mathrm{Crit}(f)$ there exists a local chart $\phi : B_\delta^{d-|z|} \times B_\delta^{|z|} \tilde{\longrightarrow} U_z \subset M$, such that $\phi(0) = z$ and such that the function $f$ and the metric $g$ read as 
\begin{align*}
f ( x_1 , \dots , x_{n-|z|} , y_1 , \dots , y_{|z|} ) &= f(p) - \frac{1}{2} ( x_1^2 + \cdots + x_{n-|z|}^2) + \frac{1}{2} ( y_1^2 + \cdots + y_{|z|}^2) \\
g &= \sum_{i=1}^{n-|z|} dx_i\otimes dx_i + \sum_{i=1}^{|z|} dy_i \otimes dy_i 
\end{align*}
in the chart $\phi$. In this chart, we then have that 
\begin{align*}
W^U(z) &:= \{ y_1 = \cdots = y_{|z|} = 0 \} \\ 
W^S(z) &:= \{ x_1 = \cdots = x_{n-|z|} = 0 \} \ ,
\end{align*}
and $M = W^U(z) \times W^S(z)$. Hence any point of $U_z$ can be uniquely written as a sum $x+y$ where $x \in W^U(z)$ and $y \in W^S(z)$. Choosing now $s \in \R$ such that the the image of $x+y$ under the Morse flow map $\phi^{s}$ still lies in $U_z$, we have that
\[ \phi^s (x+y) = e^s x + e^{-s} y \ . \]
These observations will reveal crucial in the proof of subsection~\ref{geo:sss:gluing-and-orientations}.

\subsection{Preliminaries for section~\ref{geo:ss:twisted-ainf-alg-Morse}} \label{geo:ss:preliminaries}

\subsubsection{Counting the points on the boundary of an oriented 1-dimensional manifold} \label{geo:sss:counting-boundary-points}

Consider an oriented 1-dimensional manifold with boundary. Then its boundary $\partial M$ is oriented. Assume it can be written set-theoretically as a disjoint union
\[ \partial M = \bigsqcup_i N_i \ . \]
Suppose now that each $N_i$ comes with its own orientation, and write $(-1)^{\dagger_i}$ for the sign obtained by comparing this orientation to the boundary orientation. As oriented manifolds, the union writes as
\[ \partial M = \bigsqcup_i (-1)^{\dagger_i} N_i \ . \]
The $N_i$ being 0-dimensional, they can be seen as collections of points each coming with a $+$ or $-$ sign.
Noticing that an orientable 1-dimensional manifold with boundary is either a segment or a circle, and writing $\#N_i$ for the signed count of points of $N_i$, the previous equality finally implies that
\[ \sum (-1)^{\dagger_i} \# N_i = 0 \ . \]
This basic observation is key to constructing most algebraic structures arising in symplectic topology (and in particular Morse theory). 

For instance, for a critical point $x$, counting the boundary points of the 1-dimensional manifolds $\overline{\mathcal{T}}(y;x)$ proves that
\[ \partial^{Morse} \circ \partial^{Morse} (x) = \sum_{\substack{y \in \mathrm{Crit}(f) \\ |y| = |x| + 2}} \sum_{\substack{z \in \mathrm{Crit}(f) \\ |z| = |x| + 1}} \# \mathcal{T}(y;z) \# \mathcal{T}(z;x) \cdot y = 0 \ . \]
The equations for $\Omega B As$-algebras and $\Omega B As$-morphisms will be proven using this method in the following two subsections.

\subsubsection{Reformulating the $\Omega B As$-equations} \label{geo:sss:reformulating-equations}

We fix for each $t \in SRT_n$ an orientation $\omega_t$. Given a $t \in SRT_n$ the orientation $\omega_t$ defines an orientation of the moduli space $\mathcal{T}_n(t)$, and we write moreover $m_t$ for the operations $(t, \omega)$. The $\Omega B As$-equations for an $\Omega B As$-algebra then read as
\[ [ \partial , m_t ] = \sum_{t' \in coll(t)} (-1)^{\dagger_{\Omega B As}} m_{t'} + \sum_{t_1 \#_i t_2 = t} (-1)^{\dagger_{\Omega B As}} m_{t_1} \circ_i m_{t_2} \ , \]
where the notations for trees are as defined previously. The signs $(-1)^{\dagger_{\Omega B As}}$ are obtained as in section~\ref{alg:ss:signs-ombas}, by computing the signs of $\mathcal{T}_n(t')$ and $\mathcal{T}_{i_1 + 1 + i_3}(t_1) \times_i \mathcal{T}_{i_2}(t_2)$ in the boundary of $\mathcal{T}_n(t)$. We will not need to compute their explicit value, and will hence keep this useful notation $(-1)^{\Omega B As}$ to refer to them.

\subsubsection{Twisted \Ainf -algebras and twisted $\Omega B As$-algebras} \label{geo:sss:twisted-structure}

It is clear using this counting method, that the operations $m_t$ of section~\ref{geo:ss:ombas-str-Morse} will endow the Morse cochains $C^*(f)$ with a structure of $\Omega B As$-algebra over $\Z / 2$. Working over integers will prove more difficult, and we will prove a weaker result in this case. We introduce to this extent the notion of twisted \Ainf -algebras and twisted $\Omega B As$-algebras.

\begin{definition} \label{geo:def:twisted-ombas}
A \emph{twisted \Ainf -algebra} is a dg-\Z -module $A$ endowed with two different differentials $\partial_1$ and $\partial_2$, and a sequence of degree $2-n$ operations $m_n : A^{\otimes n} \rightarrow A$ such that
\[ [ \partial , m_n ] = - \sum_{\substack{i_1+i_2+i_3=n \\ 2 \leqslant i_2 \leqslant n-1}} (-1)^{i_1 + i_2i_3} m_{i_1+1+i_3} (\ide^{\otimes i_1} \otimes m_{i_2} \otimes \ide^{\otimes i_3} ) \ , \]
where $[ \partial , \cdot ]$ denotes the bracket for the maps $ (A^{\otimes n} , \partial_1) \rightarrow (A , \partial_2)$.
A \emph{twisted $\Omega B As$-algebra} is defined similarly.
\end{definition}

We make explicit the formulae obtained by evaluating the $\Omega B As$-equations on $A^{\otimes n}$, as we will need them in our next proof :
\begin{align*}
&- \partial_2 m_t (a_1 , \dots , a_n) + (-1)^{|t| + \sum_{j=1}^{i-1}|a_j|} m_t ( a_1 , \dots , a_{i-1} , \partial_1 a_i , a_{i+1} , \dots , a_n) \\
&+ \sum_{t_1 \# t_2 = t} (-1)^{\dagger_{\Omega B As} + |t_2| \sum_{j=1}^{i_1} |a_j|} m_{t_1} (a_1 , \dots , a_{i_1} , m_{t_2} (a_{i_1 + 1} , \dots , a_{i_1 + i_2} ) , a_{i_1 + i_2 + 1} , \dots , a_n) \\
&+ \sum_{t' \in coll(t)} (-1)^{\dagger_{\Omega B As}} m_{t'} (a_1 , \dots , a_n) \\
&= 0 \ .
\end{align*}
We refer to them as "twisted", as these algebras will occur in the upcoming lines by setting $\partial_2 := (-1)^{\sigma} \partial_1$, that is by simply twisting the differential $\partial_1$ by a specific sign.

Note that these two definitions cannot be phrased in terms of operads, as $\mathrm{Hom} ( (A , \partial_1) , (A , \partial_2) )$ is an $( \End_{(A , \partial_1)} , \End_{(A , \partial_2)} )$-operadic bimodule but is NOT an operad : the composition maps on $\mathrm{Hom} ( (A , \partial_1) , (A , \partial_2) )$ are associative, but they fail to be compatible with the differential $[ \partial , \cdot ]$. As a result, a  twisted \Ainf -algebra cannot be described as a morphism of operads from \Ainf\ to $\mathrm{Hom} ( (A , \partial_1) , (A , \partial_2) )$. However, a twisted $\Omega B As$-algebra structure always transfers to a twisted \Ainf -algebra structure. Indeed, while the functorial proof of~\ref{alg:sss:op-ainf-to-ombas} does not work anymore, we point out that the morphism of operads $\Ainf \rightarrow \Omega B As$ still contains the proof that a sequence of operations $m_t$ defining a twisted $\Omega B As$-algebra structure on $A$ can always be arranged in a sequence of operations $m_n$ defining a twisted \Ainf -algebra structure on $A$. 

\subsubsection{The maps $\psi_{e_i,\mathbb{X}_t}$} \label{geo:sss:gluing-choices-perturb}

Consider again a stable ribbon tree $t$ and order its external edges clockwise, starting with $e_0$ at the outgoing edge. Given a choice of perturbation data $\mathbb{X}_t$, we illustrate in figure~\ref{geo:fig:different-flow-maps} a mean to visualize the map 
\[ \phi_{\mathbb{X}_t} : \mathcal{T}_n(t) \times W^S(y) \times W^U(x_1) \times \cdots \times W^U(x_n) \longrightarrow M^{\times n+1}  \]
defined in section~\ref{geo:ss:mod-space-pert-Morse-tree}. We introduce a family of maps defined in a similar fashion. Consider $e_i$ an incoming edge of $t$. Define the map
\[ \psi_{e_i,\mathbb{X}_t} : \mathcal{T}_n(t) \times W^S(y) \times W^U(x_1) \times \cdots \times \widehat{W^U(x_i)} \times \cdots \times \cdots \times W^U(x_n) \longrightarrow M^{\times n}  \]
to be the map which for a fixed metric tree $T$ takes a point of a $W^U(x_j)$ for $j \neq i$ to the point in $M$ obtained by following the only non-self crossing path from the time $-1$ point on $e_j$ to the time $-1$ point on $e_i$ in $T$ through the perturbed gradient flow maps associated to $\mathbb{X}_T$, and which takes a point of $W^s(y)$ to the point in $M$ obtained by following the only non-self crossing path from the time $1$ point on $e_0$ to the time $-1$ point on $e_i$ in $T$ through the perturbed gradient flow maps associated to $\mathbb{X}_T$. The map $\psi_{e_0,\mathbb{X}_t}$ is defined similarly for the outgoing edge $e_0$. These two definitions are two be understood as depicted on two examples in figure~\ref{geo:fig:different-flow-maps}. 

\begin{figure}[h]
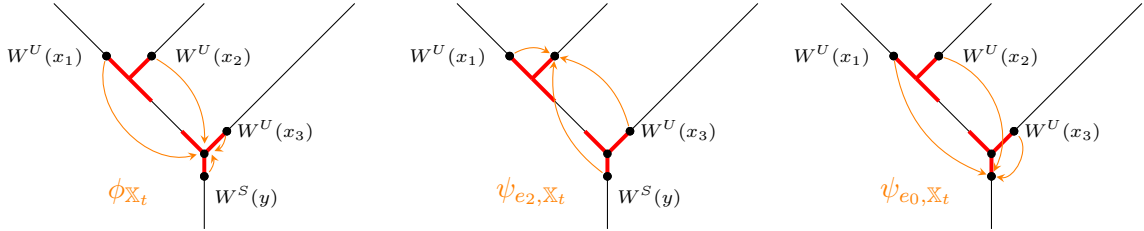
 
    \centering
    \begin{subfigure}{0.3\textwidth}
    \centering
       \examplemappsiun
    \end{subfigure} ~
    \begin{subfigure}{0.3\textwidth}
    \centering
        \examplemappsideux
    \end{subfigure} 
    \begin{subfigure}{0.3\textwidth}
    \centering
        \examplemappsitrois
    \end{subfigure} 
    \caption{Representations of a map $\phi_{\mathbb{X}_t}$, a map $\psi_{e_2,\mathbb{X}_t}$ and a map $\psi_{e_0,\mathbb{X}_t}$} \label{geo:fig:different-flow-maps}
\end{figure}

\subsection{The twisted $\Omega B As$-algebra structure on the Morse cochains} \label{geo:ss:twisted-ainf-alg-Morse}

\subsubsection{Summary of the proof of Theorem~\ref{geo:th:ombas-alg}} \label{geo:sss:result-ainf-alg-morse}

\begin{definition}
\begin{enumerate}[label=(\roman*)]
\item We define $\widetilde{\mathcal{T}}_{t}^\mathbb{X}(y ; x_1,\dots,x_n)$ to be the oriented manifold $\mathcal{T}_{t}^\mathbb{X}(y ; x_1,\dots,x_n)$ whose natural orientation has been twisted by a sign of parity
\[ \sigma (t ; y ; x_1 , \dots , x_n) := dn ( 1 + |y| + |t| ) + |t| |y| + d \sum_{i=1}^n |x_i| (n-i) \ . \]
\item Similarly, we define $\widetilde{\mathcal{T}}(y;x)$ to be the oriented manifold $\mathcal{T}(y;x)$ whose natural orientation has been twisted by a sign of parity
\[ \sigma (y ; x) := 1 \ . \]
\end{enumerate}
\end{definition}

The operations $m_t$ and the differential on $C^*(f)$ are then defined as
\begin{align*}
m_{t} (x_1 , \dots , x_n) &=  \sum_{|y|= \sum_{i=1}^n|x_i| + |t| } \# \widetilde{\mathcal{T}}_{t}^\mathbb{X}( y ; x_1,\dots,x_n ) \cdot y \ , \\
\partial_{Morse} (x) &=  \sum_{|y|= |x|+1 } \# \widetilde{\mathcal{T}}(y;x) \cdot y \ .
\end{align*} 

\begin{proposition}
If $\widetilde{\mathcal{T}}_{t}( y ; x_1,\dots,x_n )$ is 1-dimensional, its boundary decomposes as the disjoint union of the following components
\begin{enumerate}[label=(\roman*)]
\item $(-1)^{|y| + \dagger_{\Omega B As} + |t_2| \sum_{i=1}^{i_1} |x_i|} \widetilde{\mathcal{T}}_{t_1}(y ; x_1,\dots,x_{i_1 } , z , x_{i_1+i_2+1} , \dots , x_n ) \times \widetilde{\mathcal{T}}_{t_2}(z ; x_{i_1 +1 },\dots,x_{i_1 + i_2})$ ;
\item $(-1)^{|y| + \dagger_{\Omega B As}} \widetilde{\mathcal{T}}_{t'}(y ; x_1,\dots,x_n)$ for $t' \in coll(t)$ ;
\item $(-1)^{|y| + \dagger_{Koszul}+(d+1)|x_i|} \widetilde{\mathcal{T}}_{t}( y ; x_1, \dots , z , \dots , x_n ) \times \widetilde{\mathcal{T}}(z ; x_i)$ where $\dagger_{Koszul} = |t| + \sum_{j=1}^{i-1}|x_j|$ ;
\item $(-1)^{|y|+1} \widetilde{\mathcal{T}}(y ; z) \times  \widetilde{\mathcal{T}}_{t}( z ; x_1, \dots ,  x_n )$.
\end{enumerate} 
\end{proposition}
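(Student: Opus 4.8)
The proof is a sign computation organized around the set-theoretic boundary decomposition furnished by Theorem~\ref{geo:th:compactification-Tn}. Recall that the boundary of the compactified $1$-dimensional moduli space $\overline{\mathcal{T}}_t^{\mathbb{X}}(y;x_1,\dots,x_n)$ consists of the (int-collapse), (int-break) and (Morse) strata, which correspond respectively to items (ii), (i) and (iii)--(iv) of the statement. The plan is to orient each of these strata by its own natural orientation twisted by the appropriate $\sigma$, and then to compare this orientation with the boundary orientation induced on $\partial \overline{\mathcal{T}}_t$ by the ambient $1$-dimensional manifold, which we orient using the outward-pointing-vector-first convention of subsection~\ref{alg:sss:or-boundary}. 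Throughout I would work with the description $\mathcal{T}_t^{\mathbb{X}}(y;x_1,\dots,x_n) = \phi_{\mathbb{X}_t}^{-1}(\Delta)$ from Proposition~\ref{geo:prop:trois-items-ombas-alg} and the formalism of signed short exact sequences of vector bundles from subsection~\ref{geo:sss:additional-tools}, so that every orientation comparison reduces to a bookkeeping of signs of such sequences and of Koszul transpositions.

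For the two (Morse) components I would use the compactifications of the stable and unstable manifolds. Component (iv) comes from the outgoing edge breaking, hence from the stratum $W^S(z) \times \mathcal{T}(y;z)$ in $\partial \overline{W^S}(y)$, which carries the sign $(-1)^{|z|+1}$ computed in subsection~\ref{geo:sss:compact-stable-unstable}; feeding this into the short exact sequence defining $\phi_{\mathbb{X}_t}^{-1}(\Delta)$ and incorporating the twists $\sigma(t;z;x_1,\dots,x_n)$, $\sigma(y;z)=1$ and $\sigma(t;y;x_1,\dots,x_n)$ yields the claimed $(-1)^{|y|+1}$. Component (iii), coming from an incoming edge $e_i$ breaking, is handled symmetrically using the $(-1)^{(d-|z|)(|x_i|+1)}$ stratum of $\partial \overline{W^U}(x_i)$; here the extra Koszul sign $\dagger_{Koszul} = |t| + \sum_{j=1}^{i-1}|x_j|$ and the factor $(d+1)|x_i|$ arise from moving the broken factor past the remaining inputs and from the dimension shift of $W^U(x_i)$ inside $M^{\times n+1}$.

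For the (int-collapse) component (ii) the underlying tree sign $(-1)^{\dagger_{\Omega B As}}$ is exactly the sign of the stratum $\mathcal{T}_n(t')$ in the boundary of $\mathcal{T}_n(t)$ computed in section~\ref{alg:ss:signs-ombas}, and I would only have to check that the twists $\sigma(t';\dots)$ and $\sigma(t;\dots)$ together with the collapse $l_e \to 0$ contribute the global $(-1)^{|y|}$. For the (int-break) component (i) the analysis is more involved: using the gluing map $\#_{T_1^{Morse},T_2^{Morse}}$ of Theorem~\ref{geo:th:compactification-Tn}, I would identify a neighborhood of the boundary stratum $\mathcal{T}_{t_1} \times \mathcal{T}_{t_2}$ and compare the product orientation with the glued one. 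The combinatorial factor $(-1)^{\dagger_{\Omega B As}}$ again descends from the boundary of $\mathcal{T}_n(t)$, while the Koszul factor $(-1)^{|t_2|\sum_{i=1}^{i_1}|x_i|}$ arises from commuting the inputs $x_{i_1+1},\dots,x_{i_1+i_2}$ of the second tree past the preceding inputs in the ordered product $W^S(y)\times W^U(x_1)\times\cdots\times W^U(x_n)$, together with the corresponding transposition in the diagonal $\Delta$.

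The main obstacle is the gluing step for component (i): one must show that the natural orientation on $\phi_{\mathbb{X}_t}^{-1}(\Delta)$ near a broken configuration agrees, up to the stated sign, with the product of the two natural orientations. This requires an explicit model of the gluing map in the Euclidean charts of subsection~\ref{geo:sss:morse-theory}, a careful identification of the outward-pointing direction (the gluing parameter $R \to +\infty$) with the length of the breaking edge, and tracking the orientation of $\Delta$ under the decomposition $M^{\times n+1} \simeq M^{\times i_1} \times M^{\times i_2} \times M^{\times(n-i_1-i_2+1)}$. Once these orientations are matched on the nose, the global twists $\sigma$ are designed precisely so that items (i)--(iv) reproduce the evaluated $\Omega B As$-equations of subsection~\ref{geo:sss:reformulating-equations}; verifying this final bookkeeping is routine but lengthy.
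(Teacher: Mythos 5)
Your proposal is correct and follows essentially the same route as the paper's proof: the set-theoretic boundary decomposition of Theorem~\ref{geo:th:compactification-Tn}, the signed short exact sequences of subsection~\ref{geo:sss:additional-tools} applied to $\phi_{\mathbb{X}_t}^{-1}(\Delta)$, the boundary signs of $\overline{W^S}(y)$ and $\overline{W^U}(x_i)$ for the (Morse) strata, the $\dagger_{\Omega B As}$ signs inherited from $\partial\overline{\mathcal{T}}_n(t)$, and a gluing lemma in the Euclidean charts of subsection~\ref{geo:sss:morse-theory} to match the product orientation with the glued one for the (int-break) stratum. The only difference is that the paper actually carries out the lengthy bookkeeping (the terms $A$ through $H$ in subsection~\ref{geo:sss:signs-int-break-Morse}) that you defer as routine.
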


Applying the method of subsection~\ref{geo:sss:counting-boundary-points} finally proves that :
\theoremstyle{theorem}
\newtheorem*{geo:th:ombas-alg}{Theorem~\ref{geo:th:ombas-alg}}
\begin{geo:th:ombas-alg}
The operations $m_t$ define a twisted $\Omega B As$-algebra structure on $(C^*(f), \partial_{Morse}^{Tw},\partial_{Morse})$, where
\[ (\partial_{Morse}^{Tw})^k = (-1)^{(d+1)k}\partial_{Morse}^k \ . \]
\end{geo:th:ombas-alg}

\subsubsection{Signs for the (int-break) boundary} \label{geo:sss:signs-int-break-Morse}

We resort to the formalism of short exact sequences of vector bundles to handle orientations in this section. For the sake of readability, we will write $N$ rather than $TN$ for the tangent bundle of a manifold $N$ in the upcoming computations.

The moduli space $\mathcal{T}_{t}(y ; x_1 , \dots , x_n)$ is defined as the inverse image of the diagonal $\Delta \subset M^{\times n+1}$ under the map
\[ \phi_{\mathbb{X}_t} : \mathcal{T}_n(t) \times W^S(y) \times W^U(x_1) \times \cdots \times W^U(x_n) \longrightarrow M^{\times n+1} \ , \]
where the factors of $M^{\times n+1}$ are labeled in the order $M_y \times M_{x_1} \times \cdots \times M_{x_n}$. Orienting the domain and codomain of $\phi_{\mathbb{X}_t}$ by taking the product orientations, and orienting $\Delta$ as $M$, defines the natural orientation on $\mathcal{T}_{t}(y ; x_1 , \dots , x_n)$ as in subsection~\ref{geo:sss:orientation-transversality}. Choose $M^{\times n}$ labeled by $x_1 , \dots , x_n$ as complementary to $\Delta$. Then the orientation induced on $M^{\times n}$ by the orientations on $M^{\times n+1}$ and on $\Delta$, differs by a $(-1)^{d^2 n}$ sign from the product orientation of $M^{\times n}$. In the language of short exact sequences, $\mathcal{T}_{t}(y ; x_1 , \dots , x_n)$ is oriented by the short exact sequence
\[ 0 \longrightarrow \mathcal{T}_{t}(y ; x_1 , \dots , x_n) \longrightarrow  \mathcal{T}_n(t) \times W^S(y) \times \prod_{i=1}^n W^U(x_i) \longrightarrow M^{\times n} \longrightarrow 0 \ , \]
which has a sign of parity
\begin{align*}
dn \tag{A} \ .
\end{align*}
 
In the case of $\mathcal{T}^{Morse}_{t_1} := \mathcal{T}_{t_1}(y ; x_1 , \dots , x_{i_1} , z , x_{i_1+i_2+1}, \dots , x_n)$, we choose $M^{\times i_1+1+i_3}$ labeled by $y , x_1 , \dots , x_{i_1} , x_{i_1+i_2+1}, \dots , x_n$ as complementary to $\Delta$. The orientation induced on $M^{\times i_1 + 1+i_3}$, by the orientations on $M^{\times i_1 + 2+i_3}$ and on $\Delta$, differs by a $(-1)^{d^2 i_3}$ sign from the product orientation of $M^{\times i_1 + 1+i_3}$. Hence the short exact sequence
\[ \resizebox{\hsize}{!}{$\displaystyle{
0 \longrightarrow \mathcal{T}^{Morse}_{t_1} \longrightarrow \mathcal{T}_{i_1+1+i_3}(t_1) \times W^S(y) \times \prod_{i=1}^{i_1} W^U(x_i) \times W^U(z) \times \prod_{i=i_1+i_2+1}^{n} W^U(x_i) 
 \longrightarrow M^{\times i_1+1+i_3} \rightarrow 0  \ ,}$} \] 
has a sign of parity
\begin{align*}
di_3 \tag{B} \ .
\end{align*}
In the case of $\mathcal{T}^{Morse}_{t_2} := \mathcal{T}_{t_2}(z ; x_{i_1+1} , \dots , x_{i_1+i_2})$, we choose $M^{\times i_2}$ labeled by $x_{i_1+1} , \dots , x_{i_1+i_2}$ as complementary to $\Delta$. The orientation induced on $M^{\times i_2}$ differs this time by a $(-1)^{d^2 i_2}$ sign from the product orientation. The short exact sequence
\begin{align*}
0 \longrightarrow \mathcal{T}^{Morse}_{t_2} \longrightarrow \mathcal{T}_{i_2}(t_2) \times W^S(z) \times \prod_{i=i_1+1}^{i_1+i_2} W^U(x_i) \longrightarrow M^{\times i_2} \rightarrow 0  \ , 
\end{align*} 
has now a sign given by the parity of
\begin{align*}
di_2 \tag{C} \ .
\end{align*}

Following the convention of subsection~\ref{geo:sss:additional-tools}, taking the product 
\[ \resizebox{\hsize}{!}{\begin{math} \begin{aligned}
0 \longrightarrow \mathcal{T}^{Morse}_{t_1} \times \mathcal{T}^{Morse}_{t_2} \longrightarrow \mathcal{T}_{i_1+1+i_3}(t_1) \times W^S(y) \times \prod_{i=1}^{i_1} W^U(x_i) \times &W^U(z) \times \prod_{i=i_1+i_2+1}^{n} W^U(x_i) \times \mathcal{T}_{i_2}(t_2) \times W^S(z) \times \prod_{i=i_1+1}^{i_1+i_2} W^U(x_i) \\
 &\longrightarrow M^{\times i_1+1+i_3} \times M^{\times i_2} \rightarrow 0  
\end{aligned}
\end{math}} \]
doesn't introduce a sign, as $\mathcal{T}^{Morse}_{t_1}$ and $\mathcal{T}^{Morse}_{t_2}$ are 0-dimensional.

In the previous short exact sequence, $M^{\times i_1+1+i_3} \times M^{\times i_2}$ is labeled by
\[ y , x_1 , \dots , x_{i_1} , x_{i_1+i_2+1}, \dots , x_n , x_{i_1+1}, \dots , x_{i_1+i_2} \ .\]
We rearrange this labeling into 
\[ y , x_1 , \dots , x_n \ , \]
which induces a sign given by the parity of
\begin{align*}
d i_2 i_3 \tag{D} \ .
\end{align*}

We also rearrange the expression  
\[ \resizebox{\hsize}{!}{$\displaystyle{ \mathcal{T}_{i_1+1+i_3}(t_1) \times W^S(y) \times \prod_{i=1}^{i_1} W^U(x_i) \times W^U(z) \times \prod_{i=i_1+i_2+1}^{n} W^U(x_i) \times \mathcal{T}_{i_2}(t_2) \times W^S(z) \times \prod_{i=i_1+1}^{i_1+i_2} W^U(x_i) \ ,}$} \]
into
\[ W^U(z) \times W^S(z) \times \mathcal{T}_{i_1+1+i_3}(t_1) \times \mathcal{T}_{i_2}(t_2) \times W^S(y) \times \prod_{i=1}^{n} W^U(x_i) \ . \]
The parity of the produced sign is that of
\begin{align*}
&|z| \left( |t_2| + \sum_{i=i_1+i_2+1}^n (d - |x_i|) \right) + m \left( |t_1| + |y| + \sum_{i=1}^{i_1} (d-|x_i|) \right) \tag{E} \\
+ \ &|t_2| \left( |y| + \sum_{i=1}^{i_1} (d-|x_i|) + \sum_{i=i_1+i_2+1}^n (d - |x_i|) \right) + \left( \sum_{i=i_1+1}^{i_1+i_2} (d-|x_i|) \right) \left( \sum_{i=i_1+i_2+1}^{n} (d-|x_i|)  \right)  \ .
\end{align*} 

Introduce now the factor $[L,+ \infty [$, corresponding to the length $l_e$ increasing towards $+\infty$, where $e$ is the edge of $t$ whose breaking produces $t_1$ and $t_2$. Following convention~\ref{geo:sss:orientation-transversality}, the short exact sequence 
\[ \resizebox{\hsize}{!}{$\displaystyle{ 0 \longrightarrow [L,+ \infty [ \times \mathcal{T}^{Morse}_{t_1} \times \mathcal{T}^{Morse}_{t_2} \longrightarrow [L,+ \infty [ \times W^U(z) \times W^S(z)  \times \mathcal{T}(t_1) \times \mathcal{T}(t_2) \times  \times W^S(y) \times \prod_{i=1}^{n} W^U(x_i) \longrightarrow M^{\times n+1} \longrightarrow 0  \ ,}$} \] 
induces a sign change whose parity is given by
\begin{align*}
d(n+1) \tag{F} \ .
\end{align*}

Define the map
\[ \psi : M \times \mathcal{T}_n(t) \times W^S(y) \times \prod_{i=1}^n W^U(x_i) \longrightarrow M \times M^{\times n+1} \ , \]
which is defined on the factors $\mathcal{T}_n(t) \times W^S(y) \times \prod_{i=1}^n W^U(x_i)$ as $\phi$ and is defined on $M \times \mathcal{T}_n(t)$ by seeing $M$ as the point lying in the middle of the edge $e$ in $t$. This map is depicted on figure~\ref{geo:fig:map-psi}.
The inverse image of the diagonal of $M \times M^{\times n+1}$ is exactly $\mathcal{T}_{t}(y ; x_1 , \dots , x_n)$. Fix now a sufficiently great $L > 0$. We prove in subsection~\ref{geo:sss:gluing-and-orientations} that orienting $[L,+ \infty [ \times \mathcal{T}^{Morse}_{t_1} \times \mathcal{T}^{Morse}_{t_2}$ with the previous short exact sequence, the orientation induced on $\mathcal{T}_{t}^{Morse}$ by gluing is the exactly the one given by the short exact sequence
\[ \begin{tikzcd}[scale cd = 0.9]
0 \arrow[r] & \mathcal{T}^{Morse}_{t} \arrow[r] & \left[L,+ \infty \right[ \times M \times \mathcal{T}(t_1) \times \mathcal{T}(t_2) \times W^S(y) \times \prod_{i=1}^{n} W^U(x_i) \arrow{r}[below]{d\psi} &  M^{\times n+1} \arrow[r] & 0 
\end{tikzcd} \ , \]
where our convention on orientations for the unstable and stable manifolds of $z$ implies that $W^U(z) \times W^S(z)$ yields indeed the orientation of $M$, and $M^{\times n+1}$ is labeled by $y , x_1 , \dots , x_n$.

\begin{figure}[h]
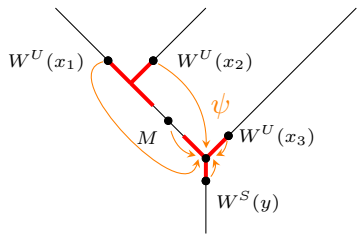
 
\centering
\examplemappsi
\caption{Representation of the map $\psi$} \label{geo:fig:map-psi}
\end{figure}

Transform the coorientation labeled by $y , x_1 , \dots , x_n$ into the coorientation labeled by $M , x_1 , \dots , x_n$ and rearrange the factors $\left[L,+ \infty \right[ \times M \times \mathcal{T}(t_1) \times \mathcal{T}(t_2) \times \cdots$ into $ M \times \left[L,+ \infty \right[ \times \mathcal{T}(t_1) \times \mathcal{T}(t_2) \times \cdots$
This produces a sign change of parity
\begin{align*}
d + d \equiv 0 \tag{G} \ .
\end{align*}
We can moreover now delete the two $M$ factors associated to the label $M$ to obtain the short exact sequence
\[ 0 \longrightarrow \mathcal{T}_{t}(y ; x_1 , \dots , x_n) \longrightarrow [L,+ \infty [ \times \mathcal{T}(t_1) \times \mathcal{T}(t_2) \times W^S(y) \times \prod_{i=1}^{n} W^U(x_i) \longrightarrow M^{\times n} \longrightarrow 0  \ , \]
where $M^{\times n} = M_{x_1} \times \cdots \times M_{x_n}$. 

Transforming finally $[L,+ \infty [ \times \mathcal{T}(t_1) \times \mathcal{T}(t_2)$ into $\mathcal{T}_{n}(t)$ gives a sign of parity 
\begin{align*}
\dagger_{\Omega B As} \tag{H} \ .
\end{align*}
In closing, the short exact sequence 
\[ 0 \longrightarrow \mathcal{T}_{t}(y ; x_1 , \dots , x_n) \longrightarrow  \mathcal{T}_n(t) \times W^S(y) \times \prod_{i=1}^n W^U(x_i) \longrightarrow M^{\times n} \longrightarrow 0 \ , \]
has sign given by the parity of $A$ when $\mathcal{T}_{t}^{Morse}$ is endowed with its natural orientation. It has sign given by the parity of $B + C + D + E + F + G + H$ when $\mathcal{T}_{t}^{Morse}$ is endowed with the orientation induced by $[L , + \infty [ \times \mathcal{T}_{t_1}^{Morse} \times \mathcal{T}_{t_2}^{Morse}$, where the first factor is the length $l_e$ and determines the outward-pointing direction $\nu_e$ to the boundary component $\mathcal{T}_{t_1}^{Morse} \times \mathcal{T}_{t_2}^{Morse}$.

We thus obtain that with our choice of orientation on the moduli spaces $\mathcal{T}_t^{Morse}$, the sign of $\mathcal{T}_{t_1}(y ; x_1 , \dots , x_{i_1} , z , x_{i_1+i_2+1}, \dots , x_n) \times \mathcal{T}_{t_2}(z ; x_{i_1+1} , \dots , x_{i_1+i_2})$ in the boundary of the 1-dimensional moduli space $\mathcal{T}_{t}(y ; x_1 , \dots , x_n)$ is given by the parity of 
\[ \resizebox{\hsize}{!}{\begin{math} \begin{aligned}
(*) \ \ \ &A + B + C + D + E + F + G + H \\
= \ & |z| |t_2| + d |y| + d |t_1 | + (n+1)d + \sum_{i=1}^{i_1}d|x_i| + |t_2| |y| + d i_1 |t_2| + d i_2 \sum_{i=i_1 + i_2 + 1}^n |x_i| + \dagger_{\Omega B As} + |t_2| \sum_{i=1}^{i_1} |x_i| \ .
\end{aligned}
\end{math}} \]
Hence the sign of $\widetilde{\mathcal{T}}_{t_1}(y ; x_1 , \dots , x_{i_1} , z , x_{i_1+i_2+1}, \dots , x_n) \times \widetilde{\mathcal{T}}_{t_2}(z ; x_{i_1+1} , \dots , x_{i_1+i_2})$ in the boundary of the 1-dimensional moduli space $\widetilde{\mathcal{T}}_{t}(y ; x_1 , \dots , x_n)$ is given by the parity of
\begin{align*}
&\sigma (t ; y ; x_1 , \dots , x_n) + \sigma (t_1 ; y ; x_1 , \dots , x_{i_1} , z , x_{i_1+i_2+1} , \dots , x_n) + \sigma (t_2 ; z ; x_{i_1+1} , \dots , x_{i_1+i_2}) + (*)  \\
= \ &|y| + \dagger_{\Omega B As} + |t_2| \sum_{i=1}^{i_1} |x_i| \ .
\end{align*}

\subsubsection{Gluing and orientations} \label{geo:sss:gluing-and-orientations}

We prove in this subsection that after orienting $[L , + \infty [ \times \mathcal{T}_{t_1}^{Morse} \times \mathcal{T}_{t_2}^{Morse}$ with the short exact sequence
\[ \begin{tikzcd}[scale cd = 0.63]
0 \arrow[r] & \left[ L,+ \infty \right[ \times \mathcal{T}^{Morse}_{t_1} \times \mathcal{T}^{Morse}_{t_2} \arrow[r] & \left[L,+ \infty \right[ \times W^U(z) \times W^S(z) \times \mathcal{T}(t_1) \times \mathcal{T}(t_2) \times W^S(y) \times \prod_{i=1}^{n} W^U(x_i) \arrow[r] &  M^{\times n+1} \arrow[r] & 0
\end{tikzcd} \ , \]
the orientation induced on $\mathcal{T}_{t}^{Morse}$ by gluing is the one given by the short exact sequence
\[ \begin{tikzcd}[scale cd = 0.85]
0 \arrow[r] & \mathcal{T}^{Morse}_{t} \arrow[r] &  \left[L,+ \infty \right[ \times M \times  \mathcal{T}(t_1) \times \mathcal{T}(t_2) \times W^S(y) \times \prod_{i=1}^{n} W^U(x_i) \arrow{r}[below]{d\psi} &  M^{\times n+1} \arrow[r] & 0 
\end{tikzcd} \ . \]
The proof boils down to the following lemma.

\begin{lemma} \label{geo:lemma:lemma-gluing}
Let $M$ and $N$ be manifolds and $S \subset N$ a submanifold of $N$. Suppose that $M$, $N$ and $S$ are orientable and oriented. Let $ f : [ 0 , 1] \times M \rightarrow N$ be a smooth map such that $f_1 := f(1,\cdot) : M \rightarrow N$ is transverse to $S$. Let $x \in f_1^{-1}(S)$. Then there exist an open subset $V$ of $M$ containing $x$ and $0 \leqslant t_1 <1$ such that
\begin{enumerate}[label=(\roman*)]
\item The map $f|_{[t_1 , 1] \times V} : [t_1 , 1] \times V \rightarrow N$ is transverse to $S$. In particular the inverse image $f|_{[t_1 , 1] \times V}^{-1}(S)$ is then a submanifold of $[ t_1 , 1] \times V$.
\item There exists an orientation-preserving embedding
\[ f|_{[t_1 , 1] \times V}^{-1}(S) \longrightarrow [t_1 , 1] \times f_1^{-1}(S)  \]
equal to the identity on $f_1|_V^{-1}(S)$ and preserving the $t$ coordinate, where we orient $[t_1 , 1] \times f_1^{-1}(S)$ with the short exact sequence 
\[ 0 \longrightarrow [t_1 , 1] \times f_1^{-1}(S) \longrightarrow [ 0 , 1] \times M \longrightarrow \nu_S \longrightarrow 0 \]
and we orient $f|_{[t_1 , 1] \times V}^{-1}(S)$ with the short exact sequence 
\[ 0 \longrightarrow f|_{[t_1 , 1] \times V}^{-1}(S) \longrightarrow [ 0 , 1] \times M \longrightarrow \nu_S \longrightarrow 0 \ . \]
\end{enumerate}
\end{lemma}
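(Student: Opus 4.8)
The plan is to prove the lemma in three stages: first obtain the neighbourhood $[t_1,1]\times V$ on which transversality holds, then build the $t$-preserving embedding $\Psi$ by integrating a lift of $\partial_t$, and finally check that $\Psi$ preserves orientations by a continuity argument anchored at the $t=1$ slice.

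First I would establish~(i). Since $f_1\pitchfork S$ and the differential $df_{(1,x)}$ restricted to the $M$-directions already equals $df_1$, the map $f$ is transverse to $S$ at $(1,x)$; as transversality along the preimage is an open condition, and $f_t|_V$ is $C^1$-close to $f_1|_V$ for $t$ near $1$, there exist $t_1<1$ and an open $V\ni x$ (which I may take relatively compact) such that $f$ is transverse to $S$ on $[t_1,1]\times V$ and each slice map $f_t|_V$ is transverse to $S$ for $t\in[t_1,1]$. This gives transversality of $f$ and of its restrictions to the boundary faces $\{t_1\}\times V$ and $\{1\}\times V$, so $\mathcal{Z}:=f|_{[t_1,1]\times V}^{-1}(S)$ is a submanifold with boundary meeting $\{1\}\times V$ in $\{1\}\times f_1^{-1}(S)$.

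Next, for~(ii), I would show that the projection $\pi\colon[t_1,1]\times V\to[t_1,1]$ restricts to a submersion on $\mathcal{Z}$ near $(1,x)$: a tangent vector $(a,v)\in T\mathcal{Z}$ satisfies $a\,\partial_t f+df_t(v)\in TS$, and transversality of $f_1$ lets one solve this with $a=1$, so $d\pi$ is onto $\R$. Choosing a smooth vector field $\xi$ on $\mathcal{Z}$ with $d\pi(\xi)=\partial_t$ and shrinking $V,t_1$ so that its backward flow is defined up to time $1$, I define $\Psi(p):=(\pi(p),\rho(p))$, where $\rho(p)\in f_1^{-1}(S)$ is the endpoint at $t=1$ of the flow line of $\xi$ through $p$. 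By construction $\Psi$ is a smooth embedding, preserves the $t$-coordinate, and restricts to the identity on $f_1|_V^{-1}(S)$.

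Finally I would prove that $\Psi$ is orientation preserving. Endow $\mathcal{Z}$ and $[t_1,1]\times f_1^{-1}(S)$ with the orientations from the two short exact sequences of the statement, using the sign convention of~\ref{geo:sss:additional-tools} and the transverse-preimage convention of~\ref{geo:sss:orientation-transversality}. The sign $\epsilon(p)\in\{\pm1\}$ measuring the failure of $d\Psi_p$ to preserve orientation is continuous, hence locally constant, on $\mathcal{Z}$; since every $p$ is joined to its image $\rho(p)\in\{1\}\times f_1^{-1}(S)$ by the flow line of $\xi$ inside $\mathcal{Z}$, it suffices to check that $\epsilon\equiv+1$ on $\{1\}\times f_1^{-1}(S)$. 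At such a point $d\Psi$ fixes $T_p f_1^{-1}(S)$ and sends the lift $\xi=\partial_t+w$ (with $w\in T_pM$) to $\partial_t$. Choosing normal directions $(0,n_i'')\subset\{0\}\oplus T_pM$ mapping to a positive basis of $\nu_S$ under $df_{(1,p)}$ — the same directions serve both sequences, since the two quotient maps agree on $\{0\}\oplus T_pM$ — the change of basis relating $\bigl((0,n_i''),\xi,b_j\bigr)$ to $\bigl((0,n_i''),\partial_t,b_j\bigr)$ is triangular with unit diagonal, because $w$ decomposes along the $n_i''$ and the fibre basis $b_j$. Hence both ambient frames induce the same orientation on $[0,1]\times M$, so $d\Psi$ preserves orientation at $t=1$ and $\epsilon\equiv+1$. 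The main obstacle will be precisely this orientation bookkeeping: one must confirm that the two short exact sequences admit a common normal frame and that the discrepancy between the lift $\xi$ and $\partial_t$ contributes trivially to the determinant, the continuity and flow-connectedness argument being what then upgrades the single computation at $t=1$ to the whole neighbourhood.
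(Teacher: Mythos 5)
Your proof is correct, but it takes a genuinely different route from the paper's. For the construction of the embedding, the paper works in adapted coordinates: it puts $f_1$ into the local normal form of a submersion onto the normal directions of $S$, so that $f_1^{-1}(S)$ becomes a coordinate slice, defines the embedding as $\mathrm{id}_t\times\pi$ for the corresponding linear projection, and invokes the inverse function theorem to see that it is a diffeomorphism onto its image near $(1,x)$. You instead observe that the projection to $[t_1,1]$ restricts to a submersion on $f|_{[t_1,1]\times V}^{-1}(S)$ (via slice-wise transversality), lift $\partial_t$ to a vector field $\xi$ tangent to the preimage, and flow to the slice $t=1$; this is coordinate-free and makes the $t$-preservation and the identity on the $t=1$ slice immediate, at the cost of the usual shrinking argument needed to guarantee that flow lines reach $t=1$ without exiting the domain. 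The difference is most visible in the orientation check: the paper reduces to a family of linear maps and asserts the sign computation there, whereas you exploit the fact that the comparison sign is locally constant and that every point of the preimage is flow-connected to the slice $t=1$, where the verification collapses to a unipotent change of basis (replacing $\partial_t$ by $\xi=\partial_t+w$ relative to a normal frame $(0,n_i'')$ that serves both exact sequences, since their quotient maps agree on $\{0\}\oplus T_pM$ at $t=1$). Both arguments are sound; yours localizes the entire sign analysis at $t=1$, where it is transparent, while the paper's yields an explicit coordinate model of the embedding. One small wording slip: what you actually need is that the forward flow of $\xi$ from any point of the shrunken preimage reaches the slice $t=1$ while staying in the region where $\xi$ is defined, not that the "backward flow is defined up to time $1$"; the shrinking of $V$ and $t_1$ you invoke handles exactly this.
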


\begin{proof}
Choose an adapted chart for $S$ around $f_1(x)$, i.e. a chart $\phi : U' \subset N \rightarrow \R^{n}$ such that
\[ \phi ( U' \cap S ) = \{ (y_1 , \dots , y_{n-s},x_1,\dots,x_s) \in \R^n , \ y_1 = \cdots = y_{n-s} = 0 \} \ , \]
where $n$ and $s$ respectively denote the dimensions of $N$ and $S$.
Using the local normal form theorem for submersions, there exists a local chart $\psi : U \subset M \rightarrow \R^m$ around $x$ such that the map $f_1$ reads as 
\[ (y_1 , \dots , y_{n-s},x_1,\dots,x_{m+s-n}) \longmapsto \left( y_1 , \dots , y_{n-s},F_1(\vec{y},\vec{x}),\dots,F_s( \vec{y},\vec{x} ) \right) \] 
in the local charts $\psi$ and $\phi$, where the $F_i$ are smooth maps and $\vec{y} := y_1 , \dots , y_{n-s}$, $\vec{x} := x_1,\dots,x_{m+s-n}$ and $m := \mathrm{dim} (M)$. In these local charts, 
\[ U \cap f_1^{-1}(U' \cap S ) = \{ (y_1 , \dots , y_{n-s},x_1,\dots,x_{m+s-n}) \in \R^m , \ y_1 = \cdots = y_{n-s} = 0 \} \ . \]
The property "being transverse to $S$" being open, there exists a neighborhood $W$ of $x$ in $M$ and $t_0 \in [0,1[$ such that the map $f|_{[t_0 , 1] \times W} : [t_0 , 1] \times W \rightarrow N$ is transverse to $S$. Suppose $W \subset U$ and consider now the projection $\pi : \R^m \rightarrow \R^{m+s-n}$ given by
\[ (y_1 , \dots , y_{n-s},x_1,\dots,x_{m+s-n}) \longmapsto (x_1,\dots,x_{m+s-n}) \]
and define the smooth map 
\[ \iota := \ide_t \times \pi : f|_{[t_0 , 1] \times W}^{-1}(S) \longrightarrow [0,1] \times f_1^{-1}(S) \]
in the local charts $\phi$ and $\psi$. The differential of this map is invertible at $(1,x)$. The inverse function theorem then ensures that there exits $t_1 \in [t_0 , 1[$ and a neighborhood $V \subset W$ of $x$ such that the map 
\[ \iota : f|_{[t_1 , 1] \times V}^{-1}(S) \longrightarrow [0,1] \times f_1^{-1}(S) \]
is a diffeomorphism on its image.

Orient now $[0,1] \times f_1^{-1}(S)$ and $f|_{[t_1 , 1] \times V}^{-1}(S)$ with the previous short exact sequences. It remains to show that the map $\iota$ is orientation-preserving. The proof of this result can be reduced to a proof in linear algebra, i.e. by considering a smooth family of linear maps $f : [0,1] \times \R^m \rightarrow \R^n$ such that $f_1$ reads as
\[ (y_1 , \dots , y_{n-s},x_1,\dots,x_{m+s-n}) \longmapsto \left( y_1 , \dots , y_{n-s},F_1(\vec{y},\vec{x}),\dots,F_s( \vec{y},\vec{x} ) \right) \ , \]
and the linear subspace $S = \{ 0 \} \times \R^s \subset \R^n$.
Then there exists $t_0 \in [0,1]$ such that $f|_{[t_0 , 1] \times \R^m}$ is transverse to $S$, and we can consider the smooth map
\[ \iota := \ide_t \times \pi : f|_{[t_0 , 1] \times \R^m}^{-1}(S) \longrightarrow [0,1] \times f_1^{-1}(S)  \]
which is a diffeomorphism on its image.
Basic computations finally show that the map $\iota$ is indeed orientation-preserving.
\end{proof}

We now go back to our initial problem. Let $T_1^{Morse} \in \mathcal{T}_1^{Morse}$ and $T_2^{Morse} \in \mathcal{T}_2^{Morse}$, where we refer to subsection~\ref{geo:sss:signs-int-break-Morse} for notations. Consider a local Euclidean chart $\phi_z : U_z \rightarrow \R^d$ for the critical point $z$ as in subsection~\ref{geo:sss:morse-theory}. Introduce the map $ev : [0, + \infty] \times U_z \rightarrow U_z \times U_z$ reading as 
\[ (\delta,x+y) \longmapsto ( e^{-2\delta}x + y , x + e^{-2\delta} y) \]
in the chart $\phi_z$. The pair $ev(\delta,x+y)$ corresponds to the two endpoints of the unique finite Morse trajectory parametrized by $[-\delta,\delta]$ and meeting $e^{-\delta}x + e^{-\delta} y$ at time $0$. 

Consider the trajectory $\gamma_{e,1} : \ ] - \infty , 0 ] \rightarrow M$ and the trajectory $\gamma_{e,2} : [ 0 , + \infty [ \rightarrow M$, respectively associated to the incoming edge of  $T_1^{Morse}$ and to the outgoing edge of $T_2^{Morse}$ which result from the breaking of the edge $e$ in $t$. Choose $L$ large enough such that $\gamma_{e,1}(-L)$ and $\gamma_{e,2}(L)$ belong to $U_z$. Introduce the map $f := ev \times (\phi^{-(L-1)})^{\times i_1 + 1 + i_3} \circ \psi_{e,\mathbb{X}_{t_1}} \times (\phi^{L-1})^{\times i_2} \circ \psi_{e,\mathbb{X}_{t_2}}$ acting as
\[ \begin{tikzcd}[scale cd = 0.9, row sep = 4 pt]
\left[ 0 , + \infty \right] \times U_z \times \mathcal{T}_{i_1+1+i_3}(t_1) \times W^S(y) \times \prod_{i=1}^{i_1} W^U(x_i) \times \prod_{i=i_1+i_2+1}^{n} W^U(x_i) \times \mathcal{T}_{i_2}(t_2) \times \prod_{i=i_1+1}^{i_1+i_2} W^U(x_i)  \\
 \hspace{25 pt} \longrightarrow M^{\times 2} \times M^{\times i_1 + 1 + i_3} \times M^{\times i_2} \ , 
\end{tikzcd} \] 
where $\phi^{L-1}$ stands for the time $L-1$ Morse flow and the maps $\psi_{e,\mathbb{X}_{t_2}}$ and $\psi_{e,\mathbb{X}_{t_1}}$ have been introduced in subsection~\ref{geo:sss:morse-theory}.
This map is depicted in figure~\ref{geo:f:the-map-f}.

\begin{figure}[h]
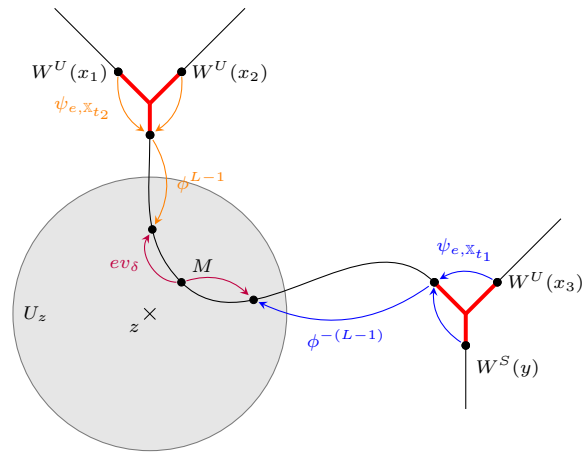

\centering
\examplemapf
\caption{Representation of the map $f$. The label $M$ corresponds to the point $e^{-\delta}x + e^{-\delta}y$ and not to the point $x+y$.} \label{geo:f:the-map-f}
\end{figure}

Define the $2d$-dimensional submanifold $\Lambda \subset M^{\times 2} \times M^{\times i_1 + 1 + i_3} \times M^{\times i_2}$ to be
\[ \Lambda := \left\{ \begin{array}{l}
         (m_z^1 , m_z^2 , m_y , m_{1} , \dots , m_{i_1}, m_{i_1 + 1 +i_2} , \dots , m_n , m_{i_1+1} , \dots , m_{i_1+i_2}) \\
         \text{such that }  m_z^1 = m_{i_1+1} = \cdots = m_{i_1+i_2} \ \ \text{ and } \\
          m_z^2 = m_y = m_{1} = \cdots = m_{i_1} = m_{i_1 + 1 +i_2} = \cdots = m_n   
  \end{array}
  			\right\} \ . \]
The pair $(T_1^{Morse} , T_2^{Morse})$ then belongs to the inverse image $f_{+ \infty}^{-1} ( \Lambda)$. By assumption on the choice of perturbation data $(\mathbb{X}_n)_{n \geqslant 2}$, the map $f_{+ \infty}$ is moreover transverse to $\Lambda$. Applying Lemma~\ref{geo:lemma:lemma-gluing} to the map $f$ at the point $(T_1^{Morse} , T_2^{Morse})$, there exists $R > 0$ and an embedding
\[ \# _{ T_1^{Morse} , T_2^{Morse} } : [ R , + \infty ] \longrightarrow \overline{\mathcal{T}}_{t}(y ; x_1,\dots ,x_n) \ . \] 
Note that the parameter $\delta$ corresponds to an edge of length $2L + 2\delta$ in the resulting glued tree. Upon reordering the factors of the domain of $f$, it is finally easy to check that this lemma also implies the result on orientations stated at the beginning of this subsection. 

\subsubsection{Signs for the (int-collapse) and (Morse) boundary} \label{geo:sss:int-collapse-Morse-signs}

Repeating the beginning of the previous section, for the moduli spaces $\mathcal{T}_{t'}(y ; x_1 , \dots , x_{n})$, where $t' \in coll(t)$, and $\mathcal{T}_{t}(y ; x_1 , \dots , x_n)$, we choose $M^{\times n}$ labeled by $x_1,\dots,x_n$ as complementary to the diagonal $\Delta \subset M^{\times n+1}$. The parity of the total sign change coming from these coorientation choices is
\begin{align*}
dn + dn = 0 \tag{A} \ .
\end{align*}

Introduce the factor $]0,L]$, corresponding to the length $l_e$ going towards $0$, where $e$ is the edge of $t$ whose collapsing produces $t'$. Applying again Lemma~\ref{geo:lemma:lemma-gluing} and following convention~\ref{geo:sss:additional-tools}, the short exact sequence
\[ \resizebox{\hsize}{!}{$\displaystyle{0 \longrightarrow \mathcal{T}_{t}(y ; x_1 , \dots , x_n) = ]0,L] \times \mathcal{T}_{t'}(y ; x_1 , \dots , x_n) \longrightarrow ]0,L] \times \mathcal{T}_{n}(t')  \times W^S(y) \times \prod_{i=1}^{n} W^U(x_i) \longrightarrow M^{\times n} \longrightarrow 0  \ ,}$} \]
introduces a sign change whose parity is given by
\begin{align*}
dn \tag{B} \ .
\end{align*}
Transforming finally $]0,L] \times \mathcal{T}_{n}(t')$ into $\mathcal{T}_{n}(t)$ gives a sign of parity 
\begin{align*}
\dagger_{\Omega B As} \tag{C} \ .
\end{align*}

Adding these contributions, we obtain that the sign of $\mathcal{T}_{t'}(y ; x_1 , \dots , x_n)$ in the boundary of the 1-dimensional moduli space $\mathcal{T}_{t}(y ; x_1 , \dots , x_n)$ is given by the parity of
\begin{align*}
A + B + C = dn + \dagger_{\Omega B As} \ . \tag{*}
\end{align*} 
The sign of $\widetilde{\mathcal{T}}_{t'}(y ; x_1 , \dots , x_n)$ in the boundary of the 1-dimensional moduli space $\widetilde{\mathcal{T}}_{t}(y ; x_1 , \dots , x_n)$ is hence given by the parity of
\[ \sigma (t ; y ; x_1 , \dots , x_n) + \sigma (t' ; y ; x_1 , \dots , x_n) + (*) = |y| + \dagger_{\Omega B As} \ . \]
Finally, the signs for the (Morse) boundary can be computed following the exact same lines of the two previous proofs. 

\subsection{The twisted $\Omega B As$-morphism between the Morse cochains} \label{geo:ss:twisted-ombas-morph-Morse}

\subsubsection{Reformulating the $\Omega B As$-equations} \label{geo:sss:reformulating-eq-morph}

We set again for the rest of this section an orientation $\omega$ for each $t_g \in SCRT_n$, which endows each moduli space $\mathcal{CT}_n(t_g)$ with an orientation, and write moreover $\mu_{t_g}$ for the operations $(t_g, \omega)$ of $\Omega B As - \mathrm{Morph}$. The $\Omega B As$-equations for an $\Omega B As$-morphism then read as
\begin{align*}
[ \partial , \mu_{t_g} ] = &\sum_{t'_g \in coll(t_g)} (-1)^{\dagger_{\Omega B As}} \mu_{t'_g} + \sum_{t'_g \in g-vert(t_g)} (-1)^{\dagger_{\Omega B As}} \mu_{t'_g} + \sum_{t^1_g \#_i t^2 = t_g} (-1)^{\dagger_{\Omega B As}} \mu_{t^1_g} \circ_i m_{t^2} \\
&+ \sum_{t^0 \# (t_g^1 , \dots , t_g^s) = t_g} (-1)^{\dagger_{\Omega B As}} m_{t^0} \circ (\mu_{t^1_g} \otimes \cdots \otimes \mu_{t^s_g} ) \ ,
\end{align*} 
where the notations for trees are transparent. The signs $(-1)^{\dagger_{\Omega B As}}$ are obtained as in subsection~\ref{geo:sss:reformulating-equations}.

\subsubsection{Twisted \Ainf -morphisms and twisted $\Omega B As$-morphisms} \label{geo:sss:twisted-ainf-morph-def}

Again, it is clear using the counting method of~\ref{geo:sss:counting-boundary-points} that if we work over $\Z / 2$, the operations $\mu_{t_g}$ of~\ref{geo:ss:ombas-morph-Morse} define an $\Omega B As$-morphism. We will prove a weaker result in the case of integers, introducing for this matter the notion of twisted \Ainf -morphisms and twisted $\Omega B As$-morphisms.

\begin{definition}
Let $(A, \partial_1 , \partial_2 , m_n)$ and $(B, \partial_1 , \partial_2 , m_n)$ be two twisted \Ainf -algebras. A \emph{twisted \Ainf -morphism} from $A$ to $B$ is defined to be a sequence of degree $1-n$ operations $f_n : A^{\otimes n} \rightarrow B$ such that
\[ \left[ \partial , f_n \right] = \sum_{\substack{i_1+i_2+i_3=n \\ i_2 \geqslant 2}} (-1)^{i_1 + i_2i_3} f_{i_1+1+i_3} (\ide^{\otimes i_1} \otimes m_{i_2} \otimes \ide^{\otimes i_3})  - \sum_{\substack{i_1 + \cdots + i_s = n \\ s \geqslant 2 }} (-1)^{\epsilon_B} m_s ( f_{i_1} \otimes \cdots \otimes f_{i_s}) \ , \]
where $[ \partial , \cdot ]$ denotes the bracket for the maps $ (A^{\otimes n} , \partial_1) \rightarrow (B , \partial_2)$.
A \emph{twisted $\Omega B As$-morphism} between twisted $\Omega B As$-algebras is defined similarly.
\end{definition}

The formulae obtained by evaluating the $\Omega B As$-equations on $A^{\otimes n}$ then become
\begin{align*}
&- \partial_2 \mu_{t_g} (a_1 , \dots , a_n) + (-1)^{|t_g| + \sum_{j=1}^{i-1}|a_j|} \mu_{t_g} ( a_1 , \dots , a_{i-1} , \partial_1 a_i , a_{i+1} , \dots , a_n) \\
&+ \sum_{t^1_g \# t^2 = t} (-1)^{\dagger_{\Omega B As} + |t^2| \sum_{j=1}^{i_1} |a_j|} \mu_{t^1_g} (a_1 , \dots , a_{i_1} , m_{t^2} (a_{i_1 + 1} , \dots , a_{i_1 + i_2} ) , a_{i_1 + i_2 + 1} , \dots , a_n) \\
&+ \sum_{t^1 \# (t_g^1 , \dots , t_g^s) = t_g} (-1)^{\dagger_{\Omega B As} + \dagger_{Koszul} } m_{t^0} ( \mu_{t_g^1} (a_1 , \dots ,a_{i_1} ) , \dots , \mu_{t_g^s} (a_{i_1 + \cdots + i_{s-1} +1} , \dots , a_n)) \\
&+ \sum_{t'_g \in coll(t_g)} (-1)^{\dagger_{\Omega B As}} \mu_{t'_g} (a_1 , \dots , a_n) + \sum_{t'_g \in g-vert(t_g)} (-1)^{\dagger_{\Omega B As}} \mu_{t'_g} (a_1 , \dots , a_n) \\
&= 0 \ ,
\end{align*}
where
\[ \dagger_{Koszul} = \sum_{r=1}^s|t_g^r| \left( \sum_{t=1}^{r-1} \sum_{j=1}^{i_t} |a_{i_1 + \cdots + a_{i_{t-1}} + j}| \right) \ . \]

Again these two definitions cannot be phrased using an operadic viewpoint. However, a twisted $\Omega B As$-morphism between twisted $\Omega B As$-algebras always descends to a twisted \Ainf -morphism between twisted \Ainf -algebras, for the same reason as in subsection~\ref{geo:sss:twisted-structure}. 

\subsubsection{Summary of the proof of Theorem~\ref{geo:th:ombas-morph}} \label{geo:sss:result-summary-ainf-morph}

Let $\mathbb{X}^f$ and $\mathbb{X}^g$ be admissible choices of perturbation data on the moduli spaces $\mathcal{T}_n$ for the Morse functions $f$ and $g$, and $\mathbb{Y}$ be a choice of perturbation data on the moduli spaces $\mathcal{CT}_n$ that is admissible w.r.t. $\mathbb{X}^f$ and $\mathbb{X}^g$.

\begin{definition}
We define $\widetilde{\mathcal{CT}}_{t_g}^\mathbb{Y}( y ; x_1,\dots,x_n )$ to be the oriented manifold $\mathcal{CT}_{t_g}^\mathbb{Y}( y ; x_1,\dots,x_n )$ whose natural orientation has been twisted by a sign of parity
\[ \sigma (t_g ; y ; x_1 , \dots , x_n) := dn ( 1 + |y| + |t_g| ) + |t_g| |y| + d \sum_{i=1}^n |x_i| (n-i) \ . \]
\end{definition}
The moduli spaces $\widetilde{\mathcal{T}}(y;x)$ and $\widetilde{\mathcal{T}}_{t}( y ; x_1,\dots,x_n )$ are moreover defined as in section~\ref{geo:ss:twisted-ainf-alg-Morse}. 
We define the operations $ \mu_{t_g} : C^*(f)^{\otimes n} \rightarrow C^*(g)$ as
\begin{align*}
\mu_{t_g} (x_1 , \dots , x_n) =  \sum_{|y|= \sum_{i=1}^n|x_i| + |t_g| } \# \widetilde{\mathcal{CT}}_{t_g}^\mathbb{Y}(y ; x_1,\dots,x_n) \cdot y \ .
\end{align*} 

\begin{proposition}
If $\widetilde{\mathcal{CT}}_{t_g}( y ; x_1,\dots,x_n )$ is 1-dimensional, its boundary decomposes as the disjoint union of the following components
\begin{enumerate}[label=(\roman*)]
\item $(-1)^{|y| + \dagger_{\Omega B As} + |t^2| \sum_{i=1}^{i_1} |x_i|} \widetilde{\mathcal{CT}}_{t^1_g}(y ; x_1,\dots,x_{i_1 } , z , x_{i_1+i_2+1} , \dots , x_n ) \times \widetilde{\mathcal{T}}_{t^2}(z ; x_{i_1 +1 },\dots,x_{i_1 + i_2})$ ;
\item $(-1)^{|y| + \dagger_{\Omega B As} + \dagger_{Koszul}} \widetilde{\mathcal{T}}_{t^1}(y ; y_1,\dots,y_s) \times \widetilde{\mathcal{CT}}_{t^1_g}(y_1 ; x_{1},\dots ) \times \cdots \times \widetilde{\mathcal{CT}}_{t^s_g}(y_s ; \dots,x_n)$ ;
\item $(-1)^{|y| + \dagger_{\Omega B As}} \widetilde{\mathcal{CT}}_{t'_g}(y ; x_1,\dots,x_n)$ for $t' \in coll(t)$ ;
\item $(-1)^{|y| + \dagger_{\Omega B As}} \widetilde{\mathcal{CT}}_{t'_g}(y ; x_1,\dots,x_n)$ for $t' \in g-vert(t)$ ;
\item $(-1)^{|y| + \dagger_{Koszul}+(m+1)|x_i|} \widetilde{\mathcal{CT}}_{t_g}( y ; x_1, \dots , z , \dots , x_n ) \times \widetilde{\mathcal{T}}(z ; x_i)$ where $\displaystyle{\dagger_{Koszul} = |t_g| + \sum_{j=1}^{i-1}|x_j|}$~;
\item $(-1)^{|y|+1} \widetilde{\mathcal{T}}(y ; z) \times  \widetilde{\mathcal{CT}}_{t_g}( z ; x_1, \dots ,  x_n )$.
\end{enumerate} 
\end{proposition}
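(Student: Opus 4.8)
The plan is to prove this boundary decomposition by mirroring, in the two-colored setting, the computations carried out for the $\Omega B As$-algebra case in subsections~\ref{geo:sss:signs-int-break-Morse} and~\ref{geo:sss:int-collapse-Morse-signs}. By Theorem~\ref{geo:th:compact-CTn}, a $1$-dimensional moduli space $\mathcal{CT}_{t_g}(y;x_1,\dots,x_n)$ compactifies to a $1$-dimensional manifold with boundary whose set-theoretic boundary is exactly the union of the six families of strata in the statement, coming respectively from the four codimension~$1$ phenomena of $\mathcal{CT}_n(t_g)$ ((above-break), (below-break), (int-collapse), (gauge-vertex)) and from the boundaries of $W^S(y)$ and of the $W^U(x_i)$ ((Morse)). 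What remains is to orient each stratum and compare its orientation to the boundary orientation. First I would set up, for each stratum, a short exact sequence of vector bundles as in section~\ref{geo:ss:more-signs}, writing $\mathcal{CT}_{t_g}^{Morse}=\phi_{\mathbb{Y}_{t_g}}^{-1}(\Delta)$ and choosing $M^{\times n}$ labeled by $x_1,\dots,x_n$ as a complement of $\Delta\subset M^{\times n+1}$, so that the natural orientation is computed exactly as the sign labeled (A) in subsection~\ref{geo:sss:signs-int-break-Morse}.

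The (Morse), (int-collapse) and (gauge-vertex) strata I would treat essentially verbatim as in subsection~\ref{geo:sss:int-collapse-Morse-signs}: these involve a single gluing parameter (a length going to $0$ or $+\infty$, a gauge sweeping across a vertex, or a trajectory breaking at a critical point), and the reordering sign is precisely the $\dagger_{\Omega B As}$ already computed on the operadic-bimodule side in Propositions~\ref{alg:prop:int-collapse-signs} to~\ref{alg:prop:below-break-signs}; after twisting by $\sigma(t_g;\cdot)$ and $\sigma(t'_g;\cdot)$ (resp.\ by $\sigma(y;x)$) these collapse to the factors $|y|+\dagger_{\Omega B As}$ and $|y|+\dagger_{Koszul}+(d+1)|x_i|$, $|y|+1$ stated in items (iii)--(vi). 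For the (above-break) stratum I would reproduce the full computation of subsection~\ref{geo:sss:signs-int-break-Morse}, since it is formally identical to the (int-break) boundary of the algebra case: an edge above the gauge breaks at a critical point $z$, yielding a product $\widetilde{\mathcal{CT}}_{t^1_g}\times\widetilde{\mathcal{T}}_{t^2}$, and the factor $|t^2|\sum_{i=1}^{i_1}|x_i|$ in item (i) arises exactly as the term labeled (E) there. Throughout, the identification of the gluing-induced orientation with the transversality orientation rests on Lemma~\ref{geo:lemma:lemma-gluing} applied to the explicit gluing maps $\#^{above-break}$ and $\#^{below-break}$ of Theorem~\ref{geo:th:compact-CTn}, constructed in subsection~\ref{geo:sss:gluing-two-colored}.

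The hard part will be item (ii), the (below-break) stratum, which has no analogue in the algebra case and is responsible for the $m_{t^0}\circ(\mu_{t_g^1}\otimes\cdots\otimes\mu_{t_g^s})$ term of the morphism equation. Here $s$ edges break simultaneously below the gauge, splitting the tree into a bottom $g$-tree $t^0$ and $s$ upper two-colored trees $t_g^1,\dots,t_g^s$, so I would have to assemble an $s$-fold product of the short exact sequences of the pieces. Each product introduces a crossing sign via the summing rule of subsection~\ref{geo:sss:additional-tools} (the $\mathrm{dim}(V_1')\mathrm{dim}(V_2)$ term), and then a large reordering of the $n+s+1$ copies of $M$ and of the stable/unstable manifold factors into the canonical order $M_y,M_{x_1},\dots,M_{x_n}$ must be performed; the net permutation sign is precisely the Koszul sign $\dagger_{Koszul}=\sum_{r=1}^s|t_g^r|\bigl(\sum_{t=1}^{r-1}\sum_{j=1}^{i_t}|a_{i_1+\cdots+i_{t-1}+j}|\bigr)$ appearing in subsection~\ref{geo:sss:twisted-ainf-morph-def}. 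On top of this, the $j$ gauge-vertex coorientations of $t_g$ contribute the extra $(-1)^j$ factor recorded in Proposition~\ref{alg:prop:below-break-signs}, which must be tracked carefully, since the parametrizations of the (below-break) neighborhood written out in subsection~\ref{alg:sss:CTm-below-break-bound} are exactly the ones dictating these signs. The main bookkeeping obstacle is thus to verify that all these contributions, once combined with the $\sigma$-twists of $t_g$, of $t^0$, and of the $t_g^r$, telescope to the advertised sign $|y|+\dagger_{\Omega B As}+\dagger_{Koszul}$.

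Once all six signs are established, the proof of Theorem~\ref{geo:th:ombas-morph} concludes by the counting argument of subsection~\ref{geo:sss:counting-boundary-points}: summing the signed boundary points of each $1$-dimensional $\widetilde{\mathcal{CT}}_{t_g}(y;x_1,\dots,x_n)$ to zero produces exactly the evaluated twisted $\Omega B As$-morphism equations recalled in subsection~\ref{geo:sss:twisted-ainf-morph-def}, with $\partial_2$ the twisted differential $\partial_{Morse}^{Tw}$ of Theorem~\ref{geo:th:ombas-alg}. This yields the twisted $\Omega B As$-morphism $\mu^{\mathbb{Y}}$, which is an honest $\Omega B As$-morphism when $\dim M$ is odd.
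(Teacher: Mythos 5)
Your proposal is correct and follows essentially the same route as the paper: the paper itself does not write out the two-colored sign computations in detail, but states the proposition and defers to the short-exact-sequence bookkeeping of the algebra case (subsections~\ref{geo:sss:signs-int-break-Morse}--\ref{geo:sss:int-collapse-Morse-signs}), together with Lemma~\ref{geo:lemma:lemma-gluing} and the explicit gluing maps of subsection~\ref{geo:sss:gluing-two-colored}, exactly as you propose. Your identification of the (below-break) stratum as the genuinely new case --- with the $s$-fold product of exact sequences producing the crossing signs, the reordering producing $\dagger_{Koszul}$, and the gauge-vertex coorientations contributing the $(-1)^j$ of Proposition~\ref{alg:prop:below-break-signs} --- is precisely the content the paper leaves implicit.
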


Applying the method of subsection~\ref{geo:sss:counting-boundary-points} again finally proves that~:
\theoremstyle{theorem}
\newtheorem*{geo:th:ombas-morph}{Theorem~\ref{geo:th:ombas-morph}}
\begin{geo:th:ombas-morph}
The operations $\mu_{t_g}$ define a twisted $\Omega B As$-morphism between the Morse cochains $(C^*(f), \partial_{Morse}^{Tw}, \partial_{Morse})$ and $(C^*(g), \partial_{Morse}^{Tw}, \partial_{Morse})$.
\end{geo:th:ombas-morph}

\subsubsection{Gluing} \label{geo:sss:gluing-two-colored}

We construct explicit gluing maps in the two-colored framework using Lemma~\ref{geo:lemma:lemma-gluing}. Gluing maps for the (above-break) boundary components are built as in subsection~\ref{geo:sss:gluing-and-orientations}. In the (below-break) case, consider critical points $y,y_1,\dots,y_s \in \mathrm{Crit}(g)$ and $x_1,\dots,x_n \in \mathrm{Crit}(f)$ such that the moduli spaces $\mathcal{T}_{t^0}(y ; y_{1},\dots,y_{s})$ and $\mathcal{CT}_{t^r_g}(y_r ; x_{i_1 + \cdots + i_{r-1}+1},\dots,x_{i_1 + \cdots + i_{r}})$ are 0-dimensional. Let $T^{0,Morse} \in \mathcal{T}_{t^0}^{Morse}$ and $T^{r,Morse}_g \in \mathcal{CT}_{t^r_g}^{Morse}$. Fix moreover an Euclidean neighborhood $U_{z_r}$ of each critical point $z_r$ and choose $L$ large enough such that for $r=1, \dots ,s$, $\gamma_{e_r,T^{0,Morse}}(-L)$ and $\gamma_{e_0,T^{r,Morse}_g}(L)$ belong to $U_{z_r}$. Define finally the map $\sigma_{e_0,\mathbb{X}_{t^0}} : M \rightarrow M^{\times s}$ in a similar fashion to the maps $\psi_{e_i,\mathbb{X}_t}$, as depicted for instance in figure~\ref{geo:f:example-map-sigma}. Gluing maps for the perturbed Morse trees $T^{0,Morse}$ and $T^{r,Morse}_g$ can then be defined by applying Lemma~\ref{geo:lemma:lemma-gluing} to the map
\[ \resizebox{\hsize}{!}{$\displaystyle{
\left[ 0 , + \infty \right] \times \prod_{r=1}^s U_{z_r} \times \mathcal{T}_{s}(t^0) \times W^S(y) \times \prod_{r=1}^{s} \left( \mathcal{CT}_{i_r}(t^r_g) \times \prod_{i=i_1 + \cdots + i_{r-1} + 1}^{i_1 + \cdots + i_r} W^U(x_i) \right) \longrightarrow M^{\times 2s} \times M^{\times s} \times \prod_{r=1}^s M^{\times i_r} \ .  }$} \]
defined as follows :
\begin{enumerate}[label=(\roman*)]
\item the factor $\mathcal{T}_{s}(t^0) \times W^S(y)$ is sent to $M^{\times s}$ under the map $(\phi^{-(L-1)})^{\times s} \circ \sigma_{e_0,t^0}$ ;
\item the factor $\mathcal{CT}_{i_r}(t^r_g) \times \prod W^U(x_i)$ is sent to $M^{\times i_r}$ under the map $(\phi^{(L-1)})^{\times i_r} \circ \sigma_{e_0,t^r_g}$ ;
\item the factor $\left[ 0 , + \infty \right] \times \prod_{r=1}^s U_{z_r} $ is sent to $M^{\times 2s}$ under the map $ev^{U_{z_1}}_{l_\delta^1} \times \cdots \times ev^{U_{z_s}}_{l_\delta^s}$ where $\delta$ denotes the parameter in $\left[ 0 , + \infty \right]$ and the lengths $l_\delta^r$ are defined as in subsection~\ref{alg:sss:CTm-below-break-bound} of part~\ref{p:algebra} in order for them to define a two-colored metric ribbon tree. In particular, we have explicit formulae depending on $\delta$ for the resulting edges in the glued tree.
\end{enumerate}

\begin{figure}[h]
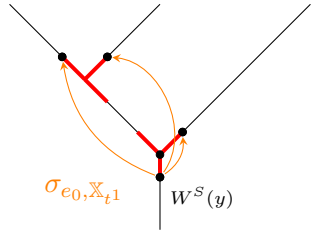

\centering
\examplemapsigma
\caption{Representation of the map $\sigma_{e_0,\mathbb{X}_{t^1}}$.} \label{geo:f:example-map-sigma}
\end{figure}

\subsection{On these twisted structures} \label{geo:ss:on-twisted-str}

Note first that if we work with coefficients in $\Z / 2$, the operations $m_t$ define of course an $\Omega  B As$-algebra structure on the Morse cochains. The operations $\mu_{t_g}$ then define an $\Omega  B As$-morphism between two $\Omega B As$-algebras. We will say that the structure we defined are \emph{untwisted}. We hence work now over the integers \Z . 
It appears from the definition of $\partial_{Morse}^{Tw}$ that when $M$ is odd-dimensional, the structures we define are untwisted. In the even-dimensional case, the structures are twisted, and it remains to be proven that all the operations $m_t$ could be twisted in order to get an untwisted structure.

We also point out that the twisted structures arise from the two uncompatible orientation conventions on an intersection $R \cap S$ and $S \cap R$ detailed in~\ref{geo:sss:orientation-transversality}. Indeed, we decided to orient $\mathcal{T}(y;x)$ inside the intersection $W^S(y) \cap W^U(x)$. The signs then compute nicely for the boundary component $\widetilde{\mathcal{T}}(y ; z) \times  \widetilde{\mathcal{CT}}_{t_g}( z ; x_1, \dots ,  x_n )$, and the twist in $\partial_{Morse}^{Tw}$ arises in $\widetilde{\mathcal{CT}}_{t_g}( y ; x_1, \dots , z , \dots , x_n ) \times \widetilde{\mathcal{T}}(z ; x_i)$. 

Orienting $\mathcal{T}(y;x)$ inside the intersection $ W^U(x) \cap W^S(y)$ makes these two boundary components switch roles. In that case, redefining the twist on the orientation of the moduli space $\mathcal{T}(y;x)$ as given by the parity of
\[ \sigma (y ; x) := 1 + |x| \ , \]
we check that the operations $m_t$ define a twisted $\Omega B As$-algebra structure on $(C^*(f),\partial_{Morse} , \partial_{Morse}^{Tw})$. The operations $\mu_{t_g}$ on their side define a twisted $\Omega B As$-morphism between $(C^*(f),\partial_{Morse} , \partial_{Morse}^{Tw})$ and $(C^*(g),\partial_{Morse} , \partial_{Morse}^{Tw})$.

\newpage

\begin{leftbar}
\part{Further developments} \label{p:further-developments}
\end{leftbar}

\setcounter{section}{0}

\section{The map $\mu^\mathbb{Y}$ is a quasi-isomorphism} \label{fd:s:q-iso}

The goal of this section is to prove the following proposition :
\begin{proposition} \label{fd:prop:quasi-iso-Morse}
The twisted $\Omega B As$-morphism $\mu^{\mathbb{Y}} : (C^*(f),m_t^{\mathbb{X}^f}) \longrightarrow (C^*(g),m_t^{\mathbb{X}^g})$ constructed in Theorem~\ref{geo:th:ombas-morph} is a quasi-isomorphism.
\end{proposition}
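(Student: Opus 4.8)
The plan is to reduce the statement about the $\Omega B As$-morphism $\mu^{\mathbb{Y}}$ to a statement about an ordinary chain map, namely its linear part $\mu_1^{\mathbb{Y}} := \mu^{\mathbb{Y}}_{\arbreopunmorph}$, and then show that this linear part is a quasi-isomorphism. The key observation is that being a quasi-isomorphism of $\Omega B As$-morphisms (or equivalently, after applying the morphism $\infmor \rightarrow \ombasmor$ of Proposition~\ref{alg:prop:markl-shnider-deux}, of the induced \Ainf-morphism) only depends on the arity $1$ component inducing an isomorphism in cohomology. Indeed, the higher operations $\mu_{t_g}$ with $n \geqslant 2$ do not affect the map induced on the level of cohomology: the $\Omega B As$-morphism equations evaluated on the trivial gauged tree $\arbreopunmorph$ give precisely $[\partial_{Morse}, \mu_1^{\mathbb{Y}}] = 0$ (up to the twist), so that $\mu_1^{\mathbb{Y}}$ is a genuine chain map $C^*(f) \rightarrow C^*(g)$, and it suffices to prove that it induces an isomorphism $H^*(C^*(f)) \rightarrow H^*(C^*(g))$.

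First I would identify the moduli space $\mathcal{CT}_{\arbreopunmorph}^{\mathbb{Y}}(y;x)$ concretely: by the construction of subsection~\ref{geo:ss:pert-two-col-tree}, a two-colored perturbed Morse gradient tree associated to $\arbreopunmorph$ is the data of a trajectory $\gamma_-$ of $-\nabla f + \mathbb{Y}_-$ on $]-\infty,0]$ and a trajectory $\gamma_+$ of $-\nabla g + \mathbb{Y}_+$ on $[0,+\infty[$ matching at time $0$, with $\gamma_-$ converging to $x \in \mathrm{Crit}(f)$ and $\gamma_+$ converging to $y \in \mathrm{Crit}(g)$. Thus $\mu_1^{\mathbb{Y}}$ is exactly the standard continuation-type map comparing the two Morse complexes $C^*(f)$ and $C^*(g)$, perturbed by $\mathbb{Y}$. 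The cleanest route is then to invoke the general principle that such continuation maps are quasi-isomorphisms: one constructs a homotopy inverse by running the analogous two-colored trees from $g$ to $f$, and shows via a standard gluing/cobordism argument (counting boundary points of $1$-dimensional moduli spaces of concatenated two-colored trees, exactly in the style of subsection~\ref{geo:sss:counting-boundary-points}) that the two composites are chain-homotopic to the respective identities. Since both $C^*(f)$ and $C^*(g)$ compute $H^*(M;\Z)$ — being deformation retracts of the singular cochains as recalled at the start of section~\ref{geo:s:ainf-ombas-alg-Morse} — this will establish that $\mu_1^{\mathbb{Y}}$ induces an isomorphism.

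An alternative and perhaps more economical argument, which I would present as the main line, bypasses constructing an explicit homotopy inverse. The point is that $\mu_1^{\mathbb{Y}}$ is homotopic to the canonical comparison isomorphism between the two Morse complexes. Concretely, both $C^*(f)$ and $C^*(g)$ sit in the zig-zag of \Ainf-quasi-isomorphisms through $(C^*_{sing}(M), \cup)$ provided by the homotopy transfer theorem (Theorem~\ref{alg:th:htt}), and the map induced by $\mu_1^{\mathbb{Y}}$ on cohomology agrees with the identity on $H^*(M;\Z)$ under these identifications; this can be seen by a further deformation argument letting the perturbation data $\mathbb{Y}$ degenerate to a homotopy between $-\nabla f$ and $-\nabla g$ along the gauge. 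Either way, the conclusion is that $\mu_1^{\mathbb{Y}}$ is a quasi-isomorphism, hence $\mu^{\mathbb{Y}}$ is an $\Omega B As$-quasi-isomorphism by definition.

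The main obstacle will be making the homotopy-inverse / cobordism argument rigorous with \emph{integer} coefficients and in the \emph{twisted} setting. All the sign bookkeeping developed in section~\ref{geo:s:signs-or} must be carried through once more for the concatenation moduli spaces that prove $\mu_1^{\mathbb{Y}}$ is a homotopy equivalence, and one must check that the twists in $\partial_{Morse}^{Tw}$ are compatible on both sides so that the homotopy relation holds over $\Z$ and not merely over $\Z/2$. Since the statement we need is only about the \emph{linear} part, however, the combinatorics collapse to the arity $1$ case and the sign analysis is considerably lighter than the full computations of subsections~\ref{geo:sss:signs-int-break-Morse} to~\ref{geo:sss:int-collapse-Morse-signs}; the gluing maps are the ones constructed in subsection~\ref{geo:sss:gluing-two-colored}. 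I would therefore expect this to be a routine but careful adaptation rather than a genuinely new difficulty.
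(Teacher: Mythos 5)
Your first, concrete line of argument --- reducing to the arity~$1$ component $\mu^{\mathbb{Y}}_{\arbreopunmorpha}$, constructing a continuation map in the opposite direction, and proving via a boundary count on a one-parameter moduli space of concatenated two-colored trees that the composite is chain-homotopic to the identity --- is exactly the proof given in the paper, which realizes this one-parameter family as the moduli space $\mathcal{H}(y;x)$ (the parameter being the length of the $g$-colored internal edge), establishes $(-1)^d\mu^{\mathbb{Y}^{gf}}\circ\mu^{\mathbb{Y}^{fg}}-\mu^{\mathbb{Y}^{ff}}=\partial_{Morse}h+h\partial^{Tw}_{Morse}$, and specializes $\mathbb{Y}^{ff}$ to zero so that $\mu^{\mathbb{Y}^{ff}}=\ide$. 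Your ``alternative main line'' through the homotopy transfer zig-zag rests on an unjustified deformation claim and is not what the paper does, but it is also not needed, since the first argument suffices.
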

\noindent In other words we want to prove that the arity 1 component $\mu^{\mathbb{Y}}_{\arbreopunmorpha} : C^*(f) \rightarrow C^*(g)$ is a map which induces an isomorphism in cohomology. The map $\mu^{\mathbb{Y}}_{\arbreopunmorpha}$ is a dg-map $(C^*(f) , \partial_{Morse}^{Tw}) \rightarrow (C^*(g),\partial_{Morse})$, but the cohomologies defined by the differentials $\partial_{Morse}^{Tw}$ and $\partial_{Morse}$ are equal.

In this regard, we will prove that given three perturbation data on $\mathcal{CT}_1 := \{ \arbreopunmorph \}$, $\mathbb{Y}^{fg}_{\arbreopunmorphfg}$ , $\mathbb{Y}^{gf}_{\arbreopunmorphgf}$ and $\mathbb{Y}^{ff}_{\arbreopunmorphff}$, defining dg-maps
\[ \mu^{\mathbb{Y}^{ij}_{\arbreopunmorpha}} : (C^*(i),\partial_{Morse}^{Tw}) \longrightarrow (C^*(j),\partial_{Morse}) \ , \]
we can construct a homotopy $h : C^*(f) \rightarrow C^*(f)$ such that \[ (-1)^d\mu^{\mathbb{Y}^{gf}}_{\arbreopunmorpha} \circ \mu^{\mathbb{Y}^{fg}}_{\arbreopunmorpha} - \mu^{\mathbb{Y}^{ff}}_{\arbreopunmorpha} = \partial_{Morse} h + h \partial^{Tw}_{Morse}  \ . \]
Specializing to the case where $\mathbb{Y}^{ff}_{\arbreopunmorphff}$ is null, $\mu^{\mathbb{Y}^{ff}_{\arbreopunmorphff}} = \ide$ and this yields the desired result.
For the sake of readability, we will write $\mathbb{Y}^{ij} := \mathbb{Y}^{ij}_{\arbreopunmorpha}$ in the rest of this section. Note also that the choice of perturbation data $\mathbb{X}^f$ and $\mathbb{X}^g$ are not necessary for this construction.

In the last paragraph of subsection \ref{geo:ss:ombas-str-Morse} of part~\ref{p:geometry}, we explained that given any Morse function $f$ together with an admissible choice of perturbation data $\mathbb{X}^f$, the Morse cochains $C^*(f)$ and the singular cochains $C^*_{sing}(M)$ are quasi-isomorphic as twisted \ombas  -algebras. In particular, given another Morse function $g$ together with an admissible choice of perturbation data $\mathbb{X}^g$, the Morse cochains $C^*(f)$ and $C^*(g)$ are quasi-isomorphic as twisted \ombas -algebras. Proposition~\ref{fd:prop:quasi-iso-Morse} show that the twisted \ombas -morphism  $\mu^\mathbb{Y}$ realizes such a quasi-isomorphism explicitly.

\subsection{The moduli space $\mathcal{H}(y;x)$} \label{fd:ss:mod-space-H}

Begin by considering the moduli space of metric trees $\mathcal{H}$, represented in two equivalent ways in figure~\ref{fd:fig:mod-space-H}. Adapting the discussions of section~\ref{geo:ss:pert-Morse-tree}, we infer without difficulty the notion of \emph{smooth choice of perturbation data on $\mathcal{H}$.} Given such a choice of perturbation data $\mathbb{W}$, we then say that it is consistent with the $\mathbb{Y}^{ij}$ if it is such that, when $l \rightarrow 0$, $ \mathrm{lim} (\mathbb{W}) = \mathbb{Y}^{ff}$, and when $ l \rightarrow + \infty$, the limit $ \mathrm{lim} (\mathbb{W})$ on the above part of the broken tree is $\mathbb{Y}^{fg}$ and the limit $ \mathrm{lim} (\mathbb{W})$ on the bottom part of the broken tree is $\mathbb{Y}^{gf}$.

\begin{figure}[h] 
    \centering
    \begin{subfigure}{0.4\textwidth}
    \centering
       \hmodspaceA 
    \end{subfigure} ~
    \begin{subfigure}{0.4\textwidth}
    \centering
        \hmodspaceB
    \end{subfigure}
    \caption{} \label{fd:fig:mod-space-H}
\end{figure}

For $x$ and $y$ critical points of the function $f$, introduce now the moduli space $\mathcal{H}^\mathbb{W}(y;x)$ consisting of perturbed Morse gradient trees modeled on \ophomotopie , and such that the two external edges correspond to perturbed Morse equations for $f$, and the internal edge corresponds to a perturbed Morse equation for $g$. We then check that a generic choice of perturbation data $\mathbb{W}$ makes them into orientable manifolds of dimension
\[ \mathrm{dim}(\mathcal{H}^\mathbb{W}(y;x)) = |y| - |x| + 1 \ . \]
The 1-dimensional moduli spaces $\mathcal{H}(y;x)$ can be compactified into compact manifolds with boundary $\overline{\mathcal{H}}(y;x)$, whose boundary is given by the three following phenomena :
\begin{enumerate}[label=(\roman*)]
\item an external edge breaks at a critical point of $f$ (Morse) ;
\item the length of the internal edge tends towards 0 : this yields the moduli spaces $$\mathcal{CT}^{\mathbb{Y}^{ff}}(y;x) \ ;$$
\item the internal edge breaks at a critical point of $g$ : this yields the moduli spaces $$\bigcup_{z \in \mathrm{Crit}(g)}\mathcal{CT}^{\mathbb{Y}^{gf}}(y;z)  \times \mathcal{CT}^{\mathbb{Y}^{fg}}(z;x) \ . $$
\end{enumerate}

Defining the map $h : C^*(f) \rightarrow C^*(f)$ as $h(x) := \sum_{|y| = |x| - 1} \# \mathcal{H}^\mathbb{W}(y;x) \cdot y$, a signed count of the boundary points of the 1-dimensional compactified moduli spaces $\overline{\mathcal{H}}^\mathbb{W}(y;x)$ then proves that : 
\begin{proposition} \label{fd:prop:homotopy-Morse}
The map $h$ defines an homotopy between $(-1)^d\mu^{\mathbb{Y}^{gf}} \circ \mu^{\mathbb{Y}^{fg}}$ and $\mu^{\mathbb{Y}^{ff}}$ i.e. is such that 
\[ (-1)^d\mu^{\mathbb{Y}^{gf}} \circ \mu^{\mathbb{Y}^{fg}} - \mu^{\mathbb{Y}^{ff}} = \partial_{Morse} h + h \partial^{Tw}_{Morse}  \ . \]
\end{proposition}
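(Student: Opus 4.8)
The plan is to run the count-of-boundary-points argument of subsection~\ref{geo:sss:counting-boundary-points} on the compactified moduli spaces $\overline{\mathcal{H}}^{\mathbb{W}}(y;x)$, exactly as Theorems~\ref{geo:th:ombas-alg} and~\ref{geo:th:ombas-morph} were established in sections~\ref{geo:ss:twisted-ainf-alg-Morse} and~\ref{geo:ss:twisted-ombas-morph-Morse}. First I would fix critical points $x,y \in \mathrm{Crit}(f)$ with $|y|=|x|$, so that $\mathcal{H}^{\mathbb{W}}(y;x)$ has dimension $|y|-|x|+1=1$, and record the analogues of the transversality and compactness statements of part~\ref{p:geometry}: for a generic choice of perturbation data $\mathbb{W}$ consistent with the $\mathbb{Y}^{ij}$, these moduli spaces are orientable $1$-manifolds compactifying to $\overline{\mathcal{H}}^{\mathbb{W}}(y;x)$ with the three boundary strata (Morse), $(l\to 0)$ and $(l\to +\infty)$ already listed. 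This requires only minor adaptations of the inductive parametric-transversality argument of section~\ref{geo:s:transversality}, applied to the single tree type $\mathcal{H}$, together with the gluing and compactification results imported from~\cite{mescher-morse}.

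Second, I would endow each $\mathcal{H}^{\mathbb{W}}(y;x)$ with a twisted orientation $\widetilde{\mathcal{H}}^{\mathbb{W}}(y;x)$, obtained from the natural orientation (coming, as in section~\ref{geo:ss:basic-mod-space-Morse}, from the inverse-image-of-the-diagonal description) by a sign twist analogous to $\sigma(t;y;x_1,\dots,x_n)$ but specialized to arity $1$ and to the dimension shift carried by the internal $g$-edge of $\mathcal{H}$. With this convention and the definition $h(x):=\sum_{|y|=|x|-1}\#\widetilde{\mathcal{H}}^{\mathbb{W}}(y;x)\cdot y$, the three boundary contributions of $\partial\overline{\mathcal{H}}^{\mathbb{W}}(y;x)$ are identified, respectively, with the terms $\partial_{Morse}h+h\partial_{Morse}^{Tw}$ (from the (Morse) stratum, handled verbatim as in subsection~\ref{geo:sss:int-collapse-Morse-signs}), with $-\mu^{\mathbb{Y}^{ff}}$ (from the $(l\to 0)$ stratum, which is the (int-collapse)-type degeneration to the trivial gauged tree), and with $(-1)^d\,\mu^{\mathbb{Y}^{gf}}\circ\mu^{\mathbb{Y}^{fg}}$ (from the $(l\to +\infty)$ stratum). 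Since the total signed count of the boundary of an oriented compact $1$-manifold vanishes, summing these three contributions yields precisely the homotopy relation $(-1)^d\mu^{\mathbb{Y}^{gf}}\circ\mu^{\mathbb{Y}^{fg}}-\mu^{\mathbb{Y}^{ff}}=\partial_{Morse}h+h\partial_{Morse}^{Tw}$.

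The main obstacle is the sign bookkeeping in the $(l\to +\infty)$ stratum, where the internal $g$-edge breaks at a critical point $z\in\mathrm{Crit}(g)$ and produces the fibre product $\mathcal{CT}^{\mathbb{Y}^{gf}}(y;z)\times\mathcal{CT}^{\mathbb{Y}^{fg}}(z;x)$. As in the (int-break) computation of subsection~\ref{geo:sss:signs-int-break-Morse}, I would build an explicit gluing map via Lemma~\ref{geo:lemma:lemma-gluing}, using a Euclidean chart at $z$ together with the evaluation map $ev$, and then track how the identification $W^U(z)\times W^S(z)\simeq M$ reorders the underlying signed short exact sequences of vector bundles. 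It is exactly this reordering, combined with the twist defining $\partial_{Morse}^{Tw}$ by $(\partial_{Morse}^{Tw})^k=(-1)^{(d+1)k}\partial_{Morse}^k$, that produces the global $(-1)^d$ prefactor on $\mu^{\mathbb{Y}^{gf}}\circ\mu^{\mathbb{Y}^{fg}}$; the two remaining strata reduce to the (Morse) and (int-collapse) sign calculations already carried out in subsections~\ref{geo:sss:signs-int-break-Morse} and~\ref{geo:sss:int-collapse-Morse-signs}, so no genuinely new computation is needed there. Finally, specializing to $\mathbb{Y}^{ff}=0$, whence $\mu^{\mathbb{Y}^{ff}}=\ide$, feeds this homotopy into the proof of Proposition~\ref{fd:prop:quasi-iso-Morse}.
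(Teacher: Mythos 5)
Your proposal is correct and follows essentially the same route as the paper: define $h$ by counting the $0$-dimensional twisted moduli spaces $\widetilde{\mathcal{H}}^{\mathbb{W}}(y;x)$, and obtain the homotopy relation by applying the count-of-boundary-points argument to the $1$-dimensional compactified moduli spaces, whose (Morse), $l\to 0$ and $l\to+\infty$ strata produce respectively $\partial_{Morse}h+h\partial_{Morse}^{Tw}$, $-\mu^{\mathbb{Y}^{ff}}$ and $(-1)^d\mu^{\mathbb{Y}^{gf}}\circ\mu^{\mathbb{Y}^{fg}}$. The only material you leave implicit is the explicit orientation twist (the paper takes $\sigma=(d+1)|y|$) and the resulting four signed boundary components, which the paper records but whose derivation is likewise deferred to the sign computations of the preceding sections.
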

\noindent Proposition~\ref{fd:prop:quasi-iso-Morse} is then a simple corollary to this proposition.

\subsection{Proof of Propositions~\ref{fd:prop:quasi-iso-Morse}~and~\ref{fd:prop:homotopy-Morse}} \label{fd:ss:signs-or}

We define the moduli space $\mathcal{H}(y;x)$ as before, by introducing the map
\[ \phi_\mathbb{W} : \mathcal{H} \times W^S(y) \times W^U(x) \longrightarrow M \times M \ , \]
and setting $\mathcal{H}(y;x) := \phi^{-1} (\Delta)$ where $\Delta$ is the diagonal of $M \times M$.
We recall moreover that $\sigma (\arbreopunmorpha ; y ; x) = d(1+|y|)$, $\sigma (y ; x ) = 1$ and that
\begin{align*}
\mu^{\mathbb{Y}^{ij}} (x) = \sum_{|y| = |x|} \# \widetilde{\mathcal{CT}}_{\arbreopunmorpha}^{\mathbb{Y}^{ij}} (y ; x) \cdot y    &&   \partial_{Morse} (x) =  \sum_{|y|= |x|+1 } \# \widetilde{\mathcal{T}}(y;x) \cdot y \ .
\end{align*}
We then set 
\[ \sigma (\ophomotopie ; y ; x) = (d+1)|y| \ , \]
and write $\widetilde{\mathcal{H}}(y;x)$ for the moduli space $\mathcal{H}(y;x)$ endowed with the orientation obtained by twisting its natural orientation by a sign of parity $\sigma (\ophomotopie ; y ; x)$. We can now define the map $h : C^*(f) \rightarrow C^*(f)$ by
\[ h (x) := \sum_{|y| = |x| - 1} \# \widetilde{\mathcal{H}} (y ; x) \cdot y \ .  \]

If $\widetilde{\mathcal{H}}( y ; x )$ is 1-dimensional, its boundary decomposes as the disjoint union of the following four types of components
\begin{align*} 
&(-1)^{|y|+d} \widetilde{\mathcal{CT}}^{\mathbb{Y}^{gf}}(y;z)  \times \widetilde{\mathcal{CT}}^{\mathbb{Y}^{fg}}(z;x) &&
(-1)^{|y|+1} \widetilde{\mathcal{CT}}^{\mathbb{Y}^{ff}}(y;x) \\
&(-1)^{|y| +1} \widetilde{\mathcal{T}}(y ; z) \times  \widetilde{\mathcal{H}}( z ; x ) && (-1)^{|y|+1+(d+1)|x|} \widetilde{\mathcal{H}}( y ; z ) \times \widetilde{\mathcal{T}}( z ; x )  \ . 
\end{align*} 
Counting the boundary points of these 1-dimensional moduli spaces implies that
\[ (-1)^d\mu^{\mathbb{Y}^{gf}} \circ \mu^{\mathbb{Y}^{fg}} - \mu^{\mathbb{Y}^{ff}} = \partial_{Morse} h + h \partial^{Tw}_{Morse}  \ . \]
To prove Proposition~\ref{fd:prop:quasi-iso-Morse}, it remains to note that this relation descends in cohomology to the relation
\[ (-1)^d[\mu^{\mathbb{Y}^{gf}}] \circ [\mu^{\mathbb{Y}^{fg}}] = [\mu^{\mathbb{Y}^{ff}}] \ . \]

\section{More on the $\Omega B As$ viewpoint} \label{fd:s:more-ombas}

We stated in section~\ref{alg:ss:homotopy} that because the two-colored operad $A_\infty^2$ is a fibrant-cofibrant replacement of $As^2$ in the model category of two-colored operads, the category of \Ainf -algebras with \Ainf -morphisms between them yields a nice homotopic framework to study the notion of "dg-algebras which are associative up to homotopy". In fact, most classical theorems for \Ainf -algebras can be proven using  the machinery of model categories, on the model category of two-colored operads in dg-\Z -modules.
We can thus similarly introduce the two-colored operad $\Omega B As^2$, which is again a fibrant-cofibrant replacement of $As^2$ in the model category of two-colored operads. The category of $\Omega B As$-algebras with $\Omega B As$-morphisms between them yields another satisfactory homotopic framework to study "dg-algebras which are associative up to homotopy", in which most classical theorems for \Ainf -algebras still hold.

We also point out that while there exists a morphism of operads $\Ainf \rightarrow \Omega B As$ which is canonically given by refining the cell decompositions on the associahedra, Markl and Shnider constructed in~\cite{markl-assoc} an explicit non-canonical morphism of operads $\Omega B As \rightarrow \Ainf$.
The operads $\Omega B As$ and $\Ainf$ being fibrant-cofibrant replacements of $As$, model category theory tells us that there necessarily exist two morphisms $\Ainf \rightarrow \Omega B As$ and $\Omega B As \rightarrow \Ainf$. Hence the noteworthy property of these two morphisms is \emph{not that they exist, but that they are explicit and computable}.

Switching to the two-colored operadic viewpoint, model category theory tells us again that there necessarily exist two morphisms $A_\infty^2 \rightarrow \Omega B As^2$ and $\Omega B As^2 \rightarrow A_\infty^2$. We have already introduced the necessary material to define an explicit and computable morphism of two-colored operads $A_\infty^2 \rightarrow \Omega B As^2$. To render explicit a morphism $ \Omega B As^2 \rightarrow A_\infty^2$ it would be enough to construct a morphism of operadic bimodules 
$\Omega B As - \mathrm{Morph} \rightarrow \infmor $. To our knowledge, this has not yet been done, but we conjecture that the construction of Markl-Shnider should adapt nicely to the multiplihedra to define such a morphism.

\section{\Ainf -structures in symplectic topology} \label{fd:s:quilted-disks}

We explained in this article how the associahedra can be realized as compactified moduli spaces of stable metric ribbon trees. In fact, writing $\mathcal{D}_{n,1}$ for the moduli space of stable disks with $n+1$ marked points on their boundary, where $n$ points are seen as incoming, and 1 as outgoing, the moduli space $\mathcal{D}_{n,1}$ can be compactified and topologized in such a way that it is isomorphic as a CW-complex to the associahedron $K_n$. See~\cite{seidel-fukaya} for instance. Mau-Woodward also prove in~\cite{mau-woodward} that the multiplihedra $J_n$ can be realized as the compactified moduli spaces of stable quilted disks $\overline{\mathcal{QD}}_{n,1}$. The objects of $\mathcal{QD}_{n,1}$ are disks with $n+1$ points $z_0,z_1,\cdots,z_n$ marked on the boundary, with an additional interior disk passing through the point $z_0$. An instance is depicted in figure~\ref{fd:fig:quilted-disk}.
These families of moduli spaces however only contain the \Ainf -cell decompositions of the associahedra resp. multiplihedra, and do not contain their $\Omega B As$-cell decompositions. 

A \textit{symplectic manifold} corresponds to the data of a smooth manifold $M$ together with a closed non-degenerate 2-form $\omega$ on $M$. The purpose of \emph{symplectic topology} is the study of the geometrical properties of symplectic manifolds $(M,\omega)$, and of the way they are preserved under smooth transformations preserving the symplectic structure. As algebraic topology seeks to associate algebraic invariants to topological spaces, in the hope of distinguishing them and understanding some of their topological properties, the same \textit{modus operandi} can be applied to the study of symplectic manifolds. This point view was prompted by the seminal work of Gromov~\cite{gromov-pseudo} on \emph{moduli spaces of pseudo-holomorphic curves}. By counting the points of 0-dimensional moduli spaces of pseudo-holomorphic curves, one will be able to define algebraic operations stemming from the geometry of the underlying symplectic manifolds.

The most famous example is that of the \emph{Fukaya category} $\mathrm{Fuk}(M)$ of a symplectic manifold $M$ (with additional technical assumptions). It is an \Ainf -category whose higher multiplications are defined by counting moduli spaces of pseudo-holomorphic disks with Lagrangian boundary conditions and $n+1$ marked points on their boundary, in other words by realizing the moduli spaces $\mathcal{D}_{n,1}$ in symplectic topology. We refer for instance to~\cite{smith-prolegomenon}~and~\cite{auroux-fukaya} for introductions to the subject. 
The moduli spaces of quilted disks can be similarly realized as pseudo-holomorphic curves in symplectic topology as in~\cite{mau-wehrheim-woodward}, to construct \Ainf -functors between Fukaya categories.

\begin{figure}[h]
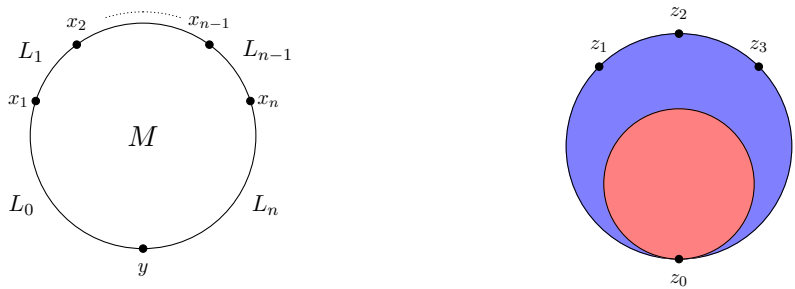
 
    \centering
    \begin{subfigure}{0.4\textwidth}
    \centering
       \disquebordlagrangienassoc
    \end{subfigure} ~
    \begin{subfigure}{0.4\textwidth}
    \centering
       \quilteddiskexample
    \end{subfigure}
    \caption{On the left, an example of a pseudo-holomorphic disk with Lagrangian boundary conditions on the Lagrangian submanifolds $L_0,\dots,L_n$ whose $n+1$ marked points are sent to the points $y,x_1,\dots,x_n$ in $M$. On the right, an example of a quilted disk in $\mathcal{QD}_{3,1}$.} \label{fd:fig:quilted-disk}
\end{figure}

\noindent It is also worth mentioning the work of Bottman on that matter. He is currently developing an algebraic model for the notion of $(\Ainf , 2)$-categories, using moduli spaces of witch curves. The goal is to prove that one can then define an $(\Ainf , 2)$-category $\mathtt{Symp}$ whose objects would be symplectic manifolds (with suitable technical assumptions), and such that the space of morphisms between two symplectic manifolds $M$ and $N$ would be the Fukaya category $\mathrm{Fuk}(M^-\times N)$. We refer to his recent papers~\cite{bottman-assoc}~and~\cite{bottman-witch} for more details. 

\section{Towards higher algebra} \label{fd:s:higher-alg}

In closing, two questions naturally arise from this construction. They will respectively represent the starting points to the parts II and III to this article.

\paragraph{\textbf{Problem 1.}} Given two Morse functions $f,g$, choices of perturbation data $\mathbb{X}^f$ and $\mathbb{X}^g$, and choices of perturbation data $\mathbb{Y}$ and $\mathbb{Y}'$, is $\mu^{\mathbb{Y}}$ always \Ainf -homotopic (resp. $\Omega B As$-homotopic) to $\mu^{\mathbb{Y}'}$ ? I.e., when can the following diagram be filled in the \Ainf\ (resp. $\Omega B As$) world
\[ \begin{tikzcd}[row sep=large, column sep = large]
C^*(f) \arrow[bend left=40]{r}[above]{\mu^{\mathbb{Y}}}[name=U,below,pos=0.5]{}
\arrow[bend right=40]{r}[below]{\mu^{\mathbb{Y}'}}[name=D,pos=0.5]{}
& C^*(g)
\arrow[Rightarrow, from=U, to=D]
\end{tikzcd} \ ? \]
In which sense, with which notion of homotopy can it be filled ? And in general, which notion of higher operadic algebra naturally encodes this type of problem ?

\paragraph{\textbf{Problem 2.}} Given three Morse functions $f_0,f_1,f_2$, choices of perturbation data $\mathbb{X}^i$, and choices of perturbation data $\mathbb{Y}^{ij}$ defining morphisms 
\begin{align*}
\mu^{\mathbb{Y}^{01}} : (C^*(f_0),m_t^{\mathbb{X}^0}) \longrightarrow (C^*(f_1),m_t^{\mathbb{X}^1}) \ , \\
\mu^{\mathbb{Y}^{12}} : (C^*(f_1),m_t^{\mathbb{X}^1}) \longrightarrow (C^*(f_2),m_t^{\mathbb{X}^2}) \ , \\
\mu^{\mathbb{Y}^{02}} : (C^*(f_0),m_t^{\mathbb{X}^0}) \longrightarrow (C^*(f_2),m_t^{\mathbb{X}^2}) \ ,
\end{align*}
can we construct an \Ainf -homotopy (or an $\Omega B As$-homotopy), such that $\mu^{\mathbb{Y}^{12}} \circ \mu^{\mathbb{Y}^{01}} \simeq \mu^{\mathbb{Y}^{02}}$ through this homotopy ? That is, can the following cone be filled in the \Ainf\ (resp. $\Omega B As$) world
\[ \begin{tikzcd}[row sep=large, column sep = large]
C^*(f_0) \arrow{dr}[below left]{\mu^{\mathbb{Y}^{02}}}[name=U,above right,pos=0.6]{} \arrow{r}[above]{\mu^{\mathbb{Y}^{01}}}
& C^*(f_1) \arrow{d}[right]{\mu^{\mathbb{Y}^{12}}} \arrow[Rightarrow, to=U] \\
& C^*(f_2) 
\end{tikzcd} \ ? \]
Which higher operadic algebra naturally arises from this basic question ? Note that the construction of section~\ref{fd:s:q-iso} solves the arity 1 step of this problem.

Problem 1 is solved in~\cite{mazuir-II} by introducing the notions of $n- \Ainf$-morphisms and $n- \Omega B As$-morphisms. Problem 2 will be adressed in an upcoming paper, in which it will appear that the higher algebra of $n-\Ainf$-morphisms provides a natural framework to solve this problem.

\newpage

\bibliographystyle{alpha}

\bibliography{ha-morse-I-biblio}
\end{document}